\newtheorem{theorem}{Theorem}[section]
\newtheorem{lemma}[theorem]{Lemma}
\newtheorem{proposition}[theorem]{Proposition}
\newtheorem{corollary}[theorem]{Corollary}
\newtheorem*{theorem*}{Theorem}
\theoremstyle{definition}
\newtheorem{definition}[theorem]{Definition}
\newtheorem{example}[theorem]{Example}
\newtheorem{question}[theorem]{Question}
\theoremstyle{remark}
\newtheorem{remark}[theorem]{Remark}
\numberwithin{equation}{section}
\newcommand{\ow}{\omega}
\newcommand{\C}{{\mathbb{C}}}
\newcommand{\R}{{\mathbb{R}}}
\newcommand{\Q}{{\mathbb{Q}}}
\renewcommand{\P}{{\mathbb{P}}}
\renewcommand{\H}{{\mathbb{H}}}
\newcommand{\Z}{{\mathbb{Z}}}
\renewcommand{\epsilon}{\varepsilon}
\renewcommand{\phi}{\varphi}
\renewcommand{\theta}{\vartheta}
\newcommand{\w}{\wedge}
\DeclareMathOperator{\Aut}{Aut}
\DeclareMathOperator{\grad}{grad}
\DeclareMathOperator{\Hom}{Hom}
\DeclareMathOperator{\id}{Id}
\DeclareMathOperator{\ind}{ind}
\DeclareMathOperator{\lk}{lk}
\DeclareMathOperator{\OB}{OB}
\DeclareMathOperator{\sgn}{sgn}
\DeclareMathOperator{\Span}{span}
\DeclareMathOperator{\Spec}{Spec}
\begin{document}
\title[Left-handed twists]{Non-fillable invariant contact structures on principal circle bundles and left-handed twists}

\author[River Chiang]{River Chiang}
\address{
River Chiang,
Department of Mathematics, National Cheng Kung University,
Tainan 701, Taiwan
}
\email{riverch@mail.ncku.edu.tw}

\author[Fan Ding]{Fan Ding}
\address{
Fan Ding,
School of Mathematical Sciences and LMAM, Peking University, Beijing 100871, P.~R.~China
}
\email{dingfan@math.pku.edu.cn}

\author[Otto van Koert]{Otto van Koert}
\address{
Otto van Koert,
Department of Mathematics and Research Institute of Mathematics, Seoul National University\\
Building 27, room 402\\
San 56-1, Sillim-dong, Gwanak-gu, Seoul, South Korea\\
Postal code 151-747
}
\email{okoert@snu.ac.kr}


\subjclass[2010]{53D35, 32Q65, 57R17}

\keywords{Invariant contact structures, fractional twists, Dehn twists, overtwistedness, open books}

\begin{abstract}
We define symplectic fractional twists, which generalize Dehn twists, and use these in open books to investigate contact structures.
The resulting contact structures are invariant under a circle action, and share several similarities with the invariant contact structures that were studied by Lutz and Giroux.
We show that left-handed fractional twists often give rise to non-fillable contact manifolds.
These manifolds are in fact ``algebraically overtwisted'', yet they do not seem to contain bLobs, nor are they directly related to negative stabilizations.
We also show that the Weinstein conjecture holds for the non-fillable contact manifolds we construct, and we investigate the symplectic isotopy problem for fractional twists. 
\end{abstract}

\maketitle

\section{Introduction}
About ten years ago, Giroux suggested that the notion of overtwistedness could be generalized to higher dimensions using open books: a negative (or left-handed) stabilization should be overtwisted.
Since then, various versions of overtwistedness in higher dimensions have been defined, most notably the plastikstufe, a special kind of family of overtwisted disks defined in \cite{Niederkrueger:Plastikstufe}, and the bLob, the bordered Legendrian open book defined in \cite{Massot:weak_strong_fillability}.
These objects reduce to overtwisted disks in dimension $3$.
Furthermore, existence of plastikstufes or bLobs obstructs fillability, implies algebraic overtwistedness and the existence of a contractible periodic Reeb orbit, see \cite{Albers:Weinstein_PS,Massot:weak_strong_fillability}.
These are all features in common with overtwisted contact $3$-manifolds.
In addition, plastikstufes exhibit some flexibility properties as shown in \cite{Murphy:loose_plastik}, so they seem to be the right generalization.

On the other hand, it was also shown that negative stabilizations have some aspects of overtwisted manifolds; for instance, negative stabilizations do not admit fillings, are algebraically overtwisted, and also have contractible periodic Reeb orbits, \cite{BvK,Massot:weak_strong_fillability}.
It is therefore reasonable to look for plastikstufes and bLobs in negative stabilizations, but it seems to be difficult to find these objects in these contact manifolds.

In this paper, we take a different point of view on this question.
We shall show that negative stabilizations are part of a much larger class of ``negatively twisted'' contact manifolds.
In order to define these, consider a Liouville domain $W$ whose boundary is a prequantization bundle $P$ over an integral symplectic manifold $(Q,\omega)$.
A collar neighborhood of the boundary looks like $(P\times I,d(e^t \lambda) \,)$, so define the symplectomorphism $\tau:(p,t)\mapsto (Fl^{R_\lambda}_{f(t)}(p),t)$, where $f: I\to \R$ is a smooth function such that $f(0)=2\pi/\ell$ for some positive integer $\ell$ and $f(1)=0$.

In some cases $\tau$ extends to the whole of $W$; 
for instance, if $\ell=1$. This gives a so-called fibered Dehn twist, which was already considered by Biran and Giroux, \cite{Biran_Giroux:fibered_Dehn}. 
We will explain some sufficient conditions and an explicit procedure to define such an extension in Section~\ref{sec:fractional_twist} by using a covering trick. Assume for now that we can construct such an extension in some definite way and call it a right-handed fractional twist of power $\ell$: this notion generalizes Dehn twists and also fibered Dehn twists.
Consider the contact open book $\OB(W,\tau^{\pm 1})$ with page $W$ and monodromy either a right-handed fractional twist $\tau$ or a left-handed one, $\tau^{-1}$.

We shall show that the contact manifolds constructed this way are principal circle bundles over smooth manifolds, and that the contact structure is invariant under the circle action.
\begin{theorem}
\label{thm:result invariant}
Let $(W,\Omega=d\lambda)$ be a Liouville domain such that $P:=\partial W$ is a prequantization bundle over a symplectic manifold $(Q,k\omega)$, where $\omega$ is a primitive, integral symplectic form, and $k\in \Z_{>0}$.
Suppose $pr:\tilde W\to W$ is an adapted $\ell$-fold cover (see Definition~\ref{def:adapted_cover}; such a cover trivially exists for $\ell=1$).
Then one can define a right-handed fractional twist $\tau$ of power $\ell$ on the cover $\tilde W$.
In particular, a fibered Dehn twist exists on $W$.

Furthermore, we have the following results for contact open books with the above monodromies.
\begin{enumerate}
\item The contact open book $\OB(\tilde W,\tau)$ is a prequantization bundle over the symplectic manifold
$$
M_+=\left(
P\times_{S^1,+}D^2
\right)
\cup_\partial W,
$$
where $P\times_{S^1,+}D^2$ denotes the associated disk bundle which is a concave filling for $P$.
In addition, this contact manifold is convex fillable.
\item The contact open book $\OB(\tilde W,\tau^{-1})$ is a principal circle bundle over the smooth manifold
$$
M_-=\left(
P\times_{S^1,-}D^2
\right)
\cup_\partial W,
$$
where $P\times_{S^1,-}D^2$ denotes the associated disk bundle dual to $P\times_{S^1,+}D^2$.
Furthermore, the contact structure on this contact open book is $S^1$-invariant, and the almost dividing set is contactomorphic to the prequantization bundle $P$.
\end{enumerate}
\end{theorem}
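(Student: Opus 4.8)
The plan is to analyze the contact open book $\OB(\tilde W,\tau^{\pm 1})$ by exhibiting an explicit $S^1$-action on the total space and identifying the quotient. First I would recall the standard model for a contact open book: the total space is $(\tilde W \times S^1) \cup_\partial (\partial \tilde W \times D^2)$, with the binding $\partial \tilde W \times \{0\}$, the pages $\tilde W \times \{\theta\}$, and a contact form of the form $\beta = e^t\tilde\lambda + C\, d\phi$ near the binding (in the collar coordinates $(p,t)\in P\times I$ on $\tilde W$ and polar coordinates $(r,\phi)$ on $D^2$), glued to a form whose Reeb flow on the mapping-torus part rotates the $S^1$-factor while being dragged by the monodromy. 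The key input is that $P=\partial\tilde W$ is a prequantization bundle over $(Q,k\omega)$ with Reeb vector field $R_\lambda$ generating the circle action, and that $\tau$ acts precisely by the time-$f(t)$ Reeb flow along the collar; this is what lets us splice the $S^1$-action coming from rotating $D^2$ together with the bundle action on $P$.

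The main steps, in order, are: (1) construct a global free $S^1$-action on $\OB(\tilde W,\tau^{\pm 1})$ by using the $D^2$-rotation near the binding and the fiberwise prequantization action of $P$ over the rest, checking these patch because $\tau$ is built from the Reeb flow and $f(0)=2\pi/\ell$, $f(1)=0$ make the transition functions match up; here the sign of the twist ($\tau$ versus $\tau^{-1}$) enters in whether the resulting bundle over the binding region glues in the disk bundle $P\times_{S^1,+}D^2$ or its dual $P\times_{S^1,-}D^2$. (2) Identify the quotient: over the page region the quotient of $\tilde W$ by the (partial, near-boundary) action is $W$ itself, and over the binding region the quotient of $P\times D^2$ by the combined action is the associated disk bundle, so the quotient is $M_\pm = (P\times_{S^1,\pm}D^2)\cup_\partial W$ as claimed. (3) Show the contact structure is $S^1$-invariant: choose the contact form $\beta$ so that the $S^1$-action preserves it (possible because all the building blocks — $e^t\tilde\lambda$, $d\phi$, and the interpolation — are manifestly invariant under the flows used), and identify the almost dividing set as the locus where the contact form evaluated on the generator of the action vanishes, i.e. the hypersurface $P = \partial\tilde W \times\{0\}$ with its induced contact structure. (4) For part (1), additionally produce a Liouville/Weinstein filling: when the twist is right-handed the monodromy is (a power-root of) a positive fibered Dehn twist, and the standard Biran–Giroux picture gives that $\OB(\tilde W,\tau)$ bounds the disk bundle over $M_+$ glued to the Liouville domain, whence convex (Weinstein) fillability. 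In case (1) one also checks the contact form can be arranged to be $S^1$-invariant with no zeros of $\beta(\partial_\theta)$, which is why $M_+$ is a genuine prequantization bundle (concave filling $P\times_{S^1,+}D^2$) rather than merely a principal bundle.

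The hard part will be step (1): making the gluing between the mapping-torus region and the binding neighborhood $S^1$-equivariant on the nose, rather than just up to isotopy. This requires choosing the interpolating function $f$ and the contact form near the binding compatibly with the chosen adapted $\ell$-fold cover, and verifying that the monodromy $\tau$ — which a priori is only a symplectomorphism of $\tilde W$ supported near $\partial\tilde W$ — commutes with (or is intertwined by) the prequantization $S^1$-action on $P$ in the precise way needed for the two local actions to agree on the overlap. The sign bookkeeping (that $\tau^{-1}$ twists the $D^2$-bundle the opposite way, producing the dual disk bundle $P\times_{S^1,-}D^2$, which is generally not a concave filling and so the total space is only a principal circle bundle, not a prequantization bundle) is the conceptual heart of the statement and deserves a careful orientation computation. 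Everything else — invariance of $\beta$, identification of the almost dividing set with $P$, and the fillability claim in case (1) — then follows from the explicit models and the structure theory of prequantization bundles recalled above.
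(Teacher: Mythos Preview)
Your overall strategy matches the paper's closely: it too builds an explicit $S^1$-action piece by piece (with generator $aR_P + b\partial_\phi$ on $P\times D^2$, $a=\pm1$, $b=\ell$), verifies that the gluing map $\psi_G$ intertwines this with the action induced on the mapping torus, checks freeness, identifies the quotient, and then treats the fillability in part~(1) separately via the associated dual disk bundle. Your identification of step~(1) as the technical heart is exactly right.

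There is, however, a concrete error in your step~(3). The almost dividing set is \emph{not} the binding $\partial\tilde W\times\{0\}$. At the binding $r=0$ the contact form reduces to $h_1(0)\lambda_P$ and the generator of the action is $-R_P$, so the pairing equals $-h_1(0)<0$, not zero. In the paper's computation one finds $\iota_{X_{-1,\ell}}\alpha_{inv}=\ell h_2(r)-h_1(r)$, which is negative at $r=0$, positive near the outer edge of the binding neighborhood, and has a unique zero at some intermediate radius $r_0>0$. The almost dividing set is the image of $\tilde P\times\{r_0\}\times S^1$ in the quotient, hence contactomorphic to $\tilde P/\Z_\ell\cong P$. (Relatedly, you write $P=\partial\tilde W$, but $\partial\tilde W=\tilde P$; the quotient by $\Z_\ell$ is what recovers $P$.) This same computation, done in the right-handed case $a=+1$, is what shows the almost dividing set is empty there, giving a genuine prequantization structure---so getting it right is the crux of distinguishing cases~(1) and~(2), not just a detail.

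A smaller point: for the convex filling in part~(1), the paper simply takes the associated disk bundle $\tilde Y_+\times_{S^1,-}D^2$ over $M_+$ (obtained by a symplectic cut argument), with no further gluing to a Liouville domain needed.
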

Before we continue, let us point out that a particularly nice class of Liouville manifolds $W$ with the above properties can be obtained from the following construction.
Let $(M,\omega_M)$ be an integral symplectic manifold, and suppose that $Q$ is a Donaldson type symplectic hypersurface that is Poincar\'e dual to $k[\omega_M]$.
For $k$ sufficiently large, $W:=M-\nu_{M}(Q)$ carries the structure of a Weinstein domain, and its completion is a Weinstein manifold whose end is the positive part of the symplectization of a prequantization bundle over $Q$.

We shall show that contact open books with left-handed twists as monodromy can be non-fillable, and below we give some criteria for this.
If we replace the word non-fillable by ``overtwisted'', the result becomes somewhat similar to the Giroux criterion for overtwistedness.
In fact, it is even more closely related to another result of Giroux, namely a result on invariant contact structures, \cite{Giroux:circle_bdls}. 
Our result is much weaker, but it does hold in higher dimensions.
\begin{theorem}[Rough version of Giroux criterion in higher dimensions]
\label{thm:result}
As in Theorem~\ref{thm:result invariant}, let $(W^{2n-2},\Omega=d\lambda)$ be a Liouville domain with prequantization boundary $P$. 
Assume that $\tilde W \to W$ is an adapted $\ell$-fold cover, and let $\tau$ denote a right-handed fractional twist of power $\ell$, defined on $\tilde W$.

Consider the invariant contact structure on $Y=\OB(\tilde W,\tau^{-1})$, and assume that one of the following conditions holds.
\begin{itemize}
\item $\tau$ is a fractional twist of power $\ell>1$. 
\item $\tau$ is a fibered twist ($\ell=1$), $k>1$, and the inclusion $i:P=\partial W\to W$ induces an injection on $\pi_1$.
\item $\tau$ is a fibered twist ($\ell=1$) on $\tilde W=W=T^*\H \P^m, ~T^*Ca\P^2$, the cotangent bundles of quaternionic projective space and the Cayley plane.
\item $\tau$ is a fibered twist ($\ell=1$), $n\geq 3$, $\pi_1(Q)=0$, $k=1$, $c_1(W)=0$ and $c_1(Q)=c[\omega]$ with $c\leq n-\frac{\max \ind+3 }{2}$.
Here $\max \ind$ is the maximal index of a Morse function on $W$ that is convex near the boundary.
\end{itemize}
Then $Y$ is not convex semi-positively fillable.
\end{theorem}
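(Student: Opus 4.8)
The plan is to prove the stronger statement that $Y=\OB(\tilde W,\tau^{-1})$ is algebraically overtwisted, i.e. that its contact homology algebra vanishes, $CH_*(Y;\Q)=0$; a convex semi-positive filling would induce a unital augmentation $CH_*(Y;\Q)\to\Q$, which is impossible once $CH_*(Y;\Q)=0$. The mechanism for the vanishing is the one isolated by Bourgeois and van Koert for negative stabilizations \cite{BvK}: a left-handed twist forces a short \emph{contractible} Reeb orbit $\gamma$ whose only rigid holomorphic cap is a single plane, so that $\partial\gamma=1$. Concretely, starting from the $S^1$-invariant contact form $\alpha$ of Theorem~\ref{thm:result invariant}(2), I would first normalize $\alpha$ on the collar $B\times D^2$ around the binding $B=\partial\tilde W$ (itself a prequantization bundle) so that the Reeb flow there is explicit. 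Since $\tau^{-1}$ rotates backwards along the Reeb flow of $\partial W$, a suitable radial profile produces a closed Reeb orbit $\gamma$ just inside the collar — the analogue of the spanning circle of an overtwisted disk — and, with the profile chosen thin enough, $\gamma$ is the shortest closed Reeb orbit of $\alpha$: its action lies below that of the binding fibers and of the orbits coming from the interior of $\tilde W$. Its Conley--Zehnder index relative to a capping disk splits into a contribution from the two $D^2$-normal directions and one from the page directions, the latter governed by the rotation numbers of the fibered twist and by the topology of $\tilde W$.

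\textbf{Step 2: the four cases.} Each bullet is a way of ensuring (i) that $\gamma$ is null-homotopic in $Y$, so it can be a generator of the algebra, and (ii) that its index (in a fixed homotopy class of capping disk) is exactly such that the space of $J$-holomorphic planes asymptotic to $\gamma$ is rigid, with no competing lower-action contractible orbit. For $\ell>1$ the non-integer twist, built via the adapted cover of Section~\ref{sec:fractional_twist}, supplies $\gamma$ directly. For a fibered twist with $k>1$ and $\pi_1$-injective $i\colon P\hookrightarrow W$, injectivity on $\pi_1$ pins the homotopy classes of $\gamma$ and of the candidate competitors and shows $\gamma$ is contractible in $Y$. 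The cases $\tilde W=T^*\H\P^m$, $T^*Ca\P^2$ are the ones where the geodesic flow is periodic and the fibered twist is completely explicit; here $\pi_1=0$ makes contractibility automatic, and $\gamma$ and its cap can be written down by hand. In the last case one trades topology for arithmetic: a Morse function on $W$ of index $\le\max\ind$ that is convex near the boundary bounds the page contribution to $\mathrm{CZ}(\gamma)$, while $c_1(W)=0$, $k=1$ and $c_1(Q)=c[\omega]$ fix the Chern number of the capping plane, and the inequality $c\le n-\frac{\max\ind+3}{2}$ is precisely what forces the cap to be rigid and at the same time rules out bubbling of holomorphic spheres of non-positive Chern number.

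\textbf{Step 3: rigidity and conclusion.} I would then establish that, for a generic cylindrical $J$ on $(\R\times Y,d(e^t\alpha))$, there is a unique, Fredholm-regular, rigid $J$-holomorphic plane asymptotic to $\gamma$; the $S^1$-symmetry is the main tool, since such a plane may be taken $S^1$-invariant and its linearized operator then diagonalizes into an explicitly surjective model. Since $\gamma$ is the shortest closed Reeb orbit, Stokes' theorem forces $\partial\gamma$ to be a multiple of the empty word, and the plane count gives $\partial\gamma=1$; hence $[1]=0$ in $CH_*(Y;\Q)$ and the unital algebra $CH_*(Y;\Q)$ vanishes. Together with the augmentation argument of Step 1 — where semi-positivity of the filling, combined with the index and Chern-class bounds of Step 2, excludes the sphere bubbling that would otherwise obstruct defining the augmentation by honest curve counts — this proves the theorem.

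\textbf{Main obstacle.} The technical heart is carrying out Steps 1--3 uniformly: producing $\gamma$ with the \emph{precise} Conley--Zehnder index and proving Fredholm regularity and uniqueness of its capping plane in all four regimes, and verifying that semi-positivity alone — rather than virtual perturbation theory — keeps contact homology and the filling augmentation well-defined. A secondary subtlety, and the reason the conclusion reads ``non-fillable'' rather than ``overtwisted'', is that one must separately confirm that no other short contractible Reeb orbit interferes with the identity $\partial\gamma=1$; this is the input that varies between the four cases, and it explains why these manifolds can be algebraically overtwisted while not visibly containing a bLob.
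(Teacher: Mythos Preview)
Your overall shape is close to the paper's, but there are two genuine gaps.

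\textbf{First, you route the argument through contact homology, which the paper deliberately avoids.} The paper does \emph{not} prove $CH_*(Y;\Q)=0$ and then invoke an augmentation; that argument is relegated to a separate, explicitly conjectural discussion (Section~\ref{sec:HC=0}) because, as the authors note, contact homology had not been rigorously defined at the time. The actual proof applies Lemma~\ref{lemma:non-fillability} (Massot--Niederkr\"uger--Wendl) directly: assume a convex semi-positive filling, push the rigid plane $u_0$ into the completed filling, and use SFT compactness plus semi-positivity to reach a contradiction. Your augmentation step would require either polyfolds or a transversality argument you do not supply, so as written it is not a proof.

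\textbf{Second, your mechanism for excluding other curves is wrong.} You assert that with a thin profile $\gamma$ is the \emph{shortest} closed Reeb orbit, so Stokes forces $\partial\gamma$ to be a multiple of the empty word. The paper does not establish this, and in the fibered case $\ell=1$ it is generally false: the monodromy is the identity on the content $W_{in}$, so every critical point of $f_{convex}$ gives a periodic orbit with linking number $1$, and there is no reason these have larger action than $\gamma_1$. The paper instead excludes competing curves by linking (Lemma~\ref{lemma:linking} forces at most one negative puncture), and then case-by-case: for $\ell>1$ the monodromy is fixed-point free on $W_{in}$ so no such orbits exist (Lemma~\ref{lemma:no_other_curves_fractional_fibered}); for $k>1$ with $\pi_1$-injectivity the candidate orbit lies in a different free homotopy class in $Y\setminus P$ (Lemma~\ref{lemma:only_plane_with_pi1_cond}); in the remaining two cases an index computation shows the cylinder moduli space is empty or non-rigid (Lemmas~\ref{lemma:fibered_twist_neg_c_single_plane}, \ref{lemma:TKPn}). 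Your reading of the inequality $c\le n-\tfrac{\max\ind+3}{2}$ as a sphere-bubbling bound is also off: sphere bubbles are handled by semi-positivity alone; the inequality is exactly what makes $\dim\mathcal M_0(\gamma_1;\gamma)\le 0$ for every competing cylinder.

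Finally, your regularity sketch via $S^1$-invariance is not how the paper proceeds: the splitting used is into a $4$-dimensional part (handled by Wendl's automatic transversality in dimension $3$) and a $\xi_P$-part (a standard $\bar\partial$-operator on $\C$), not a diagonalization by the circle action.
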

To clarify the condition on the maximal index of a Morse function, we point out that if $W^{2n-2}$ is a Weinstein domain, then one can find an $\Omega$-convex Morse function, which satisfies $\max \ind\leq n-1$.

We remark that some condition is necessary for non-fillability in the case of a fibered twist. Indeed, take $W=D^{2n-2}$. 
Then $P=\partial W$ is a prequantization bundle over $\C \P^{n-2}$, so $c=n-1>0$, and $k=1$.
In this case a fibered Dehn twist is symplectically isotopic to the identity relative to the boundary, so $Y=\OB(D^{2n-2},\tau^{-1})$ is contactomorphic to $(S^{2n-1},\xi_0)$, which is of course fillable.
On the other hand, this condition on the Chern class can certainly be relaxed, as the case $W=T^*\H \P^m, ~T^*Ca\P^2$ shows. 

We also want to point the reader's attention to the case of $\dim W=2$, where $P$ is a collection of circles.
In this case, various questions about tightness and overtwistedness have been addressed by Giroux in \cite{Giroux:circle_bdls}.
We just mention one particular case, namely $\OB(T^*S^1,\tau_{Dehn}^2)\cong \OB(T^*S^1,\tau_{fibered})$, which is overtwisted, and in particular not fillable.

The idea of the proof of Theorem~\ref{thm:result} is to construct a $1$-dimensional family of holomorphic planes in an assumed filling with one ``boundary'' component on the boundary of the filling. Such a family arises rather naturally in the case of a left-handed twist.
The conditions we impose prevent breaking of this family, and also exclude this family from having its other boundary component on the boundary of the filling.
Since sphere bubbles are prevented by the semi-positivity assumption, there cannot be another boundary component at all, and we arrive at a contradiction.
The holomorphic curve methods that we use also apply to the $3$-dimensional case (so $n=2$), but the resulting statements are weaker than those of Giroux.

The methods used in the proof of Theorem~\ref{thm:result} also imply the following result.
\begin{corollary}
\label{cor:weinstein_conj}
If the contact manifold $Y$ obtained in Theorem~\ref{thm:result} satisfies the conditions given there, then the Weinstein conjecture holds for that particular contact structure.
\end{corollary}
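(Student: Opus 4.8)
The plan is to reuse the holomorphic curve machinery from the proof of Theorem~\ref{thm:result}, run now in the symplectization of $Y$ itself rather than in a hypothetical filling. Recall that the argument for Theorem~\ref{thm:result} produces, in any semi-positive convex filling $(V,\omega)$ of $Y$, a $1$-dimensional moduli space of finite-energy holomorphic planes with one boundary/end escaping to the concave-type region coming from the left-handed twist, and the hypotheses are arranged so that this family can neither break nor reach the boundary of the filling, yielding a contradiction. For the Weinstein conjecture, I would instead argue by contradiction: assume the contact form $\alpha$ on $Y$ has no contractible closed Reeb orbit. Then one may complete $Y$ by attaching the negative half of its symplectization, $(-\infty,0]\times Y$, and — using the absence of contractible Reeb orbits — apply the usual stretching/compactness package (as in the work on holomorphic curves and the Weinstein conjecture, e.g. Hofer's method and its higher-dimensional incarnations) to obtain a well-behaved moduli space of holomorphic planes asymptotic to nothing, i.e.\ a family with no negative puncture at all.

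The key steps, in order, would be: (i) identify the initial holomorphic plane. This comes from the open book structure $Y=\OB(\tilde W,\tau^{-1})$: the binding $P=\partial\tilde W$ bounds the core disks of the pages, and after a suitable choice of almost complex structure adapted to the open book and to the $S^1$-invariant contact form, these core disks (or the planes obtained by capping in the symplectization) are holomorphic. Equivalently, one uses the neck region modeled on $P\times_{S^1,-}D^2$, where the left-handed twist forces the existence of a holomorphic plane whose asymptotics, in the absence of a contractible Reeb orbit, must be trivial. (ii) Establish Fredholm regularity and index computations so that the relevant component of the moduli space is a smooth $1$-manifold; this is identical to what is done for Theorem~\ref{thm:result}, since the index formula only depends on the page $W$, the twist $\tau^{-1}$, and the Chern-class/Conley--Zehnder data, all of which are unchanged. (iii) Invoke SFT-type compactness: a sequence in this $1$-manifold can degenerate only by breaking along closed Reeb orbits or by bubbling; semi-positivity (carried over verbatim from the hypotheses of Theorem~\ref{thm:result}) kills sphere bubbling, and the assumed non-existence of contractible Reeb orbits kills breaking. (iv) Conclude that the moduli space is a compact $1$-manifold with exactly one boundary point (the distinguished plane from step (i) together with the constraint geometry), which is impossible, since a compact $1$-manifold has an even number of boundary points. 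This contradiction shows a contractible closed Reeb orbit must exist, i.e.\ the Weinstein conjecture holds for $(Y,\alpha)$, and since $\alpha$ was an arbitrary contact form supporting the contact structure, the statement follows.

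The main obstacle I expect is step (iii): in Theorem~\ref{thm:result} the breaking of the holomorphic family is excluded using the filling — one controls the energy and the possible limiting orbits via the topology and $c_1$-conditions of the filling $(V,\omega)$, and several of the four alternative hypotheses are tailored precisely to that setting. Here there is no filling, only the (semi-infinite) symplectization, so one must re-derive the a priori energy bounds and the exclusion of breaking purely from the assumption of no contractible Reeb orbit, together with the structure of $Y$ as an $S^1$-invariant contact open book. Concretely, one must check that every Reeb orbit that could appear as a breaking asymptote is either non-contractible (hence excluded by hypothesis) or contractible (hence excluded by the contradiction hypothesis) — in other words, that no orbit can slip through because its free-homotopy class interacts badly with $\pi_1(Y)$. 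This is where the $\pi_1$-injectivity and Chern-class hypotheses from Theorem~\ref{thm:result} re-enter: they guarantee that the planes we build are genuinely contractible in $Y$, so any Reeb orbit they limit to is contractible, closing the loop. A secondary technical point is transversality of the almost complex structures in the symplectization; this can be handled either by generic domain-dependent perturbations or, in the integrable/homogeneous examples ($T^*\mathbb{H}\P^m$, $T^*\mathrm{Ca}\P^2$), by explicit automatic-regularity arguments, exactly as in the proof of the theorem.
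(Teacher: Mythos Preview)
Your proposal has a genuine gap: you conflate two different contact forms. The rigid plane $u_0$ asymptotic to $\gamma_1$ is constructed for the \emph{specific} contact form adapted to the open book; for that form $\gamma_1$ is already a contractible periodic Reeb orbit, so there is nothing to prove. The Weinstein conjecture asks for a closed orbit for \emph{every} contact form $\alpha'$ defining $\xi$, and in the symplectization of such an arbitrary $\alpha'$ there is no initial plane available --- indeed, under your contradiction hypothesis (no contractible $\alpha'$-orbit) there are no finite-energy planes in $\R\times (Y,\alpha')$ at all, since any such plane is asymptotic at its positive puncture to a Reeb orbit which would then be contractible. Thus step~(i) of your outline cannot be carried out for $\alpha'$, and the phrase ``asymptotic to nothing'' has no meaning in this setting.

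The paper's argument (the last clause of Lemma~\ref{lemma:non-fillability}, deferred to \cite[Lemma~3.3]{Massot:weak_strong_fillability}) bridges the two forms by Hofer's \emph{neck-stretching}: one inserts an increasingly long $\alpha'$-symplectization neck below the region where $u_0$ lives and lets $u_0$ degenerate. By SFT compactness the limit is a holomorphic building which is still topologically a plane, so its lowest level (in the $\alpha'$-symplectization) contains a finite-energy plane; its positive asymptote is the desired contractible $\alpha'$-orbit. Note that here breaking is the \emph{mechanism}, not an obstacle: your worry in step~(iii) about re-establishing the exclusion of breaking is misplaced, and the four alternative hypotheses of Theorem~\ref{thm:result} --- which serve to verify the uniqueness condition~(2) of Lemma~\ref{lemma:non-fillability} --- are not what drives the Weinstein statement. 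Mere existence of $u_0$ (condition~(1)) already suffices; there is no compact $1$-manifold parity argument in play.
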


Finally, these methods can also be used to tackle the symplectic isotopy problem for fibered twists, first considered by Biran and Giroux, \cite{Biran_Giroux:fibered_Dehn}, and also for fractional twists.
\begin{theorem}
\label{thm:symplectic_isotopy}
Let $W$ be a Weinstein domain admitting a right-handed fractional twist $\tau$ of power $\ell$, and suppose that $\OB(W,\tau^{-1})$ has no convex semi-positive filling (shown for instance by Theorem~\ref{thm:result}).
Assume that $Y_+=\OB(W,\tau)$ admits a convex, semi-positive symplectic filling (a sufficient condition is given in Lemma~\ref{lemma:fillability_BW}).
Then for all $N \in \Z_{>0}$, the contact manifold $\OB(W,\tau^{-N})$ is not convex semi-positively fillable.

In particular, $\tau^N$ is not symplectically isotopic to the identity relative to the boundary.
\end{theorem}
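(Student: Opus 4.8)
The plan is to show that a convex semi-positive filling of $\OB(W,\tau^{-N})$ could be converted, in $N-1$ steps, into a convex semi-positive filling of $\OB(W,\tau^{-1})$, contradicting the hypothesis. Each step will glue the given filling $F_+$ of $Y_+=\OB(W,\tau)$ onto the concave end of a Liouville cobordism that ``cancels'' one left-handed fractional twist; the base case $N=1$ is exactly the assumed non-fillability of $\OB(W,\tau^{-1})$.

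The key construction I would establish is this: for any compactly supported symplectomorphism $\phi$ of $W$, a convex semi-positive filling $F_+$ of $\OB(W,\tau)$ produces a convex semi-positive Liouville cobordism $Z_\phi$ with concave boundary $\OB(W,\phi)$ and convex boundary $\OB(W,\phi\circ\tau)$. The mechanism is that the right-handed fractional twist $\tau$ is supported in a collar $P\times I$ of $\partial W$, so replacing the monodromy $\phi$ by $\phi\circ\tau$ alters the contact open book only near the binding: the two open books agree away from a neighborhood of the binding, and the local model for their difference there is $\OB(W,\tau)$ with a page cut open. One then performs a fiber/boundary connected sum of $\OB(W,\phi)\times[0,1]$ with $F_+$ along such a page region, checking that the Liouville forms and the open-book contact structures on the two ends match and that the outer boundary is convex. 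This realizes composition of monodromies at the level of cobordisms, exactly as attaching a Weinstein handle to a Lefschetz fibration adds a single vanishing cycle; for powers of $\tau$ the side on which one composes is immaterial.

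Granting this, suppose $\OB(W,\tau^{-N})$ admits a convex semi-positive filling $F$ for some $N\geq 2$. Gluing $F$ to the concave boundary of $Z_{\tau^{-N}}$ gives a convex semi-positive filling of $\OB(W,\tau^{-N}\circ\tau)=\OB(W,\tau^{-N+1})$; iterating $N-1$ times yields a convex semi-positive filling of $\OB(W,\tau^{-1})$, which is the desired contradiction. Hence $\OB(W,\tau^{-N})$ is not convex semi-positively fillable for any $N\in\Z_{>0}$. For the last assertion, if $\tau^N$ were symplectically isotopic to the identity relative to $\partial W$ then so would be $\tau^{-N}=(\tau^N)^{-1}$, so $\OB(W,\tau^{-N})$ would be contactomorphic to $\OB(W,\id)$; but $\OB(W,\id)$ is the convex boundary of $W\times D^2$ (after smoothing corners), which is Weinstein since $W$ is, hence exact and in particular semi-positive — contradicting the previous sentence.

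The main obstacle is the key construction, and specifically the bookkeeping under gluing. Extracting from an abstract filling $F_+$ of $\OB(W,\tau)$ a genuine Liouville cobordism $Z_\phi$ with the prescribed ends, and then checking that gluing $F_+$ (and later $F$) preserves both convexity of the boundary and semi-positivity, is the delicate point: the fillings are not assumed exact, so a priori the glued manifold can contain pseudoholomorphic spheres straddling the separating hypersurface, and these must be controlled using the $\pi_1$/$\pi_2$ and Chern-class hypotheses carried over from the setting of Theorem~\ref{thm:result}.
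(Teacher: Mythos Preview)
Your overall strategy---use a cobordism together with the filling $F_+$ of $\OB(W,\tau)$ to promote a hypothetical filling of $\OB(W,\tau^{-N})$ to one of $\OB(W,\tau^{-1})$, then derive a contradiction---matches the paper's. The cobordism you are sketching is exactly Avdek's: there is a Stein cobordism from $\OB(W,\psi_1)\coprod\OB(W,\psi_2)$ (concave) to $\OB(W,\psi_1\circ\psi_2)$ (convex), which the paper cites as a black box rather than reconstructing. The paper applies it once, with concave end consisting of $N$ copies of $\OB(W,\tau)$ together with one copy of $\OB(W,\tau^{-(N+1)})$ and convex end $\OB(W,\tau^{-1})$, and then caps all concave boundary components; your iterative version amounts to the same thing. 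Note that your $Z_\phi$ cannot be Liouville, since it contains $F_+$, which is only assumed convex.

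The genuine gap is in your treatment of semi-positivity. After gluing, the resulting filling of $\OB(W,\tau^{-1})$ is \emph{not} semi-positive in general, so you cannot simply invoke the hypothesis as a black box; and your proposed remedy---appealing to $\pi_1/\pi_2$ or Chern-class conditions from Theorem~\ref{thm:result}---is not available, since Theorem~\ref{thm:symplectic_isotopy} makes no such assumptions. The paper's fix is different and uses the exactness of Avdek's cobordism $C_{Avdek}$: if a $J$-holomorphic sphere were not entirely contained in one of the capped pieces, it would have a portion $u|_\Sigma$ inside $C_{Avdek}$, and Stokes gives $\int_\Sigma u^*d\lambda_{Avdek}=\int_{\partial\Sigma}u^*\lambda_{Avdek}<0$ (the outward normal to $\Sigma$ has negative $\partial_t$-component at the concave separating hypersurfaces), contradicting non-negativity of the energy. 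Hence every sphere bubble lies entirely in one of the semi-positive pieces $F_1$ or $F_{-(N+1)}$. This suffices to re-run the holomorphic-plane argument of Lemma~\ref{lemma:non-fillability} directly in the glued manifold and obtain the contradiction; the paper does not produce a semi-positive filling of $\OB(W,\tau^{-1})$ at all.
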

To make the statement about powers of fractional twists, we use Avdek's cobordism techniques, \cite{Avdek:Liouville}.
The symplectic isotopy problem for fibered twists was also addressed in \cite{CDvK:right-handed}, and the methods in that paper are somewhat simpler than those employed here.
Also, in several of the cases the obtained twists are not even smoothly isotopic to the identity relative to the boundary.

\begin{remark}
If we assume that contact homology algebra can be defined, using for instance the not yet finished polyfold techniques, then one can show that all contact manifolds from Theorem~\ref{thm:result} are actually algebraically overtwisted as in \cite{Bourgeois_Niederkrueger:AOT} and \cite{BvK}, meaning that contact homology algebra vanishes.
We will give arguments for these conjectural statements in Section~\ref{sec:HC=0}.
This should also imply that there are no weak fillings at all, cf.~\cite[Theorem 5]{Latschev_Wendl}, but again this assumes polyfolds or some kind of non-classical transversality argument.

Concerning weak fillings, one has the general fact that weak fillings can be deformed into strong fillings if $H^2(Y;\R)=0$, see \cite[Remark 2.11]{Massot:weak_strong_fillability}.
\end{remark}

\subsection*{Plan of the paper}
The paper is organized as follows. 
In Section~\ref{sec:def} we give definitions and discuss some background on the type of filling obstructions that we shall use.
In Section~\ref{sec:invariant_contact} we discuss and construct the invariant contact structures from Theorem~\ref{thm:result invariant}.
The remainder of the paper is used for the proof of Theorem~\ref{thm:result} and its corollaries.
Section~\ref{sec:indices} contains index computations.
Section~\ref{sec:holomorphic_plane} is the main technical part of the paper: we construct a rigid holomorphic plane, and discuss transversality and uniqueness.
In Section~\ref{sec:other_curves} we look at other holomorphic curves, and show that the assumptions of Theorem~\ref{thm:result} imply that no other rigid curves exist.
Finally, we combine all ingredients in Section~\ref{sec:wrapup} to prove Theorem~\ref{thm:result}, Corollary~\ref{cor:weinstein_conj} and Theorem~\ref{thm:symplectic_isotopy}.

\subsection*{Acknowledgements}
This project grew out of one of the questions asked at the AIM workshop on ``Contact topology in higher dimensions''. OvK would like to thank AIM for their hospitality.
We thank Klaus Niederkr\"uger, Chris Wendl and Urs Frauenfelder for useful comments and discussions.
RC is partially supported by the NSC grant 101-2115-M-006-003 and NCTS(South), Taiwan;
FD is supported by grant no. 10631060 of the National Natural Science Foundation of China;
OvK is supported by the NRF Grant 2012-011755 funded by the Korean government.

\section{Definitions and setup}
\label{sec:def}
Let $(Q,\omega)$ be a symplectic manifold with integral symplectic form, i.e.~$[\omega]\in H^2(Q;\Z)$.
According to \cite{Kobayashi:circle_bundle}, there is a complex line bundle $L$ over $Q$ with $c_1(L)=[\omega]$. If $H^2(Q;\Z)$ is torsion free, then this line bundle is unique up to isomorphism.
The associated principal $S^1$-bundle $\Pi: P \to Q$ carries a contact form $\theta$, the so-called {\bf Boothby--Wang form}, which is a connection $1$-form on $P$ whose curvature form equals $-2\pi\ow$,
\begin{equation}
\label{eq:d_connection}
d\theta = -2\pi\Pi^*\ow.
\end{equation}
The vector field $R_\theta$ generating the principal $S^1$-action satisfies the following equations
$$
\iota_{R_\theta} \theta=1,\quad\iota_{R_\theta}d\theta=0,
$$
since $\theta$ is a connection form. On the other hand, these are also the equations defining the Reeb vector field for $\theta$.
The resulting principal circle bundle is called the
\textbf{Boothby--Wang bundle} or {\bf prequantization bundle} associated with $(Q,\omega)$.
It is useful to think of $Q$ as the quotient space of the prequantization bundle $P$ by the $S^1$-action.

\subsection{Fillings}
Let $(C,\Omega)$ be a compact symplectic manifold with boundary.
We call a boundary component $Y$ of $C$ {\bf convex} if there is a Liouville vector field $X$ (so $\mathcal L_X \Omega=\Omega$) on a collar neighborhood of $Y$ that points outward.
We call the boundary component $Y$ {\bf concave} if there is a Liouville vector field pointing inward.
Note that the Liouville vector field induces a contact form on the boundary, given by $\lambda_Y=(i_X \Omega)|_{Y}$.

If $Y$ is a convex or concave boundary component, then a collar neighborhood $\nu_C(Y)$ is symplectomorphic to a piece of a symplectization, namely $( [-\epsilon,\epsilon]\times Y,d(e^t \lambda_Y)\,)$, and the Liouville field $X$ corresponds to the vector field $\partial_t$.
We will often attach a {\bf convex end}, given by $( [\epsilon,\infty[ \times Y,d(e^t \lambda_Y)\,)$, to a convex boundary in order to obtain a symplectic manifold with a Liouville vector field that is forward complete.
Similarly, we can define a {\bf concave end}.

\begin{definition}
A {\bf compact symplectic cobordism} is a compact symplectic manifold $(C,\Omega)$ whose boundary components are all convex or concave.
A {\bf complete symplectic cobordism} is then obtained by attaching convex and concave ends to the boundary components.
\end{definition}

We come now to symplectic fillings, which can be thought of as compact symplectic cobordisms with only a convex, or only a concave boundary component.
\begin{definition}
A {\bf convex symplectic filling} for a contact manifold $(Y,\xi=\ker \lambda_Y)$ is a connected, compact symplectic manifold $(C,\Omega)$ with boundary $Y$ such that
\begin{itemize}
\item $(Y,\lambda_Y)$ is a convex boundary of $(C,\Omega)$ with $\lambda_Y=(i_X \Omega)|_{Y}$.
\item $(Y,\lambda_Y)$ is oriented as the boundary of $C$ using that $X$ points outward.
\end{itemize} 
Convex symplectic fillings are also known as {\bf strong fillings}.
A {\bf Liouville filling} or {\bf Liouville domain} is a convex symplectic filling with a globally defined Liouville vector field $X$.
\end{definition}
In particular, Liouville domains are exact symplectic manifolds.
One of the nicest symplectic fillings and a basic building block for many constructions is formed by so-called Weinstein manifolds, a symplectic analogue of Stein manifolds.
\begin{definition}
A {\bf compact Weinstein manifold} or {\bf Weinstein domain} consists of a Liouville domain $(W,\Omega,X)$ together with a Morse function $f:W\to \R$ such that $\partial W$ is a regular level set of $f$, and $X$ is gradient-like for $f$, i.e.~$X(f)>0$ except at critical points, where it has a standard form (like a gradient vector field).
\end{definition}
An immediate corollary of the definition is that $\partial W$ is of contact type.
We say that $\partial W$ is {\bf Weinstein fillable}.
By results of Eliashberg, one can deform {\bf Weinstein manifolds}, which are obtained by attaching a symplectization to the boundary, into Stein manifolds~\cite{Cieliebak:Stein_Weinstein}.
Clearly, Weinstein manifolds are special cases of Liouville manifolds which are in turn special cases of convex symplectic manifolds.
\begin{remark}
When talking about symplectic cobordisms and fillings, we will often drop the adjectives \emph{compact} and \emph{complete}, since either case can be converted into the other, by either attaching a symplectization piece, or by restricting to a compact subset.
\end{remark}

\subsubsection{Weak symplectic fillings}
The notion of weak symplectic filling in higher dimensions has recently been defined in a satisfactory way by Massot, Niederkr\"uger and Wendl in \cite{Massot:weak_strong_fillability}, see also \cite{DingGeiges:circle_bdls}.
We take their definition.
\begin{definition}[Massot, Niederkr\"uger, Wendl]
Let $(Y^{2n-1},\xi)$ be a cooriented contact manifold.
A {\bf weak symplectic filling} for $(Y,\xi)$ is a compact symplectic manifold $(F,\Omega)$ with boundary such that
\begin{itemize}
\item $\partial F=Y$ as oriented manifolds,
\item for a positive contact form $\alpha$ with $\xi=\ker \alpha$ we have
$$
\alpha\w (t d\alpha+\Omega|_{\xi})^{n-1}>0
$$
for all $t\geq 0$.
\end{itemize}
\end{definition}
We will also need the following definition,
\begin{definition}[Massot, Niederkr\"uger, Wendl]
Let $(F^{2n},J)$ be an almost complex manifold with boundary. We say that a contact manifold $(Y^{2n-1},\xi)$ is the {\bf tamed pseudoconvex boundary} of $(F,J)$ if $Y=\partial F$ where we orient $Y$ as the boundary of $F$, and
\begin{itemize}
\item the contact structure $\xi$ is the field of $J$-complex tangencies to $Y$, that is $\xi=TY \cap J TY$.
\item there is a symplectic form $\Omega$ on $F$ taming $J$, and
\item $Y$ is $J$-convex, meaning that for all positive contact forms $\alpha$ defining $\xi$ as an oriented hyperplane, we have
$$
d\alpha(v,Jv )>0 \text{ for all }v\in \xi-0.
$$
\end{itemize}
\end{definition}

\begin{theorem}[Massot-Niederkr\"uger-Wendl]
A compact symplectic manifold $(F,\Omega)$ is a weak filling of $(Y,\xi)$ if and only if there is an almost complex structure $J$ such that
\begin{itemize}
\item $J$ is tamed by $\Omega$,
\item $(Y,\xi)$ is the tamed pseudoconvex boundary of $(F,J)$.
\end{itemize}
\end{theorem}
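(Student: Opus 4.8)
The plan is to prove both directions by reducing to linear algebra along $Y=\partial F$, together with a standard extension argument for tame almost complex structures. Fix a positive contact form $\alpha$ with $\ker\alpha=\xi$. Everything entering the two conditions —$\Omega$, the hyperplanes $TY\subset TF|_Y$ and $\xi\subset TY$, and the two-form $d\alpha$ along $Y$— is carried by $TF|_Y$ together with the one-jet of $Y$, so the equivalence is pointwise over $Y$ plus a globalization; and neither condition depends on the choice of $\alpha$, since replacing $\alpha$ by $f\alpha$ ($f>0$) multiplies $d\alpha|_\xi$ by the positive factor $f$ and leaves the weak-filling inequality (which is imposed for all $t\ge0$) invariant.

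For the direction ``tamed pseudoconvex boundary $\Rightarrow$ weak filling'', suppose we are given an $\Omega$-tame $J$ with $\xi=TY\cap JTY$ and $Y$ $J$-convex. Since $TY\cap JTY$ is automatically $J$-invariant and equals $\xi$, the form $\Omega|_\xi$ is nondegenerate and tames $J|_\xi$, and $J$-convexity says exactly that $d\alpha|_\xi$ tames $J|_\xi$; hence $t\,d\alpha|_\xi+\Omega|_\xi$ tames $J|_\xi$ and $(t\,d\alpha|_\xi+\Omega|_\xi)^{n-1}$ is a positive volume form on $\xi$ for the complex orientation $\mathfrak{o}_\xi$ of $(\xi,J|_\xi)$, for every $t\ge0$. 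To finish I would carry out the orientation bookkeeping: picking $R\in TY$ with $\alpha(R)>0$ one has $JR\notin TY$ (else $R,JR$ span a $J$-line inside $\xi$), so $TF|_Y=\xi\oplus\langle R,JR\rangle$ with $J$-orientation $\langle R,JR\rangle\wedge\mathfrak{o}_\xi$, the outward normal turns out to be a positive combination of $-JR$ and $R$, and the boundary orientation of $Y$ becomes $\langle R\rangle\wedge\mathfrak{o}_\xi$, i.e.\ the contact orientation $\alpha\wedge(d\alpha|_\xi)^{n-1}>0$; with these orientations $\alpha\wedge(t\,d\alpha+\Omega|_\xi)^{n-1}>0$ for all $t\ge0$, which is the definition of a weak filling.

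For ``weak filling $\Rightarrow$ tamed pseudoconvex boundary'': from the inequality at $t=0$, $\Omega|_\xi$ is symplectic, so $\xi$ is a symplectic subspace of $(TF|_Y,\Omega)$ with two-dimensional symplectic complement $\xi^{\perp_\Omega}$, and $TY\cap\xi^{\perp_\Omega}$ is a line. The crucial ingredient is the pointwise lemma: \emph{if $\omega,\beta$ are symplectic forms on a vector space with $\omega+t\beta$ symplectic for all $t\ge0$, then they have a common taming complex structure.} Its hypothesis, applied to $\omega=\Omega|_\xi$ and $\beta=d\alpha|_\xi$, is precisely the weak-filling inequality restricted to $\xi$ — which is exactly why that inequality is required for \emph{all} $t\ge0$ rather than just one value. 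Granting it, I get $J_\xi$ on $\xi$ tamed by both $\Omega|_\xi$ and $d\alpha|_\xi$; extending $J_\xi$ by any $\Omega|_{\xi^{\perp_\Omega}}$-tame complex structure on $\xi^{\perp_\Omega}$ produces $J$ on $TF|_Y$ that tames $\Omega|_Y$, has $\xi=TY\cap JTY$ (a dimension count, since $\xi$ is $J$-invariant of dimension $2n-2=\dim(TY\cap JTY)$), and is $J$-convex (because $d\alpha|_\xi$ tames $J|_\xi$, the Reeb/normal signs being fixed by the assumed equality $\partial F=Y$ of oriented manifolds, as in the previous paragraph). Doing this smoothly in $y\in Y$ —$\xi^{\perp_\Omega}$ varies smoothly and the valid $J_\xi$ form a nonempty open set fiberwise— gives an $\Omega$-tame $J$ on a collar of $\partial F$; since $\Omega$-tame almost complex structures on any symplectic manifold form a nonempty contractible space, this extends to an $\Omega$-tame $J$ on all of $F$, and then $(Y,\xi)$ is the tamed pseudoconvex boundary of $(F,J)$ with $\Omega$ itself as the taming form.

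It remains to prove the lemma, and this together with the orientation bookkeeping is where I expect the real work to be. Writing $\beta=\omega(A\,\cdot\,,\,\cdot\,)$, antisymmetry of $\beta$ makes $A$ $\omega$-self-adjoint, and $\omega+t\beta$ is nondegenerate for $t>0$ iff $I+tA$ is invertible, i.e.\ iff $\Spec(A)\cap(-\infty,0)=\emptyset$; with $\beta$ nondegenerate this gives $\Spec(A)\cap(-\infty,0]=\emptyset$, so $B:=\sqrt A$ is well defined by holomorphic functional calculus, is again $\omega$-self-adjoint and invertible, and satisfies $B^2=A$. Choosing a complex structure $J$ tamed by $\omega$ and commuting with $B$ (possible by working on the $\omega$-orthogonal $B$-invariant blocks given by the generalized eigenspaces of $B$), one computes for $v\ne0$
\[
\beta(v,Jv)=\omega(B^2v,Jv)=\omega(Bv,BJv)=\omega(Bv,J(Bv))>0,
\]
using that $B$ is $\omega$-self-adjoint and commutes with $J$, while $\omega(v,Jv)>0$ by tameness; so $J$ tames $\omega$ and $\beta$. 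The delicate points are (i) producing this $J$ commuting with $B$ when $B$ has nontrivial Jordan blocks, and (ii) the orientation conventions tying together ``$\xi$ as an oriented hyperplane'', positivity of $\alpha$, the boundary orientation of $Y$, and the complex orientation of $J$, so that $J$-convexity becomes literally equivalent to $-J(\text{Reeb})$ pointing out of $F$; both are treated in \cite{Massot:weak_strong_fillability}.
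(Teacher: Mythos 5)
First, note that the paper does not prove this statement at all: it is quoted from \cite{Massot:weak_strong_fillability}, so your write-up has to be judged on its own. Your overall skeleton is the standard one and is sound: both conditions are carried by $(TF|_Y,\Omega,TY,\xi,d\alpha|_\xi)$; the forward direction is orientation bookkeeping; and the converse reduces, after splitting $TF|_Y=\xi\oplus\xi^{\perp_\Omega}$, checking $TY\cap JTY=\xi$, and extending a collar-defined tame $J$ over $F$, to the pointwise lemma that $\Omega|_\xi$ and $d\alpha|_\xi$ admit a common taming complex structure. That lemma is exactly where the real content of the Massot--Niederkr\"uger--Wendl equivalence sits.

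However, your proposed proof of that lemma has a genuine gap, and not only at Jordan blocks: an $\omega$-tame $J$ commuting with $B=\sqrt A$ need not exist at all, already in dimension $4$ with $A$ semisimple. Take $V=\C^2$, let $\sigma=dz_1\wedge dz_2$, and set $\omega=\re\sigma$, $\beta=\re(e^{i\theta}\sigma)$ with $\theta\in(\pi/2,\pi)$. Then $s\omega+t\beta=\re\bigl((s+te^{i\theta})\sigma\bigr)$ is nondegenerate for all $(s,t)\neq(0,0)$, so your hypotheses hold, and $A$ is multiplication by $e^{i\theta}$, hence $B$ is multiplication by $e^{i\theta/2}$. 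Any real endomorphism commuting with $B$ commutes with $i$, i.e.\ is $\C$-linear; but no $\C$-linear complex structure $J'$ is tamed by $\re\sigma$: either $J'=\pm i$, giving $\sigma(v,J'v)=\pm i\,\sigma(v,v)=0$, or $J'=P\,\mathrm{diag}(i,-i)\,P^{-1}$, giving $\re\sigma(v,J'v)=\re\bigl(-2i\det P\,w_1w_2\bigr)$ with $w=P^{-1}v$, which changes sign. So the set of $\omega$-tame structures commuting with $B$ is empty and the key computation $\beta(v,Jv)=\omega(Bv,JBv)>0$ never gets started. The lemma itself is not contradicted: the conjugate-linear map $J''(z_1,z_2)=(-e^{-i\theta/2}\bar z_2,\;e^{-i\theta/2}\bar z_1)$ satisfies $J''^2=-\id$ and $\sigma(v,J''v)=e^{-i\theta/2}(|z_1|^2+|z_2|^2)$, hence tames both $\omega$ and $\beta$ --- but it does not commute with $B$, so your construction cannot produce it. Thus the crucial common-taming statement for the ray $\{\Omega|_\xi+t\,d\alpha|_\xi\}_{t\ge 0}$ requires a genuinely different argument (this is what \cite{Massot:weak_strong_fillability} supplies), and as it stands your proof is incomplete precisely at its central step; the remaining flagged item, the orientation conventions in the forward direction, is by comparison routine.
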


These definitions allow one to use holomorphic curve machinery, and with these tools one can show that under certain conditions, such as negative stabilization or the existence of a bLob, weak fillings do not exist.
See \cite{BvK,Massot:weak_strong_fillability} for the definitions of negative stabilizations and bLobs.
As is usual, we will compactify the moduli space of holomorphic curves, \cite{BEHWZ:compactness}. Such a compactification can include nodal curves with multiply covered components. These cause trouble for regularity arguments, and hence it is not clear what structure the moduli space actually has.
To avoid these issues, one should be able to appeal to the polyfold machinery. However, this theory has not yet been completed, so we impose additional assumptions to deal with regularity.
\begin{definition}
A symplectic manifold $(F^{2n},\Omega)$ is called {\bf semi-positive} if every class $A\in \pi_2(F)$ with $\langle [\Omega],A \rangle >0$ and $\langle c_1(F),A \rangle \geq 3-n$ has non-negative Chern number.
\end{definition}
Note that this is trivially satisfied if $n\leq 3$.

\subsection{Contact open books}
\label{sec:contact_OB}
We follow Giroux' original construction of contact open books, which slightly modifies an idea due to Thurston and Winkelnkemper, see \cite{Giroux:ICM2002}.
Let $(W,d\lambda)$ be a Liouville domain with contact type boundary $P:=\partial W$, and suppose that $\psi:W \to W$ is a symplectomorphism that is the identity in a neighborhood of $\partial W$. 
By a lemma of Giroux, we can assume that $\psi^*\lambda=\lambda-dU$ for a positive function $U$.
The `stretched' mapping torus
$$
Map(W,\psi):=W \times \R / \sim
$$
where $(x,\phi)\sim (\psi(x),\phi+U(x)\,)$, carries the contact form $d\phi+\lambda$.
The set $W$ will be called the {\bf page}.
Note that $U$ is constant on each boundary component; we shall assume that the value of this constant equals $U_c$ on each boundary component.

\subsubsection{Binding}
Let $(P,\lambda_P)$ denote the contact type boundary of the page $W$.
We construct a closed contact manifold by gluing the set $P\times D^2_{r_0}$ to the mapping torus $Map(W,\psi)$.
On the set $P\times D^2_{r_0}$, we use a standard model for the contact structure. Choose functions $h_1$ and $h_2$ as indicated in Figure~\ref{fig:functions_binding}.
These functions satisfy $h_1(r) h_2'(r)-h_2(r) h_1'(r)>0$ and $h_1(r)>0$ if $r>0$, so the following form is a contact form,
\begin{equation}
\label{eq:form_near_binding}
\alpha=h_1(r) \lambda_P+h_2(r) d\phi.
\end{equation}

\begin{figure}[htp]
\def\svgwidth{0.65\textwidth}%
\begingroup\endlinechar=-1
\resizebox{0.65\textwidth}{!}{%
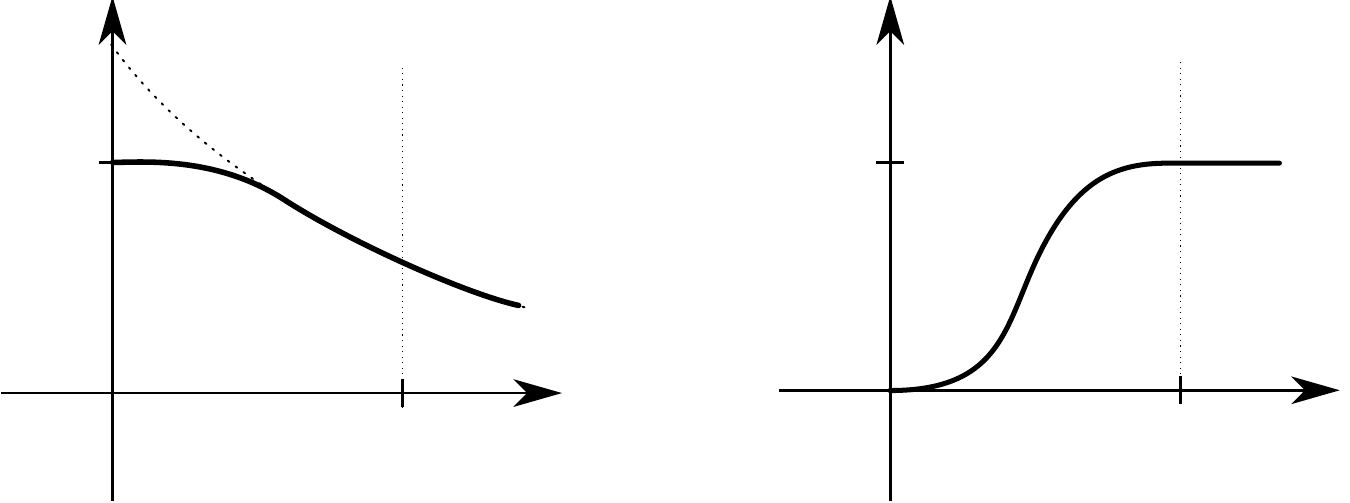%
}\endgroup
\caption{Functions for the contact form near the binding}
\label{fig:functions_binding}
\end{figure}

Note that a neighborhood of the boundary of $Map(W,\psi)$ is diffeomorphic to $P\times [-\delta,0] \times S^1$ for some $\delta>0$, because $\psi$ is the identity in a neighborhood of the boundary of $W$.
Let $A_{r_0,r_0+\delta}$ denote an annulus with inner radius $r_0$ and outer radius $r_0+\delta$, and define the gluing map
\[
\begin{split}
\psi_G: P\times A_{r_0,r_0+\delta}\subset P\times D^2_{r_0} &\longrightarrow \nu_{Map(W,\psi)}(\partial Map(W,\psi)\,) \subset Map(W,\psi)\\
(p;r,\phi) & \longmapsto (p,r_0-r,\frac{U_c \phi}{2\pi}).
\end{split}
\]

A {\bf contact open book} is a contact manifold obtained by gluing the above sets together.
We define
$$
\OB(W,\psi^{-1})=P\times D^2_{r_0} \cup_{\psi_G} Map(W,\psi),
$$
and we call $\psi^{-1}$ the {\bf monodromy} of the open book.
The subset $P\times \{ 0 \}$ is called the {\bf binding}.

\begin{remark}
The inverse in the definition of a contact open book is needed due to our conventions for a mapping torus.
\end{remark}

\subsection{Fractional fibered Dehn twists}
\label{sec:fractional_twist}
Let $(P,\lambda_P)$ be a prequantization bundle over a symplectic manifold $(Q,k\omega)$, where $[\omega]$ represents a primitive class in $H^2(Q;\Z)$, and $k$ is a positive integer.
Suppose furthermore that $(W^{2n},\Omega=d\lambda,X)$ is a Liouville filling for $(\partial W=P,\lambda_P)$.
Denote the inclusion $P\subset W$ by $j$. 
A collar neighborhood of the boundary of $W$ looks like a piece of a symplectization
$$
\nu_{W}(P)=(P\times [a,b], d(e^t \lambda_P ) \,).
$$
Here $P\times \{ b\}$ denotes the boundary.
We will usually take $[a,b]=[0,1]=I$, although this is not important.
Define $W_{in}=W-\nu_{W}(P)$; we obtain the decomposition
$$
W=W_{in}\cup_\partial P\times [a,b].
$$
We shall call $P\times [a,b]$ the {\bf margin} of the page: it carries the Liouville form $\lambda=e^t\lambda_P$.
The set $W_{in}$ will be called the {\bf content} of the page.

Fix a smooth function $f_m:[a,b]\to \R$ (the $m$ stands for monodromy) such that $f_m(a)=2\pi$ and $f_m(b)=0$.
We shall refer to this function as {\bf twisting profile}.
Define
\begin{eqnarray*}
\tau: P\times [a,b] & \longrightarrow & P\times [a,b] \\
(p,t) & \longmapsto & (Fl^{R_{P}}_{f_m(t)}(p),t),
\end{eqnarray*}
where $R_{P}$ is the Reeb field of $\lambda_P$.
Extend the map $\tau$ to be the identity on the content $W_{in}$.
\begin{lemma}
The map $\tau$ is a symplectomorphism satisfying
$$
\tau^* \lambda=\lambda-dU,
$$
where
\begin{equation}
\label{eq:formula_u}
U(p,t)=U(t)=C-\int_{s=a}^t f_m'(s)e^sds.
\end{equation}
\end{lemma}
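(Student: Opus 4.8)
The plan is to verify directly that the map $\tau$, which is the time-$f_m(t)$ Reeb flow in each slice $P\times\{t\}$ of the margin and the identity on the content, preserves $\Omega=d\lambda$ and then compute the primitive deficiency $U$. Since $\tau$ is the identity on $W_{in}$ and near $t=b$ (where $f_m(b)=0$), the only content is on the margin $P\times[a,b]$ with $\lambda=e^t\lambda_P$; smoothness at the two interfaces is guaranteed by the boundary conditions on $f_m$ (and by taking $f_m$ locally constant near the endpoints if one wishes, though it is not needed here). So everything reduces to a computation on $P\times[a,b]$.

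First I would compute $\tau^*\lambda$ on the margin. Write $\phi_s:=Fl^{R_P}_s$ for the Reeb flow of $\lambda_P$ on $P$; it satisfies $\phi_s^*\lambda_P=\lambda_P$ since $\mathcal L_{R_P}\lambda_P = d\iota_{R_P}\lambda_P + \iota_{R_P}d\lambda_P = d(1)+0 = 0$. In coordinates $(p,t)$ the map is $\tau(p,t)=(\phi_{f_m(t)}(p),t)$. Pulling back $\lambda=e^t\lambda_P$: the $t$-component is unchanged, so $\tau^*(e^t)=e^t$; for the $P$-part, $\tau^*\lambda_P$ at $(p,t)$ evaluated on a tangent vector decomposes according to the $\partial_t$ and $TP$ directions. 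On $TP$ directions we get $\phi_{f_m(t)}^*\lambda_P = \lambda_P$. On the $\partial_t$ direction, $d\tau(\partial_t) = f_m'(t)\, R_P + \partial_t$ (the derivative of $\phi_{f_m(t)}(p)$ in $t$ is $f_m'(t)$ times the generating vector field $R_P$ evaluated at $\phi_{f_m(t)}(p)$), and pairing with $e^t\lambda_P$ gives $e^t f_m'(t)\,\lambda_P(R_P) = e^t f_m'(t)$. Hence
\begin{equation*}
\tau^*\lambda = e^t\lambda_P + e^t f_m'(t)\, dt = \lambda + e^t f_m'(t)\,dt.
\end{equation*}
Therefore $\tau^*\lambda - \lambda = e^t f_m'(t)\,dt = -dU$ with $U'(t) = -f_m'(t)e^t$, i.e.\ $U(t) = C - \int_{s=a}^t f_m'(s)e^s\,ds$, which is \eqref{eq:formula_u}. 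In particular $\tau^*\lambda$ is closed minus $\lambda$, so $\tau^*\Omega = \tau^*d\lambda = d\tau^*\lambda = d\lambda = \Omega$, proving $\tau$ is a symplectomorphism (it is clearly a diffeomorphism, being fiberwise the composition with a flow map and the identity elsewhere). One should also remark that $U$ depends only on $t$ and not on $p$, which is exactly what is needed for the open-book construction of Section~\ref{sec:contact_OB}; this is immediate from the formula.

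The only genuinely delicate point — though "obstacle" overstates it — is checking that $\tau$ glues smoothly to the identity across the content/margin interface $P\times\{a\}$ and that the resulting $\tau$ is the identity near $\partial W = P\times\{b\}$; both follow from $f_m(a)=2\pi$ (so $\phi_{f_m(a)}=\phi_{2\pi}=\mathrm{id}$, using that $R_P$ generates the $S^1$-action of the prequantization bundle with period $2\pi$) and $f_m(b)=0$ (so $\phi_{f_m(b)}=\mathrm{id}$), together with choosing $f_m$ to be locally constant near $a$ and $b$, or at least matching derivatives to all orders, so that $\tau$ is smooth there; correspondingly $U$ is constant near the endpoints and we may normalize the constant $C$ so that $U>0$ if desired, matching the conventions preceding the lemma. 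Everything else is the short computation above.
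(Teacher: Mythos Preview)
Your proof is correct. The only difference from the paper's argument is the mechanics of the computation on the margin: the paper computes the Lie derivative $\mathcal L_{f_m R_P}\lambda = f_m'(t)e^t\,dt$ via Cartan's formula and then integrates $\frac{d}{ds}(Fl^{f_m R_P}_s)^*\lambda$ from $s=0$ to $1$, whereas you compute $\tau^*\lambda$ directly by evaluating $d\tau$ on the split frame $TP\oplus\R\partial_t$. Your route is slightly more elementary (no flow-integration identity needed), while the paper's route makes it transparent that the computation works for the time-$s$ flow for any $s$, which is convenient for the interpolation arguments used later. Either way the content is the same one-line identity $\tau^*\lambda-\lambda=e^tf_m'(t)\,dt$, and your discussion of smooth gluing at $t=a,b$ is more explicit than the paper's.
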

\begin{proof}
On a collar neighborhood of the boundary we have $\lambda=e^t\lambda_P$.
By the Cartan formula we compute
\[
\begin{split}
\mathcal L_{f_mR_{P}} \lambda & =d(e^tf_m(t)\,)+i_{f_mR_{P} }\left( e^t dt\w \lambda_P +e^t d\lambda_P \right) \\
&=f_m'(t)e^tdt.
\end{split}
\]
Since $\int_{s=0}^1 \frac{d}{ds}{Fl^{f_mR_{P}}_s}^* \lambda \, ds=\tau^*\lambda-\lambda$, we find the above Formula~\eqref{eq:formula_u} for $U$.
\end{proof}
The following notion was introduced by Biran and Giroux, \cite{Biran_Giroux:fibered_Dehn}.
\begin{definition}
We call the symplectomorphism $\tau$ a {\bf right-handed fibered Dehn twist}.
\end{definition}

Next, take a positive integer $\ell$ dividing $k$ and assume that $P$ is covered by $\tilde P$, a prequantization bundle over $(Q,\frac{k}{\ell}\omega)$ by an $\ell$-fold covering of the form
\begin{equation}
\label{eq:cover_BW_bdl}
\begin{split}
pr_\ell: \tilde P & \longrightarrow P \\
p & \longmapsto p\otimes \ldots \otimes p.
\end{split}
\end{equation}
We remark that this can always be done if $H^2(Q;\Z)$ is torsion free.
Note that this covering shows that $\pi_1(P)$ has a subgroup of index $\ell$.
We want to extend this covering to the filling, and for that purpose we introduce the following definition,
\begin{definition}
\label{def:adapted_cover}
We call a covering $pr_W:\tilde W \to W$ {\bf adapted} to the covering $pr_\ell: \tilde P \to P$ if $pr_W$ restricts on a collar neighborhood of the boundary $\nu_{\tilde W}( \partial \tilde W)=\tilde W \times I$ to $pr_\ell \times \id$,
$$
pr_W|_{\partial \tilde W\times \{t_0 \}}=pr_\ell \times \{ t_0\}.
$$
\end{definition}

If the Liouville filling $W$ is Weinstein of dimension at least $6$, then we can always find an adapted cover.
\begin{lemma}
If $W$ is Weinstein and $\dim W=2n\geq 6$, then there is an adapted cover $pr_W:\tilde W \to W$.
\end{lemma}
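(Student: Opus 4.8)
The plan is to reduce the claim to elementary covering space theory, the only substantive ingredient being that, for a Weinstein domain $W^{2n}$ with $n\geq 3$, the inclusion $i\colon P=\partial W\hookrightarrow W$ induces an \emph{isomorphism} $i_*\colon\pi_1(P)\to\pi_1(W)$. Granting this, an adapted cover is produced simply by transporting, via $i_*$, the index-$\ell$ subgroup of $\pi_1(P)$ that classifies $pr_\ell\colon\tilde P\to P$ over to $\pi_1(W)$ and taking the associated covering of $W$.

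To prove the $\pi_1$-isomorphism I would argue as follows. A Weinstein domain $W^{2n}$ carries a Morse function $f$ compatible with its Liouville structure all of whose critical points have index $\leq n$ (the descending manifolds are isotropic); replacing $f$ by $-f$ yields a handle decomposition of $W$ starting from a collar $P\times I$ of its boundary and attaching handles of index $\geq 2n-n=n$. Since $\dim W=2n\geq 6$ we have $n\geq 3$, so every attached handle has index $k\geq 3$; such an attachment glues a ball $D^{k}\times D^{2n-k}$ along $S^{k-1}\times D^{2n-k}$, and since $\pi_1(S^{k-1})=0$ for $k\geq 3$, van Kampen's theorem shows that it leaves $\pi_1$ unchanged. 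Hence $i_*\colon\pi_1(P)\to\pi_1(W)$ is an isomorphism.

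With this in hand the rest is formal. Let $K:=(pr_\ell)_*\pi_1(\tilde P)\leq\pi_1(P)$, a subgroup of index $\ell$, put $H:=i_*(K)\leq\pi_1(W)$ (again of index $\ell$, since $i_*$ is an isomorphism), and let $pr_W\colon\tilde W\to W$ be the connected covering classified by $H$; note $\tilde W$ inherits the pulled-back Liouville structure, and one checks it is again Weinstein. Restricting $pr_W$ to the boundary collar $P\times I\subset W$: because $i_*$ is an isomorphism the pullback covering is connected and classified by $i_*^{-1}(H)=K\leq\pi_1(P\times I)\cong\pi_1(P)$, hence is isomorphic, as a covering of $P\times I$, to the product covering $pr_\ell\times\id\colon\tilde P\times I\to P\times I$. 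Choosing the identification $\nu_{\tilde W}(\partial\tilde W)\cong\tilde P\times I$ accordingly makes $pr_W$ restrict to $pr_\ell\times\id$ near the boundary, i.e.\ $pr_W$ is adapted to $pr_\ell$ in the sense of Definition~\ref{def:adapted_cover}.

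The main obstacle is the $\pi_1$-isomorphism $i_*\colon\pi_1(P)\to\pi_1(W)$: this is exactly where the hypothesis $\dim W\geq 6$ (equivalently $n\geq 3$) is used, and it is essential — in lower dimensions one would at best obtain injectivity, which does not suffice to guarantee $i_*^{-1}(H)=K$ and hence that the restricted covering is the given $pr_\ell$. Everything else — the handle-index fact for Weinstein domains, van Kampen, and the bookkeeping with subgroups and associated covers — is standard.
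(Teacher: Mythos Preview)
Your proof is correct and follows essentially the same route as the paper's proof, which is very terse: it simply asserts that $\dim W\geq 6$ gives $\pi_1(P)\cong\pi_1(W)$, invokes covering theory to produce the $\ell$-fold cover, and notes that the Weinstein structure pulls back. You have supplied the details the paper omits---the handle-index argument for the $\pi_1$-isomorphism and the verification that the cover restricts correctly over the collar---so your argument is more complete but not different in substance.
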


\begin{proof}
The dimension assumption $2n\geq 6$ implies that $\pi_1(P)\cong \pi_1(W)$.
Hence there is also a corresponding subgroup of index $\ell$ in $\pi_1(W)$, so covering theory tells us that there is a $\ell$-fold cover $pr_W:\tilde W \to W$.
The cover $\tilde W$ inherits a Weinstein structure by pulling back the symplectic form, the Liouville vector field and the Morse function.
\end{proof}

On an adapted cover a fibered Dehn twist $\tau: W \to W$ can be lifted to a map $\tilde \tau: \tilde W \to \tilde W$.
Indeed, in a collar neighborhood of the boundary, $P\times I$, we see that $\tau$ lifts to the map
\[
\begin{split}
\tilde P\times I & \longrightarrow \tilde P\times I \\
(p,t) & \longmapsto (Fl^{R_{\tilde P}}_{f_m(t)/\ell}(p),t)
\end{split}
\]
This can be extended to the content of the page by using a deck transformation of the cover $\tilde W \to W$.
We will also call this extension $\tilde \tau$.

\begin{lemma}
The map $\tilde \tau$ is a symplectomorphism satisfying
$$
\tilde \tau^* \tilde \lambda=\tilde \lambda-d \tilde U,
$$
where $\tilde U=pr_W^*U$
\end{lemma}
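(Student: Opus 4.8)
The plan is to mimic the proof of the earlier lemma for the fibered Dehn twist $\tau$ on $W$, but carried out upstairs on the adapted cover $\tilde W$, and then observe that the relevant objects are simply pullbacks under $pr_W$. First I would recall that on the collar neighborhood $\tilde P\times I$ of $\partial\tilde W$ the lifted Liouville form is $\tilde\lambda=pr_W^*\lambda=e^t\,pr_\ell^*\lambda_P$, and that $pr_\ell^*\lambda_P$ is the contact form on the $\ell$-fold cover $\tilde P$ whose Reeb field $R_{\tilde P}$ lifts $R_P$ but with $\ell$ times the period, so that $(pr_\ell)_* R_{\tilde P} = R_P$ and the flow of $R_{\tilde P}$ for time $f_m(t)/\ell$ covers the flow of $R_P$ for time $f_m(t)$. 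Thus $\tilde\tau$ as defined on the collar is genuinely a lift of $\tau$, and it is the identity near $\partial\tilde W$ since $f_m(b)=0$.

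Next I would run the same Cartan-formula computation as in the lemma for $\tau$, but for the vector field $\frac{1}{\ell}f_m R_{\tilde P}$ on the collar with Liouville form $e^t\,pr_\ell^*\lambda_P$. Writing $\tilde\lambda_P = pr_\ell^*\lambda_P$, we have $i_{R_{\tilde P}}\tilde\lambda_P = 1$ and $i_{R_{\tilde P}}d\tilde\lambda_P = 0$ (these pull back from the corresponding identities for $\lambda_P$), so
\[
\mathcal L_{\frac{1}{\ell}f_m R_{\tilde P}}\,\tilde\lambda
= d\!\left(\tfrac{1}{\ell}e^t f_m(t)\right) + i_{\frac{1}{\ell}f_m R_{\tilde P}}\!\left(e^t\,dt\wedge\tilde\lambda_P + e^t\,d\tilde\lambda_P\right)
= \tfrac{1}{\ell}f_m'(t)\,e^t\,dt.
\]
Integrating $\int_0^1 \frac{d}{ds}(Fl^{\frac{1}{\ell}f_m R_{\tilde P}}_s)^*\tilde\lambda\,ds = \tilde\tau^*\tilde\lambda - \tilde\lambda$ over the collar gives $\tilde\tau^*\tilde\lambda = \tilde\lambda - d\tilde U$ with $\tilde U(p,t) = \tilde C - \frac{1}{\ell}\int_{s=a}^t f_m'(s)e^s\,ds$ on the collar. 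Comparing with Formula~\eqref{eq:formula_u} for $U$ and using $pr_W|_{\text{collar}} = pr_\ell\times\id$, this collar expression is exactly $pr_W^*U$ once we normalize the constant $\tilde C$ to match (note that the factor $1/\ell$ is already present in $U$ via the relation $k\mapsto k/\ell$, or more directly one just checks the constants agree; this is the only bookkeeping point).

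On the content of the page, $\tilde\tau$ is by definition a deck transformation of $pr_W:\tilde W\to W$. Since deck transformations cover the identity on $W$, and $\tau = \id$ on $W_{in}$, we get $pr_W\circ\tilde\tau = \tau\circ pr_W = pr_W$ there, so $\tilde\tau^*pr_W^*\lambda = pr_W^*\lambda$, i.e.\ $\tilde\tau^*\tilde\lambda = \tilde\lambda$; meanwhile $pr_W^*U$ is constant on $W_{in}$ (equal to the value $C - \int_a^b f_m'(s)e^s\,ds$ which by $f_m(b)=0$ one arranges to be $0$, or simply absorb into $\tilde C$), so $d\tilde U = 0$ there as well and the formula $\tilde\tau^*\tilde\lambda = \tilde\lambda - d\tilde U$ holds trivially on the content. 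Gluing the two regions, $\tilde\tau$ is a globally defined symplectomorphism of $\tilde W$ (it is smooth across $\partial W_{in}$ because near there $f_m$ is appropriately flat, exactly as for $\tau$), and $\tilde\tau^*\tilde\lambda = \tilde\lambda - d(pr_W^*U)$ globally.

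The main obstacle, such as it is, is not analytic but a matter of care with constants and with the identification of $R_{\tilde P}$: one must be sure that the Reeb field of $pr_\ell^*\lambda_P$ is the lift of $R_P$ rescaled so that the twisting angle $f_m(t)$ downstairs becomes $f_m(t)/\ell$ upstairs (this is forced by $pr_\ell$ being an $\ell$-fold cover along the $S^1$-fibers), and that the two descriptions of $\tilde U$ — the collar integral and the pullback $pr_W^*U$ — are made to agree by the normalization of the integration constant. Everything else is a verbatim repetition of the computation already carried out for $\tau$, applied to the Liouville form $\tilde\lambda = pr_W^*\lambda$ and the vector field $\frac{1}{\ell}f_m R_{\tilde P}$.
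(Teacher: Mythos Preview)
Your approach contains a genuine factor-of-$\ell$ error that you recognize but then explain away incorrectly. The paper gives no proof because the statement is immediate from naturality: by construction $\tilde\tau$ is a lift of $\tau$, i.e.\ $pr_W\circ\tilde\tau=\tau\circ pr_W$ globally (on the collar this is how the lift is defined; on the content $\tilde\tau$ is a deck transformation and $\tau=\id$). Hence
\[
\tilde\tau^*\tilde\lambda=\tilde\tau^*pr_W^*\lambda=(pr_W\circ\tilde\tau)^*\lambda=(\tau\circ pr_W)^*\lambda=pr_W^*(\lambda-dU)=\tilde\lambda-d(pr_W^*U),
\]
which is the whole proof. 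No Cartan computation is needed.

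Your direct computation would also work, but you mishandle the two distinct vector fields in play. The paper's $R_{\tilde P}$ is the generator of the $S^1$-action on $\tilde P$ (the Reeb field of the standard Boothby--Wang form on $\tilde P$), and since $pr_\ell(g\cdot p)=g^\ell\cdot pr_\ell(p)$ one has $(pr_\ell)_*R_{\tilde P}=\ell R_P$, not $R_P$. Consequently $i_{R_{\tilde P}}(pr_\ell^*\lambda_P)=\ell$, not $1$. With this correction, $i_{\frac{1}{\ell}f_m R_{\tilde P}}\tilde\lambda=e^t f_m(t)$ (no $1/\ell$), and the Cartan formula gives $\mathcal L_{\frac{1}{\ell}f_m R_{\tilde P}}\tilde\lambda=f_m'(t)e^t\,dt$, so that $\tilde U(t)=C-\int_a^t f_m'(s)e^s\,ds=U(t)=pr_W^*U$ on the nose. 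Your claim that ``the factor $1/\ell$ is already present in $U$ via the relation $k\mapsto k/\ell$'' is incorrect: the formula $U(t)=C-\int_a^t f_m'(s)e^s\,ds$ involves neither $k$ nor $\ell$. (Incidentally, your statements ``$(pr_\ell)_*R_{\tilde P}=R_P$'' and ``the flow of $R_{\tilde P}$ for time $f_m(t)/\ell$ covers the flow of $R_P$ for time $f_m(t)$'' already contradict each other.)
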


\begin{definition}
We call the symplectomorphism $\tilde \tau$ a {\bf right-handed fractional fibered Dehn twist} of power $\ell$, or more briefly a {\bf fractional twist}.
If a Liouville domain $\tilde W$ is an adapted $\ell$-fold cover of $W$, then we say that $\tilde W$ admits a fractional Dehn twist of power $\ell$.
\end{definition}
Note that if a Liouville domain admits a fractional fibered Dehn twist of power $\ell$, then there is an action of $\Z_\ell$ by symplectomorphisms which induces rotation by roots of unity in a collar neighborhood of the boundary.
Write $\zeta_\ell$ for the symplectomorphism that generates this $\Z_\ell$-action on the content of the page, and which rotates the margin of the page by $2\pi/\ell$ as in the above construction. 

\begin{remark}
To remove some unnecessary clutter, we will omit the notation $\tilde{\phantom{W}}$ for the cover $\tilde W$ and $\tilde P$ when we only need the cover itself. 
\end{remark}

\begin{example}
\label{ex:std_dehn_twist}
Consider $(W=T^*_{\leq 1} \R \P^n,d\lambda_{can})$.
Since $\R \P^n$ admits a metric for which all geodesics are periodic with the same period, $W$ admits a fibered Dehn twist $\tau$. Its double cover $T^*S^n$ admits therefore a fractional fibered Dehn twist of power $2$.
This is the generalized Dehn twist that was already considered by Arnold, \cite{Arnold:monodromy_A1}.
\end{example}

\begin{example}
Consider the complex hypersurface of degree $d$ in $\C \P^n$ given by
$$
X_d=\{ [z_0:\ldots:z_n]~|~\sum_j z_j^d=0 \}
.
$$
Define $W_d=\C \P^n-\nu_{\C \P^n}(X_d)$.
This is a Weinstein domain with fundamental group $\pi_1(W_d)\cong \Z_d$.
It admits an adapted $d$-fold cover by the Brieskorn variety
$$
V_d=\{ (z_0,\ldots,z_n)\in \C^{n+1}~|~\sum_j z_j^d=1 \}
,
$$
see \cite[Section 7.3.2]{{CDvK:right-handed}}.
Hence $V_d$ admits a fractional fibered twist of power $d$.
\end{example}

\begin{remark}
\label{rem:conventions_twisting_profile}
Due to our conventions in the definition of a mapping torus, we will usually work with the inverse of the monodromy. This inverse also has a twisting profile, which we will denote by $f_i$; we have $f_i=-f_m$.
\end{remark}

\subsection{Preparing for filling obstructions}
We will use a symplectic field theory setup to describe filling obstructions.
In this setup one considers holomorphic curves in symplectic cobordisms.
To guarantee that this is a Fredholm problem, we need additional assumptions.
The simplest condition is the following.
\begin{definition}
A {periodic Reeb orbit} is called {\bf non-degenerate} if the restriction of its linearized return map to the contact structure has no eigenvalues equal to $1$.
We call a contact form {\bf non-degenerate} if all its periodic Reeb orbits are non-degenerate.
\end{definition}
Any contact form can be deformed into a non-degenerate one by a $C^\infty$-small perturbation. 
The Reeb dynamics can change dramatically under such a perturbation.
Since we are working with an $S^1$-symmetry, it is useful to also include the Morse-Bott setup.
\begin{definition}
A contact form $\alpha$ on $Y$ is said to be of {\bf
Morse--Bott type} if the following conditions hold.
\begin{itemize}
\item The action spectrum $\Spec (\alpha)$ is discrete.
\item For every $T\in \Spec (\alpha)$, the subset $N_{T}=\{ p\in Y|
Fl^{R_\alpha}_{T}(p)=p\}$ is a smooth, closed submanifold of $Y$ such
that the rank $d\alpha|_{N_{T}}$ is locally constant and
$T_pN_{T}=\ker (TFl^{R_\alpha}_{T}-id)_p$.
\end{itemize}
\end{definition}

We will also need suitable almost complex structures on the symplectization $\R \times Y$ of a contact manifold $Y$.
We first recall the notion of stable Hamiltonian structure, which we need in order to invoke symplectic field theory compactness, \cite{BEHWZ:compactness}.
\begin{definition}
A {\bf stable Hamiltonian structure} on $Y^{2n-1}$ is a pair $(\lambda,\Omega_{sH})$, where $\lambda$ is a $1$-form and $\Omega_{sH}$ a closed $2$-form such that
\begin{itemize}
\item{} $\ker \Omega_{sH} \subset \ker d\lambda$,
\item{} $\lambda \w \Omega_{sH}^{n-1}>0$.
\end{itemize}
\end{definition}
A cooriented contact structure $(Y,\xi=\ker \lambda)$ is stably Hamiltonian with respect to $(\lambda,d\lambda)$.
On the other hand, stable Hamiltonian structures do not necessarily come from contact structures.
Given a stable Hamiltonian structure, one can define a Reeb-like vector field by the equations
$$
i_{R_\lambda} \lambda=1,\quad i_{R_\lambda} \Omega_{sH}=0.
$$

We now discuss the appropriate class of almost complex structures for the symplectic field theory setup.

\begin{definition}
Let $Y$ be an oriented $(2n-1)$-dimensional manifold with stable Hamiltonian structure $(\lambda,\Omega_{sH})$.
An {\bf adjusted almost complex structure} $J$ on $\R \times Y$ is an endomorphism $J:T (\R \times Y) \to T (\R \times Y)$ such that
\begin{itemize}
\item{} $J^2=-\id$,
\item{} $J$ is $\R$-invariant,
\item{} $J \partial_t=R_\lambda$,
\item{} $J$ gives $\xi=\ker \lambda$ the structure of a complex vector bundle that is $\Omega_{sH}$-tame.
\end{itemize}
\end{definition}

\subsection{Moduli spaces of holomorphic curves and indices}
Before we give a filling obstruction, we briefly review some notions from holomorphic curve theory.
For simplicity, we describe the case of rational holomorphic curves in a symplectization.
This setup can be generalized to general symplectic cobordisms.

Fix a contact manifold $(Y^{2n-1},\xi=\ker \alpha)$, and let $\Sigma$ be a Riemann surface of genus $0$ with finitely many punctures, denoted by $\{ p_i \}_i\cup \{ q_j \}_j$.
We call $\{ p_i \}_i$ positive punctures and $\{ q_j \}_j$ negative punctures.
Suppose that $u:\Sigma\to \R \times Y$ is a holomorphic curve asymptotic to a collection of periodic Reeb orbits $\Gamma^+$ at the positive punctures and asymptotic to a collection of periodic Reeb orbits $\Gamma^-$ at the negative punctures.
Choose a trivialization $\Phi$ of $\xi|_{\Gamma^+\cup \Gamma^-}$. This allows us to define the Conley-Zehnder index of a periodic Reeb orbit.
Define the {\bf total Maslov index} of $u$ as
$$
\mu^{\Phi}(\Gamma^+;\Gamma^-)=
\sum_{\gamma \in \Gamma^+} \mu_{CZ}(\gamma,\Phi)
-
\sum_{\gamma \in \Gamma^-} \mu_{CZ}(\gamma,\Phi).
$$
This number depends on the trivialization $\Phi$, but whenever possible we will restrict ourselves to special trivializations, see Remark~\ref{rem:CZ_via_disk}, making this dependence irrelevant.
The Fredholm index of the linearized Cauchy-Riemann operator, used in Section~\ref{seq:MB_regularity}, is given by
\begin{equation}
\label{eq:ind_moduli}
\ind D_u=n \chi(\Sigma)+2 c_1^\Phi(u^*T(\R \times Y)\, )+\mu^{\Phi}(\Gamma^+;\Gamma^-)+ \# \Gamma^+ +\# \Gamma^-,
\end{equation}
where $c_1^\Phi(u^*T(\R \times Y)\, )$ is the relative Chern class of the trivialization $\Phi$, see \cite[Proposition~3.7]{Wendl:transversality}.
Observe that this index does not depend on the chosen trivializations.
Furthermore, this Fredholm index should be thought of as the ``virtual'' dimension of the space of holomorphic maps, as the following theorem, \cite{Dragnev}, shows. Let $\mathcal J$ denote the space of adjusted almost complex structures.
\begin{theorem}[Dragnev]
\label{thm:regularity_somewhere_injective}
There is a Baire set $\mathcal J_{reg}\subset \mathcal J$ such that for $J\in \mathcal J_{reg}$, every moduli space of simple $J$-holomorphic curves is regular.
In particular, the dimension of the space of simple holomorphic maps is given by the Fredholm index.
\end{theorem}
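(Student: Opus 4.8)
The plan is to exhibit each moduli space of simple holomorphic curves as the zero set of a Fredholm section of a Banach bundle and then to apply the Sard--Smale theorem to the projection from a universal moduli space. First I would fix the stable Hamiltonian structure $(\lambda,\Omega_{sH})$ on $Y$ together with the asymptotic data, i.e., the collections $\Gamma^+$ and $\Gamma^-$ of periodic Reeb orbits; in the Morse--Bott situation one works with small exponential weights at the punctures so that the linearized Cauchy--Riemann operator $D_u$ is Fredholm with index given by \eqref{eq:ind_moduli}. Let $\mathcal B$ be a Banach manifold of maps $u\colon\Sigma\to\R\times Y$ of class $W^{k,p}$ with the prescribed punctured asymptotics and exponential decay, and let $\mathcal E\to\mathcal B\times\mathcal J$ be the Banach bundle whose fiber over $(u,J)$ consists of the $W^{k-1,p}$-sections of $\Lambda^{0,1}\Sigma\otimes u^*T(\R\times Y)$. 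The relevant section is $\bar\partial_J(u)=\tfrac12\bigl(du+J(u)\circ du\circ j\bigr)$, and for fixed $J$ its zero locus is exactly the moduli space under consideration. Since $\mathcal J$ is not itself a Banach manifold, I would follow Floer and work instead with a Banach manifold of adjusted almost complex structures of class $C^\varepsilon$ (or $C^\infty$ with an auxiliary Banach norm), still required to be $\R$-invariant, to satisfy $J\partial_t=R_\lambda$, and to tame $\Omega_{sH}$ on $\xi$; thus all perturbations of $J$ effectively live on the contact distribution.

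The core of the proof is to show that the \emph{universal} moduli space
$$
\mathcal M^{*}=\{\,(u,J)\mid\bar\partial_J(u)=0,\ u\text{ simple}\,\}
$$
is a smooth Banach manifold, which reduces to proving that the linearization
$$
D\bar\partial(u,J)\colon T_u\mathcal B\oplus T_J\mathcal J\longrightarrow\mathcal E_{(u,J)},\qquad(\zeta,\dot J)\longmapsto D_u\zeta+\tfrac12\,\dot J(u)\circ du\circ j
$$
is surjective with complemented kernel at every $(u,J)\in\mathcal M^{*}$. As $D_u$ is Fredholm its image is closed of finite codimension, so it is enough to show that no nonzero element $\eta$ of $\coker D_u$ annihilates every term $\dot J\mapsto\dot J(u)\circ du\circ j$. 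Here I would use that a simple curve is somewhere injective: there is an injective point $z_0\in\Sigma$, away from the cylindrical ends, with $du(z_0)\neq0$ and $u^{-1}(u(z_0))=\{z_0\}$. One then constructs a perturbation $\dot J$ of $J$ with prescribed value at $u(z_0)$, supported in a tiny ball about $u(z_0)$ and lying in the admissible class, so that the pairing $\int_\Sigma\langle\eta,\dot J(u)\circ du\circ j\rangle$ can be made nonzero unless $\eta(z_0)=0$. Because $\eta$ solves the formal adjoint equation of $D_u$, a first-order linear elliptic system, it has the unique continuation property, so $\eta(z_0)=0$ together with injectivity forces $\eta\equiv0$. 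Hence $D\bar\partial(u,J)$ is surjective and the implicit function theorem makes $\mathcal M^{*}$ a Banach manifold.

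Next I would apply the Sard--Smale theorem to the projection $\pi\colon\mathcal M^{*}\to\mathcal J$. At a point $(u,J)$ the kernel and cokernel of $d\pi$ are canonically identified with those of $D_u$, so $\pi$ is a Fredholm map of index $\ind D_u$, and the set $\mathcal J_{reg}$ of its regular values is comeager, in particular a Baire set. For $J\in\mathcal J_{reg}$ the fiber $\pi^{-1}(J)$, which is precisely the space of simple $J$-holomorphic curves with the chosen asymptotics, is a smooth manifold of dimension $\ind D_u$ as in \eqref{eq:ind_moduli}. A Taubes-type argument then upgrades this to genuine $C^\infty$ almost complex structures: exhausting $\Sigma$, the action window, and the homology data by countably many compact pieces, one obtains for each piece a comeager set of regular smooth $J$ and intersects them; since there are only countably many choices of asymptotic orbits and relative homotopy classes, one arrives at a single Baire set $\mathcal J_{reg}\subset\mathcal J$ that works for all simple curves simultaneously.

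I expect the main obstacle to be the surjectivity step, specifically the construction of the localized perturbations $\dot J$ within the class of adjusted almost complex structures — they must preserve $\R$-invariance and the normalization $J\partial_t=R_\lambda$, so the freedom really is only in the complex structure on $\xi$ — combined with the unique-continuation argument for cokernel elements; one must also take care that $z_0$ lies away from the ends, so that $\R$-invariance of $J$ does not spread the support of $\dot J$ over an entire $\R$-translate. The passage from the Banach category to $C^\infty$ and the bookkeeping over all asymptotic data are routine but somewhat laborious.
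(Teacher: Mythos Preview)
The paper does not prove this theorem; it is quoted from \cite{Dragnev} and stated without proof. Your outline is the standard Sard--Smale/universal moduli space argument and is, in broad strokes, the approach Dragnev takes as well.

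There is, however, one genuine subtlety that you flag but misdiagnose. Because adjusted almost complex structures on a symplectization are required to be $\R$-invariant, any perturbation $\dot J$ supported near $u(z_0)$ is automatically supported on the entire $\R$-orbit through $u(z_0)$. Consequently the pairing $\int_\Sigma\langle\eta,\dot J(u)\circ du\circ j\rangle$ picks up contributions from every $z$ with $u(z)\in\R\cdot u(z_0)$, not just from $z_0$. The ordinary notion of injective point is therefore insufficient: what one needs is a point $z_0$ with $du(z_0)\notin\Span(\partial_t,R_\lambda)$ and $u^{-1}\bigl(\R\cdot u(z_0)\bigr)=\{z_0\}$. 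Establishing that such ``symplectization-injective'' points exist and are open and dense in the domain of a simple curve (which is not a trivial cylinder) is precisely the new analytic content of Dragnev's paper over the closed case treated in \cite{MS:J_curves}. Your remark that one should choose $z_0$ ``away from the ends'' does not address this: the $\R$-translates of $u(z_0)$ can meet the image of $u$ anywhere, not only near the punctures. Once symplectization-injective points are in hand, the unique-continuation and Sard--Smale steps proceed exactly as you describe.
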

Although we will not use contact homology, it is useful to define the {\bf reduced index} of a non-degenerate periodic Reeb orbit $\gamma$ as 
$$
\bar \mu(\gamma;\Phi)=\mu_{CZ}(\gamma;\Phi)+n-3
$$
to see the relation with contact homology: the degree of a generator in contact homology is given by the reduced index.
Fix a relative homology class $A$ in $H_2(Y,\Gamma^+\cup \Gamma^-)$, and define $\mathcal Hol_0^A(\Gamma^+;\Gamma^-)$ as the space of holomorphic maps $u$ from $\Sigma$ to $\R \times Y$ that are asymptotic to the collection of periodic Reeb orbits $\Gamma^+$ near the positive punctures $\{ p_i \}_i$, asymptotic to the periodic orbits $\Gamma^-$ near the negative punctures such that $[u]=A\in H_2(Y,\Gamma^+\cup \Gamma^-)$.
Define the  {\bf moduli space of rational curves} with prescribed asymptotics and homology class by
$$
\mathcal M_0^A(\Gamma^+;\Gamma^-)=
\mathcal Hol_0^A(\Gamma^+;\Gamma^-)/\Aut \Sigma
.
$$
From now on, we will restrict ourselves to the case of a single positive puncture, so $\Gamma^+=\gamma^+$.

Suppose that $u$ represents an element $[u]\in \mathcal M_0^A(\gamma^+;\Gamma^-)$.
If we assume regularity, as for instance obtained by Dragnev's theorem, then the dimension of this moduli space is
\begin{equation}
\label{eq:dimension_formula}
\dim \mathcal M^A_0(\gamma^+;\Gamma^-)
=\bar \mu(\gamma^+, \Phi)-\sum_{\gamma^-_j \in \Gamma^-} \bar \mu(\gamma^-_j, \Phi)
+2 c_1^\Phi(u^*T(\R \times Y)\, )
.
\end{equation}
We say a curve $u$ representing $[u]$ in $\mathcal M^A_0(\gamma^+;\Gamma^-)$ is {\bf rigid} if $\dim \mathcal M_0(\gamma^+;\Gamma^-)=1$.
Note here that the symplectization has an $\R$-action, and modding out this action justifies the notion ``rigid''.

\begin{remark}
\label{rem:CZ_via_disk}
For practical purposes, we will consider special trivializations of the contact structure $(\xi,d\alpha)$.
Namely, if $\gamma$ bounds a disk $D$, then we can trivialize $(\xi,d\alpha)$ over $D$.
Denote this trivialization by $\Phi_D$ and we can define $\mu_{CZ}(\gamma,\Phi_D)$.
If we choose another bounding disk $D'$, formed as the connected sum $D'=D\# A$, where $A$ is sphere, then the Conley-Zehnder index changes according to the following formula,
$$
\mu_{CZ}(\gamma,\Phi_{D'})=\mu_{CZ}(\gamma,\Phi_D)+2\langle c_1(\xi),[A] \rangle .
$$ 
In particular, we see that this gives a well-defined total Maslov index if $c_1(\xi)=0$.
In this case, Formula~\eqref{eq:dimension_formula} simplifies since the relative Chern class will be $0$.
\end{remark}

\subsection{A filling obstruction}

We take a lemma from \cite{Massot:weak_strong_fillability} which we have slightly modified for our purposes.
\begin{lemma}[Massot, Niederkr\"uger, Wendl~+$\epsilon$]
\label{lemma:non-fillability}
Let $(Y^{2n-1},\xi=\ker \lambda)$ be a cooriented contact manifold with non-degenerate contact form $\lambda$, and a stable Hamiltonian structure $(\lambda,\Omega_{sH})$.

Suppose that there is an adjusted almost complex structure $J$ on the symplectization $\R \times Y$ with the following properties.
\begin{enumerate}
\item There is a Fredholm regular, rigid $J$-holomorphic finite energy plane $u_0$ in $\R \times Y$ that is asymptotic to a simply covered Reeb orbit $\gamma_1$.
\item If $u$ is a finite energy $J$-holomorphic curve of genus-$0$ with a single positive puncture, at which it is asymptotic to $\gamma_1$, then $u$ is a translation of $u_0$, or
$[u]\in \mathcal M_0(\gamma_1;\gamma_0)$, where $\gamma_0$ is a periodic Reeb orbit that satisfies $\mathcal A(\gamma_0)<2\min_\gamma \mathcal A(\gamma)$ and $\dim \mathcal M_0(\gamma_1;\gamma_0)>1$.
\end{enumerate}
Then $(Y,\xi)$ does not admit any semi-positive weak filling $(F_0,\Omega)$ for which $\Omega|_{TY}$ is cohomologous to $\Omega_{sH}$.
Furthermore, if $\Omega_{sH}=d\lambda$ then every contact form for $(Y,\xi)$ admits a contractible periodic Reeb orbit.
\end{lemma}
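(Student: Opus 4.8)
The plan is to argue by contradiction, so suppose $(F_0,\Omega)$ is a semi-positive weak filling of $(Y,\xi)$ with $[\Omega|_{TY}]=[\Omega_{sH}]$ in $H^2(Y;\R)$. First I would complete $F_0$ in the style of Massot--Niederkr\"uger--Wendl: since $[\Omega|_{TY}]=[\Omega_{sH}]$, one can attach to $F_0$ a cylindrical end $[0,\infty)\times Y$ carrying a symplectic form $\hat\Omega$ that agrees with $\Omega$ near $t=0$ and, for $t$ large, is the standard form compatible with the stable Hamiltonian structure $(\lambda,\Omega_{sH})$; this produces a complete symplectic manifold $(\hat F,\hat\Omega)$ together with an $\hat\Omega$-tame almost complex structure $\hat J$ which, far out on the end, equals the given adjusted $J$, and which makes $(Y,\xi)$ the tamed pseudoconvex boundary of $(F_0,\hat J)$, so that the usual confinement and energy estimates hold. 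This is the only point at which the hypothesis on $[\Omega|_{TY}]$ enters. Finite-energy $\hat J$-holomorphic curves then make sense in $\hat F$, and SFT compactness \cite{BEHWZ:compactness} applies.

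Next I would study the moduli space $\mathcal M$ of $\hat J$-holomorphic finite-energy planes in $\hat F$ that are asymptotic to $\gamma_1$ at their single positive puncture and lie in the relative homology class of $u_0$. Since the image of $u_0$ is contained in some half-symplectization $[c,\infty)\times Y$, its upward translates already sit inside the cylindrical end of $\hat F$, so $\mathcal M\neq\emptyset$. Using $u_0$ as reference disk (Remark~\ref{rem:CZ_via_disk}) together with the rigidity of $u_0$, the index formula~\eqref{eq:dimension_formula} gives $\bar\mu(\gamma_1)=1$, so $\mathcal M$ is $1$-dimensional. As $\gamma_1$ is simply covered, every plane in $\mathcal M$ is somewhere injective, so after a generic perturbation of the almost complex structures (keeping $u_0$ regular, as in Theorem~\ref{thm:regularity_somewhere_injective}) $\mathcal M$ is a smooth $1$-manifold; the semi-positivity hypothesis keeps configurations with multiply covered sphere components in codimension $\ge 2$. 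The energy of a plane in $\mathcal M$ is bounded a priori by $\mathcal A(\gamma_1)$ plus the $\hat\Omega$-area of the fixed homology class, so by SFT compactness the compactification $\overline{\mathcal M}$ is compact.

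Then I would identify the boundary of $\overline{\mathcal M}$. A boundary configuration is a holomorphic building whose topmost level is a genus-$0$ curve in $\R\times Y$ with one positive puncture at $\gamma_1$; by hypothesis~(2) this top level is either a translate of $u_0$ or an element of $\mathcal M_0(\gamma_1;\gamma_0)$ with $\dim\mathcal M_0(\gamma_1;\gamma_0)>1$ and $\mathcal A(\gamma_0)<2\min_\gamma\mathcal A(\gamma)$. The first alternative is precisely the limit of the upward translates of $u_0$, and one-sided gluing shows it contributes exactly one boundary point of $\overline{\mathcal M}$, with multiplicity one by the uniqueness clause in~(2). In the second alternative every orbit appearing below $\gamma_0$ has action $<2\min_\gamma\mathcal A(\gamma)$, hence is simply covered, so every curve in that part of the building is somewhere injective; chasing the index and action bookkeeping down the building then forces a somewhere-injective plane --- in $\hat F$ or in $\R\times Y$ --- asymptotic to an orbit of reduced index $<0$, i.e.\ a curve in a moduli space of negative expected dimension, which is empty for generic almost complex structure. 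Hence the second alternative does not occur; sphere bubbling is excluded by semi-positivity. Therefore $\overline{\mathcal M}$ is a compact $1$-manifold whose boundary consists of the single point coming from $u_0$ --- impossible, since a compact $1$-manifold has an even number of boundary points. So no such semi-positive weak filling exists.

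For the last assertion, suppose $\Omega_{sH}=d\lambda$ and let $\lambda'$ be a contact form for $(Y,\xi)$ with no contractible periodic Reeb orbit. Since $[d\lambda']=[d\lambda]$, I would run the same analysis on the symplectization $(\R\times Y,d(e^t\lambda))$ with an almost complex structure equal to $J$ for $t$ large and adjusted to $\lambda'$ for $t\ll 0$: degenerations at the top are controlled by~(2) exactly as above, while any degeneration or escape at the bottom would produce a finite-energy plane asymptotic to a contractible periodic Reeb orbit of $\lambda'$, of which there are none; the same parity argument then gives a contradiction. The step I expect to be the main obstacle is the breaking analysis of the third paragraph: the real work is to turn the quantitative hypotheses --- rigidity of $u_0$, the uniqueness statement~(2), and especially the action bound $\mathcal A(\gamma_0)<2\min_\gamma\mathcal A(\gamma)$, which simultaneously forces simplicity of the relevant orbits and cuts off cascading breakings --- into the statement that $\overline{\mathcal M}$ has exactly one boundary point, while setting up the compactness and transversality carefully enough (via semi-positivity and Dragnev's theorem) for this to be rigorous.
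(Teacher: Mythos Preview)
Your proposal is correct and follows essentially the same approach as the paper's proof: complete the filling with a cylindrical end, study the $1$-dimensional moduli space of planes in $\hat F$ asymptotic to $\gamma_1$ containing the translates of $u_0$, and use SFT compactness together with hypothesis~(2), the action bound on $\gamma_0$, and semi-positivity to show the compactified moduli space is a compact $1$-manifold with a single boundary point. The paper phrases the sphere-bubble exclusion via a direct Chern-number computation and \cite[Lemma~6.4.4]{MS:J_curves} rather than your codimension-$2$ formulation, and for the Weinstein conjecture it simply cites \cite[Lemma~3.3]{Massot:weak_strong_fillability} and Hofer's stretching idea, but these are the same arguments you outline.
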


\begin{remark}
The second condition is only slightly more general than the one in \cite{Massot:weak_strong_fillability}. Furthermore, this condition is rather artificial, but we will need it to cover the case of $T^*\H \P^m$ and $T^*Ca\P^2$ in Theorem~\ref{thm:result}.
The statement in \cite{Massot:weak_strong_fillability} suffices for all other cases.
\end{remark}
For completeness, we include a proof, which is almost the same as the one in \cite{Massot:weak_strong_fillability}.
\begin{proof}
Suppose there is a weak filling $(F_0,\Omega)$ for $(Y,\xi)$ with $[\Omega|_{Y}]=[\Omega_{sH}]$.
According to \cite[Lemma 2.10]{Massot:weak_strong_fillability} there is a cylindrical end $([0,\infty[\times Y,\Omega)$ with the properties that
\begin{itemize}
\item there is $T>0$ with $\Omega=\Omega_{sH}+d(e^t\lambda)$ on $[T,\infty[\times Y$.
\item on $[0,\epsilon[\times Y$, $\Omega$ restricts to the given symplectic form on $F_0$.
\end{itemize} 

Attach this cylindrical end along a collar neighborhood of the boundary of $F_0$ to form a complete symplectic manifold, which we denote by $F$.
\begin{remark}
For the case of a strong filling (so $\Omega=d\lambda$), which is the only case we really need, the above argument can be simplified: we can attach the positive part of the symplectization as a suitable cylindrical end.
\end{remark}

Extend the adjusted almost complex structure $J$ for $[0,\infty[\times Y\subset \R \times Y$ given in the assumptions to an almost complex structure on $F$ taming $\Omega$.
By a result due to Dragnev \cite{Dragnev}, stated in Theorem~\ref{thm:regularity_somewhere_injective}, we can assume that all simple $J$-holomorphic curves in $F$ and in the symplectization $\R \times Y$ are regular.

The holomorphic curve $u_0$ has image in $[T,\infty [ \times Y \subset F$ for some $T$, so $u_0$ represents an element $[u_0]\in \mathcal M_0(\gamma_1;\emptyset)$, the moduli space of holomorphic finite energy planes asymptotic to $\gamma_1$.
Furthermore, since all simple holomorphic curves are regular by our choice of $J$, the dimension of the component of $\mathcal M_0(\gamma_1;\emptyset)$ containing $[u_0]$ can be extracted from the Fredholm index.
We denote this component by $\mathcal M$.
It is a smooth manifold of dimension $\dim \mathcal M=1$.

Take a sequence of holomorphic planes $\{ u_k \}$ with $[u_k]\in \mathcal M$.
By SFT compactness, \cite{BEHWZ:compactness}, there is a subsequence converging to a holomorphic building with levels $u_\infty=\{ u_\infty^{L_1},\ldots,u_\infty^{L_m} \}$.
Take the first non-trivial level, say $u_\infty^{L_j}$.
This is a curve that is asymptotic to $\gamma_1$.
By the assumptions we have made, only two cases can occur, namely
\begin{enumerate}
\item $u_\infty^{L_j}$ is a translation of $u_0$.
\item $u_\infty^{L_j}$ is a holomorphic cylinder from $\gamma_1$ to $\gamma_0$.
\end{enumerate}
We first argue that the second case cannot occur.
Indeed, the total building must be a plane with possibly sphere bubbles, so there must be a holomorphic building with the topological type of a plane capping off $\gamma_0$.
The assumptions on the action of $\gamma_0$ tell us that $\gamma_0$ and all possible periodic Reeb orbits appearing in a building capping off $\gamma_0$ must be simple, so all components (not considering sphere bubbles) must be somewhere injective.
Furthermore, the index of $\gamma_0$ is lower than that of $\gamma_1$, so it follows that the plane capping off the final periodic Reeb orbit cannot exist by a regularity argument using Theorem~\ref{thm:regularity_somewhere_injective}: use that the index of such a final plane is negative, and that its asymptote is embedded.

We conclude that the first case occurs, so we have at most one level, the so-called main layer.
In this main layer, we hence have at most sphere bubbles, and we can write the limit curve as $u_\infty \cup \cup_i B_i$.

\begin{figure}[htp]
\def\svgwidth{0.5\textwidth}%
\begingroup\endlinechar=-1
\resizebox{0.5\textwidth}{!}{%
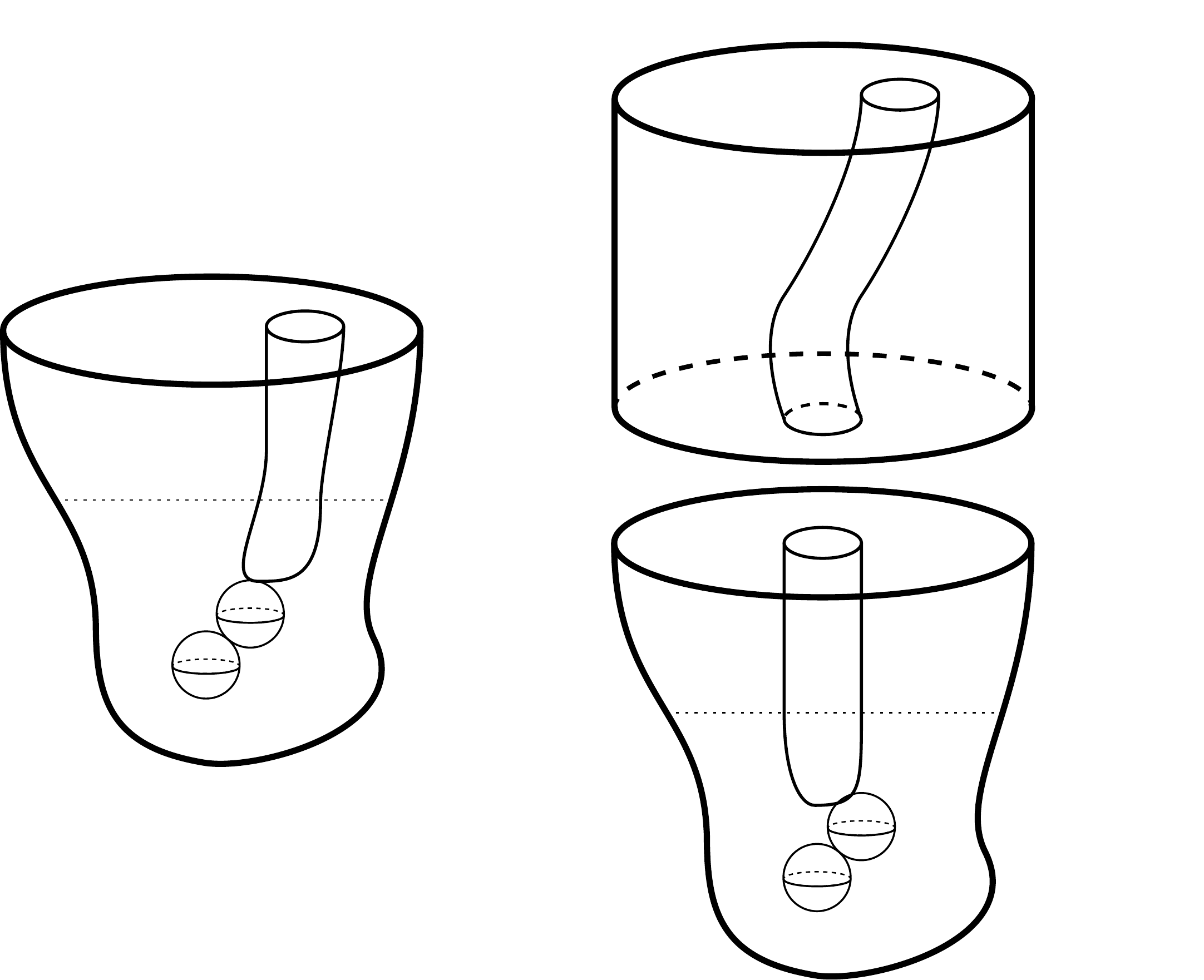%
}\endgroup
\caption{Possible breaking in the filling}
\label{fig:breaking}
\end{figure}

We claim that semi-positivity excludes these sphere bubbles, and we give a brief sketch of the argument.
The Fredholm index is additive, so in a family we get
$$
\ind u_0=\ind u_\infty +\sum_i \ind B_i,
$$
and we have $\ind u_0=\ind u_\infty$ by the above argument.
We hence conclude that $\sum_i \ind B_i=0$.
On the other hand, the index of a holomorphic sphere in $F^{2n}$ is given by
$$
\ind B_i=2n+2\langle c_1(F),B_i \rangle.
$$
We find that there is $i_0$ with $\langle c_1(F),B_{i_0} \rangle <0$.
Furthermore, $B_{i_0}$ can be written as $B_{i_0}=k_{i_0} A_{i_0}$, where $A_{i_0}$ is a simple holomorphic curve.
It follows that  $\langle c_1(F),A_{i_0} \rangle <0$ and this contradicts Lemma~6.4.4 from \cite{MS:J_curves}.

The statement concerning the Weinstein conjecture can be found in \cite[Lemma 3.3]{Massot:weak_strong_fillability}.
The basic idea is due to Hofer: stretching a finite energy plane in a symplectization gives rise to a holomorphic building which topologically still forms a plane. Hence the lowest level contains a finite energy plane, which is asymptotic to a contractible periodic Reeb orbit.
\end{proof}

\section{$S^1$-invariant contact structures, and geometric differences between positive and negative twists}
\label{sec:invariant_contact}
Let $Y\to M$ be a principal circle bundle and denote the right principal action of $g\in S^1$ by $RA_g$.
Define the vector field $X_Y$ generating the $S^1$-action by
$$
X_Y:=\frac{d}{dt}|_{t=0} RA_{e^{it}}.
$$
Suppose that $\xi$ is a cooriented, $S^1$-invariant contact structure on $Y$, meaning
$$
(RA_g)_*\xi=\xi.
$$
By averaging the contact form, we obtain an $S^1$-invariant contact form $\alpha$, so $\mathcal L_{X_Y}\alpha=0$.
Recall the following notion introduced by Giroux, \cite{Giroux:convex}.
\begin{definition}
A hypersurface $H$ in a contact manifold $(Y,\xi)$ is called {\bf convex} if there exists a contact vector field $X$ that is transverse to $H$.
The {\bf dividing set} of $H$ with respect to $X$ is the set
$$
\Gamma:=\{ x \in H ~|~X(x)\in \xi \} .
$$
\end{definition}

Let $\pi:Y\to M$ be a principal circle bundle with an invariant contact form $\alpha$.
Define the {\bf almost dividing set} of $(Y,\alpha)$ with respect to $M$ as the set
$$
\Gamma:=\{ x\in M ~|~\alpha_q(X_Y)=0 \text{ for }q\in \pi^{-1}(x) \} .
$$
This is well-defined, since $\alpha$ is $S^1$-invariant.
Now let $Z$ be a subset of $M$ such that $Y|_{M-Z}$ is a trivial bundle, so we can find a section $\sigma:M-Z \to Y$.
Then $\sigma(M-Z)$ is a convex surface with respect to $X_Y$.
Furthermore, $\pi^{-1}(\Gamma)\cap \sigma(M-Z)$ is the dividing set of $\sigma(M-Z)$ with respect to $X_Y$.

According to the following proposition the almost dividing set is a contact manifold.
\begin{proposition}
Let $(Y,\xi=\ker \alpha_{inv})$ be a principal circle bundle over $M$ with an invariant contact structure. Suppose that $\Gamma$ is the almost dividing set of $(Y,\alpha_{inv})$ in $M$.
Then $\alpha_{inv}$ induces a contact structure on $\Gamma$.
\end{proposition}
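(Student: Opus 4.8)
The plan is to analyze the contact condition $\alpha_{inv}\wedge (d\alpha_{inv})^{n-1}>0$ along $Y|_\Gamma:=\pi^{-1}(\Gamma)$ and show that it forces a contact condition one dimension down on $\Gamma$. First I would observe that $X_Y$ is the infinitesimal generator of a free $S^1$-action, so $Y|_\Gamma\to\Gamma$ is again a principal circle bundle, and by $S^1$-invariance both $\alpha_{inv}$ and $d\alpha_{inv}$ descend in the appropriate sense: $\iota_{X_Y}\alpha_{inv}=0$ on $Y|_\Gamma$ by definition of the almost dividing set, and $\mathcal L_{X_Y}\alpha_{inv}=0$ everywhere. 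Because $\iota_{X_Y}\alpha_{inv}$ vanishes identically along $Y|_\Gamma$, its differential vanishes on $TY|_\Gamma$, i.e. $\iota_{X_Y}d\alpha_{inv}+d(\iota_{X_Y}\alpha_{inv})=\mathcal L_{X_Y}\alpha_{inv}=0$ gives $\iota_{X_Y}d\alpha_{inv}=0$ when restricted to vectors tangent to $Y|_\Gamma$. Hence along $Y|_\Gamma$ the form $\alpha_{inv}$ is basic (its pullback to a local section is independent of the section), and likewise $d\alpha_{inv}|_{TY|_\Gamma}$ is the pullback of a $2$-form on $\Gamma$.

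Concretely, fix a point $x_0\in\Gamma$ and a local section $\sigma$ of $Y\to M$ near $x_0$; set $\beta:=\sigma^*\alpha_{inv}$, a $1$-form on a neighborhood of $x_0$ in $M$. I claim $\beta|_\Gamma$ is a contact form on $\Gamma$, i.e. $\beta\wedge(d\beta)^{n-2}>0$ on $T\Gamma$ (note $\dim\Gamma=2n-3$). To see this, use the splitting $TY|_{Y|_\Gamma}=\langle X_Y\rangle\oplus\sigma_*TM$ near $\sigma(\Gamma)$ together with the transversality fact that $\Gamma$ is cut out inside $M$ by the function $g:=\alpha_{inv}(X_Y)$ pushed down (well-defined by invariance), and that $dg\neq 0$ along $\Gamma$ — this last point needs to be extracted from the contact condition. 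Assuming $dg\neq 0$ on $\Gamma$, pick a vector field $V$ on $M$ near $x_0$ with $dg(V)>0$; then $\{V, T\Gamma\}$ spans $TM$ along $\Gamma$, and a local frame $\{X_Y, \sigma_*V, \sigma_*T\Gamma\}$ spans $TY$ along $\sigma(\Gamma)$. Expanding $\alpha_{inv}\wedge(d\alpha_{inv})^{n-1}$ in this frame and using $\iota_{X_Y}\alpha_{inv}=g$, $\iota_{X_Y}d\alpha_{inv}=-dg$ (from Cartan, since $\mathcal L_{X_Y}\alpha_{inv}=0$), one finds that on $\sigma(\Gamma)$ the top form evaluates, up to a positive combinatorial constant times $dg(V)$, to $\beta\wedge(d\beta)^{n-2}$ applied to the $\Gamma$-directions. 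Positivity of the left side then yields $\beta\wedge(d\beta)^{n-2}>0$ on $T\Gamma$, which is the contact condition; independence of $\sigma$ follows from the basic-ness established above, so this glues to a well-defined contact structure $\ker(\beta|_\Gamma)$ on $\Gamma$.

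The main obstacle I anticipate is precisely the claim that $dg\neq 0$ along $\Gamma$, equivalently that $\Gamma$ is a regular level set of $g=\alpha_{inv}(X_Y)$ and hence genuinely a smooth hypersurface in $M$ of the expected dimension. This is the analogue of the classical fact that for a convex hypersurface the dividing set is a contact-type submanifold, and it should follow by a pointwise linear-algebra argument: at a point where $g=0$ but $dg=0$, feeding the available data into $\alpha_{inv}\wedge(d\alpha_{inv})^{n-1}>0$ forces a contradiction because too many of the "$d\alpha_{inv}$" slots must be filled from a degenerate directions. I would make this precise by choosing at such a bad point a $d\alpha_{inv}$-symplectic frame of $\xi$ and checking that $X_Y\in\xi$ together with $\iota_{X_Y}d\alpha_{inv}=-dg=0$ would make $X_Y$ lie in the kernel of $d\alpha_{inv}|_\xi$, contradicting nondegeneracy of $d\alpha_{inv}$ on the contact hyperplane. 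Once this regularity is in hand, the rest is the routine frame computation sketched above, and the well-definedness is immediate from $S^1$-invariance.
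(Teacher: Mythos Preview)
Your argument is correct. Note that the paper does not actually supply its own proof of this proposition: it simply refers to \cite[Lemma~3.3]{DingGeiges:circle_bdls} and remarks that ``alternatively, one can prove this statement using contact reduction.'' What you have written is precisely the contact reduction argument unpacked by hand: the function $g=\alpha_{inv}(X_Y)$ plays the role of the contact moment map for the $S^1$-action, $\Gamma$ is the zero level modulo the circle, and your two steps---regularity of the zero level (from nondegeneracy of $d\alpha_{inv}|_\xi$ and $X_Y\neq 0$) followed by the frame expansion of $\alpha_{inv}\wedge(d\alpha_{inv})^{n-1}$ using $\iota_{X_Y}d\alpha_{inv}=-dg$---are exactly how one verifies that the reduced form is contact.

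One small point worth tightening: your phrase ``up to a positive combinatorial constant times $dg(V)$'' is correct in spirit but the sign depends on orientation conventions. The clean way is to compute $\iota_{X_Y}\bigl(\alpha_{inv}\wedge(d\alpha_{inv})^{n-1}\bigr)=(n-1)\,\alpha_{inv}\wedge dg\wedge(d\alpha_{inv})^{n-2}$ along $Y|_\Gamma$, pull back by a local section, and then contract with $V$; since $dg$ vanishes on $T\Gamma$ this isolates $dg(V)\cdot\beta\wedge(d\beta)^{n-2}|_{T\Gamma}$ as the only surviving term, and it is nonzero because the left-hand side is a volume form contracted with the nonvanishing $X_Y$. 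This yields $\beta\wedge(d\beta)^{n-2}\neq 0$ on $\Gamma$, which is all the proposition claims.
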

A proof of this statement can be found in \cite[Lemma~3.3]{DingGeiges:circle_bdls}.
Alternatively, one can prove this statement using contact reduction.

We shall construct invariant contact structures in higher dimensions that should be considered overtwisted in view of the $3$-dimensional analogue. 
To compare with dimension $3$, the following result due to Giroux, \cite[Proposition 4.1]{Giroux:circle_bdls}, is closest to what we will obtain.
\begin{proposition}[Giroux]
Suppose $(Y^3,\alpha)$ is a principal circle bundle over a surface $S$ with almost dividing set $\Gamma$.
Then the contact manifold $(Y,\alpha)$ is universally tight if and only if one of the following conditions is satisfied
\begin{itemize}
\item $S\neq S^2$, and no component of $S-\Gamma$ is a disk.
\item $S=S^2$, the Euler class of $Y$ is negative and $\Gamma$ is empty.
\item $S=S^2$, the Euler class of $Y$ is non-negative, and $\Gamma$ consists of a single circle.
\end{itemize}
\end{proposition}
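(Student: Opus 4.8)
The plan is to follow the strategy that is by now standard for classifying $S^1$-invariant contact structures: first put the invariant contact form into a normal form living on the base surface $S$, governed by the multicurve $\Gamma$ (the almost dividing set) and the Euler number $e(Y)$; then, since being universally tight is by definition the tightness of the pullback to the universal cover, decide each direction there, using Giroux's convex surface criterion for overtwistedness and symplectic fillings (or convexity arguments) for tightness.

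\emph{Normal form.} Let $X_S$ generate the $S^1$-action and put $h:=\alpha(X_S)$; invariance makes $h$ descend to a function $\bar h$ on $S$ whose zero set is the almost dividing set, and after an equivariant $C^\infty$-small perturbation we may assume $\bar h$ is transverse to $0$, so $\Gamma=\bar h^{-1}(0)$ is a multicurve. Over each component $U$ of $S\setminus\Gamma$ the bundle $Y|_U$ is trivial (it has boundary, unless $\Gamma=\varnothing$, which is treated directly), so a section yields $\alpha=\beta_U+\bar h\,d\theta$, and the contact condition reads $d(\beta_U/\bar h)>0$: equivalently $\beta_U/\bar h$ is a Liouville form on $U$ whose Liouville field is transverse to $\partial U$, pointing outward where $\bar h>0$ and inward where $\bar h<0$. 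A Moser-type argument shows that, for fixed $\Gamma$, any two such Liouville data are joined by an equivariant isotopy; hence $(Y,\xi)$ is determined up to contactomorphism by $\Gamma$ and $e(Y)$, the Euler number entering only through the relative twisting of the Liouville data across the components of $\Gamma$, and so being relevant exactly when $\Gamma$ fails to cut $S$ into planar pieces --- in particular when $\Gamma=\varnothing$.

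\emph{The excluded cases are not universally tight.} Suppose $S\neq S^2$ and some component $U$ of $S\setminus\Gamma$ is a disk. Trivialise $Y$ over $\overline U$, take a section, and isotope its boundary to a Legendrian curve slightly inside $\pi^{-1}(U)$; the resulting convex disk has a dividing set that contains a homotopically trivial closed curve, so by Giroux's convex surface criterion \cite{Giroux:convex} a neighbourhood of it is overtwisted, and lifting this overtwisted disk shows $(Y,\xi)$ is not universally tight. For $S=S^2$ there are three failures to exclude. If $\Gamma=\varnothing$ then, rescaling $\alpha$ to a connection form, $d\alpha$ descends to a symplectic area form on $S^2$ whose class is a negative multiple of $e(Y)$, forcing $e(Y)<0$; so for $e(Y)\ge0$ there is no invariant structure at all. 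If $\Gamma$ has at least two components, there is an annular region in $S^2\setminus\Gamma$, and combining it with an adjacent disk region and the section argument again produces a convex disk with a trivial closed dividing curve, hence an overtwisted disk. Finally, if $\Gamma$ is a single circle but $e(Y)<0$, the equator splits $S^2$ into two disks and the sign of $e(Y)$ forces the twisting of the Liouville data over one half to be ``left-handed'', so the section argument over that half produces an overtwisted disk after passing to the universal cover $S^3$; again $(Y,\xi)$ is not universally tight.

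\emph{The listed cases are universally tight --- and the main obstacle.} In each listed case one must prove the pullback $\widetilde\xi$ to the universal cover $\widetilde Y$ is tight. If $S\neq S^2$ then $\widetilde Y=\R^2\times S^1$, the hypothesis ``no disk component'' says $\Gamma$ is incompressible, and hence $\widetilde\Gamma$ is a disjoint union of properly embedded lines cutting $\R^2$ into strips; over each strip the normal form gives a Stein-fillable, hence tight, piece, and since any overtwisted disk is compact, tightness of $\widetilde\xi$ follows from tightness of these pieces together with a gluing/exhaustion argument (alternatively, one assembles a weak symplectic filling from the strip pieces). If $S=S^2$, $\Gamma=\varnothing$, $e(Y)<0$, then $\xi$ is the unique contact structure transverse to the fibres of a negative circle bundle over $S^2$, i.e.\ the link of a cyclic quotient singularity, which is symplectically fillable and whose universal cover is $(S^3,\xi_{\mathrm{std}})$; in particular $\xi$ is universally tight. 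If $S=S^2$, $\Gamma$ a single circle, $e(Y)\ge 0$, the normal form identifies $\xi$ with the standard tight structure on $S^1\times S^2$ when $e(Y)=0$ and with a symplectically fillable structure when $e(Y)>0$, so $\xi$ is again universally tight. The genuinely hard part is exactly this direction: manufacturing the fillings (or the transversal $\R$-symmetry) that certify tightness of the models, and, in the spherical cases, tracking how the sign of $e(Y)$ controls the relative twisting of the Liouville data across $\Gamma$; the normal-form reduction and the overtwisted-disk constructions via Giroux's criterion are, by contrast, routine.
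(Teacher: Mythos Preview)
The paper does not prove this proposition: it is quoted from Giroux's paper \cite{Giroux:circle_bdls} (Proposition~4.1 there) and stated here without argument, purely as background and motivation for the higher-dimensional analogue. So there is no ``paper's own proof'' to compare against; you have written a sketch of a proof for a cited external result.

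As an independent sketch, your outline is broadly in the spirit of Giroux's original argument (normal form for $S^1$-invariant contact structures, then case analysis using convex surface theory and fillings), but several steps are genuinely incomplete. First, when $S\neq S^2$ the universal cover of $Y$ is $\R^3$, not $\R^2\times S^1$; the latter is only an intermediate cover, and tightness there does not by itself give tightness of $\R^3$. Second, your overtwisted-disk construction from a disk component $U\subset S\setminus\Gamma$ is too quick: the section $\sigma(U)$ has \emph{empty} dividing set with respect to $X_S$ (since $\Gamma\cap U=\varnothing$), its boundary is not Legendrian, and you have not explained why after Legendrian realization the resulting convex disk acquires a homotopically trivial closed dividing curve rather than only arcs. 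Third, the case $S=S^2$, $e(Y)<0$, $\Gamma$ a single circle is handled by a one-line appeal to ``left-handed twisting'' that does not actually locate an overtwisted disk in the universal cover. Each of these gaps is where the real content of Giroux's proof lies; what you have written is the scaffolding, not the argument.
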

After rescaling the contact form, the second case corresponds to prequantization bundles over $S^2$.

\begin{remark}[Goal of the paper]
We shall construct a principal circle bundle $Y$ over higher-dimensional manifolds $M$ with the property that the almost dividing set bounds disk-bundles over codimension $2$ submanifolds in $M$.
In many cases the almost dividing set is actually connected.

In dimension $3$ such manifolds can be universally tight, tight, or overtwisted depending on more precise data.
Analogously, we shall obtain manifolds that can be symplectically fillable or non-fillable, depending on more precise data.
\end{remark}

\subsection{Circle actions}
Let $(P,\lambda_P)$ be a prequantization bundle over an integral symplectic manifold $(Q,\omega)$.
Denote the Reeb field of $\lambda_P$ by $R_{P}$.
This vector field generates the circle action on $P$.
Fix integers $a,b$ and consider the circle action on $P\times D^2$ given by
\[
\begin{split}
S^1 \times P\times D^2 & \longrightarrow P \times D^2 \\
(g;p,z) & \longmapsto (g^a \cdot p,g^b z).
\end{split}
\]
The vector field generating this action is
$$
X_{ab}=a R_{P} +b\partial_\phi.
$$
As in the usual model for a contact form near the binding of an open book, choose functions $h_1$ and $h_2$.
We shall only impose the contact condition, $h_1 h_2'-h_2 h_1'\neq 0$, though.

\begin{lemma}
\label{lemma:invariant_form_binding}
The contact form $\alpha_{inv}=h_1(r)\lambda_P+h_2(r)d\phi$ is invariant under the $S^1$-action.
\end{lemma}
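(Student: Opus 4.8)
The plan is to show that the flow of the generating vector field $X_{ab}=aR_{P}+b\partial_\phi$ preserves $\alpha_{inv}$, which — since the $S^1$-action is connected and its time-$t$ flow is exactly $RA_{e^{it}}$ — is equivalent to verifying $\mathcal L_{X_{ab}}\alpha_{inv}=0$. First I would record the coordinate bookkeeping on the product $P\times D^2$: we regard $\lambda_P$ and $d\phi$ as the pullbacks to $P\times D^2$ of the corresponding forms on the two factors, and we note that both $R_{P}$ and $\partial_\phi$ annihilate the radial coordinate $r$ on $D^2$ (indeed $R_{P}$ is tangent to the $P$-factor, and $\iota_{\partial_\phi}dr=0$). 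Consequently the functions $h_1(r)$ and $h_2(r)$ are invariant, i.e. $X_{ab}(h_i(r))=0$.

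Next, expanding by the Leibniz rule,
\[
\mathcal L_{X_{ab}}\alpha_{inv}=X_{ab}(h_1)\,\lambda_P+h_1\,\mathcal L_{X_{ab}}\lambda_P+X_{ab}(h_2)\,d\phi+h_2\,\mathcal L_{X_{ab}}d\phi,
\]
so by the previous paragraph it remains to check $\mathcal L_{X_{ab}}\lambda_P=0$ and $\mathcal L_{X_{ab}}d\phi=0$. For $\lambda_P$: the contribution $\mathcal L_{b\partial_\phi}\lambda_P$ vanishes because $\partial_\phi$ is tangent to the $D^2$-factor while $\lambda_P$ is pulled back from $P$, and the contribution $\mathcal L_{aR_{P}}\lambda_P$ vanishes by Cartan's formula, using that $\iota_{R_{P}}\lambda_P=1$ and $\iota_{R_{P}}d\lambda_P=0$ since $R_{P}$ is the Reeb field of $\lambda_P$. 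For $d\phi$: we have $\mathcal L_{aR_{P}}d\phi=0$ because $R_{P}$ kills the $D^2$-coordinate $\phi$, and $\mathcal L_{b\partial_\phi}d\phi=d(\iota_{b\partial_\phi}d\phi)+\iota_{b\partial_\phi}d(d\phi)=d(b)+0=0$, the constant $b$ having zero differential. Combining these gives $\mathcal L_{X_{ab}}\alpha_{inv}=0$, and hence $(RA_g)^*\alpha_{inv}=\alpha_{inv}$ for all $g\in S^1$, which is the assertion.

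There is essentially no serious obstacle here: the only point demanding any care is the bookkeeping of which forms and vector fields live on which factor of $P\times D^2$, together with the observation that the radial coordinate $r$ — on which $h_1$ and $h_2$ depend — is invariant under $X_{ab}$, so that the presence of the cut-off functions is harmless. (Near $r=0$ one works with the standard convention ensuring $h_2(r)\,d\phi$ extends smoothly, but this plays no role in the invariance computation.)
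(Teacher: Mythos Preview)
Your proof is correct and follows essentially the same idea as the paper: verify $\mathcal L_{X_{ab}}\alpha_{inv}=0$. The only cosmetic difference is that the paper applies Cartan's formula directly to the whole form, obtaining $\mathcal L_{X_{ab}}\alpha_{inv}=d(ah_1+bh_2)-a\,dh_1-b\,dh_2=0$ in one line, whereas you first expand by the Leibniz rule and then treat the pieces $\mathcal L_{X_{ab}}\lambda_P$ and $\mathcal L_{X_{ab}}d\phi$ separately; the underlying ingredients are identical.
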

This can by verified by computing the Lie derivative with the Cartan formula,
$$
\mathcal L_{X_{ab}} \alpha_{inv}=d\left( ah_1+bh_2 \right)-ad h_1-bdh_2=0.
$$
Choose a smooth function $f_i$, which is going to serve as a profile function for the inverse of a fractional twist.
We do not yet impose any conditions.
Define an equivalence relation on $P\times I \times \R$
$$
(p,t;\phi) \sim (Fl^{R_{P}}_{f_i(t)}(p),t;\phi+U_i(t)\, ),
$$
where $U_i$ is defined via \eqref{eq:formula_u} using $f_i$ instead of $f_m$.
Since we haven't imposed any conditions on the profile function $f_i$, the following construction will not be a contact open book in the sense we defined it in Section~\ref{sec:contact_OB}.
However, the resulting contact manifold can be deformed into a contact open book. We will explain this in the proof of Theorem~\ref{thm:left_and_right_twisted}.

Define the ``margins of the pages'' of an ``open book'' by $P\times I\times \R/\sim$.
The set $P\times D^2$ serves as a neighborhood of the binding of the ``open book'' to which we glue to the ``pages'' using the gluing map
\begin{equation}
\label{eq:gluingmap_binding_to_pages}
\begin{split}
\psi_G: P\times D_{glue} & \longrightarrow P\times I \times \R /\sim \\
(p;r,\phi) & \longmapsto ( [\frac{f_i(1-r)\phi}{2\pi}] \cdot p,1-r,\frac{\phi}{2\pi}U_i(1-r)\, ).
\end{split}
\end{equation}
Here $D_{glue}$ is the open annulus $D_{glue}=\{ r e^{i\phi} \in D^2~|~ r_1-\delta_{glue}<r <r_1 +\delta_{glue} \}$, whose inner and outer radius will be specified later.
We call the domain $ P\times D_{glue}$ the {\bf gluing region}.

For now, we will compute with the gluing region $P\times (D^2 -\{ 0 \})$
\begin{lemma}
The map $\psi_G$ induces a circle action on $P\times I \times \R /\sim$ generated by the vector field
\begin{equation}
\label{eq:vf_S^1_pages}
\tilde X_{a,b}=\frac{2\pi a+f_i(t)b}{2\pi} R_{P}+\frac{bU_i(t)}{2\pi}\partial_\phi .
\end{equation}
Furthermore, the contact structure $\ker (d\phi+e^t \lambda_P)$ is $S^1$-invariant.
\end{lemma}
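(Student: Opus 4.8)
The plan is to establish the two assertions in turn: that $\psi_G$ carries the circle action generated by $X_{ab}=aR_P+b\partial_\phi$ on $P\times D^2$ to the flow of $\tilde X_{a,b}$ on the ``pages'' $P\times I\times\R/\!\sim$, and that the $1$-form $d\phi+e^t\lambda_P$ (hence its kernel) is invariant under that flow.

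\emph{The pushforward computation.} Writing $g=e^{is}\in S^1$, the given action on $P\times D^2$ is $(p;r,\phi)\mapsto(Fl^{R_P}_{sa}(p);r,\phi+sb)$. I would substitute this into the gluing formula \eqref{eq:gluingmap_binding_to_pages} and use that the Reeb flow is a one-parameter group, so that $Fl^{R_P}_{\frac{f_i(1-r)(\phi+sb)}{2\pi}}\circ Fl^{R_P}_{sa}=Fl^{R_P}_{\frac{f_i(1-r)(\phi+sb)}{2\pi}+sa}$. Comparing $\psi_G(g\cdot(p;r,\phi))$ with $\psi_G(p;r,\phi)$ and differentiating at $s=0$, one reads off that the image point moves along its $R_P$-orbit with speed $a+\tfrac{bf_i(1-r)}{2\pi}$, keeps its $I$-coordinate $1-r$ fixed, and moves in the $\R$-direction with speed $\tfrac{bU_i(1-r)}{2\pi}$; setting $t=1-r$ this is precisely $\tilde X_{a,b}$. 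It then remains to check that $\tilde X_{a,b}$ is well defined on the quotient: its coefficients depend only on $t$, and the identification $\Phi:(p,t,\phi)\mapsto(Fl^{R_P}_{f_i(t)}(p),t,\phi+U_i(t))$ fixes $t$, translates $\phi$, and acts on the $P$-factor by a Reeb flow, whose differential sends $R_P$ to $R_P$; hence $\Phi_*\tilde X_{a,b}=\tilde X_{a,b}$. Since on the overlap the flow of $\tilde X_{a,b}$ agrees with the genuine circle action on $P\times D^2$, and (using $a,b\in\Z$ together with the period of $R_P$) this flow is easily seen to be complete and periodic on $P\times I\times\R/\!\sim$, the circle action extends over the glued manifold.

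\emph{Invariance of the contact form.} Set $\alpha=d\phi+e^t\lambda_P$. Using $\iota_{R_P}\lambda_P=1$, $\iota_{R_P}d\lambda_P=0$, and that $R_P,\partial_\phi$ annihilate $dt$, Cartan's formula gives $\iota_{\tilde X_{a,b}}\alpha=\tfrac{bU_i(t)}{2\pi}+\tfrac{2\pi a+bf_i(t)}{2\pi}e^t$ and $\iota_{\tilde X_{a,b}}d\alpha=-\tfrac{2\pi a+bf_i(t)}{2\pi}e^t\,dt$, so that
$$\mathcal L_{\tilde X_{a,b}}\alpha=\frac{1}{2\pi}\bigl(bU_i'(t)+bf_i'(t)e^t\bigr)\,dt.$$
Now \eqref{eq:formula_u}, read with $f_i$ in place of $f_m$, gives $U_i'(t)=-f_i'(t)e^t$, so the right-hand side vanishes and $\mathcal L_{\tilde X_{a,b}}\alpha=0$. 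Thus $\ker\alpha$ is $S^1$-invariant on $P\times I\times\R/\!\sim$; together with Lemma~\ref{lemma:invariant_form_binding} on $P\times D^2$, the contact structure on the whole ``open book'' is invariant.

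\emph{Main obstacle.} There is no genuine obstacle here: the statement is a careful calculation. The one place that demands attention is the bookkeeping in the pushforward step — correctly composing the two Reeb flows inside $\psi_G$, tracking the $1-r\leftrightarrow t$ change of variable, and verifying descent to the quotient. The ``furthermore'' part hinges entirely on recognizing that the cancellation in $\mathcal L_{\tilde X_{a,b}}\alpha$ is exactly the defining identity \eqref{eq:formula_u} for $U_i$.
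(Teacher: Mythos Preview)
Your proof is correct and follows essentially the same route as the paper: push the $S^1$-action through $\psi_G$ to read off the generating vector field, then verify $\mathcal L_{\tilde X_{a,b}}(d\phi+e^t\lambda_P)=0$ via Cartan's formula and the identity $U_i'(t)=-f_i'(t)e^t$ from \eqref{eq:formula_u}. You include an explicit check that $\tilde X_{a,b}$ descends to the quotient, which the paper leaves implicit, but otherwise the arguments coincide.
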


\begin{proof}
Take $g\in S^1$.
On $P\times D^2$, the element $g$ induces the map
\begin{equation}
\label{eq:action_binding}
(p;z) \longmapsto (g^a \cdot p,g^b z).
\end{equation}
With the map $\psi_G$, we get
$$
\psi_G(\, (g^a \cdot p,g^b z) \, )=([ag+\frac{bg}{2\pi}f_i(1-r)+\frac{\phi}{2\pi}f_i(1-r)]\cdot p,1-r,\frac{\phi}{2\pi}U_i(1-r)+ \frac{bg}{2\pi}U_i(1-r)\,).
$$
The circle action on $P\times I \times \R /\sim$ is hence given by
\[
\begin{split}
S^1 \times \left(  P\times I \times \R /\sim \right) & \longrightarrow  P\times I \times \R /\sim \\
(g;p,t,\phi) & \longmapsto ( [\frac{2\pi a+bf_i(t)}{2\pi}g] \cdot p,t,\phi+\frac{bg}{2\pi}U_i(t) ).
\end{split}
\]
On the right hand side we have used additive notation rather than multiplicative.
If we denote the vector field generating the circle action by $\tilde X_{a,b}$, then we get
$$
\tilde X_{a,b}=\frac{2\pi a+f_i(t)b}{2\pi} R_{P}+\frac{bU_i(t)}{2\pi}\partial_\phi .
$$

To check that the contact form $\left( d\phi+e^t \lambda_P \right)$ is invariant under the circle action, we compute the Lie derivative with respect to $\tilde X_{a,b}$ with the Cartan formula and simplify with Formula~\eqref{eq:formula_u},
$$
\mathcal L_{\tilde X_{a,b}} \left( d\phi+e^t \lambda_P \right) =
d\left( \frac{bU_i(t)}{2\pi} \right) +d\left( \frac{2\pi a+f_i(t)b}{2\pi}e^t \right)
-\frac{2\pi a+f_i(t)b}{2\pi}e^t dt=0.
$$
\end{proof}

Now suppose that $P$ is symplectically filled by a Liouville domain $(W,\Omega=d\lambda)$, where $\lambda=e^t\lambda_P$ in a collar neighborhood of the boundary $P\times I$.
Define the manifold
$$
Y:=P\times D^2 \cup_{\psi_G} W\times \R /\sim.
$$
Define the content of the page $W$ by $W_{in}:=W-P\times I$.
In order to obtain an invariant contact structure we need to fulfill the following criteria:
\begin{itemize}
\item the monodromy extends to $W_{in} \times \R /\sim$.
This will be done with a fractional twist of power $\ell$ ($\ell$ is some positive integer); let $\zeta_\ell=e^{2\pi i/\ell}$ denote a root of unity. Define the equivalence relation
$$
(w,\phi) \sim(\zeta_\ell \cdot w,\phi+U_i).
$$
\item the action extends to $W_{in} \times \R /\sim$.
As we shall see in the following lemma, a sufficient condition to do this is $2\pi a+f_i(0) b=0$.
\end{itemize}

\subsubsection{Right- and left-handed twists}
\label{sec:right_left_binding_twist}
We briefly describe the \emph{monodromy} that we shall be using.
We use the special twisting profiles $f_m$ as sketched in Figure~\ref{fig:profile_function}.
For a right-handed fractional twist of power $\ell$, we choose $f_m:[0,1]\to \R$ to be a positive, smooth function with
\begin{itemize}
\item $f_m\equiv 2\pi/\ell$ on a neighborhood of $0$, say $[0,\delta_c]$.
\item for $t>1-\delta_c$, the function $f_m(t)$ is small, and strictly increasing with small slope $f_m'$, say $|f_m'|<\delta_s$.
\end{itemize}
For a left-handed fractional fibered twist of power $\ell$, we choose $f_m:[0,1]\to \R$ to be an increasing, smooth function with
\begin{itemize}
\item $f_m\equiv -2\pi/\ell$ on a neighborhood of $0$, say $[0,\delta_c]$.
\item for $t>\delta_c$, the function $f_m(t)$ is strictly increasing. Furthermore, it has a unique zero in $t=t_1$.
For $t>t_1$, the slope $f_m'$ is small, say $|f_m'|<\delta_s$.
\end{itemize}
\begin{figure}[htp]
\def\svgwidth{0.65\textwidth}%
\begingroup\endlinechar=-1
\resizebox{0.65\textwidth}{!}{%
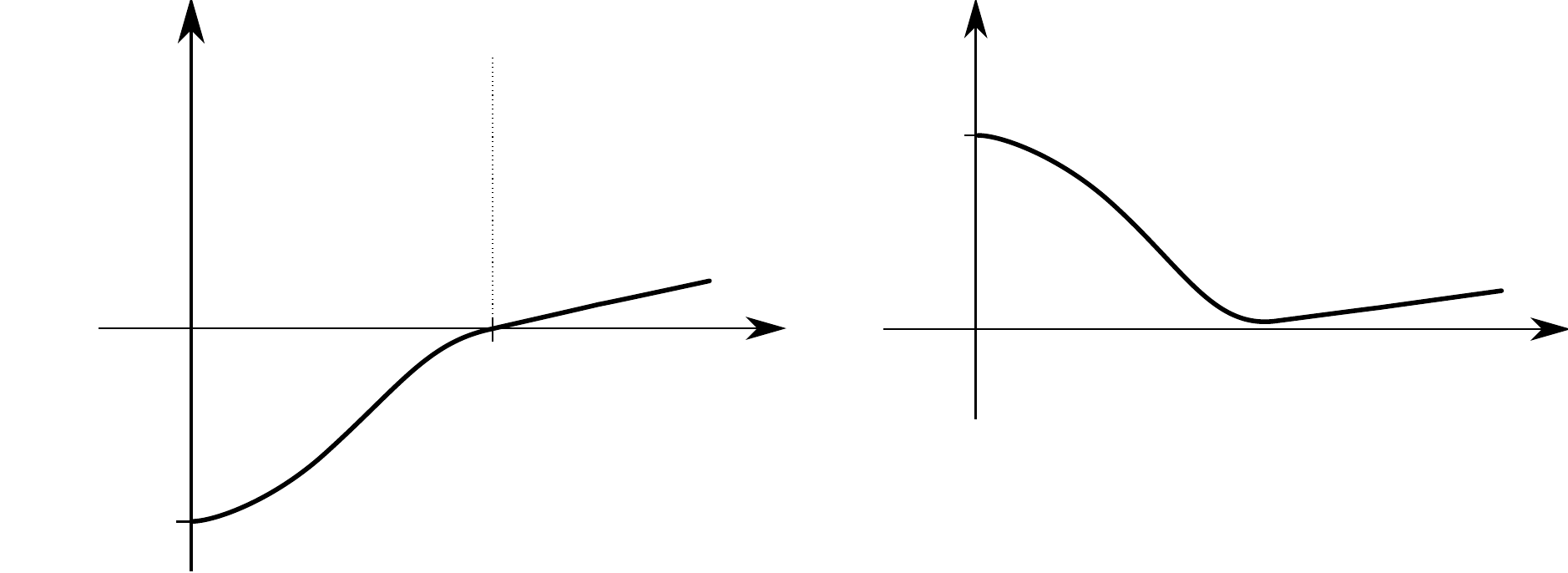%
}\endgroup
\caption{Profile functions for a left-handed fractional twist (on the left) and a right-handed fractional twist (on the right)}
\label{fig:profile_function}
\end{figure}
With these choices of twisting profiles, fractional twists are not the identity near the boundary.
However, we shall see that this choice also glues nicely to a neighborhood of the binding.

\subsubsection{Gluing the binding to the pages}
We will make some choices to fix the gluing region. The eventual results do not depend on these choices up to contactomorphism, but a small gluing region will be convenient.
If $a=-1$ (the left-handed twist), then choose $r_1=1-t_1$.
If $a=+1$ (the right-handed twist), then choose $r_1=1-t_m$, where $t_m$ attains the minimal value of $f_m$.
In both cases, choose $\delta_{glue}$ small.

\begin{lemma}
\label{lemma:decomposition} 
Suppose that $W$ admits a fractional twist of power $\ell$, and denote the boundary by $P$.
Take $a=\pm 1$ and $b=\ell$.
Choose a smooth profile function $f_i$ (for the \emph{inverse} of the monodromy) such that $f_i(0)=-\frac{a 2\pi}{\ell}$, and $f_i(1-r_1+\delta_{glue}+t)=-\left( \delta_1+\delta_2 t \right)$, where $\delta_1,\delta_2>0$ are small.
Then the manifold
$$
Y:=P\times D^2 \cup_{\psi_G} W \times \R /\sim
$$
is a principal circle bundle with an $S^1$-invariant contact form $\alpha_{inv}$.

The quotient space $M:=Y/S^1$ can be identified with the smooth manifold,
$$
M\cong P\times_{S^1} D^2 \cup_{\bar \psi_G} W/\Z_\ell,
$$
where $P\times_{S^1} D^2$ is the associated disk-bundle for the action given by \eqref{eq:action_binding}, and $\bar \psi_G$ is the map induced by $\psi_G$.
Furthermore, if $a=-1$, the almost dividing set is nonempty and contactomorphic to $P/\Z_\ell$. 
\end{lemma}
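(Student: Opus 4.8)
The plan is to assemble $Y$ from its three model pieces and to check that each structure in the statement survives the gluing. The pieces are the binding neighbourhood $P\times D^2$, carrying the action of $X_{ab}=aR_P+b\partial_\phi$ and the form $\alpha_{inv}=h_1(r)\lambda_P+h_2(r)d\phi$; the margin $P\times I\times\R/\sim$, carrying the action of $\tilde X_{a,b}$ from \eqref{eq:vf_S^1_pages} and the form $d\phi+e^t\lambda_P$; and the content $W_{in}\times\R/\sim$ with $(w,\phi)\sim(\zeta_\ell w,\phi+U_i(0))$, where $U_i$ is given by \eqref{eq:formula_u}. Invariance of $\alpha_{inv}$ on the first piece is Lemma~\ref{lemma:invariant_form_binding}; invariance of $d\phi+e^t\lambda_P$ on the second, the equivariance of $\psi_G$, and the fact that $\psi_G$ matches these two forms were established in the lemma computing $\tilde X_{a,b}$. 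So the work left is the margin--content interface, then the quotient, then the almost dividing set.

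To see that $Y$ is a principal circle bundle with an invariant contact form, evaluate $\tilde X_{a,b}$ from \eqref{eq:vf_S^1_pages} at $t=0$ and substitute $b=\ell$ and $f_i(0)=-2\pi a/\ell$: the $R_P$-coefficient $\tfrac{2\pi a+f_i(0)b}{2\pi}$ vanishes, so $\tilde X_{a,b}|_{t=0}=\tfrac{bU_i(0)}{2\pi}\partial_\phi$ points purely in the $\phi$-direction and extends over the content as that same constant multiple of $\partial_\phi$. The extended field generates an $S^1$-action on $W_{in}\times\R/\sim$ (one returns to the start after applying $\zeta_\ell^\ell=\mathrm{id}$), matching the actions on the other two pieces across the gluings, and it is free because $\zeta_\ell$ is a deck transformation of the adapted cover, so $\zeta_\ell^j$ has no fixed point for $0<j<\ell$; on the binding $aR_P+b\partial_\phi$ with $a=\pm1$ is free already along the core $P\times\{0\}$. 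Hence $Y\to M:=Y/S^1$ is a principal circle bundle. Moreover $\zeta_\ell$ preserves $\lambda$ exactly (it is a deck transformation of a cover on which $\lambda$ is pulled back), so $d\phi+\lambda$ descends to the content, glues with $d\phi+e^t\lambda_P$ on the margin and, via $\psi_G$, with $\alpha_{inv}$ on the binding, and, being invariant on each piece, patches to a global $S^1$-invariant contact form $\alpha_{inv}$ on $Y$.

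For $M=Y/S^1$ I would quotient piece by piece: $(P\times D^2)/S^1$ is by definition the associated disk bundle $P\times_{S^1}D^2$ over $Q$ for the action \eqref{eq:action_binding}; on the content the $\partial_\phi$-action collapses the $\R$-factor and leaves the residual relation $w\sim\zeta_\ell w$, so $(W_{in}\times\R/\sim)/S^1\cong W_{in}/\Z_\ell$; and the margin quotient $(P\times I\times\R/\sim)/S^1\cong (P\times I)/\Z_\ell$ glues these, so content and margin together give $W/\Z_\ell$, and the gluing maps descend to $\bar\psi_G$, yielding $M\cong P\times_{S^1}D^2\cup_{\bar\psi_G}W/\Z_\ell$. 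For the almost dividing set with $a=-1$, recall $\Gamma=\pi(\{\alpha_{inv}(X_Y)=0\})$ and evaluate $\alpha_{inv}$ on the generator. On the content $\alpha_{inv}(X_Y)=\tfrac{bU_i(0)}{2\pi}>0$; on the margin $\alpha_{inv}(\tilde X_{-1,\ell})=\tfrac{1}{2\pi}\bigl(\ell U_i(t)+e^t(\ell f_i(t)-2\pi)\bigr)$, which is strictly positive there since the left-handed profile has $f_i'\le0$ (so $U_i(t)\ge U_i(0)$) and the integration constant in \eqref{eq:formula_u} may be taken arbitrarily large; the same positive expression holds on the gluing annulus of the binding. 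On the rest of the binding $\alpha_{inv}(X_{-1,\ell})=-h_1(r)+\ell h_2(r)$, which equals $-h_1(0)<0$ at the core; choosing $h_1$ decreasing and $h_2$ increasing (consistently with $h_1h_2'-h_2h_1'>0$) makes $-h_1+\ell h_2$ strictly increasing, hence with a single regular zero $r=r_\Gamma\in(0,r_1-\delta_{glue})$. Thus $\{\alpha_{inv}(X_Y)=0\}=P\times\{|z|=r_\Gamma\}$ is nonempty, and its $S^1$-quotient under $(g;p,w)\mapsto(g^{-1}p,g^{\ell}w)$, computed by restricting to $\{w=1\}\cong P$ on which the residual $\mu_\ell\cong\Z_\ell$ acts, is $P/\Z_\ell$. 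Along that slice $\alpha_{inv}$ restricts to $h_1(r_\Gamma)\lambda_P$, which, since $\lambda_P$ is invariant under the structure group of $P\to Q$, descends to $P/\Z_\ell$ as a positive constant times the Boothby--Wang form; by the proposition identifying the induced form on the almost dividing set as contact, $\Gamma$ is then contactomorphic to the prequantization bundle $P/\Z_\ell$.

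The hard part is the gluing bookkeeping that underlies the first step: verifying that $\psi_G$ and the content identification genuinely patch the three models into one smooth manifold carrying one smooth $S^1$-invariant contact form — concretely, the cancellation of the $dr$-term in $\psi_G^*(d\phi+e^t\lambda_P)$ (via $U_i'=-f_i'e^t$) and the compatibility of the three circle actions over the gluing region, which is exactly what forces the choices of $r_1$ and of the profile $f_i$ in the statement. Granting that, the quotient and dividing-set claims reduce to the direct computations above, the only further input being that the zero set of $\alpha_{inv}(X_Y)$ is a single clean copy $\{r=r_\Gamma\}$, guaranteed by the largeness of the constant in $U_i$ and the prescribed shape of $f_i$.
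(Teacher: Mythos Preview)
Your proposal follows essentially the same route as the paper: assemble $Y$ from the three pieces, verify the $S^1$-action glues and is free, identify the quotient piecewise, obtain $\alpha_{inv}$ on the binding as $\psi_G^*(d\phi+e^t\lambda_P)$ (using $U_i'=-f_i'e^t$ to kill the $dr$-term), and locate the almost dividing set by studying $\iota_{X_{-1,\ell}}\alpha_{inv}$. The paper carries this out in the same order.

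Two places where you diverge slightly from the paper are worth noting. First, you skip the freeness check on the margin $P\times I\times\R/\sim$; the paper does this carefully via a modular-arithmetic argument (if $g\cdot[p,t,\phi]=[p,t,\phi]$ then $\frac{\ell g}{2\pi}U_i(t)\in U_i(t)\Z$, forcing $g=\frac{2\pi}{\ell}m$, and then the $P$-component forces $m\in\ell\Z$). This is not deep but is the one genuinely nontrivial freeness check, and your sentence about deck transformations only covers the content. Second, for the almost dividing set you invoke the freedom to take the integration constant in $U_i$ large to force positivity on the margin, whereas the paper instead verifies directly that $\ell h_2'-h_1'>0$ on $[0,r_1+\delta_{glue}[$ and that $\ell h_2-h_1$ changes sign across that interval; both arguments are valid and localize the zero to the same region. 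Your attribution of the form-matching $\psi_G^*(d\phi+e^t\lambda_P)=h_1\lambda_P+h_2\,d\phi$ to the earlier lemma is slightly off---that pullback computation is part of the proof of this lemma itself---but you correctly identify it as the crux of the gluing.
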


Since the gluing map in our construction comes from the inverse of the monodromy, the profile function $f_i$ in the above lemma is minus the profile function $f_m$ used for a fractional twist.
See Figure~\ref{fig:profile_function} for the twisting profiles for the monodromy we will be using.

\begin{proof}
We only carry out the proof for the case most interesting to us in this paper: $a = -1$. The case $a=1$ and $b=1$ was done in detail in our previous paper \cite[Section 6]{CDvK:right-handed}.
Alternatively, one can adapt the argument here.
  
We first show that $P\times D^2 \cup_{\psi_G} W \times \R /\sim$ admits a free $S^1$-action.
We define the action on subsets and show that it is well-defined.
On $P\times D^2$ we have the action
\[
\begin{split}
S^1 \times P \times D^2 & \longrightarrow P \times D^2\\
g\cdot (p,z) &\longmapsto \left(
g^{-1}\cdot p,g^\ell\cdot z
\right)
.
\end{split}
\]
On $P\times I\times \R /\sim$ we have the action
\[
\begin{split}
S^1 \times P \times I \times \R /\sim & \longrightarrow P \times I \times \R /\sim\\
g\cdot (p,t,\phi) &=\left(
\frac{\ell f_i(t)-2\pi}{2\pi}g \cdot p,t,\phi+\frac{\ell g}{2\pi}U_i(t)
\right)
\end{split}
\]
For the action on $W_{in}\times \R /\sim$ note first of all that the function $U_i$ is constant on that set. Define the circle action by
\[
\begin{split}
S^1 \times \left( W_{in} \times \R /\sim  \right) & \longrightarrow  W_{in} \times \R /\sim\\
g\cdot [w,\phi] &\longmapsto [
w,\phi+\frac{\ell g}{2\pi}U_i
]
\end{split}
\]
Observe that on the last piece we have
$$
(w,\phi+\ell U_i)=(\zeta_\ell^\ell\cdot w,\phi+\ell U_i)\sim(w,\phi),
$$
so this action is an honest $S^1$-action.
By our assumptions on $f_i$, the actions on the overlap of the different pieces coincide, so we have a well-defined action.

We check that the action is free.
\begin{itemize}
\item on the set $P\times D^2$, this is clear since $S^1$ acts freely on $P$.
\item on the set $P\times I \times \R /\sim$, it suffices to check for $g\in [0,2\pi[$ that $g\cdot [p,t,\phi]=[p,t,\phi]$ if and only if $g=0$.
To see this holds, take $g$ such that $g\cdot [p,t,\phi]=[p,t,\phi]$, and  note that $\frac{\ell g}{2\pi}U_i(t)$ must be an integer multiple of $U_i(t)$, so $g=\frac{2\pi}{\ell}m$ for some $m \in \Z$.
For the $P$-factor, we must have
$$
\frac{\ell f_i(t)-2\pi}{2\pi}g \equiv m f_i(t) \mod 2\pi,
$$
so we get the condition
$$
(f_i(t)-\frac{2\pi}{\ell}) m \equiv f_i(t) m \mod 2\pi.
$$
Hence $m\in \ell\Z$. This implies $g\in 2\pi \Z$.
\item on the set $W_{in}\times \R /\sim$, the circle action is given by
$$
g\cdot [x,\phi]=[x,\phi+\frac{\ell g}{2\pi}U_i].
$$
The equivalence class $[x,\phi]$ contains the elements $\{ \zeta_\ell^{m} \cdot x,\phi+m U_i \}_{m \in \Z}$, so we see that we can only have $g\cdot [x,\phi]=[x,\phi]$ if $g \in 2\pi \Z$.
It follows that the action is free.
\end{itemize}

For the assertion about the quotient space $M$ we check that
$$
(W\times \R /\sim)/S^1\cong W /\Z_\ell.
$$
To see this, note that we can use the circle action to bring an element into the form $[x,0]$. There are $\ell$ such elements, namely $[\zeta_\ell^{m} \cdot x,0]$ for $m=0,\ldots,\ell-1$. Hence we need to mod out by this $\Z_\ell$-action.

To obtain an invariant contact structure, we use the contact form
$$
\alpha=d \phi+\lambda
$$
on the set $W \times \R /\sim$.
By the previous lemma, this gives an invariant contact structure.
We pull this form back to a neighborhood of the binding to see what behavior we need to prescribe there.
Write $Inv(r)=1-r$.
By the Cartan formula $\mathcal L_{\frac{f_i\phi}{2\pi} R_{P} } e^t \lambda_P=d(e^t\frac{f_i\phi}{2\pi})-e^t\frac{f_i\phi}{2\pi} dt$, so we find
\begin{equation}
\label{eq:pullback_form}
\begin{split}
\psi_G^* \alpha & =d\left( \frac{\phi U_i\circ Inv(r)}{2\pi} \right) + \psi_G^*e^t \lambda_{P}\\
&=Inv^*
\left(
\frac{U_i}{2\pi}d\phi+\frac{\phi}{2\pi}dU_i+\frac{e^tf_i(t)}{2\pi}d\phi+\frac{e^t df_i}{2\pi} \phi+e^t \lambda_{P}
\right) \\
&=e^{1-r} \lambda_P+ \frac{ \tilde C+\int_0^{1-r} e^s f_i(s) ds}{2\pi}d\phi.
\end{split}
\end{equation}
Put $h_1(r)=e^{1-r}$ and $h_2(r)=\frac{ \tilde C+\int_0^{1-r} e^s f_i(s) ds}{2\pi}$ for $r>\delta$, where $\delta$ is some positive number, to see that this form coincides with the one given in Lemma~\ref{lemma:invariant_form_binding}.
Indeed, observe that $h_2'(r)=\frac{-1}{2\pi}e^{1-r}f_i(1-r)$ is positive for small $r$.
Therefore we can extend it to the whole set $P\times D^2$ as an invariant contact form that is of open book type near $r=0$.

To obtain the claim about the almost dividing set, observe that
$$
i_{X_{-1,\ell}}\alpha_{inv}=\ell h_2(r)-h_1(r).
$$
For $r=r_1+\delta_{glue}$, this is positive, and for $r=0$, it is negative.
Since $\ell h_2'(r)-h_1'(r)>0$ on the interval $[0,r_1+\delta_{glue}[$, it follows that there is a unique $r_0\in [0,r_1+\delta_{glue}[$, where $i_{X_{-1,\ell}}\alpha$ vanishes.
If we go further into the page, we use the page model $W\times \R /\sim$; insert the vector field generating the $S^1$-action, \eqref{eq:vf_S^1_pages} and its extension to the content of the pages, into the contact form.
We get a function that is non-decreasing as we go deeper into the page (decreasing $t$-coordinate).
In particular, we see that the $S^1$-action is positively transverse to the contact structure on the content of the pages.
The almost dividing set is hence the set $(P\times \{ r_0 \}\times S^1)/S^1\cong P/\Z_\ell$ (see Formula~\eqref{eq:action_binding} with $a=-1,b=\ell$).
\end{proof}

\begin{remark}
Doing the construction of the above proof in the case of $a=1$ will not give the standard Boothby--Wang form.
Instead, we only obtain an invariant contact structure with a positively transverse contact vector field.
By rescaling $\alpha_{inv}$ we obtain the standard Boothby--Wang contact form.
\end{remark}

In Figure~\ref{fig:circle_actions} we have visualized the difference between the circle actions on an open book with a left-handed twist and one with a right-handed twist.

\begin{figure}[htp]
\def\svgwidth{0.85\textwidth}%
\begingroup\endlinechar=-1
\resizebox{0.85\textwidth}{!}{%
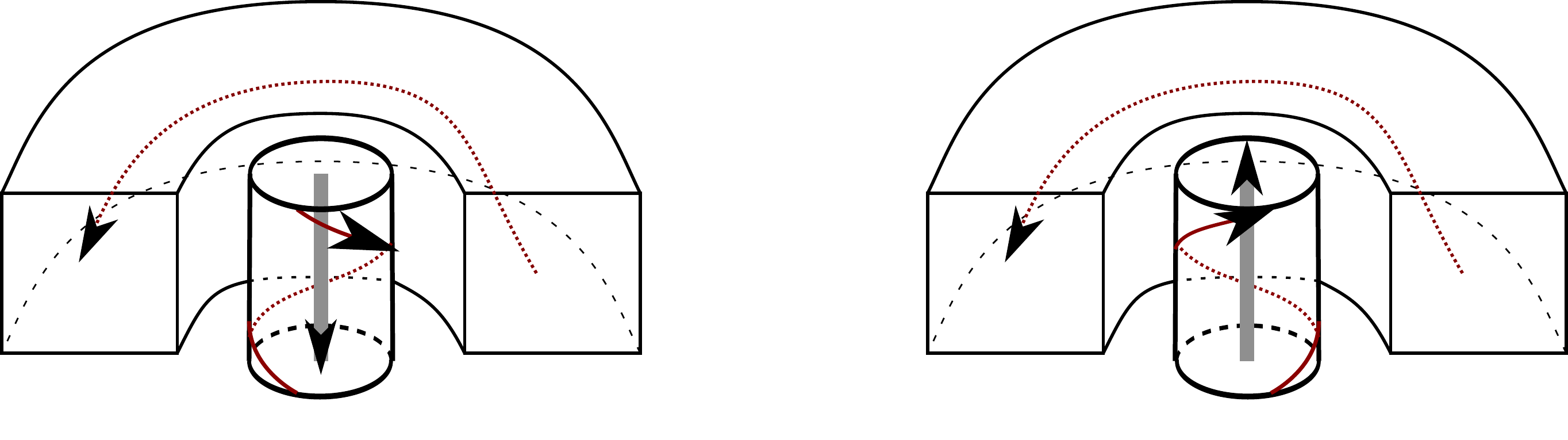%
}\endgroup
\caption{Direction and orbits of the circle action on an open book with a left-handed twist (left) and a right-handed twist (right)}
\label{fig:circle_actions}
\end{figure}

\begin{theorem}
\label{thm:left_and_right_twisted}
Let $W$ be a Liouville domain with boundary $P$ admitting a right-handed fractional twist $\tau$ of power $\ell$.
Then we have
\begin{itemize}
\item[(R)] the contact open book $\OB(W,\tau)$ is contactomorphic to a prequantization bundle $(Y,\alpha)$ over a symplectic manifold. In particular, the almost dividing set is empty, and the contact manifold is convex fillable.
\item[(L)] the contact open book $\OB(W,\tau^{-1})$ is diffeomorphic to a principal circle bundle over a smooth manifold $M$, and supports an $S^1$-invariant contact form $\alpha_{inv}$. Furthermore, the almost dividing set of $\alpha_{inv}$ is contactomorphic to $P/\Z_\ell$.
\end{itemize}
\end{theorem}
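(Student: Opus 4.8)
The plan is to derive Theorem~\ref{thm:left_and_right_twisted} by essentially stitching together Lemma~\ref{lemma:decomposition} and the earlier discussion of contact open books, after first reconciling two apparent mismatches: the construction in Section~\ref{sec:right_left_binding_twist} used twisting profiles $f_m$ that are \emph{not} the identity near $t=1$ (the boundary of the page), whereas the open book $\OB(W,\psi^{-1})$ of Section~\ref{sec:contact_OB} requires $\psi$ to be the identity near $\partial W$; and the gluing region $D_{glue}$ there is an annulus, not the full binding neighborhood $P\times D^2_{r_0}$ of Section~\ref{sec:contact_OB}. So the first step is to record that a fractional twist with the profile of Figure~\ref{fig:profile_function} is isotopic, through symplectomorphisms equal to a boundary-adjusted twist, to one with a profile that is the identity near $t=1$: concretely, $f_m$ is compactly supported in the open collar up to the constant piece, and the flow of $f_m R_P$ with small slope can be pushed to be zero near the boundary by an isotopy supported in the margin, so the resulting open books are contactomorphic (this is the ``deform into a contact open book'' remark promised just before Lemma~\ref{lemma:decomposition}).

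For part (R), I would take $a=+1$, $b=1$ in the framework of Section~\ref{sec:right_left_binding_twist}, and observe that Lemma~\ref{lemma:decomposition}, together with the Remark following it, shows $\OB(W,\tau)$ carries an invariant contact form with a \emph{positively transverse} contact vector field $X_{1,1}$; the almost dividing set $\{i_{X_{1,1}}\alpha_{inv}=0\}$ is then empty because $\ell h_2 - h_1$ — more precisely the relevant combination $h_2-h_1$ — never vanishes once the profile is right-handed. Rescaling $\alpha_{inv}$ by a suitable positive function (as in that Remark) turns it into an honest Boothby--Wang form $\theta$ satisfying \eqref{eq:d_connection} for the quotient symplectic manifold $M_+$, which exhibits $\OB(W,\tau)$ as a prequantization bundle. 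Convex fillability is then immediate: the associated disk bundle $P\times_{S^1,+}D^2$ is a concave filling of $P$, so glued to the Liouville domain $W$ along $P$ it gives a closed symplectic manifold; removing a Darboux ball, or more cleanly invoking Lemma~\ref{lemma:fillability_BW}, produces the convex filling. (One can also quote \cite{CDvK:right-handed} directly here, since (R) with $\ell=1$ is done there.)

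For part (L), I would take $a=-1$, $b=\ell$ and apply Lemma~\ref{lemma:decomposition} verbatim: it gives that $\OB(W,\tau^{-1})$, which by the gluing conventions of Section~\ref{sec:contact_OB} equals $P\times D^2 \cup_{\psi_G} W\times\R/\!\sim$ with monodromy $\tau^{-1}$ (hence profile $f_i=-f_m$), is a principal $S^1$-bundle over $M_- \cong P\times_{S^1,-}D^2 \cup_{\bar\psi_G} W/\Z_\ell$, equipped with the invariant contact form $\alpha_{inv}=d\phi+\lambda$ on the page region extended over the binding by \eqref{eq:pullback_form}. The same lemma identifies the almost dividing set as $(P\times\{r_0\}\times S^1)/S^1 \cong P/\Z_\ell$, with the induced contact structure being the one from the proposition on almost dividing sets (\cite[Lemma~3.3]{DingGeiges:circle_bdls}); since on the content of the page the $S^1$-action is positively transverse to $\xi$, the only zero of $i_{X_{-1,\ell}}\alpha_{inv}$ is the single $r_0$ in the binding neighborhood, so the dividing set is exactly that one copy of $P/\Z_\ell$ and is nonempty. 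The remaining point is to check that this $P/\Z_\ell$ with its induced contact form is contactomorphic to the prequantization bundle named in the statement: the induced form is $h_1(r_0)\lambda_P$ restricted appropriately, i.e.\ a positive multiple of $\lambda_P$ descended through the free $\Z_\ell$-quotient coming from the covering $pr_\ell$, which is again a Boothby--Wang form over $(Q,\tfrac{k}{\ell}\omega)$ — so it is $\tilde P/\cdots$, matching the claim.

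The main obstacle I expect is the first reconciliation step — showing that the ``open book'' built with the non-identity profile of Figure~\ref{fig:profile_function} is genuinely contactomorphic to $\OB(W,\tau^{\pm1})$ in the clean sense of Section~\ref{sec:contact_OB}, and in particular that extending $\alpha_{inv}$ from the page region across the full disk bundle neighborhood of the binding (not just the annulus $D_{glue}$) can be done while keeping it invariant and of open-book type near $r=0$. This is exactly the content hidden in the phrase ``this form coincides with the one given in Lemma~\ref{lemma:invariant_form_binding}'' together with the extension to $r<\delta$, and it requires checking that $h_1,h_2$ as defined by \eqref{eq:pullback_form} can be interpolated to functions satisfying $h_1h_2'-h_2h_1'>0$ and the boundary behavior of Figure~\ref{fig:functions_binding} without destroying $S^1$-invariance; invariance is automatic from Lemma~\ref{lemma:invariant_form_binding} since any radial $(h_1,h_2)$ works, so the real work is just the one-dimensional interpolation, which is routine. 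Everything else is bookkeeping assembling Lemmas~\ref{lemma:invariant_form_binding}, the page-invariance lemma, and Lemma~\ref{lemma:decomposition}.
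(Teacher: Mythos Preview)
Your proposal is essentially correct and follows the same architecture as the paper: invoke Lemma~\ref{lemma:decomposition}, then reconcile the resulting $S^1$-invariant manifold with a genuine contact open book, and defer convex fillability to Lemma~\ref{lemma:fillability_BW}. The one substantive difference is in how the reconciliation is carried out. You propose to isotope the twisting profile $f_m$ so that the monodromy becomes the identity near $\partial W$, and then invoke the fact that open books with isotopic monodromies are contactomorphic. The paper instead keeps the manifold $Y$ fixed and deforms the contact form: it chooses a $1$-parameter family $h_2^s$ with $h_2^0=h_2$ and $h_2^1$ constant on an open interval away from $r=0$ (preserving the contact condition $h_1 (h_2^s)' - h_2^s h_1' \neq 0$), then applies Gray stability. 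On the interval where $h_2^1$ is constant the Reeb field is purely in the $\partial_\phi$-direction, so one can cut at some $r_c$ in that interval and read off the open book decomposition directly. The paper's route is cleaner because it avoids having to identify a family of a priori different manifolds $Y^s$ (built from varying equivalence relations) and works with a single underlying space; your approach would work too, but you would still need Gray stability (or an equivalent Moser argument) at the end, which you do not name explicitly.

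Two small slips to flag. In part (R) the relevant pairing is $i_{X_{a,b}}\alpha_{inv} = a h_1 + b h_2$, so for $a=+1$ this is $h_1 + b h_2 > 0$, not $\ell h_2 - h_1$ or $h_2 - h_1$; your conclusion (empty almost dividing set) is still correct. Also, your closing identification of $P/\Z_\ell$ as a Boothby--Wang bundle over $(Q,\tfrac{k}{\ell}\omega)$ is not needed for the theorem (the statement only asserts contactomorphism with $P/\Z_\ell$), and in fact quotienting the cover by the $\Z_\ell$-deck action returns the original prequantization over $(Q,k\omega)$, not $(Q,\tfrac{k}{\ell}\omega)$.
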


\begin{proof}
We apply Lemma~\ref{lemma:decomposition} which does not directly give a contact open book.
However, we can deform the contact form $\alpha_{inv}$ to a contact form of open book type: choose a $1$-parameter family of functions $h_2^s$ such that $h_2^0=h_2$, $h_2^1$ is constant on some open interval away from $r=0$, and the contact condition, $h_1{h_2^s}'-h_2^sh_1'\neq 0$, holds for all $s$.
By Gray stability, the resulting contact manifolds are contactomorphic.

The open interval contains some point $r_c$, so by taking a suitable neighborhood $P\times D^2_{r_c}$, we find the decomposition 
$$
Y=P\times D^2_{r_c} \cup_{\psi_G} W\times \R /\sim.
$$
By construction, the monodromy is isotopic to a fractional twist.
This proves most of the assertions, except for the statement about convex fillability. We prove the last claim in Lemma~\ref{lemma:fillability_BW}.
\end{proof}

\subsection{Examples}
The left-handed stabilization of the standard contact sphere $\OB(D^{2n},\id)$ is given by the contact open book
$$
\OB(T^*S^{n},\tau^{-1}),
$$
where $\tau$ is a right-handed Dehn twist. By Example~\ref{ex:std_dehn_twist}, this is a special case of a fractional fibered Dehn twist.

\begin{proposition}
\label{prop:negative_stabilization_as_invariant_contact}
The left-handed stabilization $\OB(T^*S^{n},\tau^{-1})$ is diffeomorphic to the Hopf fibration over $\C \P^n$. However, it has an almost dividing set contactomorphic to $(ST^*\R \P^{n},\lambda_{can})$.
\end{proposition}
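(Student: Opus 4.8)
The plan is to prove the two assertions separately. The statement about the almost dividing set is essentially a direct reading of Theorem~\ref{thm:left_and_right_twisted}(L), while the claim that $\OB(T^*S^n,\tau^{-1})$ is, as a principal $S^1$-bundle, the Hopf fibration $S^1\to S^{2n+1}\to\C\P^n$ is where the work lies. Here $\tau$ is the fractional fibered Dehn twist of power $\ell=2$ on $T^*_{\leq1}S^n$ coming from Example~\ref{ex:std_dehn_twist}, so $W=T^*_{\leq1}S^n$, $P=\partial W=ST^*S^n$, and $W/\Z_2=T^*_{\leq1}\R\P^n$.

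For the diffeomorphism type I would use Milnor fibrations. The link of the $A_1$--singularity $f=z_0^2+\dots+z_n^2$ is the standard contact sphere: the Milnor fibration $z\mapsto f(z)/|f(z)|$ on $S^{2n+1}\setminus f^{-1}(0)$ has page the Milnor fibre $\cong T^*_{\leq1}S^n$ and monodromy the positive Picard--Lefschetz twist, i.e.\ Arnold's generalized Dehn twist $\tau$; hence $\OB(T^*S^n,\tau)\cong(S^{2n+1},\xi_0)$, which as a prequantization bundle is the Hopf fibration over $(\C\P^n,\omega_{FS})$ with principal action $g\cdot z=gz$. Replacing $f$ by $\bar f(z)=\overline{f(z)}=\bar z_0^2+\dots+\bar z_n^2$ and using complex conjugation $c\colon z\mapsto\bar z$, which maps $\bar f^{-1}(\epsilon)=f^{-1}(\bar\epsilon)$ to the regular fibre $f^{-1}(\epsilon)$, identifies the page of the $\bar f$--fibration again with $T^*_{\leq1}S^n$ but now with reversed monodromy $\tau^{-1}$; thus $\OB(T^*S^n,\tau^{-1})$ is diffeomorphic to $S^{2n+1}$ via the $\bar f$--Milnor fibration. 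Under this identification the Hopf action $g\cdot z=gz$ still preserves $S^{2n+1}$, and since $\bar f(gz)=g^{-2}\bar f(z)$ it rotates the pages of the $\bar f$--open book at rate $2=\ell$ and acts near the binding $ST^*S^n=\{\bar f=0\}\cap S^{2n+1}$ by $(p,z)\mapsto(g^{-1}p,g^2z)$ — precisely the normal form \eqref{eq:action_binding} with $a=-1$, $b=\ell=2$ occurring in Lemma~\ref{lemma:decomposition}. So this Hopf action is a model for the invariant $S^1$--action of Theorem~\ref{thm:left_and_right_twisted}(L), and its quotient is $\C\P^n$.

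The step that needs care, and which I expect to be the main obstacle, is making this last matching precise: one must exhibit a diffeomorphism $\OB(T^*S^n,\tau^{-1})\to S^{2n+1}$ carrying the abstractly constructed action of Lemma~\ref{lemma:decomposition} to the Hopf action. I would do this by comparing the decomposition $M\cong\bigl(ST^*S^n\times_{S^1}D^2\bigr)\cup_{\bar\psi_G}T^*_{\leq1}\R\P^n$ of Lemma~\ref{lemma:decomposition} (with $\ell=2$) to the classical decomposition of $\C\P^n$ along its real locus $\R\P^n$: the normal bundle of $\R\P^n$ in $\C\P^n$ is $i\cdot T\R\P^n\cong T^*\R\P^n$, so a closed tubular neighbourhood of $\R\P^n$ is diffeomorphic to $T^*_{\leq1}\R\P^n$, while $[x+iy]\mapsto\operatorname{span}_\R(x,y)$, oriented by $(x,y)$, realizes the complement $\C\P^n\setminus\operatorname{int}\nu(\R\P^n)$ as a disk bundle over the oriented Grassmannian $\widetilde G_2(\R^{n+1})=ST^*S^n/S^1$ (the fibre being $GL_2^+(\R)/\C^*$, an open disk), which matches $ST^*S^n\times_{S^1}D^2$. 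One then has to check that the two gluings along the common boundary $ST^*\R\P^n$ agree — both arise from the unit geodesic flow of the round metric, i.e.\ from the identification of $ST^*\R\P^n$ with the normal sphere bundle — which upgrades $M\cong\C\P^n$ to an isomorphism of the two circle bundles; equivalently one checks that the Euler class of the disk bundle $ST^*S^n\times_{S^1}D^2$ over $\widetilde G_2(\R^{n+1})$ coincides with that of the complement of $\R\P^n$ in $\C\P^n$. Either route yields that $\OB(T^*S^n,\tau^{-1})\to M$ is the Hopf bundle over $\C\P^n$.

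For the almost dividing set, Theorem~\ref{thm:left_and_right_twisted}(L) gives directly that it is contactomorphic to $P/\Z_\ell$ with $P=ST^*S^n$ and $\ell=2$; it remains to identify this contact manifold. The generator $\zeta_2$ of the $\Z_2$--action is the time-$\pi$ geodesic flow on the round $S^n$, which on unit covectors sends $(q,v)\mapsto(-q,-v)$, and this is exactly the deck transformation of the double cover $ST^*S^n\to ST^*\R\P^n$; hence $P/\Z_2\cong ST^*\R\P^n$. The contact form induced on the almost dividing set in the construction is the Boothby--Wang form $\lambda_P=\lambda_{can}|_{ST^*S^n}$ (up to the harmless rescaling absorbed by Gray stability), and $\lambda_{can}$ is covering-natural, so it descends to $\lambda_{can}$ on $ST^*\R\P^n$. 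Therefore the almost dividing set is contactomorphic to $(ST^*\R\P^n,\lambda_{can})$, completing the proof; everything outside the equivariant bundle identification of the middle step is bookkeeping with the models already set up in Section~\ref{sec:invariant_contact}.
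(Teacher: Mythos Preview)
Your proof is correct, and your decomposition argument is exactly the paper's, just in different language. The paper applies Lemma~\ref{lemma:decomposition} to write $M\cong (ST^*S^n\times_{S^1}D^2)\cup T^*S^n/\Z_2$, identifies the first piece with the disk bundle $\mathcal O_{Q^{n-1}}(-2)$ over the quadric $Q^{n-1}\subset\C\P^n$ and the second with $T^*\R\P^n$, and concludes $M\cong\C\P^n$. Your oriented Grassmannian $\widetilde G_2(\R^{n+1})$ \emph{is} the quadric $Q^{n-1}$ via $[x+iy]\mapsto\Span_\R(x,y)$, so your ``complement of $\R\P^n$'' piece is precisely the paper's disk bundle over the quadric, and the two decompositions agree on the nose. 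The almost dividing set argument is likewise identical: both read off $P/\Z_\ell=ST^*S^n/\Z_2\cong ST^*\R\P^n$ from Lemma~\ref{lemma:decomposition}/Theorem~\ref{thm:left_and_right_twisted}(L).

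The Milnor-fibration route you lead with is additional and not in the paper; it has the virtue of producing the total space $S^{2n+1}$ and an explicit free $S^1$-action simultaneously, so one does not have to assemble $\C\P^n$ from pieces. The equivariance step you correctly flag as the delicate point is harmless up to orientation of the circle: $\bar f(gz)=\bar g^{\,2}\bar f(z)$ gives rotation of the page coordinate at rate $-2$, which matches the $(a,b)=(-1,2)$ model of Lemma~\ref{lemma:decomposition} after replacing $g$ by $g^{-1}$; either sign yields quotient $\C\P^n$. The paper sidesteps this bookkeeping by working directly on the quotient and citing the classical decomposition of $\C\P^n$ along the quadric, which is shorter but less self-contained than your argument.
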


\begin{proof}
According to Lemma \ref{lemma:decomposition}, we have the decompositions
\[
M\cong ST^*S^n\times_{S^1} D^2 \cup T^*S^n/\Z_2.
\] 
Now note that $ST^*S^n\times_{S^1} D^2 \cong \mathcal{O}_{Q^{n-1}}(-2)$ is the line bundle dual to the neighborhood of the quadric in $\C \P^n$ and $T^*S^n/\Z_2 \cong T^*\R \P^n$. Gluing these two pieces we conclude that $M$ is diffeomorphic to $\C \P^n$, and the boundary of the disk bundle, $ST^*\R \P^{n}$, is the almost dividing set.   
\end{proof}

\begin{remark}
The right-handed stabilization of the standard contact sphere $\OB(D^{2n},\id)$ is given by the contact open book $\OB(T^*S^{n},\tau)$. It is contactomorphic to the Hopf fibration over $\C \P^n$. Its almost dividing set is empty.  
\end{remark}

\begin{example}
We can also consider a right-handed fibered Dehn twist $\tau$ on $D^{2n}$.
In this case $\tau$ is symplectically isotopic to identity relative to the boundary.
Hence
$$
\OB(D^{2n},\tau)\cong \OB(D^{2n},\id)\cong (S^{2n+1},\xi_0).
$$
Furthermore, the $S^1$-invariant contact structure from Lemma~\ref{lemma:decomposition} is the standard prequantization structure, so the almost dividing set in $M=(S^{2n-1}\times_{S^1} D^2) \cup D^{2n}\cong \C \P^n$ is empty.

On the other hand, we can also consider a left-handed fibered twist, which is also symplectically isotopic to the identity relative to the boundary.
Then
$$
\OB(D^{2n},\tau^{-1})\cong \OB(D^{2n},\id)\cong (S^{2n+1},\xi_0).
$$
In this case, the $S^1$-invariant contact structure from Lemma~\ref{lemma:decomposition} has an almost dividing set contactomorphic to $(S^{2n-1},\xi_0)$ in $\overline {\C \P}^n$.
\end{example}

\section{Reeb orbits, Maslov indices and actions}
\label{sec:indices}
Let $(W^{2n-2},d\lambda)$ be a Liouville domain with boundary $P=\partial W$. 
Write $\lambda_P:=\lambda|_{P}$ for the contact form on $P$.
Assume that $(P,\lambda_P)$ is a prequantization bundle over an integral symplectic manifold $(Q,k\omega)$, where $\omega$ is primitive and $k\in \Z_{\geq 1}$.
Let $\psi$ be a symplectomorphism on $W$ that is the identity near the boundary.
Consider the contact open book $Y=\OB(W,\psi)$.  
The contact form in a neighborhood of the binding, $P\times D^2$, is given by
$$
\alpha=h_1(r) \lambda_P+h_2(r)d\phi.
$$
Define the matrix
$$
H=\left(
\begin{array}{cc}
h_1 & h_2 \\
h_1' & h_2'
\end{array}
\right)
.
$$
Then the Reeb vector field is given by
$$
R_\alpha=\frac{1}{\det H}\left( h_2' R_{P} -h_1' \partial_\phi \right).
$$
Reeb orbits have hence the form $t\mapsto (\gamma_P(\frac{h_2'(r)}{\det H(r)}t),r,-\frac{h_1'(r)}{\det H(r)}t)$, where $\gamma_P$ is a Reeb orbit in the binding.
We see that a Reeb orbit in $P\times D^2$ is periodic if and only if $\frac{h_2'(r)}{h_1'(r)}\in \Q$ because the Reeb flow $R_P$ is periodic with period $2\pi$.
Hence we can parametrize a simple periodic Reeb orbit by
$$
\gamma_{i,j,\phi_0}(t)=(\gamma_P(jt),r,i t),
$$
where $i,j$ are relatively prime and satisfy $-\frac{h_2'(r)}{h_1'(r)}=\frac{j}{i}$.
The action of these simple orbits is given by
\begin{equation}
\label{eq:action_near_binding}
\mathcal A(\gamma_{i,j,\phi_0})=2\pi h_1(r) j +2\pi h_2(r) i.
\end{equation}
\subsubsection{Reeb dynamics away from the binding}

In a general contact open book, one can understand periodic Reeb orbits that lie in the pages in terms of fixed points of iterates of the monodromy.
Indeed, if $\psi^m(x)=x$ for some $x$ in $W$, then there is $T>0$ such that $Fl^{R}_T(x,\phi)=(x,\phi)$: the Reeb field on the pages is given by $R=\partial_\phi$, so each turn around the binding corresponds to an application of $\psi$.
\begin{equation}
\label{eq:identification}
Fl^R_t(x,\phi)=(x,\phi+t)\sim (\psi^{-1}(x),\phi+t+U_i(x)\,).
\end{equation}
Hence we have
\begin{lemma}
Periodic Reeb orbits that lie in the pages are in $1-1$-correspondence to fixed points of the monodromy $\psi$ and its iterates.
\end{lemma}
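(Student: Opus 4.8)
The plan is to read the statement off directly from the shape of the Reeb dynamics on the mapping torus part $Map(W,\psi)$. There the contact form is $d\phi+\lambda$ with $\lambda$ pulled back from $W$, hence independent of $\phi$; therefore $\iota_{\partial_\phi}(d\phi+\lambda)=1$ and $\iota_{\partial_\phi}d(d\phi+\lambda)=\iota_{\partial_\phi}d\lambda=0$, so the Reeb field is $R=\partial_\phi$ and its flow is $Fl^R_t(x,\phi)=(x,\phi+t)$ in the $W\times\R$-coordinates, exactly as recorded in \eqref{eq:identification}.

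The next step is to decide when such a trajectory closes up in the quotient $W\times\R/\!\sim$. Iterating the identification $(x,\phi)\sim(\psi^{-1}(x),\phi+U_i(x))$ gives, for $m\ge 1$,
$$
(x,\phi)\ \sim\ \bigl(\psi^{-m}(x),\ \phi+U_i^{(m)}(x)\bigr),\qquad U_i^{(m)}(x):=\sum_{j=0}^{m-1}U_i\bigl(\psi^{-j}(x)\bigr),
$$
so that $Fl^R_T(x,\phi)=(x,\phi)$ for some $T>0$ if and only if $\psi^{-m}(x)=x$ for some $m\ge 1$, in which case $T=U_i^{(m)}(x)$; positivity of $T$ is automatic since $U_i>0$ by Giroux's lemma on the stretched mapping torus. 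Thus a Reeb trajectory lying in the pages is periodic precisely when its base point $x$ is a periodic point of $\psi$, with period the corresponding sum $U_i^{(m)}(x)$.

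Finally I would set up the bijection explicitly. To a periodic Reeb orbit $\gamma\subset Map(W,\psi)$ assign the $\psi$-orbit $\{\psi^j(x)\}_{j\in\Z}$ of any $x$ with $(x,\phi)\in\gamma$; this is well defined because moving along $\gamma$ only changes $\phi$, while crossing the identification replaces $x$ by $\psi^{\pm 1}(x)$. Conversely a periodic point $x$ of $\psi$ yields the closed orbit through $(x,0)$, which depends only on the $\psi$-orbit of $x$, i.e.\ on the datum of a fixed point of some iterate $\psi^m$. These two maps are mutually inverse, giving the asserted $1$--$1$ correspondence. Orbits that enter the binding neighborhood $P\times D^2$, where $\psi=\id$, are instead described by the model preceding \eqref{eq:action_near_binding}, and are excluded by the phrase ``lying in the pages''.

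I do not expect a genuine obstacle here: the content is the identification $R=\partial_\phi$ together with unwinding the mapping torus relation. The only care needed is the bookkeeping of the cocycle $U_i^{(m)}$ with the sign conventions that make $T>0$, and the minor point that the correspondence is to be read at the level of $\psi$-orbits of periodic points, so that distinct points on one Reeb orbit are not counted separately.
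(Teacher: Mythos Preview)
Your proposal is correct and follows essentially the same approach as the paper: the paper does not give a separate proof but simply states the lemma as an immediate consequence of the observation that $R=\partial_\phi$ on $Map(W,\psi)$ together with the identification~\eqref{eq:identification}. Your write-up expands this into a careful argument, including the cocycle $U_i^{(m)}$ and the remark that the bijection is really with $\psi$-orbits of periodic points; this last point is a genuine clarification of the somewhat loose phrasing in the lemma.
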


Note that for the class of monodromies we will be considering from now on, namely left-handed fractional twists, the fixed point sets come in families, and most of the interesting behavior happens in a (large) neighborhood of the binding. Topologically, this set has the form $P\times D^2$.

Define $\Sigma_{i,j}$ to be the set of points $x$ in the margin part of the pages of the contact open book $Y=\OB(W,\psi)$ such that $x$ lies on a periodic Reeb orbit $\gamma_x$ with the properties that
\begin{itemize}
\item $\gamma_x$ has linking number $i$ with the binding $P$, and
\item the projection $P\times D^2 \to P$ sends $\gamma_x$ to a $j$-fold cover of a fiber of $P$.
\end{itemize}
We denote the corresponding orbit space by $S_{i,j}=\Sigma_{i,j}/S^1$.

\subsection{Spanning disks and Maslov indices}
\label{sec:spanning_disks}
We now assume in addition to the conditions listed at the beginning of Section~\ref{sec:indices} that
\begin{itemize}
\item $c_1(Q)=c[\omega]$.
\item $n\geq 3$, $k=1$ and $\pi_1(Q)=0$. This guarantees that the fibers of the prequantization bundle are contractible, so we can find disks bounding these fibers.
\end{itemize}
In Lemma~\ref{lemma:CZ_special_orbit} and Lemma~\ref{lemma:index_control_2} we will point out that a small part of the setup here also works out in general, and we will also point out some topological conclusions that can be drawn if $\pi_1(Q)\neq 0$.

Given these assumptions we construct a spanning disk for a periodic Reeb orbit using the binding and page model.
In our setup, the inverse of the monodromy is given by $\psi^{-1}=Fl^{f_iR_{P}}_1$, where $f_i$ is the function defined in Lemma~\ref{lemma:decomposition} for a left-handed twist.

The equivalence relation~\eqref{eq:identification} then motivates the identification map, which we will use to ``straighten the mapping torus''
\[
\begin{split}
\Psi: P\times I \times \R & \longrightarrow P\times I \times \R \\
(p,t,\phi) & \longmapsto (Fl^{f_i(t)R_{P}}_1(p),t,\phi+U_i(t)\, ).
\end{split}
\]
With $U_i'(t)=-f_i'(t)e^t$ we compute the differential of $\Psi$ as
$$
T\Psi=
\left(
\begin{array}{ccc}
TFl^{f_i(t)R_{P}}_1 & f_i'(t) R_{P} & 0 \\
0 & 1 & 0 \\
0&-f_i'(t)e^t & 1
\end{array}
\right)
.
$$

\subsubsection{Construction of an annulus bounding an orbit}
\label{sec:annulus_trivialization}
We construct an annulus that glues to a disk in the binding model, and bounds a specific Reeb orbit.
For this, we choose $p_0\in P$. 
Define $Inv(r)=1-r$, and define a map from an annulus into the pages of the open book.
\begin{eqnarray*}
\psi_A:~S^1 \times I & \longrightarrow & P\times I \times \R/\sim \quad\subset  W \times \R/\sim \\
(\phi,r) & \longmapsto & (Fl^{R_P}_{f_i\circ Inv(r) \frac{\phi}{2\pi} } (p_0),Inv(r);U_i\circ Inv(r) \frac{\phi}{2\pi} ).
\end{eqnarray*}
This map is induced by the gluing map $\psi_G$ defined in Equation~\eqref{eq:gluingmap_binding_to_pages}. It is well-defined as
$$
(\phi+2\pi,r)\mapsto  (Fl^{R_P}_{f_i\circ Inv(r) \frac{\phi}{2\pi}+ f_i\circ Inv(r) } (p_0),Inv(r),U_i\circ Inv(r)  \frac{\phi}{2\pi} +U_i \circ Inv(r))\sim
 (Fl^{R_P}_{f_i\circ Inv(r) \frac{\phi}{2\pi} } (p_0),Inv(r),U_i\circ Inv(r) \frac{\phi}{2\pi} ).
$$
Denote the image of $\psi_A$ by $C$.
Since the contact form on $P\times I \times \R /\sim$ is given by $\alpha=d\phi+e^t \lambda_P$, we use the splitting $\ker \alpha=\ker \lambda_P\oplus \Span( \partial_t,\partial_\phi-e^{-t}R_P ) $.
Define $V$ to be the symplectic vector space $(\xi_P,d\lambda_P)|_{p_0}$.
Use the following map to trivialize the contact structure along the annulus $C$,
\begin{eqnarray*}
S^1 \times I \times V \oplus ({\R^2},\omega_0) & \longrightarrow & \xi|_C \\
(\phi,r;v,w_1,w_2)  & \longmapsto & (\psi_A(\phi, r);
\left(
\begin{array}{ccc}
TFl^{R_P}_{f_i\circ Inv(r)\frac{\phi}{2\pi}}|_{\xi_P} &f_i'\circ Inv(r) \frac{\phi}{2\pi} R_{P} & -e^{-Inv(r)}R_P \\
0 & 1 & 0\\
0 & -f_i'\circ Inv(r) e^{Inv(r)} \frac{\phi}{2\pi} & 1
\end{array}
\right)
\left(
\begin{array}{c}
v \\
w_1\\
w_2
\end{array}
\right)
)
.
\end{eqnarray*}
We claim that this trivialization is well-defined.
Indeed, if we insert $\phi+2\pi$ instead of $\phi$, then we can recognize the image as a composition with $T\Psi$.

With respect to this trivialization, which we denote by $\epsilon=\epsilon_{\xi_P}\oplus \epsilon_w$, the path of symplectic matrices of the linearized flow is given by
$$
s\longmapsto 
\left(
\begin{array}{ccc}
TFl^{R_P}_{-f_i(t)s}|_{\xi_P} & 0 & 0 \\
0 & 1 & 0\\
0 & f_i'(t)e^t s & 1
\end{array}
\right)
,
$$
where we have written $t=Inv(r)$ (put $s=\frac{\phi}{2\pi}$).
Now apply the direct sum axiom and a variation of the normalization axiom for the Maslov index of symplectic paths, \cite{Robbin:Maslovindex};
note that the symplectic form for the $\epsilon_w$-part has a minus sign, so we obtain
$$
\mu(S_{i,j};\epsilon)= \mu( TFl^{R_P}_{-f_i(t)s}|_{\xi_P};\epsilon_{\xi_P} )+\mu(
\left(\,
\begin{array}{cc}
1 & 0\\
f_i'(t)e^t s & 1
\end{array}
\right)
\,
;\epsilon_w
)
=+2cj-\frac{1}{2} \sgn f_i'.
$$
Since this trivialization ``winds'' around the binding, we modify the trivialization by composing with a suitable loop of symplectic matrices.
This new trivialization extends over a disk, and the Maslov index gets an additional contribution of $2i$, where $i$ is the number of revolutions around the binding.
We conclude
$$
\mu(S_{i,j})=2i+2cj-\frac{1}{2} \sgn f_i'.
$$
Since the orbit space $S_{i,j}$ has dimension $2n-3$, we obtain a formula for the reduced index with \cite[Lemma~2.4]{Bourgeois:thesis}.
\begin{lemma}[Conley-Zehnder index after perturbation]
Let $f_{Morse}$ be a Morse function on the orbit space $S_{i,j}$. Lift this Morse function to an $S^1$-invariant function $\bar f_{Morse}$, and define the perturbed contact form $\alpha_\epsilon=(1+\epsilon\bar f_{Morse}) \alpha$.
Fix a constant $T_{threshold}$. Then for $\epsilon$ sufficiently small all periodic Reeb orbits of $\alpha_\epsilon$ with action less than $T_{threshold}$ correspond to critical points of $f_{Morse}$.
Furthermore, the reduced index of a periodic Reeb orbit corresponding to the critical point $a$ of $f_{Morse}$ is given by
\begin{equation}
\label{eq:reduced_index_orbitspaces}
\bar \mu(\gamma_a)=2i+2cj-3/2 -\frac{1}{2} \sgn f_i'+\ind_a f_{Morse}.
\end{equation}
\end{lemma}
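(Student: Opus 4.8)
The plan is to treat this as the standard Morse--Bott perturbation statement and to feed into it the Maslov index already computed in Section~\ref{sec:annulus_trivialization}.

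First I would check that $\alpha$ is of Morse--Bott type along the relevant families. Near the binding and in the margin of the pages the action spectrum is discrete (the functions $h_1,h_2$ were chosen with rational slopes and $P$ is a prequantization bundle), the orbits of a fixed period assemble into the closed submanifolds $\Sigma_{i,j}$, which are $S^1$-bundles over the orbit spaces $S_{i,j}$, and the cleanness condition $T_p\Sigma_{i,j}=\ker(TFl^{R_\alpha}_T-\id)_p$ can be read off directly from the explicit path of linearized return maps displayed in Section~\ref{sec:annulus_trivialization}: the only nontrivial blocks are $TFl^{R_P}_{-f_i(t)s}|_{\xi_P}$ and the unipotent $(\partial_t,\partial_\phi)$-block, and the kernel of $(\text{return map})-\id$ is exactly the tangent space to the family. (The hypotheses $n\ge 3$, $k=1$, $\pi_1(Q)=0$ are what make the binding fibers contractible, which is needed for the spanning annulus of Section~\ref{sec:annulus_trivialization} to exist.)

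Next I would run the Morse--Bott perturbation itself. Choosing Morse functions $f_{Morse}$ on the orbit spaces, lifting them to $S^1$-invariant functions $\bar f_{Morse}$, extending to $Y$, and setting $\alpha_\epsilon=(1+\epsilon\bar f_{Morse})\alpha$, the cited result \cite[Lemma~2.4]{Bourgeois:thesis} gives, for each fixed $T_{threshold}$ and all sufficiently small $\epsilon>0$, that the periodic Reeb orbits of $\alpha_\epsilon$ of action $<T_{threshold}$ are non-degenerate and correspond bijectively to the critical points of these Morse functions with orbit action $<T_{threshold}$; here I would use the explicit classification of the unperturbed orbits (the lemmas of this section and Section~\ref{sec:indices}) to rule out unexpected short orbits below the threshold. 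The same lemma computes the Conley--Zehnder index of the perturbed orbit $\gamma_a$ over a critical point $a$ in the family $S_{i,j}$,
\[
\mu_{CZ}(\gamma_a)=\mu(S_{i,j})-\tfrac{1}{2}\dim S_{i,j}+\ind_a f_{Morse},
\]
where $\mu(S_{i,j})$ is the Robbin--Salamon index of the Morse--Bott family with respect to the trivialization fixed in Section~\ref{sec:annulus_trivialization}. Inserting $\mu(S_{i,j})=2i+2cj-\tfrac{1}{2}\sgn f_i'$ from that computation, using $\dim S_{i,j}=2n-3$ so that $\tfrac{1}{2}\dim S_{i,j}=n-\tfrac{3}{2}$, and passing to the reduced index through $\bar\mu(\gamma_a)=\mu_{CZ}(\gamma_a)+n-3$, the two occurrences of $n$ cancel and one obtains $\bar\mu(\gamma_a)=2i+2cj-\tfrac{3}{2}-\tfrac{1}{2}\sgn f_i'+\ind_a f_{Morse}$, as claimed.

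The main obstacle is not the index bookkeeping, which is essentially a substitution once Section~\ref{sec:annulus_trivialization} is in place, but making the Morse--Bott perturbation genuinely rigorous: checking cleanness of all the families $\Sigma_{i,j}$ (including wherever the family degenerates and in the content of the page), and above all ensuring that below the chosen action threshold the perturbed Reeb flow produces exactly the orbits predicted by \cite[Lemma~2.4]{Bourgeois:thesis} and no others. This requires the explicit description of the unperturbed Reeb dynamics together with the usual finite-dimensional reduction argument and a choice of global extension of $\bar f_{Morse}$ that does not disturb orbits belonging to other families.
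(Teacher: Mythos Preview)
Your proposal is correct and matches the paper's approach exactly: the paper does not give a separate proof but simply records that the formula follows from \cite[Lemma~2.4]{Bourgeois:thesis} together with the Maslov index $\mu(S_{i,j})=2i+2cj-\tfrac{1}{2}\sgn f_i'$ computed in Section~\ref{sec:annulus_trivialization} and the fact that $\dim S_{i,j}=2n-3$. Your additional remarks on verifying the Morse--Bott hypotheses and controlling orbits below the action threshold are reasonable elaborations, but the paper treats these as absorbed into the cited lemma.
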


\subsection{A special orbit}
The periodic orbits in $S_{i,0}$ are special in the sense that they always bound a spanning disk, even without any of the assumptions made in the beginning of Section~\ref{sec:spanning_disks}.
Namely, the ``flat disk'' $\{ p_0 \}\times D^2_{r_1}\subset P\times D^2$ provides a spannning disk, where $D^2_{r_1}$ is a disk of radius $r_1$ in an enlarged neighborhood of the binding. See Section~\ref{sec:fattening_binding}.

The methods from the previous section apply, and we find.
\begin{lemma}[Conley-Zehnder index of special orbits]
\label{lemma:CZ_special_orbit}
Let $f_{Morse}$ be a Morse function on the orbit space $S_{i,0}$. Lift this Morse function to an $S^1$-invariant function $\bar f_{Morse}$, and define the perturbed contact form $\alpha_\epsilon=(1+\epsilon\bar f_{Morse}) \alpha$.
Fix a constant $T_{threshold}$. Then for $\epsilon$ sufficiently small all periodic Reeb orbits of $\alpha_\epsilon$ with action less than $T_{threshold}$ correspond to critical points of $f_{Morse}$.
Furthermore, the reduced index of a periodic Reeb orbit corresponding to the critical point $a$ of $f_{Morse}$ is given by
\begin{equation}
\bar \mu(\gamma_a)=2i-1+\ind_a f_{Morse}.
\end{equation}
\end{lemma}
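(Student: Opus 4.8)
The plan is to recycle the trivialization and Maslov-index computation of Section~\ref{sec:annulus_trivialization}, replacing the spanning annulus-plus-binding-disk by the flat disk $D_{p_0}:=\{p_0\}\times D^2_{r_1}\subset P\times D^2$ and specializing to $j=0$. The content of the lemma is that this replacement removes the hypotheses imposed at the start of Section~\ref{sec:spanning_disks}, precisely because $D_{p_0}$ is contractible, lies inside the binding neighbourhood, and has constant $P$-coordinate.

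First I would locate the orbits of $S_{i,0}$ in the binding model. Near the binding $\alpha=h_1(r)\lambda_P+h_2(r)\,d\phi$ with $h_2'(r)=-e^{1-r}f_i(1-r)/2\pi$, so $h_2'$ vanishes exactly at $r=r_1=1-t_1$, the radius at which $f_i(t_1)=0$; for a left-handed twist $f_i=-f_m$ decreases strictly through this zero, so $\sgn f_i'(t_1)=-1$. Since $h_2'(r_1)=0$ the Reeb field at $r=r_1$ is proportional to $\partial_\phi$, so every Reeb orbit through $\{r=r_1\}$ has $j=0$; the simple one is the boundary circle $\{p\}\times\partial D^2_{r_1}$, and the generic member of $S_{i,0}$ is its $i$-fold cover, spanned by $D_p$ capped with degree $i$ (this is the same mechanism producing the ``$+2i$'' adjustment in Section~\ref{sec:annulus_trivialization}).

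Next I would trivialize $\xi$ over $D_{p_0}$ and run the computation. The part of $D_{p_0}$ lying in the page model coincides with the annulus $\psi_A$ of Section~\ref{sec:annulus_trivialization} at $t=t_1$ — both have $P$-coordinate identically $p_0$ because $f_i(t_1)=0$ — so the trivialization $\epsilon=\epsilon_{\xi_P}\oplus\epsilon_w$ and the path of symplectic matrices are literally those written there with $j=0$: the $\xi_P$-factor contributes $2cj=0$ (indeed the linearized flow on $V:=(\xi_P,d\lambda_P)|_{p_0}$ is the constant path), and the $\epsilon_w$-factor contributes the shear $s\mapsto\left(\begin{smallmatrix}1&0\\ cs&1\end{smallmatrix}\right)$ with $\sgn c=\sgn f_i'(t_1)=-1$ together with the $2i$ from unwinding the revolutions around the binding. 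The one genuinely new point is that this trivialization extends, with no obstruction, over the remaining cap $\{p_0\}\times D^2_{r_1}\cap(P\times D^2)$: there $\xi_P$ is the constant bundle $V$ (the $P$-coordinate is frozen at $p_0$) and the complementary two-plane is tangent to the disk factor, so no capping of a fibre of $P$ and no knowledge of $c_1(\xi_P)$ is needed — this is exactly where $\pi_1(Q)=0$, $k=1$ and $c_1(Q)=c[\omega]$ were used in Section~\ref{sec:spanning_disks} and are now dispensable. Collecting terms gives $\mu(S_{i,0})=2i-\tfrac12\sgn f_i'=2i+\tfrac12$, and inserting this into \cite[Lemma~2.4]{Bourgeois:thesis} with $\dim S_{i,0}=2n-3$, exactly as in the derivation of Formula~\eqref{eq:reduced_index_orbitspaces}, produces the Morse--Bott correction $-3/2$ and hence $\bar\mu(\gamma_a)=2i+\tfrac12-\tfrac32+\ind_a f_{Morse}=2i-1+\ind_a f_{Morse}$. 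The assertion that after a small perturbation only the critical points of $f_{Morse}$ persist below $T_{threshold}$ is the standard Morse--Bott perturbation argument, identical to the one already invoked.

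I expect the delicate point to be the bookkeeping for the multiple cover: Conley--Zehnder indices of iterates are not additive, so one must be sure that the ``disk trivialization'' implicit in the formula is the one induced by the degree-$i$ cap of $D_{p_0}$, and that the sign of the shear (equivalently $\sgn f_i'(t_1)=-1$) is handled correctly, so that the $-\tfrac12\sgn f_i'$ term upgrades the generic constant $-3/2$ to $-1$ here. Everything else is a verbatim specialization of Section~\ref{sec:annulus_trivialization}.
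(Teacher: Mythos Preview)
Your proposal is correct and follows essentially the same approach as the paper. The paper's own argument is terse: it notes that the flat disk $\{p_0\}\times D^2_{r_1}$ provides a spanning disk independent of the hypotheses of Section~\ref{sec:spanning_disks}, and then simply says ``the methods from the previous section apply.'' You have correctly unpacked what this means---specializing the annulus trivialization to $j=0$, observing that $f_i(t_1)=0$ freezes the $P$-coordinate so that the trivialization extends over the cap without invoking $\pi_1(Q)=0$ or $c_1(Q)=c[\omega]$, and tracking $\sgn f_i'(t_1)=-1$ through Formula~\eqref{eq:reduced_index_orbitspaces}---exactly as the paper intends.
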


\subsection{Orbits through the content of the pages}
A monodromy given by a (left-handed) fibered Dehn twist $\tau^{-1}$ is the identity on the content of the page, so the manifold with boundary given by $W_{in}\times S^1 \subset \OB(W,\tau^{-1})$ consists of periodic Reeb orbits.
In this case we need to be somewhat careful with the contribution of the boundary.
We will choose a perturbation where the boundary of $W_{in}\times S^1$ contributes in a simple way.

Recall that we decomposed $W^{2n-2}=W_{in}\cup P \times [0,1]$.
Now choose a Morse function $f_{convex}$ on $W$ with the following properties.
\begin{enumerate}
\item $f_{convex}$ equals $\delta \cdot e^t$ on a symplectization piece $(P \times [0,1],d(e^t \lambda_P )\, )$ with coordinates $(p,t)$.
\item Periodic orbits of the Hamiltonian vector field $X_{f_{convex}}$ that do not correspond to critical points have large period, say much larger than $2\pi$.
\end{enumerate}
Some words on why this is possible.
For the first point we point out that we can realize any contact form on the boundary by attaching a piece of a symplectization.
For the second point, first choose a Morse function $\tilde f_{convex}$ which equals $e^t$ on the symplectization piece.
We can assume that the Hamiltonian vector field $X_{\tilde f_{convex}}$ has only finitely many periodic orbits in (a slightly shrunk copy of) $W_{in}$.
Denote the minimal period of a periodic orbit not corresponding to a critical point by $T_{min}$, and find $\delta>0$ such that $T_{min}/\delta> 2\pi$.
Then $f_{convex}=\delta \tilde f_{convex}$ has the required properties.

\begin{remark}
For later applications, it will be useful to have more control of the maximal index of $f_{convex}$.
If $W$ is a Weinstein manifold, then we can achieve this by choosing an $\Omega$-convex Morse function with the above properties.
\end{remark}

Define a Hamiltonian on $P\times I$ by
$$
F=f(t)\cdot e^t
$$
such that $f(t)+f'(t)=-(f_i(t)-2\pi)$.
Then the Hamiltonian vector field $X_F$ satisfies
$$
X_F=-(f(t)+f'(t)\,)\cdot R_P=(f_i(t)-2\pi)R_P,
$$
so its time $1$-flow generates a \emph{right-handed} fibered twist with profile $f_i$.
Note the signs and the right-left conventions are those from Remark~\ref{rem:conventions_twisting_profile}.
Furthermore, by choosing $F(0)=0$, we see that we choose $F$ to be $0$ on the content of the pages $W_{in}$.
Now define
$$
H=F+f_{convex},
$$
and let $X_H$ denote its Hamiltonian vector field of $H$.
Write the time $1$-flow of $X_H$ as
$$
\tau_{MB}=Fl_1^{X_H}.
$$
Observe that a right-handed fibered Dehn twist is symplectically isotopic, relative to the boundary, to $\tau_{MB}$ (put a parameter in front of $f_{convex}$ to make this isotopy).
Hence we consider the contact open book
$$
\OB(W,\tau_{MB}^{-1}).
$$
Since $\tau_{MB}$ is the flow of a Hamiltonian vector field,  $\tau_{MB}^*\lambda=\lambda-\mu$, where $\mu$ is exact.
In a collar neighborhood of the boundary we can use Formula~\eqref{eq:formula_u} to compute an explicit primitive of $\mu$.
\begin{remark}
The choice of the sign $+$ in front of $f_{convex}$ is a convenient choice for our purposes, as this will prevent the creation of additional orbit spaces.
\end{remark}

Later on, we shall only need the indices of periodic orbits that have linking number $1$ with the binding $P$. We have the following result.
\begin{lemma}
\label{lemma:index_control_left_twist}
Let $\gamma$ be a periodic Reeb orbit in $Map(W^{2n-2},\tau_{MB})\subset \OB(W,\tau_{MB}^{-1})$ such that the linking number of $\gamma$ with the binding equals $1$.
Then one of the following holds.
\begin{itemize}
\item $\gamma$ corresponds to a critical point of $f_{convex}$.
Its reduced index equals
$$
\bar \mu(\gamma)
=2n-2-\ind  f_{convex}-2c.
$$
\item $\gamma$ is a periodic orbit in the margin piece $P\times I \times \R /\sim$. In this case $\gamma$ comes in the Morse--Bott family $S_{1,0}$, and the reduced index of the periodic Reeb orbit corresponding to a minimum of a Morse function on $S_{1,0}$ equals
$\bar \mu(\gamma)=1$.
\end{itemize}
\end{lemma}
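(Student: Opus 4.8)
The plan is to first classify the periodic Reeb orbits of linking number $1$ in $Map(W^{2n-2},\tau_{MB})$ and then to compute their reduced indices by feeding each case into the index machinery of Section~\ref{sec:indices}. First I would reduce to a fixed-point problem: by the correspondence between Reeb orbits in the pages and periodic points of the monodromy, an orbit of linking number $1$ comes from a fixed point of $\tau_{MB}$ itself, not of a proper iterate. On the content, $\tau_{MB}|_{W_{in}}=Fl_1^{X_{f_{convex}}}$, and since by construction every non-constant periodic orbit of $X_{f_{convex}}$ has period larger than $1$, the fixed points there are exactly the critical points of $f_{convex}$. On the margin, substituting $H=F+f_{convex}$ and using $2\pi$-periodicity of the $R_{P}$-flow, $\tau_{MB}$ acts by $(p,t)\mapsto(Fl^{R_{P}}_{f_i(t)-\delta}(p),t)$; as the twisting profile $f_i$ of Lemma~\ref{lemma:decomposition} descends monotonically from $f_i(0)=2\pi$ to a value near $0$ and $\delta$ is small, $f_i(t)-\delta$ meets $2\pi\Z$ at a single parameter $t_0$, where in fact it vanishes. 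Hence the margin fixed locus is the single Morse--Bott torus $P\times\{t_0\}$; because $f_i(t_0)-\delta=0$ these orbits are constant in the $P$-direction and wind once around the binding, i.e.\ they form precisely the orbit space $S_{1,0}$ (and, after enlarging the binding neighbourhood as in Section~\ref{sec:fattening_binding}, they are the flat orbits bounding $\{p_0\}\times D^2_{r_1}$).

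For the margin family I would simply invoke Lemma~\ref{lemma:CZ_special_orbit} with $i=1$: the orbit over a critical point $a$ of a Morse function on $S_{1,0}$ has reduced index $2\cdot 1-1+\ind_a f_{Morse}$, so a minimum gives $\bar\mu=1$. As a consistency check, formula~\eqref{eq:reduced_index_orbitspaces} with $i=1$, $j=0$ and $\sgn f_i'(t_0)=-1$ also returns $1+\ind_a f_{Morse}$.

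For the content orbits the substance is the index computation. The linearized return map of the orbit $\gamma$ over a critical point $a$, acting on $\xi|_\gamma\cong T_aW$, is the linearized time-$1$ flow $Fl_1^{X_{\mathrm{Hess}_a f_{convex}}}$, which is non-degenerate because $a$ is. I would build the spanning disk of $\gamma$ from the binding and page model --- a cylinder over a path in $W$ from $a$ to $\partial W=P$ (using $\pi_1(W)=0$), closed off in the enlarged binding neighbourhood --- and then write $\mu_{CZ}(\gamma)$ as the sum of (i) the index of the linearized flow of $f_{convex}$ at $a$ in the page trivialization, a Morse-index term in which $\dim W_{in}=2n-2$ and the smallness of $f_{convex}=\delta\tilde f_{convex}$ enter, (ii) the contribution $+2$ from one revolution around the binding, and (iii) a change-of-trivialization term which, since $c_1(W)=0$, localizes near the binding and equals $-2c$ through $c_1(Q)=c[\omega]$ (the same mechanism that produces the term $2cj$ in Section~\ref{sec:spanning_disks}; cf.\ Remark~\ref{rem:CZ_via_disk}). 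Converting via $\bar\mu=\mu_{CZ}+n-3$ then gives $\bar\mu(\gamma)=2n-2-\ind f_{convex}-2c$.

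The classification and the margin case are routine. The hard part will be the normalization bookkeeping in the content case: pinning down the Conley--Zehnder index of a small autonomous Hamiltonian at a Morse critical point in terms of the Morse index, and fixing the sign and size of the change-of-trivialization contribution --- this is where the hypotheses $c_1(W)=0$, $k=1$ and $\pi_1(Q)=0$ are genuinely used, and where one must be careful that the only linking-$1$ orbits in the margin form $S_{1,0}$ and not some $S_{1,j}$ with $j\neq 0$.
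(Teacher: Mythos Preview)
Your proposal is correct and follows the same route as the paper: reduce to fixed points of $\tau_{MB}$, separate content from margin, invoke Lemma~\ref{lemma:CZ_special_orbit} for $S_{1,0}$, and compute the content index via the spanning-disk formalism of Section~\ref{sec:spanning_disks}. One packaging difference: where you plan to assemble $\mu_{CZ}$ for a content orbit from a Morse term, the $+2$ winding term, and a ``$-2c$'' change-of-trivialization term, the paper compresses all three into the single line
\[
\bar\mu(\gamma)=2i+2cj+n-3-\tfrac12\dim W_{in}+(2n-2)-\ind_x H
\]
with $i=1$ and, crucially, $j=-1$. The observation $j=-1$ is exactly your ``localized near the binding'' trivialization shift: an orbit sitting in $W_{in}$, when capped through the margin, traverses one full $R_P$--loop (since $f_i$ runs from $2\pi$ down to $0$ across the margin), so it behaves like an $S_{1,-1}$ orbit for the purposes of formula~\eqref{eq:reduced_index_orbitspaces}. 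Making this explicit would save you the first-principles bookkeeping you flag as ``the hard part''.
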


\begin{proof}
Let $x$ be a fixed point of the first iterate of $\tau_{MB}$.
Then $x\in W_{in}$ or $x\in P\times I$.

If $x\in W_{in}$, then we claim that $x$ is a critical point of $f_{convex}$.
Indeed, by property~(2) of $f_{convex}$, other periodic orbits of $X_{f_{convex}}$ have large period, so they cannot give rise to periodic Reeb orbits corresponding to a fixed point of the first iterate of the monodromy.
To determine the index of the periodic Reeb orbit $\gamma$ through $x$, we use a small modification of \cite[Lemma~2.4]{Bourgeois:thesis},
\[
\begin{split}
\bar \mu(\gamma) & =2i+2cj+n-3-\frac{1}{2}\dim W_{in}+(2n-2)-\ind_x H \\
 &=-2c+(2n-2)-\ind_x H
=2n-2-\ind_x  f_{convex}-2c,
\end{split}
\]
where we have put $i=1$. The twist in the $P$-direction equals $j=-1$ in the case of an orbit in the contents of pages.

On the margin piece $P\times I\times \R /\sim$, the Hamiltonian vector field $X_{-H}$, which generates the left-handed Dehn twist, equals $(2\pi-f_i(t)+\delta)R_P$.
So if an orbit $\gamma$ through $x=(p,t)\in P\times I$ has linking number $1$ with the binding, then $x$ lies in the perturbed copy of $S_{1,j}$ for some $j$.
As $2\pi-f_i(t)+\delta$ is an injective function, we only have to check where it takes values in $2\pi \Z$.
This is only the case if $f_i(t)=\delta$, so we see that $x$ lies in the perturbed copy of $S_{1,0}$.
Since $f_i'<0$, we obtain the reduced index of $\gamma$ by Formula~\eqref{eq:reduced_index_orbitspaces}.
\end{proof}

\begin{lemma}
\label{lemma:index_control_2}
Let $W$ be a Liouville filling for a prequantization bundle $(P,\lambda_P)$ over $(Q,k\omega)$ where $\omega$ is a primitive symplectic form and $k\in \Z_{>1}$.
Assume furthermore that the inclusion map $P\to W$ induces an injection on $\pi_1$.

Let $\gamma$ be a periodic Reeb orbit in $Map(W,\tau_{MB})$ such that the linking number of $\gamma$ with the binding equals $1$.
Then one of the following holds
\begin{itemize}
\item $\gamma$ is a periodic orbit in $S_{1,0}$.
Furthermore, the reduced index of the orbit corresponding to the minimum of a Morse function on $S_{1,0}$ is $\bar \mu(\gamma)=1$.
\item $\gamma$ lies in the content of the pages and corresponds to a critical point of $f_{convex}$.
Furthermore, if $\gamma_1$ is a periodic orbit in the orbit space $S_{1,0}$, then $[\gamma]\neq [\gamma_1]$ as a free homotopy class in $\OB(W,\tau_{MB}^{-1})-P\times \{ 0 \}$. 
\end{itemize}
\end{lemma}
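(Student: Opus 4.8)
The plan is to mirror the proof of Lemma~\ref{lemma:index_control_left_twist} for the dichotomy and the index computation, and then to settle the new free‑homotopy statement by a $\pi_1$‑computation in the complement of the binding.

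\emph{The dichotomy and the index of the $S_{1,0}$‑orbits.} A periodic Reeb orbit $\gamma$ of linking number $1$ with the binding corresponds to a fixed point $x$ of the first iterate of $\tau_{MB}$, lying either in the content $W_{in}$ or in the margin $P\times I$. On $W_{in}$ the map $\tau_{MB}$ is the time‑$1$ flow of $X_{f_{convex}}$, so by property~(2) of $f_{convex}$ a fixed point that is not a critical point would lie on a periodic orbit of period $\gg 2\pi$, which is impossible; hence $x$ is a critical point of $f_{convex}$. On the margin the inverse monodromy is generated by $X_{-H}=(2\pi-f_i(t)+\delta)R_{P}$; since $2\pi-f_i(t)+\delta$ is strictly monotone it equals $2\pi$ only at the unique $t^{*}$ with $f_i(t^{*})=\delta$, so $x$ lies in $S_{1,0}$. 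For the index we use that the orbits of $S_{1,0}$ bound the flat disk $\{p_0\}\times D^2_{r_1}$, so that Lemma~\ref{lemma:CZ_special_orbit} applies with $i=1$ and yields $\bar\mu=2i-1+\ind_{\min}f_{Morse}=1$ for the orbit over a minimum. This part uses neither $k>1$ nor the injectivity hypothesis; using the flat disk instead of the spanning disks of Section~\ref{sec:spanning_disks} is precisely what makes this possible.

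\emph{The complement of the binding.} Removing $P\times\{0\}$ from $Y=\OB(W,\tau_{MB}^{-1})$ leaves a manifold that deformation retracts onto the mapping torus $Map(W,\tau_{MB})$. Since $\tau_{MB}=Fl^{X_H}_1$ is the time‑one map of a flow whose generator is tangent to $\partial W$, it is smoothly isotopic to the identity of $W$; hence $Map(W,\tau_{MB})\cong W\times S^1$ and $\pi_1(Y-P\times\{0\})\cong\pi_1(W)\times\Z$, with the $\Z$‑factor recording the linking number with the binding. Free homotopy classes correspond to conjugacy classes, so among classes of linking number $1$ the loops $(g,1)$ and $(g',1)$ are freely homotopic if and only if $g$ and $g'$ are conjugate in $\pi_1(W)$. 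If $\gamma$ sits in the content at a critical point $x$ of $f_{convex}$, then $X_H(x)=0$, so $x$ is fixed throughout the isotopy and $\gamma$ represents $(1,1)$. If $\gamma_1\in S_{1,0}$, based at $p_0\in P$ at the parameter $t^{*}$ with $f_i(t^{*})=\delta$, then going once around the binding drags $p_0$ along $Fl^{X_H}_{\phi}(p_0)=Fl^{R_{P}}_{-2\pi\phi}(p_0)$, $\phi\in[0,1]$, which traverses the fibre of the prequantization bundle $P\to Q$ exactly once; hence $[\gamma_1]=(f^{-1},1)$, where $f\in\pi_1(P)$ is the fibre class. Because $[\omega]$ is primitive and $k>1$, the fibre class is non‑trivial in $\pi_1(P)$ (it has order $k$ there, by the Gysin sequence of $P\to Q$), and by hypothesis $\pi_1(P)\to\pi_1(W)$ is injective, so $f^{-1}\neq 1$ in $\pi_1(W)$. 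A non‑trivial element is not conjugate to the identity, so $(f^{-1},1)$ and $(1,1)$ are distinct conjugacy classes, i.e.\ $[\gamma]\neq[\gamma_1]$ in $Y-P\times\{0\}$.

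\emph{Main difficulty.} The one delicate point is the computation of $[\gamma_1]$: one must track precisely how far the monodromy rotates the base point around the prequantization fibre when one goes once around the binding — all the sign and normalization conventions for $f_i$, $U_i$ and the perturbation $f_{convex}$ feed into this — and be certain that the resulting element is the fibre class itself (up to inversion) rather than a multiple of it. Everything else is bookkeeping in the models already set up; in particular this is where, and why, the hypotheses $k>1$ and injectivity of $\pi_1(P)\to\pi_1(W)$ are needed (for $k=1$ and $\pi_1(Q)=0$ the fibre is null‑homotopic and the two classes can coincide, consistent with Lemma~\ref{lemma:index_control_left_twist} making no such claim).
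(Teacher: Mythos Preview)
Your proof is correct and follows the same approach as the paper's own proof, which is considerably terser: it simply refers to Lemma~\ref{lemma:CZ_special_orbit} and the proof of Lemma~\ref{lemma:index_control_left_twist} for the dichotomy and index, and for the homotopy statement says only that after untwisting the mapping torus $\gamma_1$ winds once along the $P$-fibre while $\gamma$ does not, and that $k>1$ makes this fibre non-contractible. Two small points: the relevant sequence for the non-triviality of the fibre is the homotopy long exact sequence of $S^1\to P\to Q$ rather than the Gysin sequence, and the order of the fibre in $\pi_1(P)$ is a multiple of $k$ (possibly infinite) rather than necessarily equal to $k$, but this does not affect your argument.
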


\begin{proof}
The first assertion follows from Lemma~\ref{lemma:CZ_special_orbit}.
The first part of the second assertion follows from the proof of Lemma~\ref{lemma:index_control_left_twist}.
For the second part, note that after untwisting the mapping torus $W\times \R /\sim$, the curve $\gamma_1$ moves once along an $S^1$-fiber in the $P$-direction, whereas $\gamma$ does not.
If $k>1$, then this fiber is non-contractible, and this implies the claim.
\end{proof}

\section{Holomorphic curves near the binding of an open book}
\label{sec:holomorphic_plane}
In this section we will analyze holomorphic curves in the symplectization of the contact open book $Y:=\OB(W,\tau^{-1})$.
Here $W$ is a Liouville domain with prequantization boundary $P$, and $\tau^{-1}$ is a left-handed fractional twist.
Instead of taking the open book contact form, we start by using the $S^1$-invariant contact form constructed in Section~\ref{sec:invariant_contact}, and we will make additional perturbations in order to obtain non-degeneracy properties.
In a neighborhood of the binding the invariant contact form looks like
\begin{equation}
\label{eq:contact_form_fat_nbhd}
(P\times D^2,\alpha=h_1(r)\lambda_P +h_2(r) d\phi).
\end{equation}
\subsection{Fattening the binding: setup for finding finite energy holomorphic planes}
\label{sec:fattening_binding}
In order to have a single model containing both the binding and an important part of the pages, we ``fatten'' the binding using the following procedure.
In the previous section we perturbed the monodromy to $\tau_{MB}$.
We have $\tau_{MB}^*\lambda=\lambda-\mu$, where $\mu$ is exact.
In a neighborhood of the boundary of $W$, we follow Formula~\eqref{eq:formula_u} and obtain an explicit primitive $U_0$ with $dU_0=\mu$.
For $x=(p,t)\in P\times [0,1]$ we have
$$
U_0(x)=U_0(t)=
C-\int_{s=a}^t f_i'(s)e^sds,
$$
where $f_i$ is the twisting profile for a left-handed twist as defined in Lemma~\ref{lemma:decomposition}. 
This primitive $U_0$ extends to $W_{in}$.
Choose $C\in \R$ such that $U_0>0$ and
\begin{equation}
\label{eq:condition_C}
\frac{\max_{x\in W} U_0(x)}
{\min_{x\in W} U_0(x)}<2.
\end{equation}
This condition on $U_0$ is not necessary to prove the main result, but it is necessary for Section~\ref{sec:HC=0} which concerns an application to contact homology.
This condition is then used to control the action of periodic Reeb orbits.

Define $\alpha_{FB}:=\psi_G^* (d\phi+ e^t \lambda_P)$.
By Formula \eqref{eq:pullback_form} we find
\[
\begin{split}
\alpha_{FB} & =e^{1-r} \lambda_P+ \frac{ \tilde C+\int_0^{1-r} e^s f_i(s) ds}{2\pi}d\phi.
\end{split}
\]
We are going to extend the coefficient of $\lambda_P$ to a function $h_1$, and the coefficient of $d\phi$ to a function $h_2$.

\begin{lemma}
\label{lemma:orbits_near_binding}
There are functions $h_1:[0,r_1+\delta]\to \R$ and $h_2:[0,r_1+\delta]\to \R$ such that
\begin{itemize}
\item the form $\alpha=h_1(r)\lambda_P+h_2(r)d\phi$ is a smooth contact form on $P\times D^2$ extending $\alpha_{FB}$.
\item The function $h_2$  has a unique maximum in $r_1$, where $Inv(r_1):=1-r_1=t_1$. Here $t_1$ is the unique zero of the function $f_i=-f_m$, the twisting profile for a left-handed twist as defined in Section~\ref{sec:right_left_binding_twist}.
\item Periodic Reeb orbits $\gamma_{ij}(t)=(\gamma_P(j t);r,it)$ with $r<r_1$ either are binding orbits (orbits with $r=0$), or satisfy $\lk(P\times \{ 0 \},\gamma_{ij})=i\geq 2$.
\item Every binding orbit has a larger action than the action of $\gamma_1(t)=(p_0;r_1,t)$.
\end{itemize}
\end{lemma}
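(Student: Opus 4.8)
The plan is to construct $h_1,h_2$ explicitly by extending the functions $e^{1-r}$ and $\tfrac{1}{2\pi}(\tilde C+\int_0^{1-r}e^sf_i(s)\,ds)$ from the collar $\{r\in[r_1-\delta,r_1+\delta]\}$ inward to $r=0$, and then verify the four bulleted properties one at a time. First I would recall from Formula~\eqref{eq:pullback_form} that on the overlap region $\alpha_{FB}=e^{1-r}\lambda_P+h_2(r)\,d\phi$ with $h_2'(r)=-\tfrac{1}{2\pi}e^{1-r}f_i(1-r)$, which is positive precisely when $f_i(1-r)<0$, i.e.\ when $1-r>t_1$, i.e.\ for $r<r_1$ (recall $f_i=-f_m$ is increasing, negative on $[0,t_1)$, zero at $t_1$). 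So on the collar $h_2$ is already increasing toward $r=r_1$. I would then extend $h_1$ to a positive smooth function on $[0,r_1+\delta]$ that behaves like the standard binding model near $r=0$ (so $h_1(0)>0$, $h_1'(0)=0$ of appropriate order), and extend $h_2$ so that $h_2$ is strictly increasing on $[0,r_1)$, has its unique maximum exactly at $r=r_1$, is decreasing on $(r_1,r_1+\delta]$, with $h_2(0)=0$ and $h_2'(0)>0$ as in the standard model; throughout one keeps the contact condition $h_1h_2'-h_2h_1'>0$, which near $r=0$ is the usual open-book condition and away from $r=0$ can be arranged since $h_1>0$ and we have freedom in the derivatives. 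This establishes the first two bullets.

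For the third bullet I would use the description of Reeb orbits in the binding model from Section~\ref{sec:indices}: a closed Reeb orbit $\gamma_{ij}$ at radius $r$ exists iff $-h_2'(r)/h_1'(r)=j/i\in\Q$ with $\gcd(i,j)=1$, and its linking number with the binding is $i$, its winding in the $P$-direction is $j$. The key point is that I have the freedom to choose the extension of $h_1,h_2$ on $(0,r_1)$ so that the slope ratio $-h_2'(r)/h_1'(r)$ avoids all rationals of the form $j/1$ except possibly with $|j|$ large; concretely, arrange that on $(0,r_1)$ the quantity $h_1'(r)$ is small and negative (so $h_1$ is slightly decreasing after its flat start, matching $e^{1-r}$ which is decreasing) while $h_2'(r)>0$, forcing $j/i=-h_2'/h_1'>0$; then further arrange the magnitude $|{-h_2'/h_1'}|$ to be large on the relevant subinterval so that if $i=1$ then $|j|$ is forced large, and a convexity/monotonicity condition on $r\mapsto -h_2'(r)/h_1'(r)$ ensures that the only orbits with small $i$ are those at $r=0$ (the binding). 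The orbit $\gamma_1(t)=(p_0;r_1,t)$ sits at the maximum $r_1$ of $h_2$, where $h_2'(r_1)=0$, so there $i=0$: it winds only in the $P$-direction, linking number zero with the binding — this is the special orbit of Section~\ref{sec:annulus_trivialization}.

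For the fourth bullet I would compare actions using Formula~\eqref{eq:action_near_binding}: a simple binding orbit (at $r=0$) has action $\mathcal A=2\pi h_1(0)\cdot 1+2\pi h_2(0)\cdot 0=2\pi h_1(0)$, since at $r=0$ we have $h_2=0$ and the orbit is a single cover of the $\lambda_P$-Reeb orbit (so $j=1,i=0$ there in the limiting sense), whereas $\gamma_1$ at $r=r_1$ has action $\mathcal A(\gamma_1)=2\pi h_2(r_1)$ (here $h_1'(r_1)=?$; more carefully $\gamma_1$ winds once in $\phi$ and zero times around $P$, giving $\mathcal A=2\pi h_2(r_1)$). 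So the fourth bullet reduces to arranging $h_1(0)>h_2(r_1)$, which is a single inequality between two values that I am free to prescribe when choosing the extension — one simply makes $h_1(0)$ large enough, or equivalently keeps $h_2$ small, noting that $h_2(r_1)=\tfrac{1}{2\pi}(\tilde C+\int_0^{t_1}e^sf_i(s)\,ds)$ is controlled by the (small) constants $\delta_1,\delta_2$ and the size of the twisting profile, while $h_1(0)$ is essentially $e$ up to the interpolation. The main obstacle I anticipate is the \emph{simultaneous} satisfaction of all constraints — the contact condition $h_1h_2'-h_2h_1'>0$ everywhere, the prescribed shape of $h_2$ (flat, then increasing, unique max at $r_1$), the slope-ratio condition ruling out low-linking orbits, and the action inequality — all while smoothly matching the fixed boundary data on the collar; this is a somewhat delicate but elementary ODE-style interpolation argument, and the real content is checking that these requirements are mutually compatible rather than any one of them individually. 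I would organize this as: fix $h_1$ first (monotone, matching data, with controlled small derivative on $(0,r_1)$), then build $h_2$ by integrating a chosen $h_2'$ that is positive on $(0,r_1)$, vanishes at $r_1$, negative just after, with $\int$ giving the right values, and finally check the two remaining inequalities hold by shrinking $\delta_1,\delta_2$ and enlarging the gap as needed.
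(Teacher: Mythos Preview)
Your overall framework is right, and your handling of the first, second, and fourth bullets matches the paper's: extend the collar data, arrange $h_2$ to have its unique maximum at $r_1$, and reduce the action comparison to the single inequality $h_1(0)>h_2(r_1)$. But your argument for the third bullet is backwards and would not work.

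You propose to make $h_1'$ small and negative so that the slope ratio $-h_2'/h_1'$ is large, and then claim that ``if $i=1$ then $|j|$ is forced large,'' as if large $j$ were acceptable. It is not: the lemma demands $i\geq 2$ for \emph{every} periodic orbit in the range $0<r<r_1$, regardless of $j$. Since $r\mapsto -h_2'(r)/h_1'(r)$ is continuous and tends to $0$ as $r\to r_1$ (where $h_2'(r_1)=0$ while $h_1'(r_1)\neq 0$), making it large somewhere forces it to cross every intermediate positive integer, and at each such radius you get an orbit with $(i,j)=(1,\text{that integer})$---precisely what you must exclude. No monotonicity or convexity condition on the ratio will save this. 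The paper does the opposite: it makes $h_1'$ \emph{sufficiently negative} (large in absolute value, not small) on $(0,r_1]$ so that $|h_2'/h_1'|<1$ throughout. Then $-h_2'/h_1'\in(0,1)$, and since a simple orbit requires $-h_2'/h_1'=j/i$ with $\gcd(i,j)=1$ and $j\geq 1$, this forces $i\geq 2$. Note also that making $h_1'$ very negative is exactly what lets $h_1(0)$ be large, which you need for the fourth bullet; your choice of ``$h_1'$ small'' works against you there as well.

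A smaller slip: in your third-bullet paragraph you describe $\gamma_1$ as winding ``only in the $P$-direction, linking number zero with the binding.'' This is wrong---$\gamma_1(t)=(p_0;r_1,t)$ has constant $P$-component and winds once in $\phi$, so $i=1$, $j=0$, linking number $1$. You then silently correct yourself in the fourth-bullet action computation.
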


\begin{proof}
We extend $h_2$ such that it has a unique maximum in $r_1$. 
It is increasing on $[0,r_1]$, and near $0$, we require $h_2(r)=r^2$.
For $h_1$ we choose a decreasing function with the following properties.
\begin{itemize}
\item $h_1(r)=e^{1-r}$ near $r=r_1$.
\item for $r\in]0,r_1]$ the derivative is sufficiently negative such that $\vert \frac{h_2'}{h_1'}\vert<1$.
\item $h_1(0)>h_2(r_1)$.
\end{itemize}
These choices guarantee that the first two asserted properties hold.
To see that the third property holds, take $r$ with $0<r<r_1$.
We rescale the Reeb vector field to
$$
-\frac{h_2'(r)}{h_1'(r)}R_{P}+\partial_\phi.
$$
Since the coefficient of $R_{P}$ is non-zero, but less than $1$ in absolute value, a Reeb orbit cannot close up after a single revolution around the binding, so $\lk(P\times \{ 0 \},\gamma_{ij})=i\geq 2$.

For the last assertion, plug in the assumption $h_1(0)>h_2(r_1)$ into Formula~\eqref{eq:action_near_binding}.
\end{proof}
From now we will use the functions $h_1$ and $h_2$ provided by this lemma.
Our first goal is
\begin{lemma}
\label{lemma:existence_rigid_regular_plane}
Let $W$ be a Liouville domain whose contact type boundary $\partial W=(P,\lambda_P)$ is a prequantization bundle over a symplectic manifold, and suppose it admits a right-handed fractional twist $\tau$.
Consider the contact open book $Y=\OB(W,\tau^{-1})$, and fatten the binding model such that the function $h_2$ has a unique maximum in $r_1$, as in the above.
Then the following holds true.
\begin{enumerate}
\item There is a periodic Reeb orbit $\gamma_1(t)=(p_0;r_1,t)$ in the fattened binding $P\times D^2\subset Y$.
\item This orbit $\gamma_1$ bounds a unique, rigid finite energy holomorphic plane in the symplectization $\R \times Y$. Furthermore, we can assume that this finite energy plane is regular.
\end{enumerate}
\end{lemma}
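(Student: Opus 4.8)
The plan is to produce the plane explicitly inside the fattened binding model of Lemma~\ref{lemma:orbits_near_binding}, to read off its rigidity from the index formula of Lemma~\ref{lemma:CZ_special_orbit}, and then to handle transversality and uniqueness by exploiting the symmetry of the model. First I would fix an adjusted almost complex structure $J$ on $\R\times Y$ which on the fattened binding $\R\times P\times D^2$ is of split type: it preserves $\xi_P\subset\xi$ and restricts there to a fixed $d\lambda_P$--compatible $J_P$; on the complementary distribution (spanned by $\partial_t$, $R_\alpha$, $\partial_r$ and the $\xi$--component of $\partial_\phi$) it is the standard structure with $J\partial_t=R_\alpha$; and it is invariant under $\R$--translation, the $S^1$--rotation in $\phi$, and the Reeb flow of the binding. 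With such a $J$ one makes the ``flat disk'' ansatz $u_0(\rho e^{i\theta})=(a(\rho),p_0,r(\rho),\theta)\in\R\times\{p_0\}\times D^2_{r_1}$; by the symmetry the Cauchy--Riemann equation collapses to a pair of first--order ODEs for $(a,r)$ in the radial variable, whose relevant stationary configurations correspond to $r=r_1$ (the asymptotic orbit) and to the center $r=0$. Solving with $r(0)=0$, $r(\rho)\to r_1$ and $a$ strictly increasing and bounded below --- so that $u_0$ extends smoothly over the center and grows at asymptotic rate $2\pi h_2(r_1)=\mathcal A(\gamma_1)$ --- produces a smooth, embedded, finite--energy holomorphic plane positively asymptotic to $\gamma_1(t)=(p_0;r_1,t)$. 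After the small Morse perturbation of $\alpha$ along the Morse--Bott family $S_{1,0}$ one takes $p_0$ to be a minimum of $f_{Morse}$, so that $\gamma_1$ is the corresponding nondegenerate orbit and $u_0$ persists by the usual Morse--Bott perturbation argument, cf.\ \cite{Bourgeois:thesis}.

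Rigidity is then immediate. The orbit $\gamma_1$ has linking number $1$ with the binding, hence is simply covered, and it bounds the flat disk $\{p_0\}\times D^2_{r_1}$, over which $\xi=\xi_P\oplus\R^2$ trivializes, so the relative Chern term in \eqref{eq:dimension_formula} vanishes for $[u_0]$. By Lemma~\ref{lemma:CZ_special_orbit} with $i=1$ and $\ind_a f_{Morse}=0$ we get $\bar\mu(\gamma_1)=1$, hence $\dim\mathcal M_0(\gamma_1;\emptyset)=1$ at $[u_0]$, which is exactly what ``rigid'' means once the $\R$--action on the symplectization is divided out.

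The main obstacle is Fredholm regularity of $u_0$: because $J$ is forced to be symmetric near the binding in order to write $u_0$ down at all, Dragnev's generic statement does not apply directly and $D_{u_0}$ must be analyzed in the model. Here the symmetry helps: it induces a splitting $u_0^*T(\R\times Y)=u_0^*\xi_P\oplus N$ in which $D_{u_0}$ is block--diagonal. The $N$--block is the linearized Cauchy--Riemann operator of the ``standard'' holomorphic plane in a $D^2$--neighborhood of a Morse--Bott binding orbit, which is surjective with kernel accounted for by reparametrizations of $\C$ and the $\R$--translation. The $\xi_P$--block is, after the Morse perturbation of $\alpha$, the operator attached to the nondegenerate orbit obtained from $S_{1,0}$; it is surjective precisely because $S_{1,0}$ satisfies the rank and tangent--space conditions in the definition of a Morse--Bott contact form --- this is the orbit--space regularity statement of \cite{Bourgeois:thesis}. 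Combining the two blocks, $D_{u_0}$ is surjective, so $u_0$ is regular. One may still perturb $J$ generically away from a neighborhood of the image of $u_0$ while retaining the split normal form near that image; this does not affect $D_{u_0}$, but it will be convenient for the other curves treated in Section~\ref{sec:other_curves}.

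For uniqueness I would in addition arrange that $\R\times P\times\{0\}$ is a $J$--holomorphic submanifold. Any finite--energy plane $u$ asymptotic to $\gamma_1$ is not contained in it, since $\gamma_1$ is not a binding orbit, so it meets it in finitely many points with positive local intersection numbers summing to the linking number $\lk(P\times\{0\},\gamma_1)=1$; hence $u$ hits the binding exactly once, transversally. A maximum principle for a suitable $J$--adapted combination of $h_2$ and the $\R$--coordinate --- using that $h_2$ is monotone in $r$ away from its unique maximum at $r_1$ --- confines $u$ to the region $r\le r_1$, where the split model applies. There the flat planes $\{u_p\}_{p\in P}$ foliate $\{r<r_1\}$, so by positivity of intersections $u$ must coincide with the leaf through its binding point, that is $u=u_0$ up to reparametrization and $\R$--translation. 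This establishes the uniqueness clause and completes the proof sketch of Lemma~\ref{lemma:existence_rigid_regular_plane}.
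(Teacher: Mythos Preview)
Your existence construction and the block-diagonal approach to regularity match the paper's (Section~\ref{sec:holomorphic_plane}): the same ODE ansatz, and the same splitting of $D_{u_0}$ along $\xi_P$ and its complement. The paper is more concrete on both blocks---Wendl's automatic transversality \cite{Wendl:transversality} for the $4$-dimensional piece, and the direct observation that the $\xi_P$-block is the standard $\bar\partial$ on $\C$ whose only $W^{1,p}_\delta$-admissible solutions are constants---whereas your appeal to ``the orbit-space regularity statement of \cite{Bourgeois:thesis}'' for the $\xi_P$-block conflates smoothness of the Morse--Bott orbit space with surjectivity of $D_{II}$. Still, the architecture is right.

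The uniqueness argument has a genuine gap. Positivity of intersections between holomorphic \emph{curves} is a $4$-dimensional phenomenon: in the $2n$-dimensional symplectization with $n\ge 3$, two $2$-planes have expected intersection dimension $4-2n<0$, so a holomorphic plane $u$ can pass through your $2$-dimensional foliation $\{u_p\}_{p\in P}$ without being forced onto any leaf. The paper closes this by first projecting $u$ to $Q$ via $\bar\pi_Q$; this projection is $(-J_Q)$-holomorphic with zero symplectic area (since $\int_\C u^*d\lambda_P=0$ from the asymptotics at $\gamma_1$), hence constant. Only then does $u$ land in a $4$-dimensional slice $\R\times\pi_Q^{-1}(q_0)\times D^2$, where positivity-of-intersection and linking arguments are legitimate (Section~\ref{sec:uniquess_finite_energy_plane}). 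Your confinement of $u$ to $\{r\le r_1\}$ via an unspecified ``maximum principle'' is also not how the paper proceeds: Lemma~\ref{lem:holomorphic_curve_projection} is a nontrivial energy estimate, splitting the plane into annuli to control the possibly negative $h_1'\,dr\wedge\lambda_P$ contribution against the action bound $E(u)=\mathcal A(\gamma_1)$.
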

\begin{proof}
The proof is somewhat long, so we split it into several sections.
Part (1) is clear by the construction in the previous section.

Existence of a finite energy holomorphic plane in the Morse--Bott setup is proved in Lemma~\ref{lemma:existence_finite_energy_plane}.
Uniqueness in the Morse--Bott setup is proved in Section~\ref{sec:uniquess_finite_energy_plane}.
In the sections after that we prove regularity of the finite energy plane and describe how to go from the Morse--Bott setup to the non-degenerate setup.
\end{proof}

\begin{remark}
In any open book, not just those with left-handed twists in the monodromy, one can always perturb $h_2$ to have such a maximum, so this lemma does not have direct geometric meaning.
Instead, in special setups one can show that $\gamma_1$ does not bound any other holomorphic curves.
If this happens, the finite energy plane from the lemma will have a meaning: it obstructs fillability, and guarantees the existence of a contractible periodic Reeb orbit, even after deformations of the contact form by Lemma~\ref{lemma:non-fillability}.

We also want to point out that the maximum of $h_2$ in the case of a left-handed twist arises rather naturally, whereas perturbing $h_2$ will in general create additional periodic Reeb orbits and holomorphic curves.
\end{remark}

\subsection{An adjusted almost complex structure for the symplectization}
The binding $P$ is a prequantization bundle over an integral symplectic manifold $(Q,\omega)$. We denote the projection from $P \to Q$ by $\pi_Q$.
Choose a compatible almost complex structure $J_Q$ for $(Q,\omega)$.
We lift this to a complex structure compatible with $(\xi_P=\ker \lambda_P,d\lambda_P)$ using the formula
$$
J_{\xi_P}:=-\pi_Q^*J_Q:=-Hor \circ J_Q \circ d\pi_Q.
$$
Here the map $Hor_p$ takes horizontal lifts of vectors in $T_{\pi(p)}Q$ to the contact structure $\xi|_p$ in $P$.
We need the minus sign because of our convention~\eqref{eq:d_connection}.

A fattened neighborhood of the binding as defined in Section~\ref{sec:fattening_binding}, has the form $P\times D^2$. Denote the natural projection $P\times D^2 \to P$ by $\pi_P$.
Define
$$
X_1=\partial_r \quad \text{and} \quad X_2=\frac{1}{\det H}\left( -h_2 R_{P}+h_1 \partial_\phi \right)
.
$$
Split the contact structure $\xi=\ker \alpha$ as
$$
\xi=\pi_P^*\xi_P \oplus \R X_1 \oplus \R X_2.
$$
The vectors $X_1,X_2$ form a symplectic basis of the complement of $\pi_P^*\xi_P$ in $\xi$, so we define a complex structure on this complement by putting $J_{X_1X_2}(X_1)=X_2$ and $J_{X_1X_2}(X_2)=-X_1$, and extending linearly.
Then
$$
J_\xi:=\pi_P^* J_{\xi_P}+J_{X_1X_2}
$$
defines a compatible complex structure on $(\xi,d\alpha)$. More explicitly, we can write
$$
J_{X_1X_2}=X_2\otimes X_1^*-X_1\otimes X_2^*=\frac{1}{\det H}\left( -h_2 R_{P}+h_1 \partial_\phi \right) \otimes dr-\partial_r \otimes \left( h_1' \lambda_P +h_2' d\phi \right).
$$

We extend the complex structure $J_\xi$ to an adjusted almost complex structure for the symplectization using the standard recipe
$$
J\partial_t =R_\alpha.
$$
We obtain
$$
J=R_\alpha \otimes dt - \partial_t \otimes \alpha+J_{X_1X_2}+\pi_P^* J_{\xi_P}.
$$
In matrix notation
$$
J=
\left(
\begin{array}{ccccc}
0 & -h_1 & 0 & 0 & -h_2 \\
\frac{h_2'}{\det H} & 0 & 0 & \frac{-h_2}{\det H} & 0 \\
0 & 0 & J_{\xi_P} & 0 & 0 \\
0 & -h_1'& 0 & 0 & -h_2' \\
\frac{-h_1'}{\det H} & 0 & 0 &\frac{h_1}{\det H} & 0
\end{array}
\right)
,
$$
where we have ordered the coordinates/basis vectors as $(t,R_{P},\xi_P,r,\phi)$.

\subsection{PDE to ODE: constructing a rigid holomorphic plane}
We prove the following lemma.
\begin{lemma}
\label{lemma:existence_finite_energy_plane}
There is a finite energy holomorphic plane $u_0$ in the symplectization of $P\times D^2$ asymptotic to $\gamma_1$.
\end{lemma}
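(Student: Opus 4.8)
The plan is to exploit the rotational symmetry of the binding model $(P\times D^2,\alpha=h_1(r)\lambda_P+h_2(r)d\phi)$ and reduce the Cauchy--Riemann equation to an ODE. Fix $p_0\in P$, identify the punctured plane $\C\setminus\{0\}$ with the cylinder $\R\times S^1$ via $z=e^{s+i\theta}$ (so that $z\to\infty$ is the positive puncture and $z=0$ an interior point), and look for a solution of the form
\[
u_0(s,\theta)=\bigl(a(s),\,p_0,\,r(s),\,\theta\bigr)\in\R\times\bigl(P\times D^2\bigr),
\]
written in the coordinates $(t,R_{P},\xi_P,r,\phi)$ used for the matrix of $J$ above, with $a\colon\R\to\R$ and $r\colon\R\to(0,r_1]$. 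Along such a map $p_0$ is constant, so $u_0^*\lambda_P=0$ and the $\pi_P^*\xi_P$--part of $\bar\partial_J u_0=0$ holds automatically; since $\partial_\theta u_0=\partial_\phi$ and, from the matrix of $J$, $J\partial_\phi=-h_2(r)\partial_t-h_2'(r)\partial_r$, the equation $\partial_s u_0+J(u_0)\partial_\theta u_0=0$ collapses to the autonomous system
\[
a'(s)=h_2\bigl(r(s)\bigr),\qquad r'(s)=h_2'\bigl(r(s)\bigr).
\]

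Next I would solve this system. The equation $r'=h_2'(r)$ is the gradient flow of $h_2$ on $[0,r_1+\delta]$; by Lemma~\ref{lemma:orbits_near_binding} the function $h_2$ is strictly increasing on $[0,r_1]$, has a nondegenerate maximum at $r_1$, and equals $r^2$ near $r=0$. Hence $0$ and $r_1$ are its only rest points in $[0,r_1]$, $h_2'>0$ in between, and both $\int_{0}dr/h_2'(r)$ and $\int^{r_1}dr/h_2'(r)$ diverge, so there is a heteroclinic solution $r\colon\R\to(0,r_1)$ with $r(-\infty)=0$ and $r(+\infty)=r_1$, unique up to the $s$--translation and converging exponentially at both ends (at rates governed by $h_2''(0)=2$ and $h_2''(r_1)<0$). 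Setting $a(s)=a_0+\int_{-\infty}^{s}h_2(r(\sigma))\,d\sigma$ solves the first equation: the integral converges at $-\infty$ because $h_2(r(\sigma))=O(e^{4\sigma})$, while $a(s)\sim h_2(r_1)s$ grows linearly as $s\to+\infty$, the constant $a_0$ being the free $\R$--translation of the symplectization.

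It then remains to verify that $u_0$ really is a finite--energy plane asymptotic to $\gamma_1$. From $u_0^*\alpha=h_2(r(s))\,d\theta$ one gets $u_0^*d\alpha=\frac{d}{ds}\bigl(h_2(r(s))\bigr)\,ds\wedge d\theta\ge0$, so the $d\alpha$--energy is $2\pi\bigl(h_2(r_1)-h_2(0)\bigr)=2\pi h_2(r_1)=\mathcal{A}(\gamma_1)<\infty$, and an integration by parts bounds the $\lambda$--part of the Hofer energy by the same quantity; hence $E(u_0)<\infty$. The exponential convergence $r(s)\to r_1$ as $s\to+\infty$ shows $u_0$ is positively asymptotic to $\gamma_1(t)=(p_0;r_1,t)$ (in the Morse--Bott picture, to the family $S_{1,0}$ at the point $p_0$, consistently with Lemma~\ref{lemma:CZ_special_orbit}). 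As $s\to-\infty$ we have $r(s)\to0$ and $a(s)\to a_0$; because $h_2(r)=r^2$ near the binding the local model there is the standard one, so the exponential decay of $r$ lets the map extend smoothly across $z=0$ (equivalently, once the image is bounded and the energy finite, apply removal of singularities). This produces the asserted smooth $J$--holomorphic plane; rigidity, Fredholm regularity and uniqueness are then taken up in the remaining parts of the proof of Lemma~\ref{lemma:existence_rigid_regular_plane}.

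The part I expect to be delicate is the analysis at the two ends rather than the ODE itself: matching the heteroclinic to the local model at the binding well enough to obtain a genuinely smooth (not merely continuous) extension across $z=0$ — this is exactly what forces the normalization $h_2(r)=r^2$ near $0$ — and establishing the precise SFT asymptotics at $\gamma_1$, i.e.\ exponential convergence with a well--defined asymptotic eigenvalue and eigenfunction, since these are the inputs needed later for rigidity, regularity, and the Morse--Bott-to-nondegenerate comparison. A minor additional point worth recording is that $u_0$ is simple: it is somewhere injective because it is pinned to $\{p_0\}\subset P$ and $r(s)$ is strictly monotone, so no nontrivial reparametrization of the domain preserves it.
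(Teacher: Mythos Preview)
Your proof is correct and takes essentially the same approach as the paper: fix $p_0\in P$, make a rotationally symmetric ansatz, and reduce the Cauchy--Riemann equation to an ODE system governed by $h_2$ and $h_2'$. The only cosmetic difference is coordinates on the domain---the paper works in polar coordinates $(\rho,\psi)$ and obtains $t_\rho=\tfrac{1}{\rho}h_2(r)$, $r_\rho=\tfrac{1}{\rho}h_2'(r)$, which your substitution $\rho=e^s$ converts to the autonomous system you wrote down; the paper also checks smoothness at the origin by direct computation from $h_2(r)=r^2$ rather than invoking removal of singularities.
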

\begin{proof}
We use the Morse-Bott almost complex structure from the previous section.
Take $p_0$ in $P$ and make the following ansatz for the holomorphic plane
\begin{eqnarray*}
u: \C & \longrightarrow & \R \times P \times D^2 \\
(\rho,\psi) & \longmapsto & (t(\rho);p_0,r(\rho),\psi)
\end{eqnarray*}
With respect to the coordinates/basis vectors $(t,R_{P},\xi_P,r,\phi)$ we have
$$
u_\rho:=\frac{\partial u}{\partial \rho}=
\left(
\begin{array}{c}
t_\rho \\
0 \\
0 \\
r_\rho \\
0
\end{array}
\right)
\text{ and }
u_\psi=
\left(
\begin{array}{c}
0 \\
0 \\
0 \\
0 \\
1
\end{array}
\right)
$$
The Cauchy-Riemann equation in polar coordinates, $u_\rho+\frac{1}{\rho}J u_\psi=0$, reduces hence to a system of ODE's
\[
\begin{split}
t_\rho(\rho) &=\frac{1}{\rho} h_2(r(\rho) \,) \\
r_\rho(\rho) &=\frac{1}{\rho} h_2'(r(\rho) \,)
\end{split}
\]
This system can be solved for by first integrating the second equation, and substituting the solution in the first: integration yields then a solution, and there are two integration constants, one for shifting $t$ (the symplectization symmetry), and one for rescaling $\rho$ (an automorphism of the plane).

We check that this is a smooth solution and that it is asymptotic to $\gamma_1$.
To see that the solution is smooth, we use the standard form of $h_2(r)=r^2$ near $r=0$. This is not really necessary, but it simplifies the argument.
Near $r=0$, we get
\[
\begin{split}
r_\rho(\rho) &=\frac{1}{\rho} 2 r(\rho)
\end{split},
\]
so $r(\rho)=C_r \rho^2$, where $C_r$ is an integration constant.
The first equation reduces to
\[
\begin{split}
t_\rho(\rho) &=\frac{1}{\rho} C_r^2 \rho^4.
\end{split}
\]
So near $r=0$, the solution looks like $t(\rho)=C_t+\frac{1}{4}C_r^2\rho^4$, where $C_t$ is another integration constant.
We see that the map $u$ is smooth near $r=0$. Away from $r=0$, there are no coordinate singularities, so $u$ is smooth everywhere.
Finally, we check that $u$ is asymptotic to $\gamma_1$.

To see this, note that as $\rho\to \infty$, the radial component $r(\rho)\to r_1$.
There $t_\rho(\rho)\sim \frac{h_2(r_1)}{\rho}$, so asymptotically $t\sim h_2(r_1) \log(\rho)$.
After going to cylindrical coordinates near $\infty$ (such coordinates are described after Lemma~\ref{lemma:normal_coordinates}) and noting that $2\pi h_2(r_1)=\mathcal A(\gamma_1)$ we see that $u$ converges exponentially to the Reeb orbit $\gamma_1(\psi)=(p_0;r_1,\psi)$ in $P\times D^2$. 
For later use, we put $u_0:=u$.
\end{proof}

\subsection{Uniqueness of holomorphic planes}
\label{sec:uniquess_finite_energy_plane}
Consider the projection
\begin{eqnarray*}
\pi: \R \times P \times D^2 & \longrightarrow & D^2.
\end{eqnarray*}
We shall start by arguing that any finite energy holomorphic plane $u$ asymptotic to $\gamma_1$ has the same projection $\pi(u_0)$ as the plane $u_0$ we found in Lemma \ref{lemma:existence_finite_energy_plane}.

\begin{lemma}
\label{lem:holomorphic_curve_projection}
Let $u:\C \to \R \times Y$ be a finite energy holomorphic plane asymptotic to $\gamma_1$.
Then $\pi \circ u$ is defined, and $\pi\circ u(\C)=\{ (r,\phi) \in D^2~|~r\leq r_1 \}$.
\end{lemma}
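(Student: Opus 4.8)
The plan is to establish the two inclusions $\pi\circ u(\C)\subseteq\{r\le r_1\}$ and $\pi\circ u(\C)\supseteq\{r\le r_1\}$ separately, after first checking that $\pi\circ u$ is defined at all, i.e.\ that $u$ stays in the symplectization of the fattened binding $P\times D^2$. For the confinement I would argue by a maximum principle. On the part of $Y$ lying in the content of the pages the contact form is $d\phi+\lambda$ with $\lambda$ the exact Liouville form of $W$ and Reeb field $\partial_\phi$, and the adjusted almost complex structure is of symplectization\,+\,Liouville type; hence the $\R$-coordinate $t\circ u$ is subharmonic, and a suitable function of the Liouville coordinate on $W_{in}$ is plurisubharmonic there. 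Since $u$ is a finite-energy plane asymptotic to $\gamma_1$, an orbit sitting at $r=r_1$ well inside $P\times D^2$, the maximum principle prevents $u$ from having an interior maximum of either function in the content of the pages, so $u(\C)\subseteq\R\times P\times D^2$ and $\pi\circ u$ makes sense.

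The upper bound $r\circ u\le r_1$ is the technical heart. We work in the model $(\R\times P\times D^2,J)$, which is invariant under rotation of the $\phi$-coordinate and under the Reeb flow of $R_{P}$. Using the explicit description of $J$ — in particular the identities $J^*dr=-(h_1'\lambda_P+h_2'd\phi)$ and $J^*(h_1'\lambda_P+h_2'd\phi)=dr$ — together with the Cauchy--Riemann equation, one shows that on the region $\{r>r_1\}$ the function $r\circ u$, or a monotone reparametrization $G(r)\circ u$ adapted to the fact that $h_2$ attains a nondegenerate maximum at $r_1$, satisfies a differential inequality $\Delta(G(r)\circ u)\ge 0$. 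The strong maximum principle then forbids $r\circ u$ from attaining an interior value exceeding $r_1$ (it would have to be constant, contradicting the asymptotics, since $u$ has positive $d\alpha$-energy), while the asymptotics force $r\circ u\to r_1$ at the puncture; hence $r\circ u\le r_1$ on $\C$, with equality only at the puncture. This step uses the properties of $h_1,h_2$ from Lemma~\ref{lemma:orbits_near_binding}: the unique maximum of $h_2$ at $r_1$, the concavity of $h_2$ there, and the slope bound $|h_2'/h_1'|<1$. If a clean subharmonicity estimate is unavailable, one can run the argument instead by positivity of intersection of $u$ with the local foliation of $\{r<r_1\}$ by the planes produced in Lemma~\ref{lemma:existence_finite_energy_plane} as the base point $p_0$ varies over $P$.

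For the reverse inclusion, note that the binding $\{r=0\}$ lifts to the $J$-holomorphic hypersurface $\R\times P\times\{0\}$ of the symplectization, since with $h_2(r)=r^2$ near $r=0$ the structure $J$ is standard in the $D^2$-factor there. Positivity of intersection of $u$ with this hypersurface, together with the fact that $\gamma_1$ has linking number $1$ with the binding, shows that $\pi\circ u$ takes the value $0\in D^2$ exactly once and transversally. Combining this with the upper bound — so that near the puncture $\pi\circ u$ is a loop of winding number $1$ about $0$ inside $\{r<r_1\}$ — and with the orientation-compatibility of $\pi\circ u$ on $\{r<r_1\}$ (equivalently $u^*(dr\wedge d\phi)\ge 0$ there, again read off from $J$), a degree argument shows that $\pi\circ u$ attains every value of the open disk $\{r<r_1\}$. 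Hence $\pi\circ u(\C)=\{(r,\phi)\in D^2\mid r\le r_1\}$. The main obstacle throughout is the upper-bound step: controlling $r\circ u$ across the ``watershed'' $\{r=r_1\}$, which is precisely where the geometry of the left-handed twist — the maximum of $h_2$ at $r_1$ — enters.
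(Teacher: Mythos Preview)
Your outline is \emph{not} the paper's approach, and the central step has a genuine gap.

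The paper does not use a maximum principle at all. It fixes the total energy by Stokes' theorem, $E(u)=\mathcal A(\gamma_1)=2\pi h_2(r_1)$, and then estimates from below the $d\alpha$-energy contributed on preimages of annuli in $D^2$. Using $h_1 d\lambda_P(\cdot,J\cdot)\ge 0$ and a careful accounting of the term $\int h_1'\,dr\wedge\lambda_P$ via the identity $\lambda_P=\tfrac1{h_1}(\alpha-h_2\,d\phi)$, one shows that merely covering $D^2_{r_1}$ already exhausts the available energy; any excursion to $r>r_1$ (or out of $P\times D^2$) would add strictly positive energy, contradicting the Stokes equality. This argument is insensitive to how $J$ is extended beyond the fattened binding.

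Your approach hinges on the claim that $r\circ u$, or some monotone $G(r)\circ u$, is subharmonic on $\{r>r_1\}$. But a direct computation of $d(dr\circ J)$ from the explicit $J$ gives
\[
d(dr\circ J)=-h_1''\,dr\wedge\lambda_P-h_1'\,d\lambda_P-h_2''\,dr\wedge d\phi,
\]
and while the $-h_1'\,d\lambda_P$ term is nonnegative on $J$-invariant $2$-planes, the remaining terms produce indefinite cross terms when evaluated on $(u_s,Ju_s)$, because $J$ mixes the $R_P$-direction with both $\partial_t$ and $\partial_r$. A reparametrisation $G(r)$ does not cure this: the same mixing occurs. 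So the asserted differential inequality is not available without further work. Your fallback, positivity of intersection with the family of planes from Lemma~\ref{lemma:existence_finite_energy_plane}, only foliates $\{r<r_1\}$ and therefore says nothing about excursions into $\{r>r_1\}$. Likewise, your confinement to $P\times D^2$ via plurisubharmonicity on the content of the pages presupposes a specific extension of $J$ there which the paper never fixes; the energy argument avoids this by using only $d\alpha(\cdot,J\cdot)\ge0$.

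In short: the ``watershed'' step is exactly where the energy method earns its keep, and your maximum-principle sketch does not supply the needed inequality.
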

The idea is that curves that escape this neighborhood exceed the a priori energy bound. 
Here are more details.
\begin{proof}
Since the asymptote $\gamma_1$ has linking number $1$ with the binding, there is a unique point on $u$ intersecting the binding.
We will assume that this is $u(0)\in P\times \{ 0\}$.
Clearly, there is some connected, open subset $V\subset \C$ such that $\pi \circ u|_V$ is defined.
Furthermore, we can find $V$ such that $D^2_{r_1} \subset \pi\circ u(V)$ as $u$ can otherwise not be asymptotic to $\gamma_1$.
By Stokes' theorem the energy of the holomorphic curve $u$ is given by
$$
E(u)=\mathcal A(\gamma_1)=E(u_0)= 2\pi h_2(r_1)
.
$$
On the other hand, we can also compute the energy directly as
$$
E(u)=\int_\C u^* d\alpha \geq \int_{u(V)} h_1'dr \w \lambda_P+h_1d\lambda_P+h_2' dr \w \phi.
$$
As $u$ is $J$-holomorphic and $h_1 d\lambda_P(\cdot, J \cdot)=h_1 d\lambda_P(\cdot, J_{\xi_P} \cdot)$ is non-negative, this can be estimated by
\begin{equation}
\label{eq:estimate_energy_curve}
E(u)\geq \int_{u(V)} h_1'dr \w \lambda_P+h_2'dr \w d\phi.
\end{equation}
We are going to give a lower bound for this quantity by dividing the plane into annuli for which the image has inner radius $r_{s_j}$ and outer radius $r_{e_j}$. 
Then the energy contains the following term
$$
\int_{A_{r_{s_j},r_{e_j}}} h_1'dr \w \lambda_P+\int_{A_{r_{s_j},r_{e_j}}} h_2'dr \w d\phi.
$$
The second term is positive for $r<r_1$, but the first term can be negative.
However, we can parametrize the inverse image of a regular value of $r$ as a circle, and this allows us to say more.
We find the following contribution to the energy for an annulus with radii $r_{s_j}$ and $r_{e_j}$,
\begin{eqnarray*}
\int_{A_{r_{s_j},r_{e_j}}} h_1'dr \w \lambda_P & = &
\int_{r=r_{s_j}}^{r_{e_j}} \int_{\psi=0}^{2\pi}h_1'(r) \lambda_P(\partial_\psi u) d\psi dr \\
& = & \int_{r=r_{s_j}}^{r_{e_j}} \int_{\psi=0}^{2\pi}\frac{h_1'(r)}{h_1(r)} \left(
\alpha-h_2(r) d\phi
\right)
(\partial_\psi u) d\psi dr \\
& = & \int_{r=r_{s_j}}^{r_{e_j}} -\frac{h_1'(r)}{h_1(r)} \left(
2\pi h_2(r) - \int_{\psi=0}^{2\pi} \alpha(\partial_\psi u) d\psi
\right)
 dr. \\
\end{eqnarray*}
Here we used the fact the asymptotics of $u$ are the same as those of $u_0$, so $\gamma_1$ is covered once, and $\int_0^{2\pi} d\phi(\partial_\psi u) d\psi =2\pi$.
We see that this contribution to the energy is non-negative as long as
$$
\int_{\psi=0}^{2\pi} \alpha(\partial_\psi u) d\psi \leq 2\pi h_2(r).
$$
This is the action of the loop for which the $r$-component of $u$ equals $u^r=r$.

Let us now complete the argument.
Consider an increasing sequence of regular values $\{ r_i \}_i$, where $r_i$ is the outer radius of an annulus, that converges to $r_1$.
There are two cases to consider.
\begin{itemize}
\item{} There is a subsequence, denoted also by $\{ r_i \}_i$, such that
$$
\int_{\psi=0}^{2\pi} \alpha(\partial_\psi u(r_i,\psi)\, ) d\psi > 2\pi h_2(r_i).
$$
If this happens, then the above estimate does not work, but in this case we can directly compute the action by Stokes' theorem to find
$$
\lim_{i\to \infty} \int_{u,r<r_i} d\alpha \geq E(u_0).
$$

\item{} If there is no such subsequence, then we can apply the above estimate.
In particular, we find that
$$
\int_{A_{r_{s_j},r_{e_j}}} h_1'dr \w \lambda_P \geq 0,
$$
so the energy of the annulus $A_{r_{s_j},r_{e_j}}$ is at least
$$
E_{A_{r_{s_j},r_{e_j}}}(u)\geq 2\pi h_2(r_{e_j})- 2\pi h_2(r_{s_j}).
$$
\end{itemize}
Combine this with Equation~\ref{eq:estimate_energy_curve} to obtain an estimate for the total energy of the curve $u$.
We have
$$
E(u)\geq E(u_0)
$$
if $\pi \circ u$ just covers $D^2_{r_1}$.

Arguing by contradiction we can show the claim of the lemma.
If there is a point $x$ such that $\pi \circ u(x)\notin D^2_{r_1}$, then we find an open set $U_x$ not contained in $D^2_{r_1}$ which gives a positive contribution to the energy.
In particular,
$$
E(u)>E(u_0).
$$
But this is impossible by Stokes' theorem which asserts that we must have equality.
\end{proof}

We shall now reduce the problem to a holomorphic curve in a $3$-dimensional contact manifold.
Consider a holomorphic plane
\begin{eqnarray*}
u: \C & \longrightarrow & \R \times P\times D^2_{r_1} \\
 z & \longmapsto & (f(z),g(z),h(z)  )
\end{eqnarray*}
with the same asymptotics as $u_0$.
By Lemma~\ref{lem:holomorphic_curve_projection} this curve $u$ must have the same projection to $D^2_{r_1}$ as $u_0$, and $u$ stays in a set of the form $\R \times P \times D^2_{r_1}$.
Consider the projection
\[
\begin{split}
\bar \pi_Q:~\R \times P \times D^2_{r_1} & \longrightarrow Q \\
(t,p,z) & \longmapsto \pi_Q(p).
\end{split}
\]
\begin{lemma}
The map $\bar \pi_Q\circ u: \C \to Q$ is a $(-J_Q)$-holomorphic curve with vanishing area. Hence $\bar \pi_Q\circ u$ is constant.
\end{lemma}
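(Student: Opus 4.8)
The plan is to prove the two assertions in order — first that $\bar\pi_Q\circ u$ is $(-J_Q)$-holomorphic, then that its $\omega$-area vanishes — and then to conclude constancy from positivity of area together with connectedness of $\C$. For the first assertion I would simply exploit the block structure of the almost complex structure: in the basis $(t,R_{P},\xi_P,r,\phi)$ the $\xi_P$-row of the displayed matrix of $J$ is $(0,0,J_{\xi_P},0,0)$, so for every tangent vector $v$ the $\xi_P$-component of $Jv$ equals $J_{\xi_P}$ applied to the $\xi_P$-component of $v$. Writing $du(X)$ in the splitting $T(\R\times P\times D^2)=\R\partial_t\oplus\R R_{P}\oplus\xi_P\oplus\R\partial_r\oplus\R\partial_\phi$ and projecting the equation $du\circ i=J\circ du$ onto $\xi_P$ gives $(du(iX))_{\xi_P}=J_{\xi_P}\bigl((du(X))_{\xi_P}\bigr)$. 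Since $\bar\pi_Q(t,p,z)=\pi_Q(p)$ and $d\pi_Q$ annihilates $R_{P}$, one has $d(\bar\pi_Q\circ u)(X)=d\pi_Q\bigl((du(X))_{\xi_P}\bigr)$; now using $J_{\xi_P}=-Hor\circ J_Q\circ d\pi_Q$ and $d\pi_Q\circ Hor=\id_{TQ}$ yields $d\pi_Q\circ J_{\xi_P}=-J_Q\circ d\pi_Q$, hence $d(\bar\pi_Q\circ u)(iX)=-J_Q\bigl(d(\bar\pi_Q\circ u)(X)\bigr)$. This step is pure bookkeeping with the explicit $J$.

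For the vanishing of the area, let $\pi_P\colon\R\times P\times D^2\to P$ denote the projection, so $\bar\pi_Q\circ u=\pi_Q\circ\pi_P\circ u$. The Boothby--Wang identity $d\lambda_P=-2\pi\,\pi_Q^*\omega$ gives $(\bar\pi_Q\circ u)^*\omega=-\tfrac1{2\pi}\,d\bigl((\pi_P\circ u)^*\lambda_P\bigr)$. I would then exhaust $\C$ by disks $\{|z|\le R\}$ — the integrand is integrable because $|(\bar\pi_Q\circ u)^*\omega|$ is controlled by $u^*d\alpha$, whose total integral is the finite number $\mathcal A(\gamma_1)=2\pi h_2(r_1)$ — and apply Stokes' theorem to reduce $\int_\C(\bar\pi_Q\circ u)^*\omega$ to $-\tfrac1{2\pi}\lim_{R\to\infty}\oint_{|z|=R}(\pi_P\circ u)^*\lambda_P$. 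Because $u$ is a finite energy plane asymptotic to $\gamma_1$ it converges exponentially in $C^1$ to the orbit near infinity, and $\gamma_1$ is covered once since $\lk(\gamma_1,P\times\{0\})=1$; as $\gamma_1(\psi)=(p_0;r_1,\psi)$ has $\pi_P\circ\gamma_1\equiv p_0$ constant, the loops $(\pi_P\circ u)|_{|z|=R}$ converge to a constant loop and the boundary integrals tend to $0$. Hence $\int_\C(\bar\pi_Q\circ u)^*\omega=0$. Since $-J_Q$ is $(-\omega)$-compatible and $\bar\pi_Q\circ u$ is $(-J_Q)$-holomorphic, the standard energy identity gives $\tfrac12\int_\C|d(\bar\pi_Q\circ u)|^2\,dA=-\int_\C(\bar\pi_Q\circ u)^*\omega=0$, so $d(\bar\pi_Q\circ u)\equiv 0$ and $\bar\pi_Q\circ u$ is constant on the connected domain $\C$.

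The only delicate point is the boundary-at-infinity term in the second step: it relies on the $C^1$-asymptotics of finite energy planes (from Hofer's theory, or the Morse--Bott analysis already set up in the preceding sections) and on the fact that $\gamma_1$ projects to a single point of $P$, hence of $Q$. If one prefers to bypass the limiting argument, an alternative is to invoke removal of singularities: extend $\bar\pi_Q\circ u$ to a $(-J_Q)$-holomorphic sphere $S^2\to Q$, note that its homology class is $(\bar\pi_Q)_*$ of the relative class $[u]\in H_2(\R\times Y,\gamma_1)$ — which is genuinely absolute because $\gamma_1$ collapses to a point under $\bar\pi_Q$ — and observe that this class pairs trivially with $[\omega]$ by the same Boothby--Wang computation, forcing the sphere, and therefore $\bar\pi_Q\circ u$, to be constant.
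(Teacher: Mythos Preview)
Your proof is correct and follows essentially the same approach as the paper: you use the block structure of $J$ with respect to the splitting to establish $(-J_Q)$-holomorphicity, then the Boothby--Wang identity $d\lambda_P=-2\pi\,\pi_Q^*\omega$ together with Stokes' theorem and the asymptotics of $u$ to show the area vanishes, and finally the energy identity to conclude constancy. You supply more detail on the limiting step at infinity (and an alternative via removal of singularities) than the paper, which simply invokes the fact that the $P$-component of $u$ converges to $p_0$, but the argument is the same.
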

\begin{proof}
This holds because $J$ is obtained by pulling back $J_Q$. More precisely, if we let $j$ denote the standard complex structure on $\C$, then we have
\[
\begin{split}
d(\bar \pi_Q\circ u) \circ j&=d \bar \pi_Q \circ d u \circ j=d \bar \pi_Q \circ J \circ du
=d\bar \pi_Q \circ \left( R_\alpha \otimes dt-\partial_t \otimes \alpha-\pi_P^* \circ \pi_Q^*J_Q +J_{X_1X_2}\right) \circ du\\
&=
-d \pi_Q \circ \left( Hor \circ J_Q \circ d\bar \pi_Q \right)du =-J_Q \circ d(\bar \pi_Q\circ u),
\end{split}
\]
so $\bar \pi_Q\circ u$ is a $(-J_Q)$-holomorphic curve. Furthermore, $\int_\C ( \bar \pi_Q\circ u)^*\omega_Q=\int_\C \frac{-1}{2\pi} u^* d \lambda_P$ by virtue of $\lambda_P$ being a connection form.
As $\int_\C u^*d \lambda_P=0$ by the asymptotic boundary conditions (the $P$-component of $u$ converges to $p_0$ at $\infty$), we conclude that $\bar \pi_Q\circ u$ is a constant map.
\end{proof}
From this lemma we conclude that $u$ must have the following form,
\begin{eqnarray*}
u: \C & \longrightarrow & \R \times P\times D^2_{r_1} \\
 z & \longmapsto & (f(z),Fl^{R_{P}}_{\tilde g(z)}(p_0),h(z)  )
\end{eqnarray*}
It follows that we can construct a holomorphic curve $v$ in the symplectization of a $3$-dimensional contact manifold,
\begin{eqnarray*}
v: \C & \longrightarrow & \R \times S^1 \times D^2_{r_1} \\
 z & \longmapsto & (f(z),\tilde g(z),h(z)  )
.
\end{eqnarray*}
Similarly, we can also associate a holomorphic curve $v_0$ with $u_0$.
\begin{eqnarray*}
v_0: \C & \longrightarrow & \R \times S^1 \times D^2 \\
 z & \longmapsto & (f_0(z),\theta_0,h_0(z)  )
.
\end{eqnarray*}
Let us now argue that $v$ is a translation of $v_0$.
This implies in turn that $u$ is a translation of $u_0$.
There are two cases to consider.
\begin{itemize}
\item{} The function $\tilde g$ is constant, so $\tilde g(z)=\theta_0$.
In this case, the curves $v$ and $v_0$ are both contained in the $3$-dimensional submanifold $\R \times \{ \theta_0 \} \times D^2$, and they both converge exponentially to the same periodic orbit $\gamma_1$.

If $v$ is not a translation of $v_0$, then we can apply a translation in the $\R$-direction of the symplectization to $v$, and obtain a shifted curve $v_{shift}$ that intersects $v_0$ non-trivially and transversely.
It follows that the intersections are $1$-dimensional submanifolds.
By positivity of intersection (cf.~\cite[Theorem~E.1.2]{MS:J_curves}), $v$ must coincide with $v_0$, which contradicts our assumption.

\item{} The function $\tilde g$ is not constant.
Then we define a shifted holomorphic curve with different asymptotics by
$$
v_{shift}= (f(z),\tilde g(z)+\theta_{shift},h(z)  ).
$$
The holomorphic curve $v_0$ is asymptotic to the periodic Reeb orbit $\gamma_1(\psi)=(\theta_0;r_1,\psi)$, and $v_{shift}$ is asymptotic to the periodic Reeb orbit $\gamma_{shift}(\psi)=(\theta_0+\theta_{shift};r_1,\psi)$.
The obvious Seifert surfaces for $\gamma_1$ and $\gamma_{shift}$ (flat disks in the solid torus $S^1\times D^2$) do not intersect, so $\gamma_1$ and $\gamma_{shift}$ do not link for any shift $\theta_{shift} \notin 2\pi \Z$.

On the other hand, the linking number $\lk(\gamma_1,\gamma_{shift})$ can also be computed as a $4$-dimensional intersection number of the Seifert surfaces of $\gamma$ and $\gamma_{shift}$.
We shall take the Seifert surfaces provided by the finite energy planes $v_0$ and $v_{shift}$, so
$$
\lk(\gamma_1,\gamma_{shift}) =v_0 \cdot v_{shift}.
$$
Since $\tilde g(z)$ is assumed to be non-constant, we can find $\theta_{shift}$ such that $v_0$ and $v_{shift}$, after possibly translating in the $\R$-direction, intersect. By positivity of intersection for holomorphic curves in dimension $4$, it follows that $\lk(\gamma_1,\gamma_{shift})>0$.
This is a contradiction, so we conclude that for finite energy planes asymptotic to $\gamma_1$ the function $\tilde g$ is constant.
\end{itemize}

\subsection{Regularity of the finite energy plane: Banach space for Morse-Bott setup and reduction to $3$ dimensions}
\label{seq:MB_regularity}
We will show that $u_0$ is regular, meaning that the linearized Cauchy-Riemann operator is surjective at $u_0$.
Since the details are rather lengthy, we give a summary first.
\begin{itemize}
\item We show that the linearized operator splits into a $4$-dimensional part, which is a mixture of normal and tangential directions, and higher-dimensional part, which is purely normal.
\item The kernel of the higher-dimensional part can be explicitly determined.
\item By automatic transversality results of \cite{Wendl:transversality} the $4$-dimensional part is regular. We conclude that $\ker D_{u_0}=\ind D_{u_0}$ for the full problem, so the cokernel is trivial.
\end{itemize}
Let us now give some details.
Since we are looking only at finite energy planes, we shall restrict ourselves to the functional analytic setup for this particular case.

Fix a contact manifold $(Y,\alpha)$, and let $\gamma_1$ be a periodic Reeb orbit of Morse-Bott type in $Y$.
Suppose that the action $\mathcal A(\gamma_1)=T$.
Fix a small number $\delta>0$. This number will serve as an asymptotic weight and is necessary for the linearized operator to be Fredholm. Also choose $p>2$.

We take the following lemma from \cite[Lemma~3.1]{Bourgeois:thesis}.
\begin{lemma}[Normal coordinates for a periodic orbit in a Morse-Bott manifold]
\label{lemma:normal_coordinates}
Suppose that $N_T$ is a $k$-dimensional submanifold in $(Y^{2n-1},\alpha)$ consisting of simple periodic Reeb orbits of period $T$.
Let $\gamma_1\subset N_T$ be a simple periodic orbit. 
Then there is a neighborhood $S^1\times D^{k-1}\times D^{2n-1-k}=S^1 \times D^{2n-2}$ near $\gamma_1$ with coordinates $(\phi;z_t,z_n)=(\phi;z)$ such that
$$
\alpha=g\cdot(d\phi+\frac{i}{2}(zd\bar z-\bar z dz)\, ),
$$
where $dg|_{N_T}=0$.
\end{lemma}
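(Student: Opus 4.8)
The plan is to prove this by a relative Moser (normal-form) argument in the spirit of \cite[Lemma~3.1]{Bourgeois:thesis}. First I would set up coordinates using the Reeb flow itself. Because $\gamma_1$ and all nearby orbits inside $N_T$ are simple and share the common period $T$, the time-$t$ Reeb flow restricts to a \emph{free} action of $\R/T\Z$ on a neighbourhood of $\gamma_1$ in $N_T$; picking a $(k-1)$-disk $\Delta_t$ through $\gamma_1(0)$ inside $N_T$, transverse to $\gamma_1$, together with a complementary normal $(2n-1-k)$-disk $\Delta_n$ with $T\Delta_t\oplus T\Delta_n=\xi_{\gamma_1(0)}$, and then flowing $\Delta:=\Delta_t\times\Delta_n$ by the Reeb flow (rescaling the flow parameter to $\phi\in S^1=\R/2\pi\Z$), I obtain a diffeomorphism of a neighbourhood of $\gamma_1$ with $S^1\times D^{k-1}\times D^{2n-1-k}$, with coordinates $(\phi;z_t,z_n)=(\phi;z)$, in which $\gamma_1=S^1\times\{0\}$, $N_T=\{z_n=0\}$, and $R_\alpha=\partial_\phi$ along $N_T$ (so $\iota_{\partial_\phi}\alpha\equiv1$ on $N_T$). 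By choosing $\Delta$ so that the restriction of $d\alpha$ to $\xi_{\gamma_1(0)}$ becomes the standard symplectic form compatible with the complex coordinate $z\in\C^{n-1}$ — this is where the locally-constant-rank hypothesis is used, to ensure the directions $z_t$ tangent to $N_T$ lie in $\ker d\alpha|_{N_T}$ and $z_n$ is transversally nondegenerate — I may also arrange that along $N_T$ the $0$-jet of $\alpha$ agrees with that of $\alpha_0:=d\phi+\lambda_0$, where $\lambda_0:=\tfrac i2(z\,d\bar z-\bar z\,dz)=\sum_j(x_j\,dy_j-y_j\,dx_j)$.

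Next I would normalise the full $1$-jet of $\alpha$ along $N_T$. Set $g:=\iota_{\partial_\phi}\alpha>0$, so $g\equiv1$ on $N_T$. From $\iota_{R_\alpha}d\alpha=0$ and $R_\alpha=\partial_\phi$ on $N_T$ one gets $d\alpha(\partial_{z_n},\partial_\phi)=0$ on $N_T$, hence the normal derivative of $g$ along $N_T$ equals $\partial_\phi\bigl(\alpha(\partial_{z_n})\bigr)$; a further $\phi$-averaging adjustment of the slice $\Delta$ makes this vanish, giving $dg|_{N_T}=0$. Feeding the Morse--Bott identity $T_pN_T=\ker(TFl^{R_\alpha}_T-\id)_p$ together with $\iota_{R_\alpha}d\alpha=0$ into the remaining first-order terms of $\alpha$ along $N_T$, and using a linear-in-$z$ coordinate change that fixes $N_T$ to absorb them, one verifies that $\alpha$ and $g\,\alpha_0$ have equal $1$-jet at every point of $N_T$; equivalently, $\alpha-g\,\alpha_0$ and $d(\alpha-g\,\alpha_0)$ vanish on $N_T$, so $\alpha-g\,\alpha_0=O(|z_n|^2)$ near $N_T$.

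Finally I would run a Moser argument on the linear interpolation $\alpha_s:=g\,\alpha_0+s\,(\alpha-g\,\alpha_0)$, $s\in[0,1]$. The perturbation being $O(|z_n|^2)$, all $\alpha_s$ are contact on a uniform neighbourhood of $\gamma_1$ and satisfy $\iota_{R_{\alpha_s}}d\alpha_s=0$. I solve for the time-dependent vector field $X_s$ with $\iota_{X_s}\alpha_s=0$ and $\iota_{X_s}d\alpha_s+(\alpha-g\,\alpha_0)=h_s\,\alpha_s$, $h_s:=(\alpha-g\,\alpha_0)(R_{\alpha_s})$; on $\ker\alpha_s$ this is $\iota_{X_s}(d\alpha_s|_{\ker\alpha_s})=-(\alpha-g\,\alpha_0)|_{\ker\alpha_s}$, uniquely solvable by nondegeneracy of $d\alpha_s|_{\ker\alpha_s}$, and $X_s$ vanishes to second order along $N_T$. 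Its flow $\Phi_s$ is then defined near $\gamma_1$, fixes $N_T$ pointwise, and gives $\Phi_s^*\alpha_s=f_s\,\alpha_0$ with $\tfrac{d}{ds}\log f_s=h_s\circ\Phi_s$; since $h_s$ vanishes to second order on $N_T$, $f_1|_{N_T}=1$ and $df_1|_{N_T}=0$. Setting $g:=f_1$, the map $\Phi_1$ puts $\alpha$ into the required form $g\cdot\bigl(d\phi+\tfrac i2(z\,d\bar z-\bar z\,dz)\bigr)$ with $dg|_{N_T}=0$.

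The hard part will be the middle step: reading off the correct $1$-jet normal form of $\alpha$ along $N_T$ from the Morse--Bott data. One has to arrange that the tangential $z_t$-directions (where $d\alpha$ degenerates) and the normal $z_n$-directions (where $d\alpha$ is transversally nondegenerate) combine into the single \emph{complex} coordinate $z$ of the model — this genuinely needs the locally-constant-rank hypothesis, to choose an adapted complex structure on $\xi$ — and that the $\phi$-dependence of the slice can be averaged away without disturbing $R_\alpha=\partial_\phi$ or $dg=0$ on $N_T$. Once the $1$-jets agree, the Moser step is routine, the key point being that the second-order vanishing of the perturbation along $N_T$ makes the Moser vector field integrable on a whole neighbourhood of $\gamma_1$ while fixing $N_T$ and keeping the conformal factor constant with vanishing normal derivative there.
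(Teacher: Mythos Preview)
The paper does not actually prove this lemma: it is quoted verbatim from \cite[Lemma~3.1]{Bourgeois:thesis} and used as a black box, so there is no ``paper's own proof'' to compare against. Your relative Moser/Gray argument is exactly the standard route and is essentially what Bourgeois does in his thesis: set up coordinates with the Reeb flow so that $N_T=\{z_n=0\}$ and $R_\alpha=\partial_\phi$ there, match $1$-jets along $N_T$ using the Morse--Bott nondegeneracy, and then run Gray stability on the straight-line interpolation with a vector field vanishing to second order on $N_T$.

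Two small points worth tightening. First, there is a minor inconsistency in your normalisation: you rescale the flow parameter to $\phi\in\R/2\pi\Z$ but then also assert $R_\alpha=\partial_\phi$ and $\iota_{\partial_\phi}\alpha\equiv1$ on $N_T$; with period $T$ these cannot both hold unless $T=2\pi$. This is cosmetic---just fix one convention and carry the constant through (in the paper's later use one has $g(0;0,0)=T$). Second, in the Moser step you write $\Phi_s^*\alpha_s=f_s\,\alpha_0$, but the initial condition is $\alpha_0|_{s=0}=g\,\alpha_0$, so one really gets $\Phi_s^*\alpha_s=f_s\,g\,\alpha_0$ with $f_0\equiv1$; the final conformal factor is the product $f_1\cdot g$, not $f_1$ alone. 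Since you already arranged $g\equiv1$ and $dg=0$ along $N_T$, and $h_s$ vanishes to second order there, the product still satisfies the required $dg|_{N_T}=0$, so the conclusion is unaffected. With these cosmetic fixes your argument is complete and matches the cited source.
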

We apply this lemma to obtain coordinates $(t;\phi,z)$ near a cylinder over $\gamma_1$ for the symplectization $\R \times Y$. 
We then say that a map $u:\C \P^1-\{ pt \}=\C \to \R \times Y$ is {\bf asymptotically cylindrical} to $\gamma_1$ if there are $t_0,\phi_0$ such that in cylindrical coordinates $(\rho,\psi)\in Z_+=\R_{\geq 0}\times S^1$ for $\C$, the map $u$ satisfies
\[
\begin{split}
t\circ u-T \rho -t_0 & \in W^{1,p}_\delta(Z_+,\R) \\
\phi\circ u-\psi-\phi_0 & \in W^{1,p}_\delta(Z_+,\R) \\
z\circ u=(z_t,z_n)\circ u & \in W^{1,p}_\delta(Z_+,\R^{2n-2}).
\end{split}
\] 
Denote the orbit space $N_T/S^1$ by $S_T$.
Define the Banach manifold 
$$
\mathcal B^{1,p}_{MB,\delta}(\gamma_1):=\{ u:\C \to \R \times Y~|~u \text{ is of class } W^{1,p}_{loc}, \text{ and asymptotically cylindrical to }\gamma_1\}
$$
For a map $u\in \mathcal B^{1,p}_{MB,\delta}(\gamma_1)$ with $[\gamma_1]$ in the orbit space $S_T$, we define the finite-dimensional vector spaces
$$
V_{\gamma_1}=\R \frac{\partial}{\partial t}\oplus \R R_\alpha \text{ and }
W_{\gamma_1}=T_{[\gamma_1]} S_T.
$$
Then the tangent space at $u$ can be identified with
\[
\begin{split}
T_u \mathcal B^{1,p}_{MB,\delta}(\gamma_1) &\cong W^{1,p}_\delta(\C,u^* T(\R \times Y )\, ) \oplus V_{\gamma_1}\oplus W_{\gamma_1}.
\end{split}
\]
This vector space is actually enough for our purposes since we only need to check regularity near one solution, namely $u_0$.

Let $Y=\OB(W,\tau^{-1})$ be the contact manifold constructed in Theorem~\ref{thm:left_and_right_twisted} with the modified contact form from Section~\ref{sec:fattening_binding}.
Consider the holomorphic plane $u_0$ constructed in Lemma~\ref{lemma:existence_finite_energy_plane}.
We linearize the Cauchy-Riemann operator near the solution $u_0$.
Observe that the $P$-component of $u_0$ equals $p_0$.
On a neighborhood of $\R \times \{ p_0 \} \times D^2$ we choose the Riemannian metric
$$
g:=dt \otimes dt+g_{flat}^P+dr \otimes dr +r^2 d\phi \otimes d\phi.
$$
Let $\nabla$ denote the Levi-Civita connection for this flat metric.
Then $\nabla \partial_t=0,\nabla \partial_r=0, \nabla \partial_\phi=0$.
The linearized operator at a solution $u_0$ of the Cauchy-Riemann equations, acting on a vector field $X$, is then given by $D_{u_0}X \nabla X + J \nabla X \circ i +(\nabla_X J)\circ du_0 \circ i$, see \cite[Equation~3.8]{Wendl:transversality}. 
By plugging in $\partial_\rho$ we get a PDE with asymptotic boundary conditions given by the functional analytic setup,
$$
D_{u_0}(X)(\partial_\rho)= \nabla_{\partial_\rho} X + \frac{1}{\rho} J \nabla_{\partial_\psi} X +\frac{1}{\rho} (\nabla_X J)\partial_\phi.
$$
The last term can be simplified: $(\nabla_X J)\partial_\phi= \nabla_X (J\partial_\phi)-J\nabla_X \partial_\phi=-\nabla_X \left( h_2 \partial_t +h_2' \partial_r \right)$. 
Simplifying notation as well, we arrive at
$$
D_{u_0}(X)(\partial_\rho)= \nabla_\rho X + \frac{1}{\rho} J \nabla_\psi X - \frac{1}{\rho} X^r h_2' \partial_t-\frac{1}{\rho} X^r h_2'' \partial_r. 
$$
In order to understand the associated differential operator, consider the vector fields along the solution $u_0$ given by
\[
\begin{split}
X_1^I=\partial_t,\quad X_2^I=R_\alpha, \quad X_3^I=X_1, \quad X_4^I=X_2, \\
\{ X_i^{II} \} \text{ symplectic basis of }{\xi_P}|_{p_0}.
\end{split}
\]
Denote the span of the $X^I$-vectors by $E_{I}$, and the span of the $X^{II}$-vectors by $E_{II}$.
We obtain a splitting $u_0^*T(\R \times Y)=E_{I}\oplus E_{II}$.
Furthermore, with respect to this decomposition, the ``Morse-Bott'' vector space $W_{\gamma_1}$ decomposes into $W_{\gamma_1}^I\oplus W_{\gamma_1}^{II}$.
This gives rise to bounded linear projection maps
\[
\begin{split}
\pi_{I}:W^{1,p}_\delta(u_0^*T(\R \times Y)\,)\oplus V_{\gamma_1} \oplus W_{\gamma_1}
& \longrightarrow W^{1,p}_\delta(E_{I})\oplus V_{\gamma_1} \oplus W^I_{\gamma_1} \\
\pi_{II}:W^{1,p}_\delta(u_0^*T(\R \times Y)\,)\oplus V_{\gamma_1} \oplus W_{\gamma_1}
& \longrightarrow W^{1,p}_\delta(E_{II})\oplus W^{II}_{\gamma_1}
\end{split}
\]
splitting the tangent space $T_{u_0} \mathcal B^{1,p}_{MB,\delta}$.
Similarly, there is a splitting of the target space of the linearized operator as well.
We have
$$
L^p_\delta(\overline{\Hom}(T\C, u_0^*T(\R \times Y)\,)\, )
=
L^p_\delta(\overline{\Hom}(T\C,E_{I}\, ))
\oplus
L^p_\delta(\overline{\Hom}(T\C,E_{II}\, ))
,
$$
where $\overline{\Hom}$ denotes complex anti-linear maps.
\begin{lemma}
The vertical differential at $u_0$, 
\[
D_{u_0}: T_{u_0} \mathcal B^{1,p}_{MB,\delta} \longrightarrow
L^p_\delta(\overline{\Hom}(T\C, u_0^*T(\R \times Y)\,)\, ),
\]
splits as $D_{u_0}=D_{I}+D_{II}$, where 
\[
\begin{split}
D_{I}: W^{1,p}_\delta(E_{I})\oplus V_{\gamma_1} \oplus W^I_{\gamma_1} & \longrightarrow L^p_\delta(\overline{\Hom}(T\C,E_{I}\, )) \\
D_{II}: W^{1,p}_\delta(E_{II})\oplus W^{II}_{\gamma_1}
& \longrightarrow L^p_\delta(\overline{\Hom}(T\C,E_{II}\, ))
\end{split}
\]
\end{lemma}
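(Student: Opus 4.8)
The plan is to show that every term in the formula for $D_{u_0}$ preserves the decomposition $u_0^*T(\R\times Y)=E_I\oplus E_{II}$, together with the induced decompositions of the finite-dimensional summands and of the target space. Two structural observations do the bulk of the work. First, the adjusted almost complex structure $J$ leaves both $E_I$ and $E_{II}$ invariant along $u_0$: from $J=R_\alpha\otimes dt-\partial_t\otimes\alpha+J_{X_1X_2}+\pi_P^*J_{\xi_P}$ one sees that $\pi_P^*J_{\xi_P}$ preserves $\pi_P^*\xi_P$ and annihilates its complement, while the remaining part takes values in $\Span(\partial_t,R_\alpha,X_1,X_2)$ and annihilates $\pi_P^*\xi_P$. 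Since $u_0$ has constant $P$-component $p_0$, this means $E_{II}=\pi_P^*\xi_P|_{p_0}$ and $E_I=\Span(\partial_t,\partial_r,R_P,\partial_\phi)$ (using that $R_\alpha,X_2$ are an invertible combination of $R_P,\partial_\phi$, their coefficient matrix having nonzero determinant $\tfrac1{\det H}$), and both are $J$-complex sub-bundles. Second, because the metric $g=dt\otimes dt+g^P_{flat}+dr\otimes dr+r^2\,d\phi\otimes d\phi$ is a Riemannian product of a metric on $\R\times D^2$ with a flat metric on $P$ near $p_0$, all mixed Christoffel symbols vanish; and along $u_0$ one differentiates only in the directions $du_0(\partial_\rho)=t_\rho\partial_t+r_\rho\partial_r$ and $du_0(\partial_\psi)=\partial_\phi$, which lie in the $\R\times D^2$-factor. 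Hence differentiating a section of $TP$ that is constant in the flat $P$-coordinates gives zero in these directions, so $\nabla_\rho R_P=\nabla_\psi R_P=0$ and $\nabla_\rho X_i^{II}=\nabla_\psi X_i^{II}=0$; combined with $\nabla\partial_t=\nabla\partial_r=\nabla\partial_\phi=0$, this shows that $\nabla_\rho$ and $\nabla_\psi$ map sections of $E_I$ to sections of $E_I$ (expand in the frame $\partial_t,\partial_r,R_P,\partial_\phi$) and sections of $E_{II}$ to sections of $E_{II}$ (expand in the constant frame $\{X_i^{II}\}$).

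With these two facts I would then simply run through the terms of
$$
D_{u_0}(X)(\partial_\rho)=\nabla_\rho X+\tfrac1\rho J\nabla_\psi X-\tfrac1\rho X^r h_2'\partial_t-\tfrac1\rho X^r h_2''\partial_r .
$$
The first two terms respect $E_I\oplus E_{II}$ by the two observations above. The last two depend only on $X^r$, the component of $X$ along $X_1=X_3^I\in E_I$, hence only on $\pi_I X$, and they take values in $\Span(\partial_t,\partial_r)\subset E_I$, so they contribute to $D_I$ alone. On the finite-dimensional side, $V_{\gamma_1}=\R\partial_t\oplus\R R_\alpha$ sits inside $E_I$; and at $r=r_1$ the Reeb field is a positive multiple of $\partial_\phi$, so the orbit space $S_T$ is identified with $P$ and $W_{\gamma_1}=T_{[\gamma_1]}S_T\cong T_{p_0}P=\R R_P|_{p_0}\oplus\xi_P|_{p_0}$, which is precisely the decomposition $W_{\gamma_1}^I\oplus W_{\gamma_1}^{II}$ with $W_{\gamma_1}^I\subset E_I$ and $W_{\gamma_1}^{II}\subset E_{II}$. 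Since $E_I$ and $E_{II}$ are $J$-complex sub-bundles, the target splits as $L^p_\delta(\overline{\Hom}(T\C,E_I))\oplus L^p_\delta(\overline{\Hom}(T\C,E_{II}))$ with bounded projections, and assembling all of this gives exactly $D_{u_0}=D_I+D_{II}$ with the stated domains and targets.

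I expect the only genuinely delicate point to be the claim that $\nabla$ respects the decomposition along $u_0$. It rests on a fortunate combination: the product structure of $g$, which kills the mixed Christoffel symbols, and the fact that $u_0$ is constant in the $P$-direction, so that differentiation only ever happens along the $\R\times D^2$-factor. Worth isolating is that one needs no compatibility of $g^P_{flat}$ with the contact distribution $\xi_P$, nor parallelism of the splitting $TP=\R R_P\oplus\xi_P$; it is solely the direction of differentiation that matters. Once this is pinned down, what remains is bookkeeping with the explicit matrix for $J$ and the formula for $D_{u_0}$ recorded above.
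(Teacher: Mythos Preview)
Your proposal is correct and follows essentially the same approach as the paper. The paper proceeds by writing out the linearized Cauchy-Riemann equation $D_{u_0}(X)(\partial_\rho)=0$ explicitly as a system of five scalar equations in the coframe $dt,\alpha,dr,\beta$ together with the $\xi_P$-part, and then observes that the $\xi_P$-equation $\nabla_\rho X^{\xi_P}+\tfrac1\rho J_\xi\nabla_\psi X^{\xi_P}=0$ decouples from the other four; your argument establishes the same decoupling more structurally by checking that $J$, $\nabla_\rho$, $\nabla_\psi$, and the zeroth-order term each preserve $E_I\oplus E_{II}$, which is exactly what underlies the paper's observation.
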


\begin{proof}
For this, we write out the vertical part of the linearized operator.
Note that the pulled back tangent space $u_0^*T(\R \times P \times D^2)$ is trivialized by the ${X^{I}}$-vectors $\partial_t,R_\alpha,X_1=\partial_r,X_2=J_{X_1X_2} \partial_r$ and the $X^{II}$-vectors which form a symplectic basis of $\xi_P$ at $p_0$.
A dual basis is given by $dt,\alpha, dr, \beta,\{ X^{II,*}_j \}_j$, where
$$
\beta=h_1' \lambda_P+h_2' d\phi.
$$
The linearized Cauchy-Riemann equation $D_{u_0} \bar \partial_{J}(X)(\partial_\rho)=0$ can be written as the system
\begin{alignat*}{4}
\quad\quad
\partial_\rho X^t-\frac{1}{\rho}\alpha( \nabla_{\psi}X )-\frac{1}{\rho}X^rh_2' &= 0 
&\quad\quad\quad\quad(dt) \\
\quad\quad
\alpha(\nabla_\rho X)+\frac{1}{\rho}\partial_\psi X^t &=0
&\quad\quad\quad\quad(\alpha) \\
\quad\quad
\nabla_\rho X^{\xi_P}+\frac{1}{\rho}J_\xi \nabla_\psi  X^{\xi_P} &=0 
&\quad\quad\quad\quad(II\text{-part}) \\
\quad\quad
\partial_\rho X^r-\frac{1}{\rho}\beta(\nabla_\psi X)-\frac{1}{\rho}X^r h_2'' &=0 
&\quad\quad\quad\quad(dr) \\
\quad\quad
\beta(\nabla_\rho X)+\frac{1}{\rho}\partial_\psi X^r &=0.
&\quad\quad\quad\quad(\beta)
\end{alignat*}
The third equation $\nabla_\rho X^{\xi_P}+\frac{1}{\rho}J_\xi \nabla_\psi  X^{\xi_P} =0$ separates from the others, and this equation gives $D_{II}$. 
\end{proof}

To check that the linearized operator is surjective, we show that $\dim \ker D_{u_0}=\ind D_{u_0}$.
We emphasize that we are in a Morse-Bott setup, so the index can be computed with Formula~\eqref{eq:ind_moduli} if replace the Conley-Zehnder index by a perturbed variant as defined in \cite[Section~3.2]{Wendl:transversality}. Using Lemma~\ref{lemma:CZ_special_orbit} we may write the index as
$$
\ind D_{u_0}=5+\dim S_T.
$$
Now observe that if $X=X^I+X^{II}$ lies in the kernel of $D_{u_0}=D_{I}+D_{II}$ with $X^I\in W^{1,p}_\delta(E_{I})\oplus V_{\gamma_1} \oplus W^I_{\gamma_1}$ and $X^{II}\in W^{1,p}_\delta(E_{II})\oplus W^{II}_{\gamma_1}$, then $D_{II} X^{II}=0$.
Solutions to the equation $D_{II} X^{II}=0$ correspond to some of the Morse-Bott symmetries.
Indeed,  $D_{II} X^{II}=0$ is a standard system of Cauchy-Riemann equations on $\C$ in polar coordinates, so its solutions are holomorphic functions $\C \to \C^{\frac{\dim S_T-1}{2}}$.
In the functional analytic setup we have here, only constant solutions are admissible; other solutions do not lie in $ W^{1,p}_\delta(E_{II})\oplus W^{II}_{\gamma_1}$.
There are $\dim S_T-1$ real linearly independent constant solutions, so $\dim \ker D_{II}=\dim S_T-1$.

Since $D_{I}$ and $D_{II}$ decouple, we find also $D_{I} X^{I}=0$.
Now notice that $D_{I}$ corresponds to the linearized Cauchy-Riemann operator for the $3$-dimensional problem: the case that $W$ is a surface, and $P$ a collection of circles.
Denote the finite energy plane for the $3$-dimensional case by $u_{0,\dim=3}$.

For this $3$-dimensional case we use Wendl's automatic transversality result, \cite[Theorem 1]{Wendl:transversality}.
Since the curve $u_{0,\dim=3}$ is embedded, has genus $0$, only one positive puncture, the conditions of Wendl's theorem hold, and we see that $\dim \ker D_I=\ind D_{ u_{0,\dim=3} }=5+1$.
We conclude that for the original problem $\dim \ker D_{u_0}=\dim \ker D_{I} +\dim \ker D_{II}=6+\dim S_T-1=\ind D_{u_0}$, so the cokernel of $D_{u_0}$ is trivial.

\begin{remark}
Alternatively, we can use a more direct argument to compute $\ker D_{I}$.
In \cite{BvK} Fourier analysis was used to directly compute the kernel.
Also note that the index that we use here is not the same as the one in \eqref{eq:ind_moduli}; here we are working with maps, so there are still automorphisms.
\end{remark}

\subsection{From Morse-Bott to non-degenerate}
\label{sec:MB_non_deg}
We follow the procedure from Bourgeois' thesis to get a non-degenerate contact form.

Instead of performing the procedure in general, we will perform this procedure in our particular setup.
In our case, the Morse-Bott manifold is given by $P\times \{ r_1 \} \times S^1\subset P \times D^2$, and the orbit space is $P=P\times S^1 /Reeb$.
Choose a Morse function $f$ on the orbit space with a unique local minimum in $[\gamma_1]=p_0$ and with value $f(p_0)=0$.
Lift this function to an $S^1$-invariant function $\bar f$ on the Morse-Bott manifold $P\times S^1$.

Define the perturbed contact form
$$
\alpha_\epsilon:=(1+\epsilon \bar f) \alpha.
$$
Denote the Reeb field of $\alpha_\epsilon$ by $R_\epsilon$.
Since $p_0$ is a minimum, $\gamma_1$ is also a periodic orbit of $R_\epsilon$, and as $f(p_0)=0$, it has in fact the same parametrization.
In addition, for $\epsilon>0$ sufficiently small, $\gamma_1$ is a non-degenerate periodic orbit of $R_\epsilon$.

Next we define an adjusted almost complex structure $J_\epsilon$ for the symplectization by putting $J_\epsilon|_{\xi}=J|_{\xi}$, and extending by the usual recipe $J_\epsilon \partial_t =R_\epsilon$.
First apply Lemma~\ref{lemma:normal_coordinates} to obtain coordinates $(\phi;x,y)$ for a neighborhood of $\gamma_1$ of the form $S^1\times U_{P\times I}$ such that
$$
\alpha=g\cdot(d\phi+xdy-ydx).
$$
In these coordinates, we have identified $S^1\cong \R/\Z$. 
For later use, we define the period $T=\int_{S^1}\alpha=g(0;0,0)$.
Take a moving frame $\partial_t,\partial_\phi,\{ U_i,V_j \}_{i,j}$ on this neighborhood of $\gamma_1$, where $\{ U_i,V_j \}$ forms a unitary trivialization of $(\xi,d\alpha,J_\xi)$, meaning in particular that $J_\xi$ is the standard complex structure with respect to this trivialization.
Write $F=(1+\epsilon \bar f)g$, so $\alpha_\epsilon=F\cdot (d\phi+xdy-ydx)$.
With respect to the above moving frame, $J_\epsilon$ is the matrix 
\begin{equation}
\label{eq:perturbed_J}
J_\epsilon=
\left(
\begin{array}{cccc}
0 & -F & 0 & 0 \\
\frac{1}{F} & 0 & 0 & 0 \\
\frac{V(F)}{F^2} & \frac{- U(F)}{F} & 0 & -\mathbbm{1} \\
\frac{-U(F)}{F^2} & \frac{- V(F)}{F} & \mathbbm{1} & 0
\end{array}
\right)
.
\end{equation}
Note that the first column is the Reeb vector field for $\alpha_\epsilon$, and that the restriction of $J_\epsilon$ to $\xi$ is equal to the restriction of unperturbed $J_0$ to $\xi$.

\subsubsection{Asymptotic behavior and Sobolev spaces}
In Section~\ref{seq:MB_regularity} we checked that the operator $D_{u_0}$ is surjective.
As $D_{u_0}$ is a Fredholm operator, we get a bounded right inverse $Q_{u_0}$.

Note that the difference of the endomorphisms
$$
\Delta_\epsilon:=J_\epsilon-J_0
$$
is small by the above formula~\eqref{eq:perturbed_J} for $J_\epsilon$.
Writing out the Cauchy-Riemann equation shows that there is a constant $C(p)$ depending on $p$ such that
$$
\Vert \bar \partial_{J_\epsilon} \Vert_{L^p_\delta} \leq C(p) \epsilon
$$
provided $\epsilon$ is sufficiently small.

We now investigate the behavior of solutions of the Cauchy-Riemann equation for the perturbed problem.
We need this to ensure that we can use the Sobolev space $W^{1,p}_\delta(\C,u^* T(\R \times P \times D^2 )\, ) \oplus V_{\gamma_1}$ also for the functional analytic setup in the non-degenerate case.
Near a vertical cylinder consider the coordinates
$$
Z=(Z^t,Z^\phi;Z_{P\times I})=(u^t-T\rho,u^\phi-\psi,u_{P\times I})\in \R \times S^1\times P\times I.
$$
With respect to these coordinates the Cauchy-Riemann equations for the cylindrical ends become
\[
0=u_\rho +J_{\epsilon} u_\psi=Z_\rho+T\partial_t+J_\epsilon Z_\psi +J_{\epsilon}\partial_\phi.
\]
We note that the second column of $J_\epsilon$ contains the gradient of $F$ in the direction of the contact structure with respect to the metric $\omega(\cdot,J_\xi\cdot )$, so the Cauchy-Riemann equations near the cylindrical ends reduce to
\[
Z_\rho+J_\epsilon Z_\psi+T(1-F/T) \partial_t
-\frac{1}{F} \grad f=0.
\]
Express the coordinates for $Z_{P\times I}$ into a part tangential to the Morse-Bott manifold $S_T=P$, denoted by $z_t$, and a normal part corresponding to the $I$-factor, denoted by $z_n$.
Rewrite the above equation into the general form
\begin{equation}
\label{eq:CR_near_vertical_cylinder}
Z_\rho+J_\epsilon Z_\psi+s_\epsilon(z_t,z_n) z_n
-S_\epsilon \grad f=0.
\end{equation}
With this equation and the proof of \cite[Proposition~A.2]{Bourgeois:MB_symplectic_homology} we obtain
\begin{proposition}
Let $N_T$ be the Morse-Bott submanifold consisting of points on simple periodic Reeb orbits with period $T$, and write $S_T=N_T/S^1$ for its orbit space.
Choose a Morse function $f:S_T \to \R$ on the orbit space $S_T$ with a single local minimum at $[\gamma_1]$.
Then there are $\delta,\epsilon_0>0$ such that, for $\epsilon<\epsilon_0$, every $J_\epsilon$-holomorphic plane $u:\C \to \R \times Y$ asymptotic to $\gamma_1$ satisfies
\begin{eqnarray*}
t \circ u(\rho,\psi)-T \rho-t_0 & \in &
W^{1,p}_\delta(\C,\R) \\
\phi \circ u(\rho,\psi)- \psi-\phi_0 & \in &
W^{1,p}_\delta(\C,\R) \\
z_{t} \circ u(\rho,\psi)- Fl^{S_\epsilon \grad f}_\rho(p_0) & \in &
W^{1,p}_\delta(\C,\R^{2n-3}) \\
z_{n} \circ u(\rho,\psi) & \in &
W^{1,p}_\delta(\C,\R^{1}) .
\end{eqnarray*}
for some $t_0\in \R$, $\phi_0 \in S^1$ and $p_0\in \R^{2n-3}$.
\end{proposition}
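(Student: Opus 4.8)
The plan is to follow the asymptotic analysis of \cite[Appendix~A]{Bourgeois:MB_symplectic_homology}, adapted to the almost complex structure $J_\epsilon$ of \eqref{eq:perturbed_J} and to the normal coordinates of Lemma~\ref{lemma:normal_coordinates}; the only thing that really needs checking is that the perturbation $\alpha_\epsilon=(1+\epsilon\bar f)\alpha$ produces exactly the structure required there, with all decay rates bounded below uniformly in $\epsilon$. First I would recall that $\gamma_1$ is a non-degenerate periodic Reeb orbit of $\alpha_\epsilon$ once $\epsilon>0$ is small (Section~\ref{sec:MB_non_deg}), so that the standard Hofer--Wysocki--Zehnder exponential-convergence analysis applies: any finite-energy $J_\epsilon$-holomorphic plane $u$ asymptotic to $\gamma_1$ converges exponentially to the trivial cylinder over $\gamma_1$, and in the coordinates $Z=(Z^t,Z^\phi;Z_{P\times I})=(u^t-T\rho,u^\phi-\psi,u_{P\times I})$ it solves, on the cylindrical end, the Cauchy--Riemann equation which, using that the second column of $J_\epsilon$ records $\grad F$, reduces to \eqref{eq:CR_near_vertical_cylinder},
\[
Z_\rho+J_\epsilon Z_\psi+s_\epsilon(z_t,z_n)\,z_n-S_\epsilon\grad f=0,
\]
after splitting $Z_{P\times I}$ into its part $z_t$ tangent to $S_T=P$ and its normal part $z_n$ in the $I$-direction; here $s_\epsilon(z_t,z_n)\,z_n$ collects everything that vanishes along $N_T$, and $-S_\epsilon\grad f$ is the tangential forcing of size $O(\epsilon)$ produced by the Morse perturbation.

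The key step is the separation of decay rates. The asymptotic operator of $\gamma_1$ for $\alpha_\epsilon$ has a block form: along the directions $(Z^t,Z^\phi)$ and $z_n$ transverse to the Morse--Bott family its spectrum stays bounded away from $0$ by a constant $\lambda_0>0$ independent of $\epsilon$ (for $\epsilon$ small), while along the $z_t$-directions the eigenvalues are $O(\epsilon)$. I would therefore first show that $z_n$, the non-$S^1$-invariant (``oscillating'') part of $z_t$, and $Z^t-t_0$, $Z^\phi-\phi_0$ decay at rate $\ge\lambda_0$, hence lie in $W^{1,p}_\delta$ for any $0<\delta<\lambda_0$; here the estimate $\Vert\bar\partial_{J_\epsilon}u_0\Vert_{L^p_\delta}\le C(p)\epsilon$ from Section~\ref{sec:MB_non_deg} together with the uniform right inverse $Q_{u_0}$ drives the bootstrap and keeps the constants uniform in $\epsilon$; the estimate on $Z^t,Z^\phi$ uses that $T-F$ is $O(\epsilon)+O(|Z|)$. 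Feeding the fast decay of $z_n$ back into \eqref{eq:CR_near_vertical_cylinder} and averaging over $\psi\in S^1$ leaves, for the $S^1$-invariant part $\bar z_t(\rho)$ of $z_t$, the ODE $\partial_\rho\bar z_t=S_\epsilon\grad f(\bar z_t)+r(\rho)$ with $r$ decaying at rate $\lambda_0$. Since $[\gamma_1]$ is an isolated local minimum of $f$, a standard asymptotic-phase (shadowing) argument then produces a point $p_0$ for which $\bar z_t(\rho)-Fl^{S_\epsilon\grad f}_\rho(p_0)$ decays at rate $\lambda_0$ as well. Finally I would fix $\delta\in(0,\lambda_0)$ and then choose $\epsilon_0$ small enough that all of these rates stay $\ge\lambda_0$ for $\epsilon<\epsilon_0$, which yields exactly the four claimed memberships in $W^{1,p}_\delta$.

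I expect the main obstacle to be the uniformity in $\epsilon$: one must check that the spectral gap of the asymptotic operator along the transverse directions does not collapse as $\epsilon\to 0$, and that once the slow $O(\epsilon)$ gradient-flow direction has been peeled off the remainder decays at a rate bounded below independently of $\epsilon$. This is precisely what \cite[Proposition~A.2]{Bourgeois:MB_symplectic_homology} is built to deliver (modulo the mild translation from Floer cylinders to finite-energy planes in a symplectization), so in practice the proof reduces to matching our data $(\alpha_\epsilon,J_\epsilon)$ and the coordinates of Lemma~\ref{lemma:normal_coordinates} to the hypotheses of that proposition and to the analysis already carried out in Section~\ref{sec:MB_non_deg}.
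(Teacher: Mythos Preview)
Your proposal is correct and follows exactly the paper's route: the paper derives the Cauchy--Riemann equation \eqref{eq:CR_near_vertical_cylinder} in the normal coordinates of Lemma~\ref{lemma:normal_coordinates} and then simply invokes the proof of \cite[Proposition~A.2]{Bourgeois:MB_symplectic_homology}. You have in fact supplied more detail than the paper does (the block decomposition of the asymptotic operator, the shadowing argument for $\bar z_t$, and the uniformity in $\epsilon$), all of which is implicit in the cited reference.
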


\subsubsection{Bounded right inverse for the non-degenerate setup and applying the implicit function theorem to construct a solution to the perturbed problem}
If $\epsilon$ is sufficiently small, then $D_{u_\epsilon}$ is still surjective.
By possibly choosing $\epsilon$ even smaller we can ensure that
$$
\Vert (D_{u_\epsilon}-D_{u_0}) Q_{u_0} \Vert <\frac{1}{2}
$$
holds.
Then we can define a bounded right inverse $Q_{u_\epsilon}$ for $D_{u_\epsilon}$ by putting
\[
\begin{split}
Q_{u_\epsilon}&:=Q_{u_0}(D_{u_\epsilon} Q_{u_0})^{-1}\\
&=Q_{u_0}(  (D_{u_\epsilon}-D_{u_0})Q_{u_0} +\id )^{-1}=Q_{u_0} \sum_{k=0}^\infty (-1)^k \left(  (D_{u_\epsilon}-D_{u_0})Q_{u_0} \right)^k.
\end{split}
\]
We now apply \cite[Proposition A.3.4]{MS:J_curves} and a modification of \cite[Theorem 3.5.2]{MS:J_curves} to our problem.
This will yield a solution $u_\epsilon:\C \to \R \times Y$ to the perturbed equation
$$
\begin{cases}
\bar \partial_{J_\epsilon} u_\epsilon=0,& \\
u_\epsilon \text{ asymptotic to } \gamma_1.
\end{cases}
$$
Moreover, this argument also shows that $u_\epsilon$ is a rigid curve, so a solution of the Cauchy-Riemann equation is unique (up to translation) for curves in a neighborhood of $u_0$ as in \cite[Corollary~3.5.6]{MS:J_curves}.

\subsubsection{Excluding other solutions to the perturbed problem}
We conclude by arguing that there are no finite energy holomorphic planes asymptotic to $\gamma_1$ other than translations of $u_\epsilon$ if $\epsilon$ is sufficiently small.

To show this, we argue by contradiction. Take a sequence $\{ \epsilon_n \}_n$ converging to $0$, and assume that there is a sequence of $J_{\epsilon_n}$-holomorphic curves $u_{n}$ that are not vertical translations of the $u_{\epsilon_n}$ we constructed before.
Since $u_0$ is regular and rigid, these curves $u_n$ cannot lie in a sufficiently small $C^0$-neighborhood $U_{C^0}$ of any unperturbed solution by a modification of \cite[Corollary~3.5.6]{MS:J_curves}.

Note that there is a subsequence of the $u_{n}$ converging to a $J_0$-holomorphic curve on compact subsets of $\C$.
The energy bound for $u_n$, namely
$$
E(u_n)\leq \mathcal A_{\alpha_\epsilon}(\gamma_1)=\mathcal A(\gamma_1),
$$
implies that we obtain a $J_0$-holomorphic finite energy plane $u_\infty$ asymptotic to some periodic Reeb orbit $\gamma$.
By energy/action considerations, this Reeb orbit $\gamma$ must give a point $[\gamma]$ in the Morse-Bott orbit space $S_T$.

We claim that $u_\infty$ is asymptotic to $\gamma_1$.
To see why, note that the positive cylindrical end of a holomorphic curve follows the positive gradient flow of $f$ by Equation~\eqref{eq:CR_near_vertical_cylinder}.
Any solution not converging to $\gamma_1$ is therefore pushed away from $\gamma_1$.
Since the total building, of which $u_\infty$ is just one layer, converges to $\gamma_1$, we obtain a contradiction if $u_\infty$ is not asymptotic to $\gamma_1$.

Hence $u_\infty$ is a solution to the unperturbed Cauchy-Riemann equations with asymptote $\gamma_1$, and by our previous uniqueness argument it is a translation of $u_0$.
It follows that the $J_{\epsilon_n}$-holomorphic curves $u_n$ must lie in some $C^0$ neighborhood $U_{C^0}$ of a solution to the unperturbed problem for sufficiently large $n$.
This is a contradiction.

\section{Other holomorphic curves: the situation away from the binding}
\label{sec:other_curves}
In this section we consider more general rational holomorphic curves asymptotic to $\gamma_1$.
As in the previous section~\ref{sec:holomorphic_plane}, $Y^{2n-1}$ denotes the contact manifold $Y=\OB(W,\tau^{-1})$ and we will use the contact form from the perturbation described in that section to make orbits non-degenerate.
We will show that the conditions of Lemma~\ref{lemma:non-fillability} hold given the assumptions in Theorem~\ref{thm:result}, to do so we use action, index and linking/homotopy arguments combined with some regularity arguments using Dragnev's Theorem~ \ref{thm:regularity_somewhere_injective}.
We start with a well-known observation about the linking number.
\begin{lemma}
\label{lemma:linking}
Let $\gamma^+,\gamma^-_1,\ldots,\gamma^-_m$ be periodic Reeb orbits that do not lie in the binding $P$ of a contact open book $Y$, and consider a holomorphic curve $u$ of genus $0$ with $[u]\in \mathcal M^A_0(\gamma^+;\gamma^-_1,\ldots,\gamma^-_m)$.
Then $\lk(P\times \{ 0 \},\gamma^+)\geq \sum_i \lk(P\times \{ 0 \},\gamma^-_i)$.
\end{lemma}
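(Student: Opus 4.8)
The plan is to reinterpret $\lk(P\times\{0\},\cdot\,)$ as an intersection number in the symplectization $\R\times Y$ and then apply positivity of intersections for $J$-holomorphic curves.

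First I would record two preliminary facts. In an open book a periodic Reeb orbit either lies entirely in the binding or is disjoint from it, because $P\times\{0\}$ is Reeb-invariant; hence $\gamma^+$ and all the $\gamma^-_i$ are disjoint from $P\times\{0\}$, and in particular the image of $u$ is not contained in $\R\times P\times\{0\}$. Secondly, the pages of $Y$ are Seifert hypersurfaces for the binding, so for any loop $\gamma$ in $Y\setminus(P\times\{0\})$ the linking number $\lk(P\times\{0\},\gamma)$ is well defined and equals the degree of $\Theta\circ\gamma$, where $\Theta\colon Y\setminus(P\times\{0\})\to S^1$ is the open book fibration. Fixing a $1$-form $\sigma$ on $S^1$ with $\int_{S^1}\sigma=2\pi$ and the projection $pr\colon\R\times Y\to Y$, I would then work with the closed $1$-form $\kappa:=pr^*\Theta^*\sigma$ on $(\R\times Y)\setminus(\R\times P\times\{0\})$, which satisfies $\frac{1}{2\pi}\int_\ell\kappa=\lk(P\times\{0\},\gamma)$ for every loop $\ell$ that is homotopic, in the complement of $\R\times P\times\{0\}$, to a pushoff $\gamma\times\{t_0\}$ of a loop $\gamma$ in $Y\setminus(P\times\{0\})$.

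The geometric input is that $\R\times P\times\{0\}$ is a closed submanifold of $\R\times Y$ whose tangent bundle is $J$-invariant along itself: $\xi_P$ is $J$-invariant, $J\partial_t=R_\alpha$, and $R_\alpha$ is tangent to $P\times\{0\}$ over the binding for the almost complex structure constructed in Section~\ref{sec:holomorphic_plane}. Since $u$ stays a definite distance from $\R\times P\times\{0\}$ near its punctures (its asymptotic orbits avoid the binding), $u$ meets $\R\times P\times\{0\}$ only over a compact part of the domain, hence — $u$ not being contained in it — in finitely many points $z_1,\dots,z_N$, each with positive local intersection multiplicity $m_k\ge 1$ by positivity of intersections, see \cite{MS:J_curves}. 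The last step is a Stokes argument: excise small disks $B_k$ around the $z_k$, truncate the cylindrical ends at a large level, apply Stokes to the closed form $u^*\kappa$ on the resulting compact subsurface, and let the truncation level tend to infinity; the boundary integrals over the ends converge to $2\pi\,\lk(P\times\{0\},\gamma^+)$ on the positive end and to $2\pi\,\lk(P\times\{0\},\gamma^-_i)$ on the negative ends, while the integral over $\partial B_k$ with its boundary orientation equals $-2\pi m_k$, so that
\[
\lk(P\times\{0\},\gamma^+)-\sum_i\lk(P\times\{0\},\gamma^-_i)=\sum_{k=1}^N m_k\ge 0.
\]

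The main obstacle — really the only delicate point — is the orientation bookkeeping: I would need to check that the coorientation of $\R\times P\times\{0\}$ implicit in the definition of the linking number agrees with its complex orientation, so that the contributions $m_k$ are genuinely non-negative rather than non-positive, and that the boundary orientations at the punctures and around the $z_k$ carry the signs used above. This is also the only place where $J$-holomorphicity of $u$, rather than mere smoothness, is used. Everything else is routine.
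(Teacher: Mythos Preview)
Your proposal is correct and follows essentially the same approach as the paper: both use that $\R\times P\times\{0\}$ is a $J$-complex submanifold of real codimension $2$ and apply positivity of intersections with the holomorphic curve $u$. The paper's version is terser---it simply notes that $u$ serves as a Seifert surface for the signed collection $\gamma^+-\sum_i\gamma^-_i$, so the intersection number $(\R\times P)\cdot u$ equals the linking difference directly---whereas you spell out this identification via the closed $1$-form $\kappa$ and Stokes, which is a perfectly good (and arguably more transparent) way to arrive at the same conclusion.
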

\begin{proof}
To see this, observe that the symplectization of $P$ is an almost complex submanifold of real codimension $2$ in $\R \times Y$.
Furthermore, the holomorphic curve $u$ can be thought of as a Seifert surface for the collection of oriented Reeb orbits $\Gamma$ that $u$ bounds, so $(\R \times P)\cdot u=\lk(P\times \{ 0 \},\Gamma)$.
By positivity of intersection, $\lk(P\times \{ 0 \},\Gamma)\geq 0$, so the claim follows.
\end{proof}

\begin{lemma}
\label{lemma:no_other_curves_fractional_fibered}
Let $W$ be a Liouville domain admitting a right-handed fractional twist $\tau$ of power $\ell>1$.
Let $Y=\OB(W,\tau^{-1})$.
The only rigid holomorphic curve in the symplectization of $Y$ with positive puncture $\gamma_1$ is the finite energy plane $u_0$ from Lemma~\ref{lemma:existence_finite_energy_plane}.
\end{lemma}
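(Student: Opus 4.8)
The plan is to verify the two conditions of Lemma~\ref{lemma:non-fillability} for the contact manifold $Y=\OB(W,\tau^{-1})$ with $\tau$ a fractional twist of power $\ell>1$. Condition (1) is already established: Lemma~\ref{lemma:existence_rigid_regular_plane} provides the rigid, regular finite energy plane $u_0$ asymptotic to the simply covered orbit $\gamma_1$. So the content of this lemma is condition (2): we must show that any finite energy genus-$0$ curve $u$ with a single positive puncture asymptotic to $\gamma_1$ is a translation of $u_0$. Since for $\ell>1$ we will see no ``exceptional'' orbit $\gamma_0$ appears, the second alternative in condition (2) of Lemma~\ref{lemma:non-fillability} never occurs, and the statement reduces to pure uniqueness.

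The main steps, in order. First I would recall that $\gamma_1$ has linking number $1$ with the binding $P$, and apply Lemma~\ref{lemma:linking}: for any such curve $u$ with negative asymptotics $\gamma^-_1,\dots,\gamma^-_m$, we get $1=\lk(P\times\{0\},\gamma_1)\geq \sum_i \lk(P\times\{0\},\gamma^-_i)$. Since the orbits $\gamma^-_i$ do not lie on the binding (binding orbits have larger action than $\gamma_1$ by Lemma~\ref{lemma:orbits_near_binding}, and a curve asymptotic to $\gamma_1$ at its positive end cannot have a negative asymptotic of larger action by Stokes), each $\lk(P\times\{0\},\gamma^-_i)\geq 1$ whenever the orbit is not contractible-in-a-page in the relevant sense; more precisely, the only orbits with linking number $0$ relative to the binding are the binding orbits themselves, which are excluded. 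Hence $m\leq 1$, and if $m=1$ then $\lk(P\times\{0\},\gamma^-_1)=1$. Next, I would rule out $m=1$: a curve with one positive and one negative puncture, both asymptotic to orbits of linking number $1$. Here the fractional-twist hypothesis $\ell>1$ enters. Because $\zeta_\ell$ acts on the content of the page by a genuine $\Z_\ell$ deck transformation with no fixed points away from the covered structure, the periodic orbits with linking number $1$ through the content of the page are acted on nontrivially, and more importantly the fattened-binding analysis of Section~\ref{sec:holomorphic_plane} shows that near the binding the only linking-number-$1$ orbit is $\gamma_1$ (in the Morse--Bott family $S_{1,0}$ after perturbation). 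So if $m=1$ the negative asymptote would have to be $\gamma_1$ again (or another orbit in the same family $S_{1,0}$), and then an action computation via Stokes' theorem forces $\mathcal A(\gamma^-_1)<\mathcal A(\gamma_1)$ strictly unless $u$ is trivial; but orbits in $S_{1,0}$ all have essentially the same action up to the small perturbation, giving a contradiction with positivity of $d\alpha$-area. This leaves $m=0$, i.e. $u$ is a finite energy plane asymptotic to $\gamma_1$, and then the uniqueness result of Section~\ref{sec:uniquess_finite_energy_plane} (Lemma~\ref{lem:holomorphic_curve_projection} together with the reduction to a $3$-dimensional problem and positivity of intersection) shows $u$ is a translation of $u_0$.

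I would then check that the action threshold in condition (2) of Lemma~\ref{lemma:non-fillability} is met vacuously: since no orbit $\gamma_0$ of the exceptional type arises (we showed $m\leq 1$ and the $m=1$ case is impossible), there is nothing to verify there, so both hypotheses of Lemma~\ref{lemma:non-fillability} hold. This is precisely the input needed later in Section~\ref{sec:wrapup} to conclude non-fillability in the $\ell>1$ case of Theorem~\ref{thm:result}.

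The main obstacle I anticipate is the step excluding a holomorphic cylinder (or more complicated $m=1$ configuration) from $\gamma_1$ to another linking-number-$1$ orbit. One has to argue carefully that \emph{no} other periodic Reeb orbit with linking number $1$ and action $\leq \mathcal A(\gamma_1)$ exists — this requires combining the index/action bookkeeping of Section~\ref{sec:indices} (particularly the structure of the families $S_{i,j}$ and the fact that for $\ell>1$ the content-of-page orbits sit at controlled actions) with the precise choice of $h_1,h_2$ from Lemma~\ref{lemma:orbits_near_binding}. If an orbit in $S_{1,0}$ distinct from $\gamma_1$ did support such a cylinder, one would need a regularity/dimension argument à la Dragnev (Theorem~\ref{thm:regularity_somewhere_injective}) to rule it out, using that $\gamma_1$ is the minimum of the Morse function on the orbit space and that the relevant moduli space then has negative virtual dimension. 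Getting the action strict inequalities exactly right, so that no such cylinder can carry nonnegative $d\alpha$-energy, is the delicate point.
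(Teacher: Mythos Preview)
Your outline is essentially the paper's argument, and the logical skeleton---linking bound, exclude binding orbits by action, reduce to at most one negative puncture in $S_{1,0}$, then action forces the trivial cylinder---is correct.

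Two places deserve sharpening. First, your sentence ``the periodic orbits with linking number $1$ through the content of the page are acted on nontrivially'' is muddled: the point is that there \emph{are no} such orbits. A page orbit with linking number $1$ corresponds to a fixed point of the monodromy, and on $W_{in}$ the monodromy is the deck transformation $\zeta_\ell^{-1}$, which is fixed-point free for $\ell>1$. So content-of-page orbits have linking number $\geq \ell \geq 2$ and are excluded outright. Second, your anticipated obstacle (needing a Dragnev-type index argument to rule out cylinders to other orbits in $S_{1,0}$) does not arise: since $\gamma_1$ corresponds to the \emph{minimum} of the perturbing Morse function on the orbit space, it has minimal action among all orbits in $S_{1,0}$, so any cylinder from $\gamma_1$ to another orbit in $S_{1,0}$ would have non-positive $d\alpha$-energy and must be the trivial cylinder over $\gamma_1$, which is not rigid. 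No regularity or dimension count is needed in the $\ell>1$ case.
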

\begin{proof}
By Lemma~\ref{lemma:existence_rigid_regular_plane} there is a unique rigid, finite energy holomorphic plane asymptotic to $\gamma_1$.
Therefore we only need to exclude holomorphic curves that also have negative punctures.
Suppose that $u$ is such a holomorphic curve.
By Lemma~\ref{lemma:orbits_near_binding} the action of a binding orbit is larger than the action of $\gamma_1$, so none of the negative punctures can be a binding orbit.

Denote the binding of the contact open book by $P$. Then $\lk(P\times \{ 0 \},\gamma_1)=1$, so by Lemma~\ref{lemma:linking}, the linking number of any negative puncture can be at most $1$.
Since all page orbits intersect the pages in a positively transverse fashion, linking number $0$ is not possible.
So we conclude that there can be only one negative puncture, at which $u$ is asymptotic to $\gamma$.

The monodromy is nontrivial and fixed point free on the content of the page, since it corresponds to a non-trivial deck-transformation by the assumption $\ell>1$.
Therefore any periodic orbit in the content of the page must have linking number at least $2$. 
It follows that all candidates for $\gamma$ lie in the fattened binding and in the margins of the pages.
By Lemma~\ref{lemma:orbits_near_binding} we have $\lk(P\times \{ 0 \},\gamma)\geq 2$ if $r<r_1$.

The periodic orbit $\gamma$ must make an integer number of turns in $P$-direction, and the linking condition $\lk(P\times \{ 0 \},\gamma)=1$ tells us then that $f_i\circ Inv(r)$ must be an integer multiple of $2\pi$.
This only happens at $r_1$, so $\gamma$ satisfies $r=r_1$.
Since $\gamma_1$ corresponds to the minimum of a Morse function as described in Section~\ref{sec:MB_non_deg}, it has minimal action among all Reeb orbits corresponding to that orbit space.
Since $\mathcal A(\gamma)>\mathcal A(\gamma_1)$ is impossible for a holomorphic cylinder we have $\gamma=\gamma_1$. 
It follows that $u$ is a vertical cylinder, which is not a rigid curve.

We conclude that the claim holds.
\end{proof}

The next case combines a linking and index argument.
\begin{lemma}
\label{lemma:fibered_twist_neg_c_single_plane}
Let $W^{2n-2}$ be a Liouville domain with prequantization boundary $(P,\lambda_P)$ over $(Q,\omega)$, where $\omega$ is a primitive integral symplectic form, and $\pi_1(Q)=0$.
Assume furthermore that 
\begin{itemize}
\item $c_1(W)=0$.
\item $c_1(Q)=c[\omega]$.
\end{itemize}
Let $\tau$ denote a right-handed fibered Dehn twist on $W$.
Define $Y=\OB(W,\tau^{-1})$.
Denote the maximal index of a Morse function on $W$ that is convex near the boundary by $\max \ind$.
If $c\leq n-\frac{\max \ind+3 }{2}$, then the only rigid holomorphic curve in the symplectization of $Y$ with positive puncture $\gamma_1$ is the finite energy plane constructed in the proof of Lemma~\ref{lemma:existence_finite_energy_plane}. 
\end{lemma}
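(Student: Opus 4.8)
The plan is to apply Lemma~\ref{lemma:non-fillability}'s second condition by showing that, under the hypothesis $c \leq n - \tfrac{\max\ind + 3}{2}$, the only rigid curve with positive puncture $\gamma_1$ is the plane $u_0$, and that every other genus-$0$ curve with positive puncture $\gamma_1$ is either a translate of $u_0$ or has a negative puncture whose unique allowed orbit lies in a moduli space of dimension $> 1$. As in the proof of Lemma~\ref{lemma:no_other_curves_fractional_fibered}, I first dispose of the easy constraints. By Lemma~\ref{lemma:existence_rigid_regular_plane} the plane $u_0$ exists, is rigid and regular. If $u$ is any other genus-$0$ curve with positive puncture $\gamma_1$, then by Lemma~\ref{lemma:orbits_near_binding} no binding orbit can appear (binding orbits have action larger than $\mathcal{A}(\gamma_1)$), and by Lemma~\ref{lemma:linking} together with $\lk(P\times\{0\},\gamma_1)=1$ and positive transversality of all page orbits, $u$ has exactly one negative puncture, at an orbit $\gamma$ with $\lk(P\times\{0\},\gamma)=1$.

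The key difference from the $\ell > 1$ case is that now the monodromy is a \emph{fibered} Dehn twist, hence the identity on the content $W_{in}$, so periodic orbits with linking number $1$ in the content genuinely occur: by Lemma~\ref{lemma:index_control_left_twist} they are exactly the critical points of $f_{convex}$, with reduced index $\bar\mu(\gamma) = 2n-2-\ind f_{convex}-2c$, while the orbits at $r=r_1$ form the Morse--Bott family $S_{1,0}$ with $\bar\mu(\gamma_1)=1$ at the minimum. So I would split into two subcases for the negative asymptote $\gamma$. First, if $\gamma$ lies in the margin (the family $S_{1,0}$), then as in Lemma~\ref{lemma:no_other_curves_fractional_fibered} minimality of the action of $\gamma_1$ in that orbit space forces $\gamma=\gamma_1$ (a cylinder cannot increase action), so $u$ is a trivial cylinder, not rigid. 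Second, if $\gamma$ lies in the content and corresponds to a critical point of $f_{convex}$, I compute the dimension of $\mathcal{M}_0(\gamma_1;\gamma)$ via Formula~\eqref{eq:dimension_formula}; since $c_1(W)=0$, $\pi_1(Q)=0$, $k=1$ ensure $c_1(\xi)$ is well-behaved and the relative Chern term vanishes (Remark~\ref{rem:CZ_via_disk}), this dimension is $\bar\mu(\gamma_1) - \bar\mu(\gamma) = 1 - (2n-2-\ind f_{convex}-2c) = \ind f_{convex} + 2c + 3 - 2n$. The hypothesis $c\leq n - \tfrac{\max\ind+3}{2}$ rearranges to $\max\ind + 2c + 3 - 2n \leq 0$, hence $\ind f_{convex} + 2c + 3 - 2n \leq 0$ for the index-optimal convex Morse function, i.e. $\dim\mathcal{M}_0(\gamma_1;\gamma)\leq 0$. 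Since a non-trivial curve from $\gamma_1$ to such a $\gamma$ would have to live in a moduli space of non-negative dimension after modding out translation — and regularity from Dragnev's Theorem~\ref{thm:regularity_somewhere_injective} together with the embeddedness of $\gamma_1$ makes negative-dimensional moduli spaces empty — no such curve exists. Therefore $u$ must be a translate of $u_0$, which is the claim.

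The main obstacle I expect is the bookkeeping around which Morse function $f_{convex}$ to use and making sure the index count is uniform: the statement quantifies over ``the maximal index of a Morse function on $W$ that is convex near the boundary,'' so I need the curve $\gamma$ to be forced to correspond to a critical point of the \emph{specific} $f_{convex}$ chosen in Section~\ref{sec:indices}, whose maximal index can be taken $\leq \max\ind$ (and $\leq n-1$ when $W$ is Weinstein, as remarked after Theorem~\ref{thm:result}). A secondary subtlety is ruling out curves with a negative puncture at an orbit of linking number $1$ that is a \emph{multiply covered} orbit or that forces multiply covered components in a building capping off $\gamma$; here the action constraint $\mathcal{A}(\gamma) < 2\min_{\gamma'}\mathcal{A}(\gamma')$ from Lemma~\ref{lemma:non-fillability}(2) must be verified, and this is exactly where the normalization~\eqref{eq:condition_C} on $U_0$ is used. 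Modulo these points, the argument is a direct index computation combined with the linking and regularity inputs already assembled in Sections~\ref{sec:indices} and~\ref{sec:holomorphic_plane}.
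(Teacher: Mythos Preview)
Your proposal is correct and follows essentially the same approach as the paper: the linking argument reduces to a single negative puncture, the margin case $S_{1,0}$ is excluded by action, and the content case is killed by the index computation $\dim\mathcal M_0(\gamma_1;\gamma)=2c+3-2n+\ind_a f_{convex}\leq 0$ combined with Dragnev regularity. The paper is slightly more explicit than you in verifying $c_1(\xi)=0$ (it uses Mayer--Vietoris on the decomposition $Y=P\times D^2\cup W\times S^1$), and your final paragraph's worries about multiply covered orbits and the action bound \eqref{eq:condition_C} are extraneous here---those concerns enter only in the $T^*\H\P^m$, $T^*Ca\P^2$ case of Lemma~\ref{lemma:TKPn}, not in this lemma.
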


\begin{proof}
Let $u:\Sigma \to \R \times Y$ be any holomorphic curve with a single positive puncture, at which $u$ is asymptotic to $\gamma_1$.
As in the proof of the previous lemma, we use a linking argument to show that $u$ can only have one negative puncture, at which $u$ is asymptotic to $\gamma$.
Lemma~\ref{lemma:orbits_near_binding} excludes any orbit $\gamma$ that lies in a fattened neighborhood of the binding with $r<r_1$.
We invoke Lemma~\ref{lemma:index_control_left_twist} to conclude that $\gamma$ lies either in the content of the pages or $\gamma$ lies in $S_{1,0}$.
The case that $[\gamma]\in S_{1,0}$ was already excluded in the proof of the previous lemma.

We conclude that $\gamma$ lies in the content of the pages, so it corresponds to a critical point $a$ of $f_{convex}$ by Lemma~\ref{lemma:index_control_left_twist}.
Since $\gamma_1$ is simple, the holomorphic curve $u$ must be somewhere injective.
By Dragnev's theorem, we can perturb $J$ to make all somewhere injective curves regular.
Hence the moduli space $\mathcal M_0(\gamma_1;\gamma)$ is a smooth orbifold of dimension determined by the Fredholm index, which we compute now.

Denote the contact structure on $Y$ by $\xi$. 
We first show that $c_1(\xi)=0$.
For this we apply the Mayer-Vietoris sequence in cohomology,
\[
\begin{split}
0 \longrightarrow H^2(Y;\R) & \longrightarrow H^2(P\times D^2;\R) \oplus H^2(W\times S^1;\R) \\
c_1(\xi) & \longmapsto (i_{P\times D^2}^* c_1(\xi),i_{W\times S^1}^* c_1(\xi) \, )=(0,0).
\end{split}
\]
It follows that the Conley-Zehnder/Maslov indices do not depend on the homology class of a curve as explained in Remark~\ref{rem:CZ_via_disk}, so by Lemma~\ref{lemma:index_control_left_twist} and Formula~\eqref{eq:dimension_formula} we find the following dimension for the moduli space.
\[
\begin{split}
\dim \mathcal M_0(\gamma_1;\gamma)&=\bar \mu(\gamma_1)-\bar \mu(\gamma)=
1-(-2c+2n-2-\ind_a f_{convex} )\\
&=1+2c-2n+2+\ind_a f_{convex}.
\end{split}
\]
If $c$ satisfies the above assumptions, then this dimension is non-positive and regularity directly implies that this moduli space is empty.
\end{proof}

For the following lemma, a purely homotopical argument suffices to exclude other curves.
\begin{lemma}
\label{lemma:only_plane_with_pi1_cond}
Let $W^{2n-2}$ be a Liouville domain with prequantization boundary $(P,\lambda_P)$ over $(Q,k\omega)$, where $\omega$ is a primitive integral symplectic form, and $k\in \Z_{>1}$.
Let $\tau$ denote a right-handed fibered Dehn twist on $W$, and define $Y=\OB(W,\tau^{-1})$.
If the inclusion map $i:P=\partial W\to W$ induces an injection on $\pi_1$, then the only rigid holomorphic curve in the symplectization of $Y$ with positive puncture $\gamma_1$ is the finite energy plane constructed in the proof of Lemma~\ref{lemma:existence_finite_energy_plane}. 
\end{lemma}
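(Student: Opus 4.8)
The plan is to run the same linking/action dichotomy used in the proofs of Lemma~\ref{lemma:no_other_curves_fractional_fibered} and Lemma~\ref{lemma:fibered_twist_neg_c_single_plane}, but to replace the index computation by the purely homotopical clause of Lemma~\ref{lemma:index_control_2}. First I would reduce to a cylinder. Let $u:\Sigma\to\R\times Y$ be a rational holomorphic curve with a single positive puncture at which it is asymptotic to $\gamma_1$. If $u$ has no negative puncture, then it is a finite energy plane asymptotic to $\gamma_1$, hence a translation of $u_0$ by the uniqueness statement in Lemma~\ref{lemma:existence_rigid_regular_plane}, which is what we want. If $u$ has negative punctures, then since $\lk(P\times\{0\},\gamma_1)=1$ and, by Lemma~\ref{lemma:orbits_near_binding}, binding orbits have strictly larger action than $\gamma_1$ (so no negative asymptote can be a binding orbit), the linking inequality of Lemma~\ref{lemma:linking} together with the fact that page and near-binding orbits have strictly positive linking with the binding forces $u$ to have exactly one negative puncture, asymptotic to an orbit $\gamma$ with $\lk(P\times\{0\},\gamma)=1$. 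Thus $\Sigma$ is a cylinder. Invoking Lemma~\ref{lemma:orbits_near_binding} once more, $\gamma$ cannot lie in the fattened binding with $r<r_1$, so by Lemma~\ref{lemma:index_control_2} either $[\gamma]\in S_{1,0}$ or $\gamma$ lies in the content of the pages and corresponds to a critical point of $f_{convex}$.

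In the first case I would argue by action: Stokes gives $0\le E(u)=\mathcal A(\gamma_1)-\mathcal A(\gamma)$, while the perturbation set up in Section~\ref{sec:MB_non_deg} places $\gamma_1$ at the unique minimum of the Morse function on $S_{1,0}$, so $\mathcal A(\gamma)\ge\mathcal A(\gamma_1)$ with equality only if $\gamma=\gamma_1$. Hence $\gamma=\gamma_1$ and $u$ is a trivial cylinder, which is not rigid. In the remaining case, $u$ is a cylinder from $\gamma_1$ to $\gamma$, and both asymptotes have linking number $1$ with the binding, so the algebraic intersection number of $u$ with the almost complex submanifold $\R\times P\subset\R\times Y$ (here $P$ denotes the binding) equals $1-1=0$. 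By positivity of intersection the image of $u$ is then disjoint from $\R\times P$, so projecting to $Y$ and capping off the cylindrical ends exhibits a free homotopy between $\gamma_1$ and $\gamma$ inside $Y-P\times\{0\}$. This contradicts the last clause of Lemma~\ref{lemma:index_control_2}, which — precisely because $k>1$ and $i_*:\pi_1(P)\to\pi_1(W)$ is injective — asserts $[\gamma]\neq[\gamma_1]$ as free homotopy classes in $Y-P\times\{0\}$. So this case is vacuous, and the only rigid curve with positive puncture $\gamma_1$ is $u_0$.

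The one step that genuinely needs care is the positivity-of-intersection argument: one must verify that the intersection of $u$ with $\R\times P$ is compact, which holds since neither $\gamma_1$ nor $\gamma$ lies on the binding, so that the algebraic count is well defined and equals the difference of the two linking numbers; and one must check that the negative end of $u$ covers $\gamma$ exactly once, which is forced by $\lk(P\times\{0\},\gamma)=1$ (a multiple cover of a content-of-pages orbit would have linking $\ge 2$), so that the projected cylinder really is an honest free homotopy of loops. Note that, unlike the previous two lemmas, no index or semi-positivity input is needed here — the obstruction is topological. Everything else is the bookkeeping already performed in the proofs of Lemma~\ref{lemma:no_other_curves_fractional_fibered} and Lemma~\ref{lemma:fibered_twist_neg_c_single_plane}.
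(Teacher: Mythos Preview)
Your proof is correct and follows essentially the same route as the paper: reduce to a cylinder by the linking/action argument, exclude the $S_{1,0}$ case by action, and in the content-of-pages case invoke the free-homotopy clause of Lemma~\ref{lemma:index_control_2} to get a contradiction. The only difference is that where the paper simply asserts ``the cylinder $u$ cannot intersect the binding,'' you spell out the reason via positivity of intersection (the algebraic count is $1-1=0$, hence geometrically zero), which is the correct justification and a useful clarification.
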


\begin{proof}
The same arguments as in the proofs of the earlier lemmas show that we only have the following possibilities for a holomorphic curve $u$ whose only positive puncture is asymptotic to $\gamma_1$:
\begin{enumerate}
\item The finite energy plane constructed in the proof of Lemma~\ref{lemma:existence_finite_energy_plane}.
\item A holomorphic cylinder $u$ representing an element in some moduli space ${\mathcal M}_0(\gamma_1;\gamma)$, where $\lk(P\times \{ 0 \},\gamma)=1$.
\end{enumerate}

Suppose that $u$ is a holomorphic cylinder as in the second case.
Since $k>1$ and $i$ induces an injection on $\pi_1$, Lemma~\ref{lemma:index_control_2} applies.
As we have already seen in the proof of the previous two lemmas, the orbit $\gamma$ at the negative puncture of $u$ cannot lie in the orbit space $S_{1,0}$.
This leaves the second case from Lemma~\ref{lemma:index_control_2}, which tells us that the free homotopy classes $[\gamma_1]$ and $[\gamma]$ in $Y-P\times \{ 0\}$ are not equal.
Since the cylinder $u$ cannot intersect the binding, the projection of $u$ to $Y-P\times \{ 0\}$ provides a homotopy from $\gamma_1$ to $\gamma$, which is a contradiction.

We conclude that $u$ cannot be a non-trivial holomorphic cylinder, so any rigid $u$ is a translation of $u_0$.
\end{proof}

Finally, we consider the case analogous to that of Lemma~\ref{lemma:fibered_twist_neg_c_single_plane}, but with $c>0$.
Here we use an index argument, which depends on more delicate details, and only works for the following specific, but important examples.
\begin{lemma}
\label{lemma:TKPn}
Let $W=T^*\H \P^m$ or $W=T^*Ca\P^2$.
These manifolds admit right-handed fibered Dehn twists, which we denote by $\tau$.
Define $Y=\OB(W,\tau^{-1})$.
Then the only rigid holomorphic curve in the symplectization of $Y$ with positive puncture $\gamma_1$ is the finite energy plane constructed in the proof of Lemma~\ref{lemma:existence_finite_energy_plane}.
\end{lemma}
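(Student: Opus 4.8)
The plan is to follow the same strategy as in Lemma~\ref{lemma:fibered_twist_neg_c_single_plane}, combining a linking argument with an index computation, but now carried out by hand for the two specific cotangent bundles $T^*\H\P^m$ and $T^*Ca\P^2$, where the naive index bound $c\leq n-\frac{\max\ind+3}{2}$ fails. First I would recall that the round metrics on $\H\P^m$ and $Ca\P^2$ have all geodesics closed of the same length, so a fibered Dehn twist $\tau$ exists on $W=T^*_{\leq 1}\H\P^m$ (resp. $T^*_{\leq 1}Ca\P^2$), and the prequantization boundary is $P=ST^*\H\P^m$ (resp. $ST^*Ca\P^2$) over the space of oriented geodesics $Q$, with $[\omega]$ primitive and $k=1$; in these cases $\pi_1(Q)=0$ and one computes $c_1(Q)=c[\omega]$ with the explicit value of $c$ (for $\H\P^m$ one has $n=2m+1$ and $c=2m+1$, so $c>0$). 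I would also note $c_1(W)=0$ since $T^*X$ is always a complex vector bundle pulled back from $X$ via the projection, and the Mayer–Vietoris argument from the proof of Lemma~\ref{lemma:fibered_twist_neg_c_single_plane} gives $c_1(\xi)=0$ on $Y$, so Conley–Zehnder indices are trivialization-independent.

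Next, as in the earlier lemmas, Lemma~\ref{lemma:existence_rigid_regular_plane} provides the unique rigid plane $u_0$ asymptotic to $\gamma_1$, so I only need to exclude rigid curves with negative punctures. The linking argument (Lemma~\ref{lemma:linking}) together with $\lk(P\times\{0\},\gamma_1)=1$ and positive transversality of page orbits forces exactly one negative puncture $\gamma$ with linking number $1$; Lemma~\ref{lemma:orbits_near_binding} rules out $r<r_1$, and Lemma~\ref{lemma:index_control_left_twist} then says $\gamma$ lies either in $S_{1,0}$ — already excluded as in Lemma~\ref{lemma:no_other_curves_fractional_fibered} since $\gamma_1$ has minimal action in that orbit space and a cylinder cannot raise action — or in the content of the pages, corresponding to a critical point $a$ of $f_{convex}$ with $\bar\mu(\gamma)=2n-2-\ind_a f_{convex}-2c$. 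The somewhere-injectivity of $u$ (from simplicity of $\gamma_1$) plus Dragnev's theorem makes $\mathcal M_0(\gamma_1;\gamma)$ a smooth orbifold of dimension $1+2c-2n+2+\ind_a f_{convex}$, so I must show this is non-positive for all critical points of the chosen convex Morse function, i.e. $\ind_a f_{convex}\leq 2n-3-2c$.

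The main obstacle — and the reason this lemma is separated out — is that this inequality is false for an arbitrary convex Morse function on $T^*\H\P^m$ or $T^*Ca\P^2$; instead I would choose $f_{convex}$ carefully, as the pullback (suitably perturbed near the zero section) of a perfect Morse function on the base $\H\P^m$ (resp. $Ca\P^2$), whose critical points have indices $0,4,8,\dots,4m$ (resp. $0,8,16$). For such an $f_{convex}$ one has $\max\ind=4m$ for $\H\P^m$ with $n=2m+1$, so $2n-3-2c=4m+2-1-2(2m+1)=-1<\ind_a$, which still looks bad — so in fact the genuine content of the argument must be a more refined index bound than Formula~\eqref{eq:dimension_formula} applied crudely: I would instead invoke the slightly more general second condition of Lemma~\ref{lemma:non-fillability} (the one the authors explicitly inserted to handle exactly $T^*\H\P^m$ and $T^*Ca\P^2$), namely allow the exceptional cylinder $[u]\in\mathcal M_0(\gamma_1;\gamma_0)$ with $\mathcal A(\gamma_0)<2\min_\gamma\mathcal A(\gamma)$ and $\dim\mathcal M_0(\gamma_1;\gamma_0)>1$, and show that every candidate $\gamma$ in the content of the page either has $\dim\mathcal M_0(\gamma_1;\gamma)<0$ (hence empty moduli space) or else has action below $2\min_\gamma\mathcal A(\gamma)$ and moduli-space dimension $>1$, so that it falls under the permitted second alternative of Lemma~\ref{lemma:non-fillability} rather than obstructing the filling obstruction. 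Concretely, the action bound comes from condition~\eqref{eq:condition_C} on $U_0$, which guarantees $\mathcal A(\gamma)<2\mathcal A(\gamma_1)=2\min_\gamma\mathcal A(\gamma)$ for all page orbits of linking number $1$; and the dimension is $>1$ precisely when $\ind_a f_{convex}>2n-4-2c$, which for the perfect Morse function on the base holds for every critical point of positive index once one checks the single arithmetic inequality for the base index values. Thus the proof reduces to: (i) pick $f_{convex}$ as a pulled-back perfect Morse function on the base; (ii) enumerate critical-point indices ($4j$ for $\H\P^m$, $8j$ for $Ca\P^2$); (iii) verify for each that $1+2c-2n+2+\ind_a f_{convex}\notin\{0,1\}$, i.e. it is either negative or $\geq 2$; (iv) conclude via the second clause of Lemma~\ref{lemma:non-fillability}. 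I expect step (iii) to be the only real computation and the place where the specific geometry of these two spaces (the value of $c$ relative to $n$ and the gaps in the cohomology) is genuinely used.
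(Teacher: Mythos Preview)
Your proposal eventually lands on the right computation in step~(iii), but you arrive there by a detour that reflects a misreading of what the lemma actually asks. The statement is about \emph{rigid} curves, and by the paper's convention a curve is rigid precisely when $\dim\mathcal M_0(\gamma_1;\gamma)=1$. So you do not need the dimension to be non-positive; you only need it to avoid the value~$1$. The paper's proof is exactly this: compute
\[
\dim\mathcal M_0(\gamma_1;\gamma)=2c+1-\dim W+\ind_a f_{convex},
\]
plug in $c=2m+1$, $\dim W=8m$, $\ind_a\in\{0,4,\ldots,4m\}$ for $T^*\H\P^m$ (resp.\ $c=11$, $\dim W=32$, $\ind_a\in\{0,8,16\}$ for $T^*Ca\P^2$) and observe that the resulting values $3-4m+4r$ (resp.\ $8r-9$) are either negative or $\geq 3$, hence never equal to~$1$. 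Negative dimension plus regularity gives emptiness; dimension $>1$ means the cylinder is not rigid. That is the whole argument.

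Your invocation of the second clause of Lemma~\ref{lemma:non-fillability} and the action bound from condition~\eqref{eq:condition_C} is not wrong, but it belongs to the proof of Theorem~\ref{thm:result} in Section~\ref{sec:wrapup}, not to the proof of this lemma. The paper separates these: Lemma~\ref{lemma:TKPn} only establishes that no rigid cylinder exists, while the action condition on the surviving high-dimensional moduli spaces is checked later when Lemma~\ref{lemma:non-fillability} is actually applied. Conflating the two makes your argument look harder than it is. Also, a small arithmetic slip: for $W=T^*\H\P^m$ one has $\dim W=8m=2n-2$, so $n=4m+1$, not $2m+1$; with the correct $n$ your formula $1+2c-2n+2+\ind_a$ reproduces the paper's $3-4m+4r$.
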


\begin{proof}
Let $u$ be a holomorphic curve of genus $0$ with $\gamma_1$ as its only positive puncture. 
The same linking argument we used earlier shows that $u$ can have at most one negative puncture, at which it is asymptotic to $\gamma$.
As before, we can exclude the case that $\gamma$ lies in a fattened neighborhood of the boundary.

As in Lemma~\ref{lemma:fibered_twist_neg_c_single_plane} we will use an index argument.
Denote the contact structure on $Y$ by $\xi$.
Then we can verify that $c_1(\xi)=0$ by using the same method as in the proof of Lemma~\ref{lemma:fibered_twist_neg_c_single_plane}; the new ingredient is $c_1(T^*M,d\lambda_{can})=0$.
By Dragnev's theorem and Lemma~\ref{lemma:index_control_left_twist}, we see that the moduli space $\mathcal M_0(\gamma_1;\gamma)$ is a smooth orbifold of dimension determined by the Fredholm index, and with Formula~\eqref{eq:dimension_formula} we find
$$
\dim \mathcal M_0(\gamma_1;\gamma)=\bar \mu(\gamma_1)-\bar \mu(\gamma)= 2c+1-\dim W+\ind_a f_{convex}.
$$
For $T^*\H \P^m$, we have $c=2m+1$ by \cite[Proposition 2.10]{Frauenfelder:Volume_growth_Dehn}.
Furthermore, $T^*\H \P^m$ admits a plurisubharmonic Morse function with indices $0,4,\ldots,4m$.
It follows that 
$$
\dim \mathcal M_0(\gamma_1;\gamma)=1+4m+2-8m+4r=3-4m+4r,
$$
with $r=0,\ldots,m$. 
By regularity, it follows that the moduli spaces $\mathcal M_0(\gamma_1;\gamma)$ are empty or satisfy $\dim \mathcal M_0(\gamma_1;\gamma)>1$.
It follows that holomorphic cylinders with $\gamma_1$ at its positive puncture cannot be rigid.

For $T^*Ca\P^2$, this argument works as well. Use that $c=11$, and that there is a plurisubharmonic Morse function with indices $0,8,16$.
Then
$$
\dim \mathcal M_0(\gamma_1;\gamma)=2c+1-32+8r=8r -9,
$$
for $r=0,\ldots,2$, so the same argument works.
\end{proof}

\section{Proof of the main theorem and discussion}
\label{sec:wrapup}
All that is left is combining the pieces we set up earlier.
\begin{proof}[Proof of Theorem~\ref{thm:result}]
We apply Lemma~\ref{lemma:non-fillability} to the contact manifold $Y=\OB(W,\tau^{-1})$.
Let us briefly check the conditions.
By Lemma~\ref{lemma:existence_rigid_regular_plane} there is an adjusted almost complex structure on $\R \times Y$ and a rigid finite energy holomorphic plane $u_0$ that is asymptotic to a non-degenerate, simple periodic Reeb orbit $\gamma_1$.
In addition, this plane is unique up to translation and it is Fredholm regular, so the first condition holds.

The second condition of Lemma~\ref{lemma:non-fillability} follows 
\begin{itemize}
\item from Lemma~\ref{lemma:no_other_curves_fractional_fibered} for a fractional twist.
\item from Lemma~\ref{lemma:TKPn} for a fibered twist on $W=T^* \H \P^m$ or $W=T^*Ca \P^2$ and the following observation.
In both cases, the necessary plurisubharmonic Morse function on $W$ comes from a Morse function $f_0$ on the zero-section, and by adding a constant, we can ensure $\max_x(f_0(x) \, )/\min_x(f_0(x) \, )<2$.
This implies the action condition in Lemma~\ref{lemma:non-fillability}.

\item from Lemma~\ref{lemma:only_plane_with_pi1_cond} if $k>1$ and the inclusion $P\to W$ induces an injection on $\pi_1$.
\item from Lemma~\ref{lemma:fibered_twist_neg_c_single_plane} in the remaining case.
\end{itemize}

We conclude that the conditions of Lemma~\ref{lemma:non-fillability} hold for the stable Hamiltonian structure $(\lambda,\Omega_{sH}=d\lambda)$.
This proves the theorem.
\end{proof}

Corollary~\ref{cor:weinstein_conj}, which asserts that the Weinstein conjecture holds for these manifolds, follows from the last statement in Lemma~\ref{lemma:non-fillability}.

We also want to mention another corollary, namely that the negative stabilization is a special case of our construction.
\begin{corollary}[\cite{BvK,Massot:weak_strong_fillability}]
The negative stabilization of the standard contact sphere $(S^{2n+1},\xi_0)$ admits no weak semi-positive symplectic filling.
\end{corollary}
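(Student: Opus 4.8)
The plan is to exhibit the negative stabilization of $(S^{2n+1},\xi_0)$ as one of the contact open books to which Theorem~\ref{thm:result} applies, and then to remove the cohomological hypothesis on the filling by observing that the underlying manifold is a sphere.

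First I would recall, as in the Examples subsection preceding Proposition~\ref{prop:negative_stabilization_as_invariant_contact}, that the negative (left-handed) stabilization of $\OB(D^{2n},\id)\cong(S^{2n+1},\xi_0)$ is the contact open book $Y:=\OB(T^*S^n,\tau^{-1})$, where $\tau$ is a right-handed Dehn twist. By Example~\ref{ex:std_dehn_twist}, $T^*S^n$ is an adapted double cover of $W=T^*_{\leq1}\R\P^n$, so $\tau$ is a right-handed fractional twist of power $\ell=2>1$. Hence the first bullet of Theorem~\ref{thm:result} applies to $Y$; more precisely, the proof of Theorem~\ref{thm:result} verifies, for the stable Hamiltonian structure $(\lambda,\Omega_{sH}=d\lambda)$, the hypotheses of Lemma~\ref{lemma:non-fillability} (the rigid regular finite energy plane coming from Lemma~\ref{lemma:existence_rigid_regular_plane}, and the absence of competing rigid curves from Lemma~\ref{lemma:no_other_curves_fractional_fibered} since $\ell>1$). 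This gives that $Y$ admits no semi-positive weak filling $(F_0,\Omega)$ with $[\Omega|_{TY}]$ cohomologous to $[\Omega_{sH}]=[d\lambda]=0\in H^2(Y;\R)$.

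It then remains to drop the condition $[\Omega|_{TY}]=0$. By Proposition~\ref{prop:negative_stabilization_as_invariant_contact}, $Y$ is diffeomorphic to the Hopf fibration over $\C\P^n$, i.e.\ to $S^{2n+1}$, so $H^2(Y;\R)=0$ and the condition $[\Omega|_{TY}]=0$ is automatically satisfied by every symplectic form on a filling. Consequently $Y$ has no semi-positive weak filling whatsoever; since $\dim Y=2n+1$, semi-positivity is automatic when $n\le 3$, and in general the statement is exactly as claimed. (Alternatively, one could invoke the general fact recalled after the statement of Theorem~\ref{thm:result}, that weak fillings deform to strong ones when $H^2(Y;\R)=0$, and then apply the convex version of the obstruction; but going directly through Lemma~\ref{lemma:non-fillability} avoids worrying about whether such a deformation preserves semi-positivity.) There is no new analytic content here: everything is already packaged in Theorem~\ref{thm:result} and Lemma~\ref{lemma:non-fillability}, and the only point requiring care is precisely the weak-versus-strong passage, which the vanishing of $H^2(Y;\R)$ handles.
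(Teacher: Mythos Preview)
Your argument is correct. For the convex case you proceed exactly as the paper does, invoking Theorem~\ref{thm:result} via the first bullet ($\ell=2>1$). For the weak case, however, you take a different and more elementary route than the paper: the paper writes that ``more work is required to exclude weak fillings: the stable Hamiltonian structure has to be chosen with care'' and defers to \cite[Proof of Theorem~3.1]{Massot:weak_strong_fillability}, whereas you observe that since $Y\cong S^{2n+1}$ (Proposition~\ref{prop:negative_stabilization_as_invariant_contact}) one has $H^2(Y;\R)=0$, so the cohomological hypothesis $[\Omega|_{TY}]=[\Omega_{sH}]=[d\lambda]=0$ in Lemma~\ref{lemma:non-fillability} is automatic for \emph{any} weak filling, and no new choice of stable Hamiltonian structure is needed. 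This shortcut works precisely because the underlying manifold is a sphere; the paper's pointer to \cite{Massot:weak_strong_fillability} reflects the general method (adapting $\Omega_{sH}$ to the candidate filling), which is what one would need when $H^2(Y;\R)\neq 0$.
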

We already described the $S^1$-invariant contact structure on the negative stabilization in Proposition~\ref{prop:negative_stabilization_as_invariant_contact}.
We directly obtain the non-existence of a convex semi-positive symplectic filling from Theorem~\ref{thm:result}.
More work is required to exclude weak fillings: the stable Hamiltonian structure has to be chosen with care. We refer to \cite[Proof of Theorem~3.1]{Massot:weak_strong_fillability}.

\subsection{Negative powers of fractional twists}
To deal with powers of fractional twists we use cobordism techniques due to Avdek.
The following proposition is a special case of \cite[Theorem 1.9]{Avdek:Liouville}.
\begin{proposition}[Avdek]
\label{prop:cobordism_open_book}
Let $W$ be a Weinstein domain, and suppose that $\psi_1$ and $\psi_2$ are symplectomorphisms of $W$ with compact support.
Then there is a Stein cobordism from the disjoint union $\OB(W,\psi_1)\coprod \OB(W,\psi_2)$ at the concave end to $\OB(W,\psi_1 \circ \psi_2)$ at the convex end.
\end{proposition}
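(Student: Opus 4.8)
The plan is to deduce Proposition~\ref{prop:cobordism_open_book} from Avdek's Liouville connect sum construction, \cite[Theorem 1.9]{Avdek:Liouville}, by specializing his notion of \emph{Liouville hypersurface} to a page of an open book. Recall that inside any contact open book $\OB(W,\psi)$ the page sits as a Liouville hypersurface: a neighborhood of a fixed page $\{\phi_0\}\times W$ inside the mapping torus $Map(W,\psi)$ is contactomorphic to $((-\epsilon,\epsilon)\times W,\,d\phi+\lambda)$, and $(W,d\lambda)$ is a Liouville domain. Avdek's theorem produces, from a contact manifold equipped with two disjoint Liouville hypersurfaces that are symplectomorphic Liouville domains, an exact symplectic cobordism — Stein whenever that Liouville domain is in fact Weinstein — whose concave end is the given contact manifold and whose convex end is the result of the \emph{Liouville connect sum} along the two hypersurfaces.

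Concretely, the steps I would carry out are the following. First, fix a page $W_i\subset\OB(W,\psi_i)$ for $i=1,2$ together with its contact-type collar $(-\epsilon,\epsilon)\times W$, and form the disjoint union $\OB(W,\psi_1)\coprod\OB(W,\psi_2)$, which then contains $W_1\coprod W_2$ as a pair of disjoint Liouville hypersurfaces, each symplectomorphic to $(W,d\lambda)$. Second, attach along $W_1\coprod W_2$ the connect-sum cobordism of \cite{Avdek:Liouville}: locally it has the form $W\times H$, where $H$ is the standard Weinstein $1$-handle, glued to $(W\times(-\epsilon,\epsilon))\coprod(W\times(-\epsilon,\epsilon))$ along its attaching region; since $W$ is Weinstein this is a Weinstein cobordism, being essentially the product of the Weinstein structure on $W$ with a $1$-handle attachment. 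Third, identify the convex boundary: cutting each $Map(W,\psi_i)$ open along $W_i$ presents $\OB(W,\psi_i)$ as the binding glued to $W\times[0,1]/\!\sim_{\psi_i}$, and the connect sum removes small balls straddling $W_1$ and $W_2$ and reglues, the net effect being to concatenate the two cut-open mapping tori end to end, so that the identifications with monodromies $\psi_1$ and $\psi_2$ compose into the single identification with monodromy $\psi_1\circ\psi_2$. Since this all takes place near the interior pages, away from the bindings, the binding piece $P\times D^2$ is carried along unchanged and the convex end is exactly $\OB(W,\psi_1\circ\psi_2)$.

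The part that requires care — and which is precisely what Avdek's theorem packages — is verifying that the attaching region of the handle $W\times H$ meets the collars along hypersurfaces of contact type with matching induced Liouville forms (using the lemma of Giroux to arrange $\psi_i^*\lambda=\lambda-dU_i$ with $U_i$ positive, so that the two mapping-torus contact forms line up), and then keeping track of the mapping-torus orientation conventions so that the composition appears in the stated order $\psi_1\circ\psi_2$; exchanging the roles of $W_1$ and $W_2$ produces the opposite order, and the two open books are identified by a global diffeomorphism. The Stein (rather than merely exact) conclusion then follows from the Eliashberg structure theory recalled in Section~\ref{sec:def}, since the cobordism constructed above is Weinstein. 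No compactness or holomorphic-curve input enters: this is purely a handle-attachment statement.
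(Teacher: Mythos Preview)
Your approach is exactly the paper's: the paper does not prove this proposition at all but simply records it as a special case of \cite[Theorem~1.9]{Avdek:Liouville}, and your sketch is precisely the specialization of Avdek's Liouville connect sum to the case where the two Liouville hypersurfaces are pages of the two open books.

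One point in your description is not right and should be fixed. You write that the construction ``takes place near the interior pages, away from the bindings'' and that ``the binding piece $P\times D^2$ is carried along unchanged.'' But the concave end $\OB(W,\psi_1)\coprod\OB(W,\psi_2)$ has \emph{two} bindings, while the convex end $\OB(W,\psi_1\circ\psi_2)$ has only one; something must happen to the bindings. The boundary $\partial W$ of each page \emph{is} the corresponding binding, so the Liouville hypersurfaces you are summing along meet the bindings, and Avdek's handle attachment does not avoid this region. What actually occurs is that removing the page neighborhood from $\OB(W,\psi_i)$ cuts the binding neighborhood $P\times D^2$ into $P\times(\text{half-disk})$, and the connect sum reglues these two half-disk pieces into a single $P\times D^2$; this is how the two bindings merge into the single binding of $\OB(W,\psi_1\circ\psi_2)$. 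Once you correct this, the rest of your outline is fine and no further argument is needed beyond citing Avdek.
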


The following lemma is essentially contained in \cite{NP:resolving_orbi_singularities}. 
Furthermore, that paper also deals with prequantization bundles over symplectic orbifolds.
For us the following suffices.
\begin{lemma}
\label{lemma:convex_concave_filling}
Let $P$ be a smooth prequantization bundle over a symplectic manifold $(Q,k\omega)$ whose associated disk-bundle is a convex or a concave filling. Then $P$ admits both a convex and a concave filling.
\end{lemma}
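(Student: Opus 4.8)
The plan is to produce, from one of the two disk-bundle fillings, the other one, so that $P$ bounds both a convex and a concave filling. Write $P \to Q$ for the prequantization bundle over $(Q,k\omega)$. There are two associated disk-bundles: the bundle $D_+$ whose boundary orientation/Liouville data makes $P$ a \emph{convex} boundary, and the ``dual'' bundle $D_-$ for which $P$ is a \emph{concave} boundary; these are exactly the bundles $P\times_{S^1,\pm}D^2$ appearing in Theorem~\ref{thm:result invariant}. The key point is that \emph{both} of these bundles always carry a symplectic form making the zero-section symplectomorphic to $(Q,k\omega)$ (this is the standard prequantization/minimal coupling construction: take the form $\Pi^*(k\omega) + d(r^2\theta)$ in a local model, or build it from the connection $1$-form $\theta$ with $d\theta = -2\pi\Pi^*\omega$), and one checks directly that in $D_+$ the radial vector field $r\partial_r$ is Liouville and outward-pointing near $\partial D_+ = P$, while in $D_-$ the corresponding radial field is inward-pointing near $\partial D_- = P$, inducing the same contact form $\lambda_P$ on $P$ up to scaling (adjust by Gray stability / rescaling). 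Thus $D_+$ is a convex filling and $D_-$ is a concave filling of $(P,\ker\lambda_P)$, regardless of which of the two we started with.

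So the first step is to set up the two disk-bundle models explicitly and verify the Liouville/contact data on their common boundary $P$. The second step is to observe that the hypothesis ``the associated disk-bundle is a convex or concave filling'' simply guarantees we are in this prequantization situation (the form $k\omega$ is genuinely symplectic and $P$ is the honest Boothby--Wang bundle), so the explicit construction of the \emph{other} bundle goes through: if we are handed the convex filling $D_+$, we build $D_-$ as above and check concavity; if handed the concave filling $D_-$, we build $D_+$ and check convexity. The third step is a small bookkeeping check that the contact structure induced on $P$ by the newly constructed filling agrees, up to contactomorphism, with the one we started from — this follows because both are supported by the connection form $\theta$, and any two contact forms of the form $c\,\theta$ ($c>0$ constant) are strictly contactomorphic, with the general case handled by Gray stability.

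The main obstacle I expect is not conceptual but a matter of being careful with \emph{orientations and co-orientations}: getting the sign conventions right so that the radial Liouville field really points outward for one bundle and inward for the other, and so that $P$, oriented as the boundary of each, carries the \emph{same} cooriented contact structure. The Euler number of $D_\pm$ is $\mp k$ (or $\pm k$, depending on convention), and one must make sure the symplectic form chosen on each total space is compatible with that Euler number and with $d\theta = -2\pi\Pi^*\omega$; a sign error here would swap convex and concave. Once the local model near $P$ is pinned down — say $(P\times[0,\epsilon), d(e^t\lambda_P))$ from the convex side and $(P\times(-\epsilon,0], d(e^t\lambda_P))$ from the concave side, with the zero-section glued in by the standard minimal-coupling form — the rest is routine. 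I would therefore organize the write-up around a single explicit model for $P\times_{S^1}D^2$ with both orientations, cite the minimal coupling construction for the symplectic form on the total space, and reduce everything else to checking the sign of the radial vector field and invoking Gray stability on $P$.
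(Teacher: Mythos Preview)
Your approach is correct but takes a different route from the paper. You propose to construct the two disk-bundle fillings $D_+$ (convex) and $D_-$ (concave) directly and independently via the minimal coupling / prequantization form, checking the Liouville data on each. The paper instead performs a \emph{symplectic cut} on the given disk bundle $P\times_{S^1}D^2$: this produces a closed symplectic $S^2$-bundle $E = Q \tilde\times S^2$ over $Q$ containing a slightly shrunk copy of the original disk bundle, and the \emph{complement} of that copy in $E$ is then the filling of the opposite type (convex if the original was concave, and vice versa).

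Both arguments are short. Your direct construction is more elementary and makes the two fillings explicit from the start, at the cost of the sign/orientation bookkeeping you correctly flag. The paper's symplectic-cut argument is more conceptual: it exhibits the two fillings as the two halves of a single closed symplectic manifold, and this is exactly the picture used immediately afterwards in Lemma~\ref{lemma:fillability_BW}, where the convex filling $\bar L$ is identified as the complement of the concave piece and inherits its complex structure from the cut. So while your argument proves the lemma as stated, the paper's route also sets up the ambient geometry needed for the subsequent Chern-class computation.
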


\begin{proof}
Perform a symplectic cut on $P\times_{S^1}D^2$.
We obtain a new symplectic manifold $E=Q\tilde \times S^2$, which is an $S^2$-bundle over $Q$. This bundle $E$ contains the smaller copy of the original disk bundle $P\times_{S^1}D^2$ with its given symplectic form as a subset.
This smaller copy still has concave or convex boundary, and the complement of this subset forms then a convex (if the original bundle $P\times_{S^1}D^2$ was concave) or concave (if $P\times_{S^1}D^2$ was convex) filling. 
\end{proof}
We will now give a criterion to test whether the convex fillings obtained this way are semi-positive.
For this, we first set up some notation.
Let $W^{2n-2}$ be a Liouville domain with boundary $P$, where $\pi:P\to (Q,k\omega)$ is a prequantization bundle over a symplectic manifold, with $\omega$ a primitive symplectic form and $k\in \Z_{>1}$.
Suppose that $\tilde P\to P$ is an $\ell$-fold cover of the same form as in Equation~\eqref{eq:cover_BW_bdl}, and assume that $\tilde W$ is an adapted $\ell$-fold cover of $W$.
Denote the right-handed fractional twist of power $\ell$ by $\tilde \tau$.
Consider the contact open book $\tilde Y_+=\OB(\tilde W,\tilde \tau)$.
By Lemma~\ref{lemma:decomposition} $\tilde Y_+$ is contactomorphic to a prequantization bundle over $M=P\times_{S^1,+} D^2 \cup_\partial W$.
Here $(p,z)\in P\times D^2 \sim (pg,gz)\in P\times D^2$ is the equivalence relation for the associated bundle $P\times_{S^1,+} D^2$.

We rescale the symplectic form on $M$ by positive number in order to obtain a primitive symplectic form $\omega_M$.

\begin{lemma}
\label{lemma:fillability_BW}
The above contact manifold $\tilde Y_{+}$ is convex symplectically fillable by the associated disk bundle $\bar L:=\tilde Y_{+}\times_{S^1,-}D^2$, where $(p,z)\sim_{S^1,-} (pg,g^{-1}z)$.
\begin{itemize}
\item if $n=2,3$, then $\bar L$ is trivially semi-positive.
\item if $n\geq 4$, $W$ is Weinstein manifold, and $c_1(Q)=c[\omega]$, then
\[
\begin{split}
c_1(\bar L)&=-\frac{k}{\ell} \pi_{\bar L}^* [\omega_M]\\
c_1(T\bar L)&=(c+k-\frac{k}{\ell})\pi_{\bar L}^*[\omega_M],
\end{split}
\]
where $\pi_{\bar L}:\bar L\to M$ is the natural projection.
In particular $\bar L$, as a symplectic manifold, is semi-positive if $c+k-\frac{k}{\ell}\geq 0$ or if $c+k-\frac{k}{\ell}<3-n$.
\end{itemize}
\end{lemma}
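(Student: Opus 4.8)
The plan is to first dispatch the fillability claim, then the $n=2,3$ case, and finally the Chern class computation, which is the real content. For fillability: $\tilde Y_+ = \OB(\tilde W,\tilde\tau)$ is, by Lemma~\ref{lemma:decomposition} (the $a=+1$ case), contactomorphic to a prequantization bundle over $M = P\times_{S^1,+}D^2\cup_\partial W$, rescaled so that the base form $\omega_M$ is primitive. A prequantization bundle always bounds its associated disk bundle as a strong (indeed exact-at-infinity, convex) filling, so taking $\bar L = \tilde Y_+\times_{S^1,-}D^2$ with the dual orientation/sign convention gives the required convex filling. For $n=2,3$ we have $\dim\bar L = 2n \le 6$, and the definition of semi-positivity is vacuous in these dimensions (as already noted after the definition in Section~\ref{sec:def}), so there is nothing to prove.

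For $n\ge 4$ the task is to compute $c_1(\bar L)$ and $c_1(T\bar L)=c_1(\bar L)+\pi_{\bar L}^*c_1(TM)$ in terms of $\pi_{\bar L}^*[\omega_M]$, and then read off semi-positivity. First I would compute $c_1(TM)$. Write $M = (P\times_{S^1,+}D^2)\cup_\partial W$. Since $W$ is Weinstein, $c_1(TW)$ is determined by $c_1(Q)$ plus the contribution of the normal bundle of the boundary; more precisely, using that $P$ is the prequantization bundle over $(Q,k\omega)$ and $\tilde Y_+$ is the prequantization bundle over $(M,\omega_M)$ with Euler class $-[\omega_M]$, the disk bundle $P\times_{S^1,+}D^2$ is a neighborhood of $Q$ inside $M$ with normal bundle of first Chern class $k\pi_M^*[\omega]$ restricted appropriately. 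Comparing the two descriptions of $M$ and using $c_1(Q)=c[\omega]$, one gets that $[\omega_M]$ restricts to $[\omega]$ on the $Q$-part and that $c_1(TM) = (c+k)\,[\omega_M]$ after the rescaling: the $c$ comes from $c_1(Q)$ and the $+k$ from the self-intersection/normal-bundle contribution of $Q\subset M$, which is Poincaré dual to $k[\omega_M]$ by the prequantization construction. Next, $\bar L = \tilde Y_+\times_{S^1,-}D^2$ is the disk bundle dual to the prequantization bundle, so its fiberwise first Chern class is $-(-[\omega_M]) = ?$—here one must be careful with the sign convention $(p,z)\sim(pg,g^{-1}z)$: the associated line bundle is $L^{\vee}$ where $L$ has $c_1(L)=-\frac{k}{\ell}[\omega_M]$ (the factor $\frac{k}{\ell}$ because we are on the $\ell$-fold cover $\tilde W$, so the boundary is a prequantization bundle over $(Q,\frac{k}{\ell}\omega)$), giving $c_1(\bar L) = c_1(L) = -\frac{k}{\ell}\pi_{\bar L}^*[\omega_M]$. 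Then
\[
c_1(T\bar L) = c_1(\bar L) + \pi_{\bar L}^*c_1(TM) = \Bigl(-\frac{k}{\ell} + c + k\Bigr)\pi_{\bar L}^*[\omega_M] = \Bigl(c + k - \frac{k}{\ell}\Bigr)\pi_{\bar L}^*[\omega_M].
\]
Finally, for semi-positivity: every class $A\in\pi_2(\bar L)$ with $\langle[\Omega],A\rangle>0$ satisfies $\langle\pi_{\bar L}^*[\omega_M],A\rangle = \langle[\omega_M],(\pi_{\bar L})_*A\rangle$, and since $[\omega_M]$ is a primitive (hence in particular rational, suitably normalized) symplectic class pulled back from the base, one checks $\langle\pi_{\bar L}^*[\omega_M],A\rangle\ge 0$ for such $A$ (the fiber class is $\Omega$-positive but $\pi_{\bar L}$-trivial; base classes pair non-negatively once we note the symplectic form on $\bar L$ is cohomologous to a positive combination of $\pi_{\bar L}^*\omega_M$ and the fiber form). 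Hence $\langle c_1(T\bar L),A\rangle$ has the sign of $c+k-\frac{k}{\ell}$ times a non-negative number; if $c+k-\frac{k}{\ell}\ge 0$ the Chern number is automatically $\ge 0$, and if $c+k-\frac{k}{\ell}<3-n$ then no class $A$ can satisfy both $\langle[\Omega],A\rangle>0$ and $\langle c_1,A\rangle\ge 3-n$ unless $\langle\pi_{\bar L}^*[\omega_M],A\rangle=0$, in which case the Chern number is $0\ge0$; either way semi-positivity holds.

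The main obstacle I expect is bookkeeping the signs and the factor $\frac{k}{\ell}$ correctly: there are three sign conventions in play (the connection form convention $d\theta=-2\pi\Pi^*\omega$ from \eqref{eq:d_connection}, the $\pm$ in the associated bundles $P\times_{S^1,\pm}D^2$, and the orientation of $\bar L$ as a filling), and the $\ell$-fold cover means the relevant base form on $\partial\tilde W$ is $\frac{k}{\ell}\omega$ rather than $k\omega$. Getting $c_1(\bar L)=-\frac{k}{\ell}\pi_{\bar L}^*[\omega_M]$ rather than $+\frac{k}{\ell}$ or $-k$ requires tracking these carefully; once the two Chern classes are pinned down, the semi-positivity conclusion is a one-line consequence of the definition.
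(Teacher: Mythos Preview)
Your overall architecture matches the paper's proof, but there is a genuine gap in the Chern class step, and it is exactly the place where the hypotheses $n\ge 4$ and ``$W$ Weinstein'' are actually used.

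You compute $c_1(TM)$ by restricting to the neighbourhood $P\times_{S^1,+}D^2$ of $Q$ and getting $(c+k)[\omega]$ there. But you never justify why this restriction determines $c_1(TM)$ as a class in $H^2(M)$. The paper does this via the long exact sequence of the pair $(M,P\times_{S^1,+}D^2)$: excision gives $H^2(M,P\times_{S^1,+}D^2)\cong H^2(W,\partial W)$, and Lefschetz duality plus the Weinstein handle index bound give $H^2(W,\partial W)\cong H_{2n-4}(W)=0$ once $2n-4>n-1$, i.e.\ $n\ge 4$. This is what forces $i^*:H^2(M)\to H^2(P\times_{S^1,+}D^2)$ to be injective, so that knowing $i^*c_1(TM)$ and $i^*c_1(L)$ pins down the classes on $M$. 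Your sentence ``Since $W$ is Weinstein, $c_1(TW)$ is determined by $c_1(Q)$ plus\ldots'' gestures at this but is not the actual mechanism; without the LES argument there is no reason the formula $(c+k)[\omega_M]$ should hold globally rather than just over $Q$.

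Separately, your computation of $c_1(\bar L)$ is internally inconsistent: you write ``the associated line bundle is $L^\vee$ where $L$ has $c_1(L)=-\frac{k}{\ell}[\omega_M]$, giving $c_1(\bar L)=c_1(L)$''. If $\bar L=L^\vee$ then $c_1(\bar L)=-c_1(L)$, not $c_1(L)$. The paper's bookkeeping is: $L=\tilde Y_+\times_{S^1,+}D^2$ has $i^*c_1(L)=\frac{k}{\ell}\pi^*[\omega]$ (this is $\frac{1}{\ell}$ of the Chern class of $P\times_{S^1,+}D^2$, since $\tilde P\to P$ is an $\ell$-fold cover), and $\bar L$ is dual to $L$, so $c_1(\bar L)=-\frac{k}{\ell}[\omega_M]$. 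Your final answer is correct but arrived at by two cancelling sign errors; as you yourself predicted, this is where the argument needs care.

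The fillability paragraph and the semi-positivity paragraph are fine and match the paper (the paper simply says semi-positivity ``follows directly from the definition''; your more explicit argument using $[\Omega]\in\R_{>0}\cdot\pi_{\bar L}^*[\omega_M]$ and integrality of $\omega_M$ is correct).
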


\begin{proof}
We already know that $\tilde Y_{+}$ admits a concave filling by a disk bundle $L=\tilde Y_{+}\times_{S^1,+}D^2$.
Here the equivalence relation for defining $L$ is $(p,z)\sim_+(pg,gz)$, and the symplectic form is $\omega_{Biran}=-d\left( (1-r^2)\theta+r^2d\phi\right)$ as defined by Biran, \cite{Biran:Lagrangian_barrier}.
We give the line bundle associated with $L$ a complex structure, which we define by $j\cdot [p,z]_{+}=[p,iz]_+$.

By Lemma~\ref{lemma:convex_concave_filling} we obtain a convex filling $\bar L$, which can be identified with the disk-bundle we defined in the claim of the lemma.
Furthermore, $\bar L$ inherits a complex structure from the symplectic cutting construction, which we denote by $\bar j$.
Note that $(\bar L,\bar j)$ is dual to $(L,j)$.

We now check the statement concerning the case $n\geq 4$.
We need to compute $c_1(T\bar L)$. Since $\bar L$ can be equipped with a split complex structure, $c_1(T\bar L)=\pi_L^*c_1(\bar L)+\pi_L^*c_1(TM)$, where $\pi_L:\bar L \to M$ is the natural projection.
As $(\bar L,\bar j)$ is dual to $(L,j)$, we see that $c_1(\bar L)=-c_1(L)$.
By Lemma~\ref{lemma:decomposition} we have the decomposition see $M=P\times_{S^1}D^2 \cup_{\partial} W$.
The sequence of the pair gives
$$
H^1(P\times_{S^1,+}D^2) \longrightarrow H^2(M,P\times_{S^1,+}D^2)
\longrightarrow H^2(M) \stackrel{i^*}{\longrightarrow} H^2(P\times_{S^1,+}D^2 ) \longrightarrow H^3(M,P\times_{S^1,+}D^2),
$$
so by excision, the assumption that $W$ is Weinstein and the dimension restriction we see that $H^2(M,P\times_{S^1,+}D^2)\cong H^2(W,\partial W)\cong H^{2n-4}(W)=0$, and $i^*$ is an injection.
The same ingredients imply also that $H^3(M,P\times_{S^1,+}D^2)$ has no torsion.
This means that $[\pi^* \omega]$ lies in the image of $i^*$, so we conclude that $[\omega_M]=(i^*)^{-1}[\pi^* \omega]$ as these elements are primitive.
It is therefore enough to compute $i^*c_1(L)$ and $i^*c_1(TM)$.

By the assumption that $P$ is a prequantization bundle over $(Q,k\omega)$, we see that as a line bundle over $Q$, we have $c_1(P\times_{S^1,+}D^2)=k[\omega]$.
Hence we compute
\[
i^*c_1(TM)=c_1(T(P\times_{S^1,+}D^2)\,)=\pi^*c_1(P\times_{S^1,+}D^2)+\pi^*c_1(TQ)=(c+k)\pi^*[\omega].
\]
Since $\tilde P$ is the $\ell$-fold cover of $P$, we have
\[
i^*c_1(L)=\frac{1}{\ell}c_1(P\times_{S^1,+}D^2)=\frac{k}{\ell}\pi^*[\omega].
\]
The claim about the first Chern class follows, and the statement about semi-positivity follows directly from the definition.
\end{proof}

\begin{corollary}
\label{cor:no_convex_filling}
Let $W^{2n-2}$ be a Weinstein domain admitting a right-handed fractional twist $\tau$ such that $\OB(W,\tau^{-1})$ is not convex semi-positively fillable.
Suppose that the prequantization bundle $\OB(W,\tau)$ is convex semi-positively fillable.
Then for all positive integers $N$, the contact manifold $\OB(W,\tau^{-(N+1)})$ is not convex semi-positively fillable.
\end{corollary}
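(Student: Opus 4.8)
The plan is to argue by induction on $N$, using Avdek's cobordism construction (Proposition~\ref{prop:cobordism_open_book}) to reduce the non-fillability of $\OB(W,\tau^{-(N+1)})$ to that of $\OB(W,\tau^{-1})$ together with the \emph{positive} fillability hypothesis on $\OB(W,\tau)$. The base case $N=0$ is exactly the hypothesis that $\OB(W,\tau^{-1})$ is not convex semi-positively fillable. For the inductive step, suppose $\OB(W,\tau^{-N})$ admits no convex semi-positive filling; I want to conclude the same for $\OB(W,\tau^{-(N+1)})$. Suppose for contradiction that $F$ is a convex semi-positive filling of $\OB(W,\tau^{-(N+1)})$.

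First I would apply Proposition~\ref{prop:cobordism_open_book} with $\psi_1=\tau^{-N}$ and $\psi_2=\tau^{-1}$ (both compactly supported symplectomorphisms of the Weinstein domain $W$, the latter because $\tau$ is the identity near $\partial W$ in the fibered case; in the fractional case one works on the cover, but the statement is phrased for a domain $W$ carrying $\tau$, so I keep the notation). This yields a Stein cobordism $C$ with concave end $\OB(W,\tau^{-N})\coprod \OB(W,\tau^{-1})$ and convex end $\OB(W,\tau^{-(N+1)})$. Next, I would glue the assumed filling $F$ of $\OB(W,\tau^{-(N+1)})$ onto the convex end of $C$, obtaining a symplectic cobordism $C'$ whose only boundary is the concave end $\OB(W,\tau^{-N})\coprod\OB(W,\tau^{-1})$. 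Now I use the positive fillability hypothesis: by Lemma~\ref{lemma:fillability_BW} (or the hypothesis of the corollary directly) $\OB(W,\tau)$ admits a convex semi-positive filling $G_+$, and by Lemma~\ref{lemma:convex_concave_filling} applied to the prequantization bundle $\OB(W,\tau)$ — whose associated disk bundle is a convex filling — we also get a \emph{concave} filling $G_-$ of $\OB(W,\tau)$. Since $\OB(W,\tau)$ is the contact manifold obtained from $\OB(W,\tau^{-1})$ by the ``sign flip'', what I actually need is a concave symplectic cap for $\OB(W,\tau^{-1})$: this is furnished precisely by $G_-$ (the Biran-type disk bundle over $M_-$, which is concave), so I cap off the $\OB(W,\tau^{-1})$ boundary component of $C'$ with $G_-$. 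The result is a symplectic cobordism whose only remaining boundary component is the convex end $\OB(W,\tau^{-N})$; equivalently, a convex filling of $\OB(W,\tau^{-N})$. Checking semi-positivity of this composite (the pieces $C$, $F$, $G_-$ are each semi-positive by hypothesis or by the low-dimensional remark / Chern-class computation in Lemma~\ref{lemma:fillability_BW}, and gluing preserves the condition when the relevant spherical classes localize in the pieces) contradicts the inductive hypothesis, completing the induction.

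The main obstacle I expect is the bookkeeping around \emph{which} contact manifold is being capped and with what: one must identify $\OB(W,\tau^{-1})$ (the left-handed open book) with the boundary of the concave filling $G_-$ coming from $\OB(W,\tau)$, i.e.\ verify that the sign-reversed disk bundle $P\times_{S^1,-}D^2\cup_\partial W$ that appears in Theorem~\ref{thm:result invariant}(2) and Lemma~\ref{lemma:decomposition} is exactly the concave filling produced by the symplectic-cut argument of Lemma~\ref{lemma:convex_concave_filling}. A secondary technical point is ensuring the almost complex / stable Hamiltonian data match up along the gluing hypersurfaces so that ``convex semi-positively fillable'' is genuinely inherited; this is routine once the cobordisms are glued in the Liouville/Weinstein category, where collar neighborhoods are standard. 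Finally, one should remark that in several cases $\tau^N$ is not even smoothly isotopic to the identity rel boundary, but the statement we need only concerns the symplectic isotopy problem, which follows since a symplectic isotopy of $\tau^N$ to the identity rel $\partial W$ would make $\OB(W,\tau^{-N})$ contactomorphic to $\OB(W,\id)=(S^{2n-1},\xi_0)$ (after stabilization), which is convex semi-positively fillable — contradicting what we just proved.
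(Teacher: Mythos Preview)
Your proposal has a genuine gap: the cobordism directions are reversed, and the gluing you describe is impossible.

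With $\psi_1=\tau^{-N}$ and $\psi_2=\tau^{-1}$, Avdek's cobordism has $\OB(W,\tau^{-N})\coprod\OB(W,\tau^{-1})$ at the \emph{concave} end and $\OB(W,\tau^{-(N+1)})$ at the \emph{convex} end. A convex filling $F$ of $\OB(W,\tau^{-(N+1)})$ has that manifold as its own \emph{convex} boundary, so it cannot be glued to the convex end of $C$; convex ends can only be capped by concave fillings, and concave ends by convex fillings. Your attempt to patch this by producing a ``concave cap $G_-$ for $\OB(W,\tau^{-1})$'' fails for a second reason: Lemma~\ref{lemma:convex_concave_filling} gives a concave filling of $\OB(W,\tau)$, which is a genuinely different contact manifold from $\OB(W,\tau^{-1})$; there is no ``sign flip'' identifying them. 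Finally, at the end you declare $\OB(W,\tau^{-N})$ to be a convex boundary of the glued object, but it was concave throughout.

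The fix is to choose the decomposition so that the manifold you want to fill sits at the convex end. The paper does this directly (no induction): write $\tau^{-1}=\tau\circ\cdots\circ\tau\circ\tau^{-(N+1)}$, so Avdek gives a cobordism with convex end $\OB(W,\tau^{-1})$ and concave ends consisting of $N$ copies of $\OB(W,\tau)$ together with $\OB(W,\tau^{-(N+1)})$. Now every concave end has, by hypothesis, a convex semi-positive filling ($F_1$ for each $\OB(W,\tau)$, and the assumed $F_{-(N+1)}$), so capping them off yields a convex filling of $\OB(W,\tau^{-1})$, contradicting the standing hypothesis. (Your inductive idea could also be repaired by using $\tau^{-N}=\tau\circ\tau^{-(N+1)}$ instead.) One further point you glossed over: semi-positivity of the glued filling is \emph{not} automatic---the paper handles this by showing, via a Stokes/energy argument, that any holomorphic sphere in the glued manifold must be entirely contained in one of the capping pieces $F_1$ or $F_{-(N+1)}$, which are semi-positive by assumption; this is what makes the bubble exclusion in Lemma~\ref{lemma:non-fillability} go through.
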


\begin{proof}
We argue by contradiction to prove the assertion.
Suppose that $\OB(W,\tau^{-(N+1)})$ is convex symplectically fillable by $F_{-(N+1)}$.
By assumption $\OB(W,\tau)$ is convex semi-positively symplectically fillable, and we call this filling $F_1$.

\begin{figure}[htp]
\def\svgwidth{0.75\textwidth}%
\begingroup\endlinechar=-1
\resizebox{0.75\textwidth}{!}{%
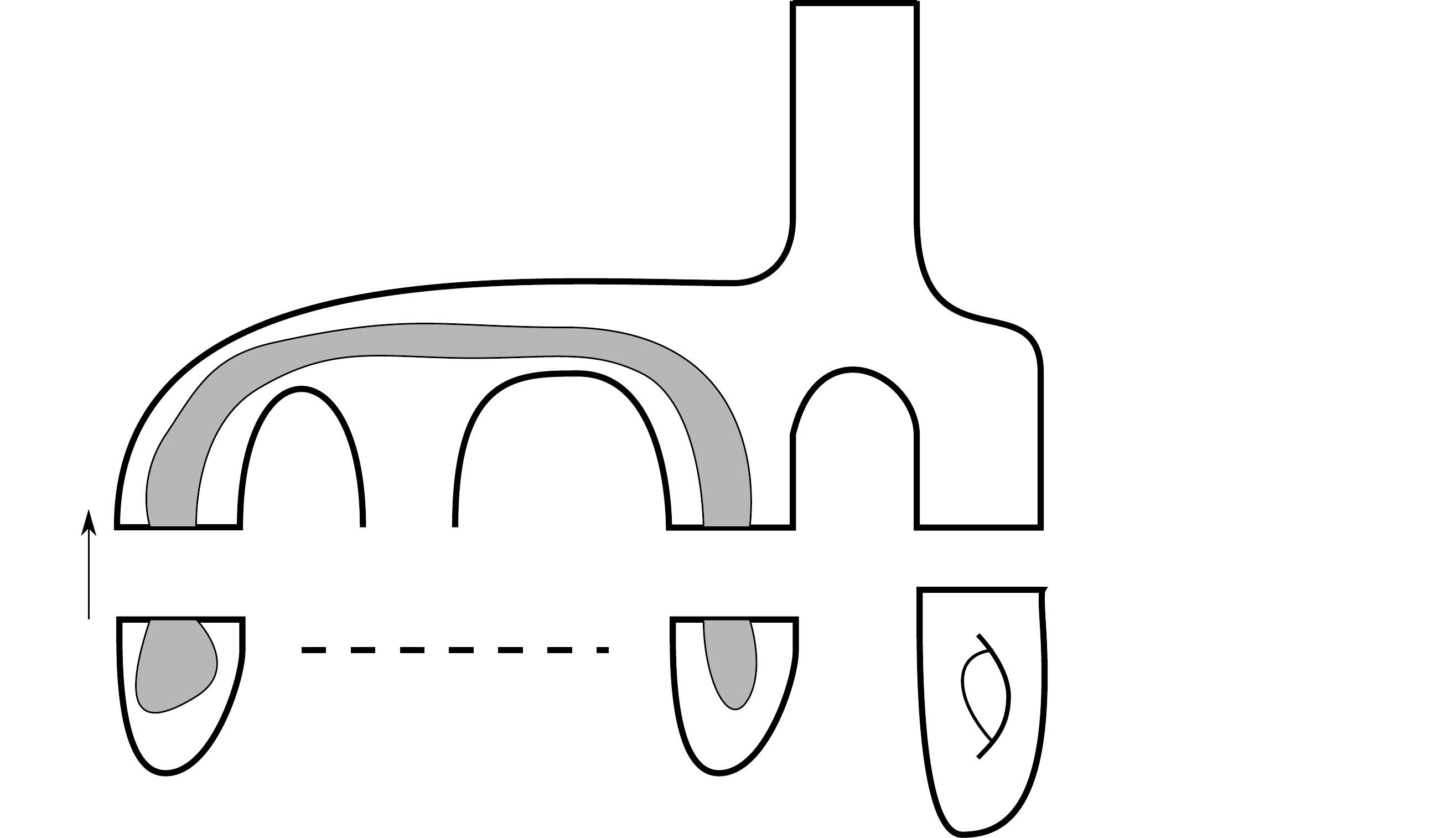%
}\endgroup
\caption{Capping off the cobordism}
\label{fig:cobordism}
\end{figure}

By Proposition~\ref{prop:cobordism_open_book}, there is an exact symplectic cobordism $(C_{Avdek},d\lambda_{Avdek})$ whose convex end is $\OB(W,\tau^{-1})$, and whose concave ends are $N$ copies of $\OB(W,\tau)$ and a single copy of $\OB(W,\tau^{-(N+1)})$.
Cap off the concave ends using the convex fillings we obtained, and we find a convex symplectic filling for $\OB(W,\tau^{-1})$.
We can make the almost complex structure standard in the sense that in the gluing regions involving the $N$ copies of $\OB(W,\tau)$ and $\OB(W,\tau^{-(N+1)})$ the almost complex structure sends $\partial_t$ to the Reeb fields of the corresponding ends.

Semi-positivity does, in general, not hold in this glued cobordism, but we shall argue that we still have control over sphere bubbles.
Indeed, let $u$ be a parametrized holomorphic sphere that is not completely contained in one of the $F_1$'s or in $F_{-(N+1)}$.
Then we find a subset $\Sigma\subset S^2$ such that $u(\Sigma)\subset C_{Avdek}$, and by Stokes' theorem we compute the energy as
$$
\int_{\Sigma}u^*d \lambda_{Avdek}=\int_{\partial \Sigma} u^*\lambda_{Avdek}.
$$
For $J$-holomorphic curves, the energy must be non-negative.
However, the $\partial_t$-component of $(Tu) \nu$ (here $\nu$ is an outward normal to $\Sigma$) is negative, so by our choice of almost complex structure we find $\int_{\partial \Sigma} u^*\lambda_{Avdek}<0$.
We conclude that any sphere bubble must be contained in one of the $F_1$'s or in $F_{-(N+1)}$.
Since these symplectic manifolds are semi-positive by assumption, we conclude that sphere bubbles cannot occur.

To conclude, in the filling for $\OB(W,\tau^{-1})$ we constructed above, sphere bubbles cannot occur, so the argument in the proof of Lemma~\ref{lemma:non-fillability} will give a contradiction to the existence of a convex, semi-positive filling of $\OB(W,\tau^{-(N+1)})$.
\end{proof}

Note that $\OB(W,\tau^N)$ is a prequantization bundle over a symplectic orbifold by arguments in \cite{CDvK:right-handed}.
By \cite{NP:resolving_orbi_singularities}, this manifold is convex symplectically fillable, so instead of attaching $N$ convex fillings for $\OB(W,\tau)$, we could also have used a single convex filling for $\OB(W,\tau^N)$.
However, if one does so, sphere bubbles are more difficult to control.
In this case, we can also find a convex filling with the cobordism techniques of Avdek.
\begin{lemma}
Let $W$ be a Weinstein domain admitting a right-handed fractional twist $\tau$, and suppose that $\OB(W,\tau)$ admits a convex, semi-positive symplectic filling.
Then for all $N\in \Z_{>0}$, the contact manifold $\OB(W,\tau^N)$ admits a convex, semi-positive symplectic filling.
\end{lemma}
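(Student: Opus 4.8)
The plan is to construct the filling explicitly from Avdek's cobordism and the hypothesised filling of $\OB(W,\tau)$, and then to propagate semi-positivity across the gluing. I would argue by induction on $N$, the base case $N=1$ being the hypothesis. So suppose $\OB(W,\tau^{N-1})$ already admits a convex, semi-positive symplectic filling $F_{N-1}$. Since $\tau$ and $\tau^{N-1}$ are compactly supported symplectomorphisms of the Weinstein domain $W$, Proposition~\ref{prop:cobordism_open_book} gives a Stein (hence exact) cobordism $(C,d\lambda_C)$ with convex end $\OB(W,\tau^N)$ and concave ends $\OB(W,\tau)$ and $\OB(W,\tau^{N-1})$. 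Capping these two concave ends with $F_1$ and $F_{N-1}$ produces a compact symplectic manifold $F_N$ with boundary $\OB(W,\tau^N)$; after the standard smoothing of the gluing collars the Liouville vector field near $\partial F_N$ points outward, so $F_N$ is a convex filling of $\OB(W,\tau^N)$.

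The remaining point --- and the real content --- is that $F_N$ can be taken semi-positive. For $n\le 3$ this is automatic. For $n\ge 4$ I would analyse the spherical homology classes of $F_N$ via a Mayer--Vietoris decomposition for $F_N = C\cup F_1\cup F_{N-1}$ (the pieces being glued along the contact manifolds $\OB(W,\tau)$ and $\OB(W,\tau^{N-1})$). A class $A\in\pi_2(F_N)$ that is carried by $F_1$ or by $F_{N-1}$ has the same $\omega$-area and the same Chern number computed in $F_N$ as in the piece, since $TF_N$ restricts to the tangent bundle of that piece; thus the assumed semi-positivity of $F_1$ and $F_{N-1}$ handles these classes. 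For a class $A$ that genuinely meets $C$ I would reuse the Stokes argument from the proof of Corollary~\ref{cor:no_convex_filling}: because $\lambda_C$ is a bona fide primitive and the gluing collars can be arranged so that along the common hypersurfaces the almost complex structure sends $\partial_t$ to the relevant Reeb field, the part of $A$ lying in $C$ contributes non-positively to $\langle[\omega],A\rangle$. Hence every spherical $A$ with $\langle[\omega],A\rangle>0$ has its positive area and its Chern number pinned down inside the semi-positive pieces, so $F_N$ is semi-positive, completing the induction.

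The hard part will be making the area- and Chern-class bookkeeping for \emph{mixed} spherical classes fully rigorous: classes represented by surfaces split among $C$, $F_1$ and $F_{N-1}$ and glued along loops in the intermediate contact manifolds. This forces one to pin down the Liouville form $\lambda_C$ and the lengths of the gluing collars precisely enough that the Stokes estimate of Corollary~\ref{cor:no_convex_filling} applies, and it is exactly here that exactness of $C$ (not just its being symplectic) is used. I note in passing that one could instead fill $\OB(W,\tau^N)$ directly, viewing it as a prequantization bundle over a symplectic orbifold and resolving it in the style of Lemma~\ref{lemma:fillability_BW}, but then the resulting filling need not be semi-positive and sphere bubbles become harder to control, which is precisely why the cobordism route is the one to take.
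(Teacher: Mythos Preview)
Your basic construction matches the paper's: use Avdek's cobordism and cap off the concave ends with the given semi-positive filling of $\OB(W,\tau)$. The paper does this in one step, taking the Stein cobordism from $N$ disjoint copies of $\OB(W,\tau)$ to $\OB(W,\tau^N)$ and capping all $N$ concave ends with copies of $F_1$; your inductive version is an inessential variant.

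Where your argument goes wrong is in the attempted verification of semi-positivity for the glued filling. Semi-positivity is a purely cohomological condition on classes $A\in\pi_2(F_N)$: one must show that every $A$ with $\langle[\Omega],A\rangle>0$ and $\langle c_1(F_N),A\rangle\ge 3-n$ has $\langle c_1(F_N),A\rangle\ge 0$. The Stokes argument you borrow from Corollary~\ref{cor:no_convex_filling} says nothing about this. That argument shows that a \emph{$J$-holomorphic} sphere, for a specific cylindrical $J$, cannot cross from the exact cobordism $C$ into the caps, because the portion in $C$ would have negative energy. It does not control the $\omega$-area of an arbitrary smooth representative of a class in $\pi_2(F_N)$, nor does it say anything at all about $c_1$. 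The phrase ``the part of $A$ lying in $C$ contributes non-positively to $\langle[\omega],A\rangle$'' is not meaningful for a homotopy class: $\langle[\omega],A\rangle$ is independent of the representative, while the integral over ``the part in $C$'' depends entirely on which representative you chose and carries no sign without a $J$-holomorphicity hypothesis.

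It is worth noting that the paper's own two-line proof simply asserts the conclusion and does not justify semi-positivity of the glued manifold either. In the logic of the paper this lemma is peripheral: what is actually needed downstream (in Corollary~\ref{cor:no_convex_filling} and Theorem~\ref{thm:powers_of_twists}) is not that the glued filling be semi-positive, but that holomorphic sphere bubbles be excluded, and \emph{that} is exactly what the Stokes/maximum-principle argument provides. So your instinct to invoke that argument is correct, but it belongs at the level of excluding bubbling in the moduli-space analysis, not as a proof that the filling is semi-positive in the sense of the definition.
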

\begin{proof}
By Avdek's Proposition~\ref{prop:cobordism_open_book}, there is an exact symplectic cobordism with $\OB(W,\tau^N)$ at the convex end and $N$ copies of $\OB(W,\tau)$ at the concave ends.
Cap off each concave end with a convex semi-positive filling for $\OB(W,\tau)$.
\end{proof}

We collect the above results as Theorem~\ref{thm:symplectic_isotopy} mentioned in the introduction.
\begin{theorem}
\label{thm:powers_of_twists}
Let $W$ be a Weinstein domain admitting a right-handed fractional twist $\tau$ of power $\ell$, and suppose that $\OB(W,\tau^{-1})$ has no convex, semi-positive filling.
Assume that $Y_+=\OB(W,\tau)$ admits a convex, semi-positive symplectic filling.
Then for all $N \in \Z_{>0}$, the contact manifold $\OB(W,\tau^{-N})$ is not convex, semi-positively fillable.

In particular, $\tau^N$ is not symplectically isotopic to the identity relative to the boundary.
\end{theorem}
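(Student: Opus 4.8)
The plan is to read off the first assertion directly from Corollary~\ref{cor:no_convex_filling}, and then to reduce the ``in particular'' clause to it by recognizing $\OB(W,\id)$ as a fillable contact manifold.

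First I would observe that the two standing hypotheses of the theorem --- that $\OB(W,\tau^{-1})$ admits no convex, semi-positive filling, and that $Y_+=\OB(W,\tau)$ does admit one --- are exactly the hypotheses of Corollary~\ref{cor:no_convex_filling}. Applying that corollary with $N$ replaced by $N-1$ for each integer $N\geq 2$, together with the $N=1$ case which is assumed outright, shows that $\OB(W,\tau^{-N})$ is not convex, semi-positively fillable for every $N\in\Z_{>0}$. The only point to verify is the bookkeeping: one applies Avdek's Proposition~\ref{prop:cobordism_open_book} iteratively with $\psi_1=\tau$ and $\psi_2$ a negative power of $\tau$, so that the composition along the cobordism telescopes to $\tau^{-1}$, and one caps off the concave ends by the assumed fillings. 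That this capped-off cobordism still controls sphere bubbles --- by the $\partial_t$-component/energy argument, and because any surviving bubble lies in one of the semi-positive pieces $F_1$ or $F_{-(N+1)}$ --- is already carried out in the proof of Corollary~\ref{cor:no_convex_filling}, so nothing new is needed.

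It remains to deduce the statement about symplectic isotopy, and I would argue by contradiction. Suppose $\tau^N$ is symplectically isotopic to the identity relative to the boundary; then so is $\tau^{-N}$. Since the contactomorphism type of a contact open book $\OB(W,\psi)$ depends only on the symplectic isotopy class of $\psi$ relative to the boundary (Giroux's construction together with Gray stability, cf.\ Section~\ref{sec:contact_OB}), this yields a contactomorphism $\OB(W,\tau^{-N})\cong \OB(W,\id)$. But $\OB(W,\id)$ is nothing other than the contact boundary of $W\times D^2$: the identity monodromy has constant shift function, so $Map(W,\id)=W\times S^1$, and gluing $P\times D^2$ to it along the standard annular region recovers $\partial(W\times D^2)=(\partial W\times D^2)\cup(W\times S^1)$. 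As $W$ is Weinstein, $W\times D^2$ carries a Weinstein structure, hence is a convex filling; moreover its symplectic form is exact, so $\langle[\Omega],A\rangle=0$ for all $A\in\pi_2(W\times D^2)$ and the semi-positivity condition is vacuous. Thus $\OB(W,\id)$, and therefore $\OB(W,\tau^{-N})$, admits a convex, semi-positive filling, contradicting the first assertion. Consequently $\tau^N$ is not symplectically isotopic to the identity relative to the boundary.

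I do not expect a genuine obstacle at this stage: all of the analytic difficulty --- constructing the rigid holomorphic plane, ruling out breaking, and controlling bubbling in the capped-off cobordism --- has already been absorbed into Lemma~\ref{lemma:non-fillability} and Corollary~\ref{cor:no_convex_filling}. The one point that deserves a little care is confirming that the semi-positivity hypothesis on the filling of $Y_+$ really does survive the chain of cappings and the non-semi-positive glued cobordism $C_{Avdek}$; but this is precisely the content of the sphere-bubble discussion in the proof of Corollary~\ref{cor:no_convex_filling}, and no extra input is required.
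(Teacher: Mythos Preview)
Your proposal is correct and follows essentially the same approach as the paper: the first assertion is read off from Corollary~\ref{cor:no_convex_filling} (together with the $N=1$ hypothesis), and the isotopy claim is deduced by contradiction from the fillability of $\OB(W,\id)\cong\partial(W\times D^2)$. Your additional remarks on the bookkeeping $N\mapsto N-1$, the explicit identification $\OB(W,\id)=\partial(W\times D^2)$, and the observation that exactness of $W\times D^2$ makes semi-positivity vacuous are all welcome elaborations of what the paper leaves implicit.
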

The first claim follows directly from Corollary~\ref{cor:no_convex_filling}.
For the last claim, observe that if $\tau^{-N}$ is symplectically isotopic to the identity relative to the boundary, then $\OB(W,\tau^{-N})$ is contactomorphic to $\OB(W,\id)$ which is fillable by the Weinstein domain $W\times D^2$.
This gives a contradiction, so the assertion holds.

\subsection{Algebraic overtwistedness}
\label{sec:HC=0}
The discussion in this section will be of a conjectural nature, since we will assume that contact homology algebra exists, and has the expected properties. For a more detailed discussion of this point of view, see \cite{BvK}.

With this in mind, we claim that our construction also shows that contact homology algebra will vanish if the fillability obstructions listed in Theorem~\ref{thm:result} hold.

Let $A_*(Y,\alpha;\Q[H_2(Y)])$ denote the contact homology algebra chain complex with full coefficients in $H_2(Y;\Z)$.
We claim that $\gamma_1\in A_*(Y,\alpha;\Q[H_2(Y)])$ satisfies $\partial \gamma_1 =\pm 1$.

Indeed, the linking argument from the proof of Lemma~\ref{lemma:no_other_curves_fractional_fibered} shows that the holomorphic curve count needed for $\partial \gamma_1$ only involves cylinders and planes.
By Lemma~\ref{lemma:existence_rigid_regular_plane}, there is a unique plane, so $\partial \gamma_1 =\pm 1 +\sum_i n_i \gamma^-_i$.
Lemmas \ref
{lemma:no_other_curves_fractional_fibered}
\ref{lemma:fibered_twist_neg_c_single_plane}, \ref{lemma:only_plane_with_pi1_cond} and \ref{lemma:TKPn} show that there are no rigid holomorphic cylinders.
Note here that the contact manifolds in the Lemmas that rely on index arguments, namely Lemma~\ref{lemma:fibered_twist_neg_c_single_plane} and \ref{lemma:TKPn}, always have $c_1(\xi)=0$.

\subsubsection{Left-handed stabilizations}
We also recover the main result of \cite{BvK}, which asserts that certain left-handed stabilizations have vanishing contact homology, by the following argument.

Let $Y_{-}=\OB(T^*S^n,\tau^{-1}_{Dehn})$, and let $Y'$ be another closed contact manifold of the same dimension.
By the condition~\eqref{eq:condition_C} we imposed on the function $U_0$ and Lemma~\ref{lemma:orbits_near_binding} the orbit $\gamma_1$ in $Y_-$ has minimal action.
By rescaling the contact form in $Y'$, we can assume that all periodic Reeb orbits in $Y'$ have much larger action than that of $\gamma_1$.
Now take the connected sum $Y_- \# Y'$ along suitable Darboux balls using the Weinstein model.
New orbits are created, but by shrinking the connecting tube of the Weinstein model most of the new periodic Reeb orbits have large action, except possibly for the periodic orbits contained in the connecting tube of the connected sum; these orbits are also known as Lyapunov orbits.

Since these Lyapunov orbits have reduced index $2n-3,2n-1,\ldots$ and $\gamma_1$ has reduced index equal to $1$, it follows that there is no holomorphic curve from $\gamma_1$ to any combination of these Lyapunov orbits. Since all other orbits have larger action than $\gamma_1$, it follows that $\partial \gamma_1=\pm 1$ even after the connected sum.

However, contact homology is still a work in progress, mainly due to transversality problems involving holomorphic curves that are not simple, and we will not pursue this argument.

\subsection{Generalizations and questions}
There are some obvious generalizations which we haven't worked out.
For example, consider a Weinstein domain for which the Reeb flow on the boundary is periodic, say $\partial W=P$.
If we mod out $P$ by its Reeb action, we obtain a symplectic orbifold.
We can still define fractional twists, and many arguments will still go through.
See for instance \cite[Section 4.1.1]{vanKoert:openbooks5} for a variation of the fractional twist in the case that the boundary of $W$ is a Brieskorn manifold, which is a prequantization bundle over a symplectic orbifold.

\subsubsection{$T^*\C P^n$ as a page}
The index argument we relied on for $T^*\H \P^m$ and $T^*Ca\P^2$ does not work in this case.
However, on the odd complex projective spaces there is a free involution, namely
\[
\begin{split}
\sigma: \C \P^{2n+1} & \longrightarrow \C \P^{2n+1} \\
[z_0:z_1:\ldots:z_{2n}:z_{2n+1}] & \longmapsto [-\bar z_1:\bar z_0:\ldots:-\bar z_{2n+1}:\bar z_{2n}].
\end{split}
\]
The Fubini-Study metric on complex projective space has the property that all geodesics are periodic with the same period.
It follows that $M:=\C \P^{2n+1}/\sigma$ admits a metric for which all geodesics are periodic.
Unfortunately, the periods are not all the same unless we are looking at $T^*\C \P^1/\sigma\cong T^*\R \P^2$, so this involution does not help us in general.

In the latter case we can find a fractional twist $\tilde \tau$ on $T^*\C P^{1}$ of power $2$, which is the usual Dehn twist on $T^*S^2$.
By the first part of Theorem~\ref{thm:result}, it follows that $\OB(T^*\C P^{1},\tilde \tau^{-1})$ is not convex semi-positively fillable.
On the other hand, the contact open book $Y_{+}=\OB(T^*\C P^{1},\tilde \tau)\cong S^5$ is Liouville fillable.
Theorem~\ref{thm:powers_of_twists} then tells us that $\OB(T^*\C \P^{1},\tilde \tau^{-N})$ is not convex semi-positively fillable.

\begin{question}[AIM Workshop]
Let $\tau$ denote a right-handed fibered Dehn twist on $T^*\C \P^n$.
Is $\OB(T^*\C P^n,\tau^{-1})$ non-fillable for $n\geq 2$?
\end{question}

\end{document}